\newtheorem{thm}{Theorem}[section]
\newtheorem{prop}[thm]{Proposition}
\newtheorem{lem}[thm]{Lemma}
\newtheorem{cor}[thm]{Corollary}
\theoremstyle{remark}
\newtheorem{rem}[thm]{Remark}
\theoremstyle{definition}
\newtheorem{defi}[thm]{Definition}
\renewcommand{\le}{\leqslant}
\renewcommand{\leq}{\leqslant}
\renewcommand{\ge}{\geqslant}
\renewcommand{\geq}{\geqslant}
\newcommand{\ls}{\lesssim}
\renewcommand{\subset}{\subseteq}
\newcommand{\mcl}{\mathcal}
\newcommand{\msf}{\mathsf}
\newcommand{\mfk}{\mathfrak}
\newcommand{\msc}{\mathscr}
\newcommand{\N}{\mathbb{N}}
\newcommand{\na}{\nabla}
\newcommand{\Ll}{\left}
\newcommand{\Rr}{\right}
\newcommand{\lhs}{left-hand side}
\newcommand{\rhs}{right-hand side}
\newcommand{\1}{\mathbf{1}}
\newcommand{\R}{\mathbb{R}}
\newcommand{\Z}{\mathbb{Z}}
\renewcommand{\P}{\mathbb{P}}
\newcommand{\ov}{\overline}
\newcommand{\un}{\underline}
\newcommand{\td}{\tilde}
\newcommand{\wh}{\widehat}
\newcommand{\eps}{\varepsilon}
\renewcommand{\d}{{\mathrm{d}}}
\newcommand{\dr}{\partial}
\newcommand{\ZM}{\cZ_{M}}
\newcommand{\tZM}{\tilde{\cZ}_{M}}
\newcommand{\al}{\alpha}
\newcommand{\be}{\beta}
\newcommand{\ga}{\gamma}
\newcommand{\de}{\delta}
\newcommand{\ze}{\zeta}
\renewcommand{\th}{\theta}
\newcommand{\ka}{\kappa}
\newcommand{\la}{\lambda}
\newcommand{\si}{\sigma}
\newcommand{\cZ}{\mathsf{Z}}
\newcommand{\DM}{\mathscr{D}_M^{:n:}}
\newcommand{\B}{\mathcal{B}}
\newcommand{\F}{\msc{F}}
\newcommand{\G}{\mcl{G}}
\newcommand{\Gg}{\G^\th_c}
\newcommand{\Bb}{\B^{\al,\mu}_{p,q}}
\newcommand{\Bpq}{\B_{p,q}}
\newcommand{\tBb}{\td{\B}^{\al,M}_{p,q}}
\newcommand{\tBB}[1]{\td{\B}^{#1,M}_{\infty}}
\newcommand{\tB}{\td{\B}}
\newcommand{\hB}{\widehat{\B}}
\newcommand{\dk}{\delta_k}
\newcommand{\Lm}{L_\mu}
\newcommand{\tL}{\td{L}_M}
\newcommand{\hL}{\widehat{L}_\sigma}
\newcommand{\wm}{w_\mu}
\newcommand{\lan}{\left\langle}
\newcommand{\ran}{\right\rangle_M}
\newcommand{\ol}{\varolessthan}
\newcommand{\oo}{\varodot}
\newcommand{\og}{\varogreaterthan}
\newcommand{\dXi}{\delta_k X_M^{(\text{in})}}
\newcommand{\dXo}{\delta_k X_M^{(\text{out})}}
\newcommand{\Zdd}{Z^{( 2 )}}
\newcommand{\Ztt}{Z^{( 3 )}}
\newcommand{\ZZ}{{\widehat{\msc{Z}}^{\sigma}_p}}
\newcommand{\ZZt}{{\widehat{\msc{Z}}^{\sigma}_{3p}}}
\newcommand{\tn}{|\!|\!|} 
\newcommand{\BK}{\mathscr{B}_{T^\star} }
\newcommand{\MT}{\mathscr{M}_{T^\star} }
\DeclareMathOperator{\supp}{Supp}
\newcommand{\Pl}{\mathcal{P}_\ell}
\newcommand{\Plp}{\mathcal{P}_\ell^{\perp}}
\newcommand{\Hhk}{\Hh^{(k)}}
\newcommand{\tZ}{\tilde{\cZ}}
\newcommand{\tc}{\widetilde{\chi}}
\newcommand{\te}{\widetilde{\eta}}
\newcommand{\E}{\mathbb{E}}
\newcommand{\Hh}{\mathcal{H}}
\newcommand{\Ff}{\mathcal{F}}
\newcommand{\ph}{\phi}
\newcommand{\Ss}{\mathcal{S}}
\numberwithin{equation}{section}
\title{Global well-posedness of the dynamic $\Phi^4$ model in the plane}
\author{Jean-Christophe Mourrat, Hendrik Weber}
\address[Jean-Christophe Mourrat]{Ecole normale supérieure de Lyon, CNRS, Lyon, France}
\email{jean-christophe.mourrat@ens-lyon.fr}
\address[Hendrik Weber]{University of Warwick, Coventry, United Kingdom}
\email{hendrik.weber@warwick.ac.uk}
\begin{document}

\begin{abstract}

We show global well-posedness of the dynamic $\Phi^4$ model in the plane. The model is a non-linear stochastic PDE that can only be interpreted in a ``renormalised'' sense. Solutions take values in suitable weighted Besov spaces of negative regularity.

\bigskip

\noindent \textsc{MSC 2010:} 81T27, 81T40, 60H15, 35K55.

\medskip

\noindent \textsc{Keywords:} Non-linear stochastic PDE, Stochastic quantisation equation, Quantum field theory, Weighted Besov space.

\end{abstract}
\maketitle
%
%
%
%
%
%
%
%
\section{Introduction}
\label{s:intro}
The aim of this paper is to show global-in-time well-posedness for the stochastic quantisation equation 
\begin{equation}
\label{e:eqX}
\Ll\{
\begin{array}{ll}
\dr_t X = \Delta X - X^{:3:} + a X +  \xi, \qquad \text{on } \R_+ \times \R^2, \\
X(0,\cdot) = X_0,
\end{array}
\Rr.
\end{equation}
on the full space $\R^2$ and in the probabilistically strong sense. Here $\xi$ denotes a  white noise over $\R \times \R^2$, $a$ is a real parameter and $X^{: 3 :}$ denotes a renormalised cubic power. This cubic power is sometimes referred to as a ``Wick power'' or ``normally ordered power'', and sometimes written in the suggestive form $X^3 - 3  \infty  X$, where ``$\infty$'' stands for a divergent constant that appears in the renormalisation procedure. The initial condition $X_0$ is assumed to take values in a certain Besov space of negative regularity with weights.

Equation \eqref{e:eqX} describes the natural reversible dynamics for the Euclidean $\Phi^4_2$ quantum field theory. It is given by a Gibbs measure on $\mathcal{S}^{\prime}(\R^2)$ which is formally proportional to
\begin{equation}\label{e:phi4}
\exp\Ll(- \int_{\R^2}\Ll[\frac14 X^{: 4 :}  -\frac{a}{2} X^{: 2 :} \Rr]\Rr)  \, \d \nu(X),
\end{equation}
where $\nu$ is the law of a Gaussian free field. This measure was constructed and investigated intensively in the seventies and eighties (see \cite{GJ81} and the references therein). In 1981, Parisi and Wu \cite{ParisiWu} proposed to construct solutions to \eqref{e:eqX} as a means to obtain samples from \eqref{e:phi4} via an MCMC procedure. In this article, we fully perform the construction of solutions to \eqref{e:eqX}. In our companion article \cite{JCH}, we show that solutions of \eqref{e:eqX} on the two-dimensional torus arise as scaling limits for the Glauber dynamics of a ferromagnetic Ising-Kac model near criticality. 

\smallskip

Parisi and Wu's article \cite{ParisiWu} received a lot of attention over the years, and the construction of solutions to ``renormalised'' SPDE has been a recurring theme in the stochastic analysis literature. First results were due to Jona-Lasinio and Mitter 
 \cite{JLM}. Using the Girsanov theorem, they constructed solutions to a modified equation 
 \begin{align}\label{e:JLM}
 \dr_t X = (-\Delta+1)^{-\eps} \big(  \Delta X - X^{:3:} + a X\big) + (-\Delta+1)^{-\frac{\eps}{2}} \xi
 \end{align}
 for  $\frac{9}{10} <\eps <1$, on the two-dimensional torus. Note that (at least formally), this equation also defines reversible dynamics with respect to the $\Phi^4$ measure \eqref{e:phi4}.
 
 In the early nineties, Albeverio and R\"ockner \cite{AlbRock} studied \eqref{e:eqX} using Dirichlet forms. They could show that the Dirichlet form for \eqref{e:eqX} is closable, and thus construct weak solutions to \eqref{e:eqX}. Weak uniqueness for solutions on the torus was shown in \cite{RockShen}.  In \cite{MiRo}, Mikulevicius and Rozovskii developed an alternative approach and constructed martingale solutions to \eqref{e:JLM} on the torus for any value of $\eps \in [0,1)$, and shown uniqueness in law for $\eps>0$. Uniqueness in law for solutions to the original equation  \eqref{e:eqX} on the full space remained open in all of these approaches. 
 
 A breakthrough result was obtained by da Prato and Debussche in \cite{dPD}. They considered \eqref{e:eqX} on the two-dimensional torus and showed short time existence and uniqueness in the probabilistically strong sense, via a fixed point argument in a suitable Besov space. Using the reversible measure \eqref{e:phi4}, they also showed non-explosion for almost every (with respect to this measure) initial datum. 

\smallskip

Our argument builds on this result and extends the method developed in \cite{dPD}. The strategy is similar in spirit to the one-dimensional construction performed in  \cite{Iwa}. 
  Following \cite{dPD}, we first construct periodic solutions for a short time on a torus of arbitrary size, using a fixed point argument. Deviating from \cite{dPD}, we  derive a priori estimates that are strong enough to imply non-explosion on the torus for an arbitrary initial condition in a natural Besov space. We then show that, as the torus grows larger, the family of solutions remains in a compact subset of a suitable Besov space with polynomial weights. This implies the existence of solutions by extracting a converging subsequence.

The proof of uniqueness comes with a twist. The nature of the equation does not allow for a standard Gronwall argument in Besov spaces with polynomial weights. Instead, we ``unfold'' the information of boundedness in such a space into a scale of bounds in Besov spaces with (stretched) exponential weights. We then perform a Gronwall-type argument using this infinite scale of bounds.

\smallskip

Recently, the three-dimensional version of \eqref{e:eqX} has received a lot of attention. In \cite{Martin1}, Hairer developed a theory of ``regularity structures''. The construction of solutions to the three dimensional version of \eqref{e:eqX}  on the torus and for short times was one of the key applications given in \cite{Martin1}. In \cite{MassimilianoPHI4}, Catellier and Chouk presented another method to derive an equivalent result. Their argument is based on the method of  ``modelled distributions'' which was put forward by Gubinelli, Imkeller and Perkowski in \cite{Gubi}. Yet another method to construct short time solutions to the three-dimensional version of \eqref{e:eqX}, using the renormalisation group, was proposed by Kuppiainen in \cite{Kupi}. Four dimensional versions of \eqref{e:eqX} and \eqref{e:phi4} are not expected to exist  \cite{Aizenman}.

The problem addressed in all of these works is somewhat orthogonal to the problem discussed in this article. More precisely, all of these works develop methods to understand the behaviour of a large class of stochastic equations (including \eqref{e:eqX}) on \emph{small} scales. These methods apply to nonlinear stochastic equations satisfying a certain scaling property (called \emph{subcriticality} in \cite{Martin1}) which permits to view the solution of the nonlinear equation as a perturbation of a linearised stochastic equation. It is not expected that a single general theory can give \emph{global} in time non-explosion results for all of these equations in finite or infinite volume; such results will rather have to be obtained case by case. Yet, we hope that the present article will serve as a first step towards proving global-in-time well-posedness for the three-dimensional version of~\eqref{e:eqX}. 

\smallskip

In \cite{Cyril0} and in the forthcoming article  \cite{Cyril}, Hairer and Labb\'e obtain global well-posedness for a  \emph{parabolic Anderson model} on all of $\R_+ \times \R^d$ for $d=2,3$. This model is given by a renormalised version of the stochastic PDE
\begin{align}\label{e:PAM}
\partial_t u = \Delta u + u\, \eta \, ,
\end{align}
where $\eta$ is a white noise \emph{in space}. To this end, they significantly extend the theory of regularity structures to include weights, and they are able to replace some of the $L^{\infty}$-type assumptions from \cite{Martin1} by a more general $L^p$ structure. This part of their work is similar in spirit to our treatment of $L^p$-based Besov spaces with weights below. But their method differs from ours in an important way: Hairer and Labb\'e can directly view \eqref{e:PAM} as a fixed point problem for an operator which is \emph{globally} Lipschitz continuous on some (complicated) space. Their result thus follows by a Picard iteration. We do not expect that such an approach would work  for \eqref{e:eqX}, due to the non-linearity. We are thus forced to adopt the more indirect strategy described above. 

However, our uniqueness argument (see Section~\ref{s:uniqueness}) seems related to the method employed in \cite{Cyril}. Indeed, once the necessary a priori estimates are established, our argument reduces to a uniqueness statement for the heat equation with irregular potential, very much akin to \eqref{e:PAM}.

%
%

\subsection{Statement of the main results}\label{s:mainResult}

Let $\xi$ be space-time white noise on $\R_+ \times \R^2$, and let $a \in \R$. 
%
%
%
We denote by $Z$ be the solution of the stochastic heat equation
\begin{equation}
\label{e:eqZ}
\Ll\{
\begin{array}{ll}
\dr_t Z = \Delta Z  + \xi, \qquad \text{on } \R_+ \times \R^2, \\
Z(0,\cdot) = X_0,
\end{array}
\Rr.
\end{equation}
and denote by $Z^{:2:}, Z^{:3:}$  its Wick powers.
These Wick powers can, for example, be defined by approximation. Let $\rho$ be a mollifying kernel, i.e.\ a compactly supported, non-negative smooth function from $\R \times \R^2$ to $\R$ with $\int \rho =1$. 
For $\delta >0$, set 
\begin{equation*}
\rho_\delta(t,x) = \delta^{-4} \rho\Big(\frac{t}{\delta^2}, \frac{x}{\delta}\Big)\;.
\end{equation*}
Let $Z_\delta$ be the solution of \eqref{e:eqZ} with $\xi$ replaced by the regularised noise $\xi_\delta =\xi \ast \rho_\delta$. There exist constants $\mathfrak{c}_\delta$, which diverge logarithmically as $\delta$ goes to zero, such that
\begin{align*}
Z_\delta^2 - \mathfrak{c}_\delta \qquad Z_\delta^3 - 3 \mathfrak{c}_\delta Z_\delta
\end{align*}
converge to non-trivial limits, which we denote by  $Z^{:2:}$ and $Z^{:3:}$. Such a construction is given, for example, in \cite{dPD,Martin1}. Below, in Section~\ref{sec:Probability} (see \eqref{e:DefZN}) we give an alternative, more direct construction of $Z^{:2:}$ and $Z^{:3:}$. In particular, in Theorem~\ref{thm:BoundsOnZ} and Corollary~\ref{cor:BoundsOnZ2}, we show that for every initial condition $X_0$ in a suitable weighted Besov space, the $Z^{: n :}$ can be realised as random continuous (in time) functions taking values in a weighed Besov space of negative regularity.

Motivated by \cite{dPD}, we then say that $X$ solves the equation \eqref{e:eqX} if $X = Y + Z$, where $Y$ solves
\begin{equation}
\label{e:eqY}
\Ll\{
\begin{array}{ll}
\dr_t Y = \Delta Y  + \Psi(Y,Z,Z^{:2:}, Z^{:3:}), \qquad \text{on } \R_+ \times \R^2, \\
Y(0,\cdot) = 0,
\end{array}
\Rr.
\end{equation}
with
\begin{equation}\label{e:defPsi1}
\Psi(Y,Z,Z^{(2)}, Z^{(3)}) = -Y^3 - 3 Y^2 Z - 3 Y Z^{(2)} - Z^{(3)} + a(Y+Z).
\end{equation}
It turns out that $Y$ is a continuous (in time) function taking values in a Besov space of sufficient positive regularity. Hence, the
non-linear terms in \eqref{e:defPsi1} can be interpreted through multiplicative inequalities in these spaces.
We interpret \eqref{e:eqY} in the mild sense, i.e.\ we say that $Y$ solves \eqref{e:eqY} if for every $t\ge 0$,
\begin{equation}
\label{e:eqY-mild}
Y_t = \int_0^t e^{(t-s) \Delta} \, \Psi(Y_s, \un{Z}_s) \, \d s,
\end{equation}
where $\un{Z} = (Z, Z^{:2:}, Z^{:3:})$.
Our main result states that there exists exactly one solution of \eqref{e:eqY} taking values in some weighted Besov space $\hB^{\be,\si}_{p,\infty}$. In short, the space $\hB^{\be,\si}_{p,\infty}$ is defined analogously to the usual Besov space with regularity index $\be$ and lower indices $p,\infty$, except that the integration is taken against a weight of the form $(1+|x|^2)^{-\si/2}$ (see \eqref{e:def:hB} for a precise definition).
\begin{thm}[Existence and uniqueness of solutions]
\label{t:main} 
Let $\be < 2$, $\si > 2$, $\al > 0$ be sufficiently small and $p <\infty$ be sufficiently large. Let $X_0 \in \hB^{-\al,\si}_{3p,\infty}$, $Z$ be the solution of \eqref{e:eqZ}, and $Z^{:2:}, Z^{:3:}$ denote its Wick powers. With probability one, there exists a unique $Y \in \mcl C(\R_+,\hB^{\be,\si}_{p/9,\infty})$ solving~\eqref{e:eqY}.
\end{thm}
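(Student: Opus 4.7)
The plan is to build solutions on tori of growing size via a fixed-point argument, obtain $M$-uniform a priori bounds in a polynomially weighted Besov space, pass to the limit by compactness, and prove uniqueness through a scale of stretched-exponentially weighted estimates.

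First, fixing $M \gg 1$ and working on the torus $(\R/M\Z)^2$, I would perform the da Prato--Debussche \cite{dPD} reduction: writing $X=Y+Z^M$, the equation for $Y$ is the mild formulation \eqref{e:eqY-mild}, in which $\Psi$ is cubic in $Y$ with coefficients built from $\un Z^M=(Z^M,(Z^M)^{:2:},(Z^M)^{:3:})$. Since Theorem~\ref{thm:BoundsOnZ} and Corollary~\ref{cor:BoundsOnZ2} supply almost sure control of $\un Z^M$ in weighted Besov spaces of negative regularity, and the heat semigroup gains $\al+\be$ derivatives, a Picard iteration in a ball of the (periodic analogue of) $\mcl C([0,T^\star_M],\hB^{\be,\si}_{p/9,\infty})$ closes via the Besov multiplicative inequalities in the weighted setting, producing a unique short-time solution $Y_M$.

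The pivotal new input is an $M$-uniform a priori bound. Testing Littlewood--Paley blocks of \eqref{e:eqY-mild} against $|Y_M|^{p-1}\operatorname{sgn}(Y_M)$ with the polynomial weight $(1+|x|^2)^{-\si/2}$, the cubic sink $-Y^3$ produces a coercive term of order $|Y|^{p+2}$ against which $-3Y^2 Z$, $-3YZ^{:2:}$ and $-Z^{:3:}$ are absorbed by Young's inequality at the cost of polynomial moments of $\|\un Z^M\|$ in suitable weighted norms. This should yield
\begin{equation*}
\sup_{t\le T}\|Y_M(t)\|_{\hB^{\be,\si}_{p/9,\infty}} \le \Phi\Ll(T, \|X_0\|_{\hB^{-\al,\si}_{3p,\infty}}, \|\un Z^M\|\Rr),
\end{equation*}
for a locally bounded $\Phi$ independent of $M$, ruling out blow-up on the torus and locking $\{Y_M\}$ in a fixed ball. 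Since this ball embeds compactly into a Besov space with a slightly weaker regularity index and a slightly heavier polynomial weight, and since the mild formulation provides equicontinuity in time, a subsequence converges; together with the already-proved convergence $\un Z^M\to\un Z$, the limit $Y$ solves \eqref{e:eqY-mild} on all of $\R_+\times\R^2$.

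For uniqueness, the difference $W=Y^1-Y^2$ of two solutions satisfies a linear heat equation with rough potential built from $Y^1,Y^2$ and $\un Z$. A direct Gronwall loop in the polynomially weighted space fails, because multiplying by $(Y^i)^2$ shifts the weight exponent. The remedy is to ``unfold'' the polynomial-weighted bound into a whole family of bounds in Besov spaces $\td\B^{\be,M}_\infty$ with stretched-exponential weight parameterised by $\mu>0$; each multiplication by a rough coefficient incurs only a controlled loss in $\mu$, so a Gronwall-type iteration through this scale closes and forces $W\equiv 0$. This is the main obstacle of the proof: calibrating the weight scale so that polynomial bounds imply finite norms throughout, quantifying the weight-loss per multiplication, and iterating to closure, is the genuinely new and delicate part, whereas the short-time existence and a priori estimates, though technically involved in the weighted setting, follow the spirit of \cite{dPD}.
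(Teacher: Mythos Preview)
Your proposal is correct and follows essentially the same route as the paper: local existence on tori via a da Prato--Debussche fixed point, global-in-time extension on the torus through the $L^p$-type energy estimate obtained by testing against $Y^{p-1}$ (using the coercive $-Y^{p+2}$ term to absorb the cross terms), $M$-uniform a priori bounds in the polynomially weighted scale $\hB^{\be,\si}_{p/9,\infty}$ leading to compactness, and finally uniqueness for the linearised (parabolic Anderson) equation by iterating through a one-parameter family of stretched-exponentially weighted Besov norms. Two small slips: the weighted spaces used in the uniqueness step are the $\B^{\al,\mu}_p$ (stretched exponential, parameter $\mu$), not the periodic $\td\B^{\be,M}_\infty$; and the energy estimate is obtained by testing the equation itself (after passing from mild to weak form) against $Y^{p-1}$ with $p$ even, rather than testing individual Littlewood--Paley blocks.
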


\begin{rem}
As explained above, one of the key steps in the construction of solutions on the full space is to show global-in-time existence and uniqueness for solutions on the torus, for arbitrary initial condition in a natural Besov space of negative regularity. This 
improves on the result from \cite{dPD} where non-explosion is only shown for almost every (with respect to the invariant measure) initial datum. This result is stated in Theorem~\ref{t:global-torus}.
\end{rem}

\begin{rem}
We expect that our method of proof can be modified to imply that 
$(\| X_t \|_{B^{-\alpha, \si}_{p/9, \infty}})_{t \ge 0}$ is a tight family of random variables. By the Krylov-Bogolyubov method, this would give a dynamic construction of a $\Phi^4$ measure formally given by \eqref{e:phi4}. (For some values of the parameter $a$, there are several Gibbs measures formally given by \eqref{e:phi4}, see \cite{GlimmJaffe}.)
\end{rem}

\begin{rem}\label{rem:SmoothApproximation}
As in \cite{Martin1}, our solution $X$ could be obtained as the limit of an approximation procedure with diverging constants:  Let $\xi_\delta = \xi \star \rho_\delta$ be the regularised white noise defined above.
For any $\delta>0$, let $X_\delta$ be the solution of the equation
\begin{align*}
\Ll\{
\begin{array}{ll}
\dr_t X_\delta = \Delta X_\delta - (X_\delta^{3} - 3\mathfrak{c}_\delta X_\delta) + a X_\delta +  \xi_\delta, \\
X_\delta(0,\cdot) = X_0.
\end{array}
\Rr.
\end{align*}
Then  the solutions $X_\delta$ converge to $X$ as $\delta$ goes to zero. Such a result is fully within the scope of the method presented here --- indeed, essentially only some $\delta$-dependent bounds on $Z$ and its Wick powers in Section~\ref{sec:Probability} would  have to be added (see Remark~\ref{rem:RenConst} for a discussion). However, this analysis is not too different from the calculations performed in  \cite{Martin1}, and in order to keep the length of the paper within reason, we refrain from giving this construction. 
\end{rem}

\begin{rem}
As in \cite{dPD}, one could replace the term $-X^{: 3 :}$ by any Wick polynomial of odd degree with negative leading coefficient.
\end{rem}

\subsection{Organisation of the paper}
The first half of the paper is devoted to developing the necessary facts about the different scales of weighted Besov spaces. Weighted Besov spaces have already been studied extensively, see in particular \cite[Chapter~4]{EdmTri}, \cite[Chapter~6]{Tri3} and references therein. Since the precise results that we need are difficult to locate in the literature, and for the reader's convenience, we have chosen to make our paper essentially self-contained. In Section~\ref{s:ineq}, we provide some preliminary results about functions with compactly supported Fourier transform and Gevrey classes. In Section~\ref{s:besov}, we develop all the necessary properties of the weighted Besov spaces in the case of a stretched exponential weight. We conclude 
the discussion of weighted Besov spaces in Section~\ref{s:DifferentWeights}, by indicating how our results change for different choices of weights. 

The actual construction of solutions to \eqref{e:eqX} is performed in the remaining sections. In Section~\ref{sec:Probability}, we recall some probabilistic preliminaries, and give a construction of solutions to the stochastic heat equation \eqref{e:eqZ} and its Wick powers in different weighted Besov spaces. In Section~\ref{s:torus}, we show global-in-time well-posedness for \eqref{e:eqY} in the periodic case. As stated above, we use the strategy developed in \cite{dPD} to construct solutions for a short time, and then derive a priori bounds that are strong enough to show non-explosion for every initial datum in a Besov space of periodic distributions. In Section~\ref{s:full1}, we extend these a priori bounds to the more general case of solutions on the plane. These bounds imply that solutions to \eqref{e:eqY} on tori of diverging size remain in a compact subset of a weighted Besov space. In Section~\ref{s:exist-full}, we show that any limiting point is a mild solution of \eqref{e:eqY}.  Uniqueness is shown in Section~\ref{s:uniqueness}. 

Finally, some standard calculations for Gevrey functions are collected in Appendix~\ref{s:AppA} for the reader's convenience.

\subsection{Notation}

We denote by $\supp f$ the support of the function $f$, by $B(0,r)$ the open Euclidean ball of radius $r$. For $p \in [1,\infty]$, we write $L^p$ for the usual space $L^p(\R^d, \d x)$, whose norm we denote by $\|\cdot \|_{L^p}$. The space of infinitely differentiable functions with compact support is denoted by $C^\infty_c$. For $I = \N$ or $\N \cup \{-1\}$ and $q \in [1,\infty]$, we let 
$$
\|(u_n)_{n \in I}\|_{\ell^q} = \Ll( \sum_{n \in I} |u_n|^q \Rr) ^{1/q}, 
$$
with the usual understanding as a supremum when $q = \infty$. We let
\begin{equation}
\label{e:def:ellstar}
\ell^q = \Ll\{ (u_n)_{n \in I} : \|u\|_{\ell^q} < \infty \text{ and } \lim_{n \to \infty} u_n = 0 \Rr\} .
\end{equation}
(Note that this differs from the usual definition of the space $\ell^q$ only when $q = \infty$; our definition makes the space separable in every case.)

We denote by $\mcl S$ the Schwartz space of smooth functions with rapid decay at infinity, and we denote the dual space of Schwartz distributions by $\mcl S'$. We write $\F f$ or $\hat{f}$ for the Fourier transform of $f$, which is defined by
$$
\F f(\zeta) = \hat{f}(\zeta) = \int e^{-i x \cdot \zeta} f(x) \, \d x
$$
for $f \in L^1$, and can be extended to any $f \in \mcl S'$ by duality. We also write $\F^{-1} f$ for the inverse Fourier transform, which, for $f \in L^1$, takes the form
$$
\F^{-1} f(\ze) = \frac{1}{(2\pi)^d} \int e^{i x \cdot \ze} f(\ze) \, \d \ze.
$$

\medskip

\noindent{\bfseries Acknowledgements.} HW was supported by an EPSRC First Grant.

%

%
%
%
%
%
%
%
%
\section{Functions with compactly supported Fourier transform}
\label{s:ineq}
The goal of this section is to show that a function that has a compactly supported Fourier transform satisfies several regularity properties. For instance, for $p \ge q$, the $L^p$ norm of such a function is controlled by its $L^q$ norm, with a constant that depends only on the location of the support of the Fourier transform. 

While such results are classical for usual $L^p$ spaces, our subsequent analysis requires that we extend these regularity results to weighted spaces. 

\medskip

Three scales of weights will be used in this paper: stretched exponential weights, polynomial weights, and ``flat'' weights on finite cubes for periodic functions. We could possibly have come up with a general framework that contains these three scales as particular cases. However, we find it clearer to focus first on the case of stretched exponential weights, which is the most delicate. We will then indicate why the argument carries over with only notational change to the other cases.

%
%
%
%
%
%
%
\subsection{Gevrey classes}
\label{s:gevrey}
We begin with a brief reminder on Gevrey classes (see also \cite[Chapter 1]{rod}).
\begin{defi}
The Gevrey class of index $\theta \ge 1$, denoted by $\G^\theta$, is the set of infinitely differentiable functions $f : \R^d \to \R$ satisfying
\begin{equation*}
\begin{array}{c}
\text{for every compact } K\text{, there exists } C < \infty \text{ such that for every } n \in \N^d, \\
\sup_{K} \Ll|\dr^n f\Rr| \le C^{|n| +1} \, (n!)^{\theta},
\end{array}
\end{equation*}
where $n!$ stands for $n_1! \cdots n_d!$ and $|n| = n_1+\cdots+n_d$. We let $\Gg$ be the set of compactly supported functions in $\G^\th$.
\end{defi}
Gevrey classes interpolate between analytic functions ($\theta = 1$) and $C^\infty$ functions. They are stable under addition, multiplication and differentiation. (Stability under multiplication is given by Proposition~\ref{p:gevrey-mult} of the appendix; the other stability properties are easier to check.) We have
$$
\Gg = \{0\} \quad \Leftrightarrow \quad \th = 1.
$$
In order to show that $\Gg$ is non trivial for $\theta > 1$ (following \cite[Example 1.4.9]{rod}), we can first show that for $\ka = 1/(\th - 1)$, the function
$$
\ph : 
\Ll\{
\begin{array}{lll}
	\R & \to & \R \\
	x & \mapsto & \exp \Ll( -x^{-\ka} \Rr) \1_{x > 0}
\end{array}
\right.
$$
belongs to $\G^\theta$, and then observe that for any $r > 0$, the function
$$
\Phi :
\Ll\{
\begin{array}{lll}
	\R^d & \to & \R \\
	x=(x_1,\ldots,x_d) & \mapsto & \prod_{i = 1}^d \ph(r+x_i) \ph(r-x_i)
\end{array}
\right.
$$
belongs to $\Gg$. For a given compact $K \subset \R^d$, one can then construct a function in $\Gg$ that is constant equal to $1$ on $K$ and vanishes outside of a given neighbourhood of $K$. Indeed, it suffices to take the convolution of an indicator function with $\Phi$ (for a suitable choice of $r$) and renormalise by $\|\Phi\|_{L^1}$.

We will shortly introduce function spaces with weights that decay roughly as $e^{-|x|^\de}$ ($|x| \to \infty$) for some $\de \in (0,1)$. Functions in $\Gg$ will help us counter-balance the presence of these weights thanks to the following property (which we will in fact use in the ``reverse'' direction, to construct functions with fast decay at infinity with prescribed Fourier transform in $\Gg$).

\begin{prop}[Decay of the Fourier transform]
\label{p:decay}
If $f \in \Gg$, then there exists $c > 0$ and $C < \infty$ such that
\begin{equation}
	\label{e:decay}
	|\hat{f}(\ze)| \le C e^{-c|\ze|^{1/\th}}.
\end{equation}
\end{prop}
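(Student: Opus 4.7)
The plan is to exploit the standard identity $\F(\dr^n f)(\ze) = (i\ze)^n \hat f(\ze)$, which converts polynomial weights on the Fourier side into derivatives on the physical side. Combined with the elementary bound $\|\F g\|_{L^\infty} \leq \|g\|_{L^1}$, this yields $|\ze^n \hat f(\ze)| \leq \|\dr^n f\|_{L^1}$ for every multi-index $n$. Since $f$ is compactly supported, with $\supp f \subset K$ say, the Gevrey hypothesis then gives
$$|\ze^n \hat f(\ze)| \leq \|\dr^n f\|_{L^1} \leq |K| \sup_K |\dr^n f| \leq |K| \cdot C^{|n|+1} (n!)^\th.$$

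For each fixed $\ze \in \R^d \setminus \{0\}$, I would then select the multi-index strategically. Picking $i \in \{1,\ldots,d\}$ with $|\ze_i| = \max_j |\ze_j|$, so that $|\ze_i| \geq |\ze|/\sqrt{d}$, and taking $n = N e_i$ for an integer $N \geq 0$ (where $e_i$ denotes the $i$-th coordinate vector) gives
$$|\hat f(\ze)| \leq |K| C \cdot \Ll( \frac{C \sqrt{d}}{|\ze|} \Rr)^N (N!)^\th.$$
By Stirling's formula, $(N!)^\th$ is bounded by $A^N N^{\th N}$ up to a polynomial prefactor in $N$, so the right-hand side has the form $D (B N^\th / |\ze|)^N$ for positive constants $B, D$ depending only on $f$, $\th$, and $d$.

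It then remains to optimize over $N$. Treating $N$ as a continuous variable, the minimum of $N \log(B N^\th / |\ze|)$ is attained at $N_\star = (|\ze|/B)^{1/\th} \, e^{-1}$, with minimum value $-\th N_\star$. Choosing $N = \lfloor N_\star \rfloor$ when $|\ze|$ is large enough that this is a non-negative integer (and using the trivial bound $\|\hat f\|_{L^\infty} \leq \|f\|_{L^1} < \infty$ for bounded $\ze$) delivers the claimed estimate $|\hat f(\ze)| \leq C e^{-c |\ze|^{1/\th}}$ with a suitable $c > 0$. The only slightly delicate point is to verify that the polynomial prefactors arising from Stirling and from the replacement of $N_\star$ by its integer part can be absorbed without destroying the exponential decay; this is routine once the optimisation is carried out explicitly.
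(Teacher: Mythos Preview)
Your argument is correct and follows essentially the same route as the paper: both start from $|\ze^n \hat f(\ze)| \le \|\dr^n f\|_{L^1} \le C^{|n|+1}(n!)^\th$ and then convert this family of polynomial bounds into stretched-exponential decay. The only difference is cosmetic---you optimise over the integer $N$ directly via Stirling, whereas the paper instead shows $|\hat f(\ze)| \le C(Cm/|\ze|^{1/\th})^m$ for every $m$ and then sums the Taylor series of $e^{c|\ze|^{1/\th}}$ term by term, which sidesteps the bookkeeping of the polynomial prefactors.
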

The proof is recalled in the appendix, see Proposition~\ref{p:decay-a}.

\subsection{Young and Bernstein inequalities}

A key tool in the derivation of the regularity results we alluded to is Young's inequality. Our starting point is a version thereof that allows for the presence of weights.

\begin{defi}
Let $v, w : \R^d \to \R_+$. We say that $w$ is $v$\emph{-moderate} if for every $x, y \in \R^d$,
\begin{equation}
\label{def:v-moderate}
w(x+y) \le v(x) w(y).
\end{equation}
\end{defi}

\begin{thm}[Weighted Young inequality] 
Let $w$ be $v$-moderate. For every $r,p,q \in [1,\infty]$ satisfying
\begin{equation}
\label{e:exponents-Young}
\frac{1}{r} + 1 = \frac{1}{p} + \frac{1}{q}
\end{equation}
and every measurable functions $f, g : \R^d \to \R$,
$$
\|(f \star g) \, w\|_{L^r} \le \|f \, v\|_{L^p} \, \|g \, w\|_{L^q}.
$$
\label{t:young}
\end{thm}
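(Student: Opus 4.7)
The plan is to reduce the weighted inequality to the standard (unweighted) Young inequality by moving the weight inside the convolution and using the $v$-moderate condition pointwise.

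First I would write, for any $x \in \R^d$,
\begin{equation*}
|(f \star g)(x)| \, w(x) \le \int |f(y)|\, |g(x-y)|\, w(x)\, \d y,
\end{equation*}
and then decompose $w(x) = w(y + (x-y))$. The $v$-moderate hypothesis \eqref{def:v-moderate} yields $w(x) \le v(y)\, w(x-y)$, so the right-hand side is bounded by
\begin{equation*}
\int \bigl(|f(y)|\, v(y)\bigr) \cdot \bigl(|g(x-y)|\, w(x-y)\bigr) \, \d y = \bigl( |f| v \star |g| w \bigr)(x).
\end{equation*}

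The second step is then simply to apply the classical (unweighted) Young convolution inequality to the two non-negative functions $|f| v$ and $|g| w$, with exponents $p, q, r$ satisfying \eqref{e:exponents-Young}. This gives
\begin{equation*}
\| (f \star g)\, w \|_{L^r} \le \| \, |f| v \star |g| w \, \|_{L^r} \le \| f v \|_{L^p} \, \| g w \|_{L^q},
\end{equation*}
which is the desired bound.

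There is no real obstacle here: the entire content is the pointwise estimate $w(x) \le v(y)\, w(x-y)$, which is exactly the definition of being $v$-moderate, and the classical Young inequality is assumed. The only thing worth double-checking is measurability of the functions $f v$ and $g w$, which is immediate from the assumption that $f, g, v, w$ are measurable. The statement of the theorem does not require $v, w$ to be anything more than non-negative measurable functions, so the argument proceeds without any restriction.
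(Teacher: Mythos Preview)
Your proof is correct and essentially identical to the paper's: both use the $v$-moderate condition pointwise to bound $|(f\star g)(x)|\,w(x)$ by $(|f|v \star |g|w)(x)$, and then apply the classical unweighted Young inequality.
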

\begin{proof}
We observe that
\begin{eqnarray*}
|f \star g|(x) w(x) & \le & \int |f|(y) |g|(x-y) w(x) \, \d y \\
	& \stackrel{\eqref{def:v-moderate}}{\le} & \int |f|(y) |g|(x-y) v(y) w(x-y) \, \d y \\
	& \le & \Ll[(|f| v) \star (|g| w) \Rr](x).
\end{eqnarray*}
The result then follows from the classical Young inequality, see e.g.\ \cite[Lemma~1.4]{bcd}.
\end{proof}

We let 
$$
|x|_* = \sqrt{1+|x|^2}
$$
be a smoothened version of the norm $|\cdot|$. Naturally, $|\cdot|_*$ is no longer a norm, but it still satisfies the triangle inequality, since
$$
|x+y|^2_* \le 1 + |x|^2 + |y|^2 + 2 |x| \, |y| 
\le 2+|x|^2 + |y|^2 + 2 |x|_*|y|_*= (|x|_* + |y|_*)^2.
$$
\begin{defi}
Throughout the paper,
\begin{equation}
\label{e:fixdeth}
\emph{we fix } \delta \in (0,1) \emph{ and } \theta \in (1,1/\delta).
\end{equation}
We define
\begin{equation}
\label{def:wm}
\wm(x) = e^{-\mu |x|_*^\de} \qquad (\mu \in \R),
\end{equation}
and let $L^p_\mu$ be the space $L^p(\R^d, \wm(x) \, \d x)$. We denote by $(\, \cdot \,, \, \cdot \,)_{\mu}$ the scalar product in $L^2_\mu$.
\end{defi}
(Since $\delta$ is kept fixed throughout, we choose to leave the dependence on $\delta$ implicit in the notation; the number $\theta$ will come into play shortly.) We impose from now on that $\mu \ge 0$. Note that
\begin{multline}
\label{e:w-moderate}
\frac{\wm(x+y)}{\wm(y)} = \exp \mu \Ll(|y|_*^\de - |x+y|_*^\de\Rr) \\
\le \exp \mu \Ll((|x+y|_*+ |x|_*)^\de - |x+y|_*^\de\Rr) \le \exp \mu |x|_*^\de,
\end{multline}
since we assume $\delta < 1$. Hence, the weight $\wm$ is $w_{-\mu}$-moderate. 

\begin{lem}[Scaling property]
\label{l:scaling}
Let 
$\phi \in \Gg$, $\phi_\la = \phi(\cdot/\la)$ and $g_\la = \F^{-1} \phi_\la$. For every $p \in [1,\infty]$ and $\mu_0 > 0$, there exists $C < \infty$ such that uniformly over $\la \ge 1$ and $\mu \le \mu_0$,
$$
\|g_\la\|_{L_{-\mu}^p} \le C \la^{d/p'},
$$
where $p' \in [1,\infty]$ is the conjugate exponent of $p$, that is, $1/p + 1/p' = 1$.
\end{lem}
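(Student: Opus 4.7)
The plan is to reduce the lemma to two elementary ingredients: a scaling relation that extracts the factor $\la^{d/p'}$, and then the fast decay of $g = \F^{-1}\phi$ coming from Proposition~\ref{p:decay}.

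First I would compute explicitly that
$$
g_\la(x) = \F^{-1}\phi_\la(x) = \la^d\, g(\la x),
\qquad g := \F^{-1}\phi,
$$
which follows directly from the scaling rule for the Fourier transform. Plugging this into the definition of the $L^p_{-\mu}$ norm and changing variables $y = \la x$ gives
$$
\|g_\la\|_{L^p_{-\mu}}^p
= \la^{d(p-1)} \int |g(y)|^p \, \exp\!\bigl(\mu\, |y/\la|_*^{\delta}\bigr)\, \d y.
$$
Since $\la \ge 1$, one checks that $|y/\la|_*^{\delta} = (1+|y|^2/\la^2)^{\delta/2} \le (1+|y|^2)^{\delta/2} = |y|_*^{\delta}$, hence
$$
\|g_\la\|_{L^p_{-\mu}} \le \la^{d/p'}\, \|g\|_{L^p_{-\mu}},
$$
using $d(p-1)/p = d/p'$. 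This already extracts the desired factor $\la^{d/p'}$, and the $\la$-dependence is gone.

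It remains to bound $\|g\|_{L^p_{-\mu}}$ uniformly in $\mu \in [0, \mu_0]$. Since $\phi \in \Gg$, Proposition~\ref{p:decay} (applied to $\F^{-1}\phi$, which differs from $\F\phi$ only by a reflection and a constant) provides constants $c > 0$, $C < \infty$ such that $|g(x)| \le C\, e^{-c|x|^{1/\theta}}$. Therefore
$$
\|g\|_{L^p_{-\mu}}^p \le C^p \int \exp\!\bigl(-cp\,|x|^{1/\theta} + \mu_0\, |x|_*^{\delta}\bigr) \, \d x.
$$
The key point, and the reason for the standing assumption $\theta < 1/\delta$ in \eqref{e:fixdeth}, is that $1/\theta > \delta$: the Gevrey-type decay $e^{-c|x|^{1/\theta}}$ strictly dominates the stretched-exponential growth $e^{\mu_0 |x|_*^{\delta}}$ as $|x|\to\infty$, so the integrand is bounded by, say, $\exp(-\tfrac{cp}{2}|x|^{1/\theta} + C')$ for $|x|$ large, and the integral converges to a finite constant depending only on $\mu_0$, $c$, $p$, $\delta$ and $\theta$.

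Combining the two steps yields the claimed bound. I expect no real obstacle here: the scaling change of variables is straightforward, and the uniform integrability is essentially a consequence of the chosen hierarchy $\delta < 1/\theta$. The only subtlety worth flagging is that one must use $\la \ge 1$ to compare $|y/\la|_*$ with $|y|_*$ via the smoothened norm $|\cdot|_*$ rather than $|\cdot|$ (the two behave the same way under scaling for $\la \ge 1$ but not for $\la < 1$), which is why the lemma restricts to $\la \ge 1$.
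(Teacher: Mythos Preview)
Your proof is correct and follows essentially the same approach as the paper's: both use the scaling relation $g_\la = \la^d g(\la\,\cdot)$, change variables to extract $\la^{d(p-1)} = \la^{dp/p'}$, and then appeal to Proposition~\ref{p:decay} to bound the remaining integral uniformly over $\la \ge 1$ and $\mu \le \mu_0$. You supply more detail than the paper (the explicit comparison $|y/\la|_*^\delta \le |y|_*^\delta$ and the use of $1/\theta > \delta$), and you might add a one-line remark that the case $p=\infty$ is immediate from $\|g_\la\|_{L^\infty} = \la^d\|g\|_{L^\infty}$, but otherwise nothing is missing.
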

\begin{proof}
Since $g_\la = \la^d g_1(\la \cdot)$, the result is clear if $p = \infty$. Otherwise, a change of scale gives 
$$
\|g_\la\|_{L_{-\mu}^p}^p = \la^{d(p-1)} \int |g_1|^p(x) \, e^{\mu|x/\lambda|_*^\delta} \, \d x.
$$
By Proposition~\ref{p:decay}, the latter integral is bounded uniformly over $\mu \le \mu_0$ and $\lambda \ge 1$.
\end{proof}
\begin{rem}
\label{r:scaling-poly}
Similarly, for every $\ga \ge 0$, we have
$
\| |\la \cdot|^\ga \, g_\la \|_{L_{-\mu}^p} \lesssim \la^{d/p'}. 
$
\end{rem}


\begin{lem}[Bernstein's lemma]
Let $B$ be a ball. For every $\mu_0 > 0$, $k = (k_1,\ldots,k_d) \in \N^d$ and $p \ge q \in [1,+\infty]$, there exists $C < \infty$ such that for every $\mu \le \mu_0$ and $\lambda \ge 1$,
$$
\supp \hat{f} \subset \lambda B \quad \Rightarrow \quad \|\partial^k f\|_{L^p_\mu} \le C \,  \lambda^{|k|+d\Ll(\frac{1}{q}- \frac{1}{p}\Rr)} \,  \|f\|_{L^q_{\mu q/p}} \qquad (|k| = k_1+ \cdots + k_d).
$$
\label{l:bernstein}
\end{lem}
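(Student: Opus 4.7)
The plan is to mimic the classical proof of Bernstein's lemma, replacing the unweighted Young inequality by Theorem~\ref{t:young}. I would first pick $\phi \in \Gg$ equal to $1$ on a neighbourhood of $B$, and set $\psi(\zeta) := (i\zeta)^k \phi(\zeta)$. Since polynomials are analytic and Gevrey classes are stable under multiplication (Proposition~\ref{p:gevrey-mult}), $\psi$ is again in $\Gg$. With $\psi_\lambda = \psi(\cdot/\lambda)$ and $\tilde g_\lambda := \F^{-1}\psi_\lambda$, the support condition on $\hat f$ gives the convolution identity $\partial^k f = \tilde g_\lambda * f$.

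The key bookkeeping observation is that with the measure convention of $L^p_\mu$ one has $\|h\|_{L^p_\mu} = \|h \cdot w_{\mu/p}\|_{L^p}$, and similarly $\|f\|_{L^q_{\mu q/p}} = \|f \cdot w_{\mu/p}\|_{L^q}$, so the \emph{same} multiplicative weight $w_{\mu/p}$ appears on both sides of the target inequality. By \eqref{e:w-moderate}, $w_{\mu/p}$ is $w_{-\mu/p}$-moderate, so Theorem~\ref{t:young} applied to $\partial^k f = \tilde g_\lambda * f$ with $r = p$ and $1/p_1 = 1 + 1/p - 1/q$ yields
\[
\|(\partial^k f) \cdot w_{\mu/p}\|_{L^p} \le \|\tilde g_\lambda \cdot w_{-\mu/p}\|_{L^{p_1}} \, \|f \cdot w_{\mu/p}\|_{L^q}.
\]
Thus it suffices to prove the kernel bound $\|\tilde g_\lambda \cdot w_{-\mu/p}\|_{L^{p_1}} \le C\lambda^{|k| + d(1/q - 1/p)}$, uniformly in $\lambda \ge 1$ and $\mu \le \mu_0$.

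This bound is a straightforward variant of Lemma~\ref{l:scaling}. Scaling gives $\tilde g_\lambda(x) = \lambda^{|k|+d}(\F^{-1}\psi)(\lambda x)$, and the change of variables $y = \lambda x$ produces
\[
\|\tilde g_\lambda \cdot w_{-\mu/p}\|_{L^{p_1}}^{p_1} = \lambda^{p_1(|k|+d) - d} \int |(\F^{-1}\psi)(y)|^{p_1} \, e^{(p_1\mu/p)|y/\lambda|_*^\delta} \, dy.
\]
Since $\lambda \ge 1$ implies $|y/\lambda|_* \le |y|_*$, the integral is dominated by its $\lambda = 1$ version. That version is finite uniformly in $\mu \le \mu_0$ thanks to Proposition~\ref{p:decay}, which gives $|(\F^{-1}\psi)(y)| \le C e^{-c|y|^{1/\theta}}$; this decay beats the weight $e^{(p_1\mu/p)|y|_*^\delta}$ because $\theta < 1/\delta$, i.e.\ $1/\theta > \delta$. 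The resulting exponent is $|k| + d - d/p_1 = |k| + d(1/q - 1/p)$, as required.

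There is no substantive obstacle—the argument is a careful weighted transcription of the classical proof. The only point demanding attention is the bookkeeping of weight conventions (the measure convention of $L^p_\mu$ versus the multiplicative convention of Theorem~\ref{t:young}) and recognising that $w_\mu^{1/p} = w_{\mu/p}$ is the natural multiplier that makes both sides of the inequality carry matching weights.
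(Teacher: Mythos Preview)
Your proof is correct and follows essentially the same route as the paper's: pick a Gevrey cutoff, write $\partial^k f$ as a convolution with a rescaled kernel, apply the weighted Young inequality (Theorem~\ref{t:young}), and bound the kernel norm via the scaling argument of Lemma~\ref{l:scaling}. One cosmetic caveat: with your definition $\psi_\lambda = \psi(\cdot/\lambda)$ the identity should read $\partial^k f = \lambda^{|k|}\,\tilde g_\lambda * f$ and the scaling should be $\tilde g_\lambda(x) = \lambda^{d}(\F^{-1}\psi)(\lambda x)$ --- your two slips cancel, so the final exponent $|k| + d(1/q - 1/p)$ comes out right.
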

\begin{rem}
\label{r:range of f's}
Here and in the two lemmas below, we have not been precise concerning the range of allowed functions $f$. We will only use the lemma for functions belonging to the Schwartz space $\mcl S$ of smooth functions with rapid decay at infinity, so we will prove the result in this setting. It is straightforward to generalise the result to Schwartz distributions (i.e.\ elements of $\mcl S'$). This is not the most general class one can think of, since there are functions in $L^q_{\mu q/p}$ that fail to be intepretable as Schwartz distributions (due to a fast growth at infinity). The main issue then is on the interpretation of $\hat f$ when $f \in L^q_{\mu q/p}$ does not belong to $\mcl S'$. We prefer to not delve into this question (but note that if $f \in L^q_{\mu q/p}$, then one can make sense of $\hat f$ as an element of the dual of $\Gg$ by Proposition~\ref{p:decay}).
\end{rem}
\begin{proof}[Proof of Lemma~\ref{l:bernstein}]
We show the result for finite $p$ and $q$, the adaptation for the remaining cases being transparent. Let 
$\phi \in \Gg$ be such that $\phi = 1$ on $B$, and let $\phi_\la = \phi(\cdot/\la)$. We observe that
$$
f = \F^{-1} \Ll( \hat{f} \phi_\la \Rr) = g_{\la} \star f,
$$
where $g_\la = \F^{-1} \phi_\la = \la^d g_1(\la\cdot)$. Writing $g_\la^{(k)} := (\dr^k g_1)_{\la} = \la^d (\dr^k g_1)(\la \cdot)$, we have
$$
\dr^k f = \la^{|k|} \ g^{(k)}_\la \star f.
$$ 
By Theorem~\ref{t:young},
\begin{multline*}
\la^{-|k|} \|\dr^k f\|_{L^p_\mu} = \|(g^{(k)}_\la \star f) \wm^{1/p}\|_{L^p} \le \|g^{(k)}_\la \, w_{-\mu}^{1/p}\|_{L^r} \, \|f \, \wm^{1/p}\|_{L^q} \\
= \|g^{(k)}_\la \, w_{-\mu}^{1/p}\|_{L^r} \, \|f \|_{L^q_{\mu q/p}} ,
\end{multline*}
where $r$ is such that
\begin{equation}
\label{defr}
1 + \frac{1}{p} = \frac{1}{r} + \frac{1}{q}.
\end{equation}
Since $\|g^{(k)}_\la \, w_{-\mu}^{1/p}\|_{L^r} = \|g^{(k)}_\la \|_{L_{-\mu r/p}^r}$,
the result follows by Lemma~\ref{l:scaling} and \eqref{defr} once we notice that $g^{(k)}_1 = \dr^k g_1$ is the (inverse) Fourier transform of a function in $\Gg$.
\end{proof}

\subsection{Effect of the heat flow}
\label{ss:heat-flow}

We now derive two results that quantify the regularizing and continuity properties of the heat semi-group.

\begin{lem}[Smoothing of the heat flow]
\label{l:smooth-heat} 
Let $\mcl{C}$ be an annulus (i.e.\ a set of the form $\{r \le |x| \le R\}$ for some $0 < r < R$) and $\mu_0 > 0$. There exists $c > 0$ and $C < \infty$ such that for every $p \in [1,+\infty]$, $\mu \le \mu_0$, $t\ge 0$ and $\lambda \ge 1$,
$$
\supp  \hat{f} \subset \lambda \mcl{C} \quad \Rightarrow \quad \|e^{t \Delta} f\|_{L^p_\mu} \le C e^{-ct\lambda^2} \|f\|_{L^p_\mu}.
$$
\end{lem}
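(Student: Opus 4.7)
\emph{Plan.} The overall strategy mimics the classical unweighted Littlewood--Paley proof (write $e^{t\Delta} f$ as a convolution, apply Young, estimate the kernel in $L^1$), but the exponential-type weight forces us to work with the weighted Young inequality and to produce sub-Gaussian decay of the kernel. Concretely, pick $\phi \in \Gg$ with $\phi \equiv 1$ on $\mcl C$ and $\supp \phi \subset \{r \le |\cdot| \le R\}$ for some $0<r<R$, set $\phi_\lambda = \phi(\cdot/\lambda)$, and observe that
\[
e^{t\Delta} f \;=\; g_{t,\lambda} \star f, \qquad g_{t,\lambda} \;:=\; \F^{-1}\!\bigl(e^{-t|\cdot|^2}\phi_\lambda\bigr).
\]
Since $\wm^{1/p}$ is $w_{-\mu/p}$-moderate by \eqref{e:w-moderate}, Theorem~\ref{t:young} with the convolution factor in $L^1$ gives
\[
\|e^{t\Delta} f\|_{L^p_\mu} \;\le\; \|g_{t,\lambda}\|_{L^1_{-\mu/p}}\,\|f\|_{L^p_\mu},
\]
and the task reduces to proving $\|g_{t,\lambda}\|_{L^1_{-\mu/p}} \ls e^{-c t \lambda^2}$ uniformly in $\mu \le \mu_0$ and $\lambda \ge 1$.

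I would next rescale: writing $g_{t,\lambda}(x) = \lambda^d h_{t\lambda^2}(\lambda x)$ with $h_s := \F^{-1}(e^{-s|\cdot|^2}\phi)$, a change of variable combined with the inequality $|y/\lambda|_*^\delta \le |y|_*^\delta$ (valid for $\lambda \ge 1$) yields $\|g_{t,\lambda}\|_{L^1_{-\mu/p}} \le \|h_{t\lambda^2}\|_{L^1_{-\mu/p}}$. Since $|\zeta|^2 \ge r^2$ on $\supp\phi$, I would peel off the decay by writing
\[
h_s \;=\; e^{-sr^2/2}\, \F^{-1}\psi_s, \qquad \psi_s(\zeta) \;:=\; e^{-s|\zeta|^2 + sr^2/2}\phi(\zeta),
\]
the plan being to apply Proposition~\ref{p:decay} to $\psi_s$ with constants independent of $s$.

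The crux, and the main obstacle I anticipate, is to verify that $\psi_s$ lies in $\Gg$ with Gevrey-$\theta$ norms uniform in $s \ge 0$. A direct Leibniz expansion is useless, because $\partial^n e^{-s|\cdot|^2}$ has polynomial growth in $s$. Instead, I would extend $F_s(\zeta) := e^{-s|\zeta|^2 + sr^2/2}$ holomorphically to $\C^d$ and choose a polydisc radius $\rho > 0$ small enough that $\Re\bigl(\textstyle\sum_j \zeta_j^2\bigr) \ge r^2/2$ throughout the $\rho$-neighborhood of $\supp\phi$; this forces $|F_s| \le 1$ on the neighborhood, uniformly in $s$. Cauchy's inequalities then deliver $|\partial^n F_s| \le C_0^{|n|}\, n!$ on $\supp\phi$, which combined with the Gevrey-$\theta$ bounds for $\phi$ and the multiplicative stability of Gevrey classes (Proposition~\ref{p:gevrey-mult}) furnishes the required uniform Gevrey-$\theta$ bounds on $\psi_s$.

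Given this, Proposition~\ref{p:decay} yields $|\F^{-1}\psi_s(y)| \le C e^{-c|y|^{1/\theta}}$ with $C, c > 0$ independent of $s$. Since $1/\theta > \delta$ by~\eqref{e:fixdeth}, the weight $e^{(\mu/p)|y|_*^\delta}$ is integrated out uniformly in $\mu \le \mu_0$:
\[
\|h_s\|_{L^1_{-\mu/p}} \;\le\; e^{-sr^2/2}\int C\, e^{-c|y|^{1/\theta} + (\mu/p)|y|_*^\delta}\,\d y \;\ls\; e^{-sr^2/2}.
\]
Substituting $s = t\lambda^2$ closes the argument.
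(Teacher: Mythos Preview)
Your proof is correct and follows the same architecture as the paper's: write $e^{t\Delta}f$ as a convolution with a spectrally localised kernel, apply the weighted Young inequality (Theorem~\ref{t:young}), rescale, and bound the resulting kernel in $L^1_{-\mu/p}$. The difference lies in how the key kernel estimate is obtained. The paper packages it as Proposition~\ref{p:gevrey-ann} in the appendix and proves it combinatorially, using Fa\`a di Bruno's formula to control the derivatives of $\zeta \mapsto e^{-t|\zeta|^2}$ on the annulus and extract the factor $e^{-ct}$. Your route---extend $F_s(\zeta)=e^{-s|\zeta|^2+sr^2/2}$ holomorphically, choose the polydisc radius so that $\Re\bigl(\sum_j z_j^2\bigr)\ge r^2/2$ and hence $|F_s|\le 1$ uniformly in $s$, then invoke Cauchy's inequalities to get Gevrey-$1$ (a fortiori Gevrey-$\theta$) bounds independent of $s$---is a clean alternative that sidesteps the combinatorics and makes the uniformity in $s$ immediate. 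Both arguments land on the same pointwise decay $|\F^{-1}\psi_s(y)|\lesssim e^{-c|y|^{1/\theta}}$ uniformly in $s\ge 0$, after which the two proofs coincide.
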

\begin{proof}[Proof of Lemma~\ref{l:smooth-heat}]
We show the result for finite $p$, the adaptation to $p = \infty$ being straightforward (and a classical result since the weights no longer matter in this case). 
Let $\phi \in \Gg$ be such that $\phi = 1$ on $\mcl{C}$ and with support in an annulus, and let $\phi_\lambda = \phi(\cdot/\lambda)$. We observe that
$$
e^{t \Delta}f = \F^{-1} \Ll(\hat{f} \, \phi_\lambda \, e^{-t|\cdot|^2} \Rr) = g_{\lambda,t} \star f,
$$
where 
\begin{eqnarray*}
g_{\lambda,t} (x) & = & \frac{1}{(2\pi)^d} \int e^{i x \cdot \ze} \phi_\lambda(\ze) \, e^{-t|\ze|^2} \, \d \ze \\
& = & \frac{\la^d}{(2\pi)^d} \int e^{i \la x \cdot \ze} \phi(\ze) \, e^{-t|\la \ze|^2} \, \d \ze.
\end{eqnarray*}
Let us write
$$
\ov{g}_{\lambda,t}(x) = \int e^{i x \cdot \ze} \phi(\ze) \, e^{-t|\la \ze|^2} \, \d \ze,
$$
so that $g_{\la,t}(x) = (2\pi)^{-d} \la^d \ \ov{g}_{\la,t}(\la x)$. By Proposition~\ref{p:gevrey-ann} of the appendix,
$$
|\ov{g}_{\lambda,t}(x)| \le C  e^{-c\la^2 t - c |x|^{1/\th}}.
$$
By Theorem~\ref{t:young}, 
$$
\|e^{t \Delta} f\|_{L^p_\mu} = \|(g_{\lambda,t} \star f) \, \wm^{1/p}\|_{L^p} \le \|g_{\lambda,t} w_{-\mu}^{1/p}\|_{L^1} \|f \|_{L^p_\mu},
$$
and moreover,
\begin{eqnarray*}
\|g_{\lambda,t} w_{-\mu}^{1/p}\|_{L^1} & \le & {C  e^{-c\la^2 t}} \la^d \int e^{-c|\la x|^{1/\th}} e^{\mu |x|_*^\de/p} \, \d x \\
& \le & {C  e^{-c\la^2 t}}  \int e^{-c|x|^{1/\th}} e^{\mu |x/\la|_*^\de/p} \, \d x  \\
& \stackrel{(\la \ge 1)}{\le} & {C  e^{-c\la^2 t}} \int e^{-c|x|^{1/\th}} e^{\mu |x|_*^\de/p} \, \d x  .
\end{eqnarray*}
The integral is bounded uniformly over $\mu \le \mu_0$, so the result is proved.
\end{proof}
\begin{lem}[time regularity of the heat flow]
	\label{l:time-reg-Lp}
	Let $B$ be a ball and $\mu_0 > 0$. There exists $C < \infty$ such that for every $p \in [1,+\infty]$, $\mu \le \mu_0$, $t\ge 0$ and $\lambda \ge 1$,
	$$
	\supp  \hat{f} \subset \lambda B \quad \Rightarrow \quad \|(1-e^{t \Delta}) f\|_{L^p_\mu} \le C (t\lambda^2 \wedge 1) \, \|f\|_{L^p_\mu}.
	$$
\end{lem}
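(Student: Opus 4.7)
The proof will closely parallel Lemma~\ref{l:smooth-heat}. The plan is to realize $(1-e^{t\Delta})f$ as a convolution of $f$ with a kernel whose $L^1_{-\mu/p}$ norm scales like $t\lambda^2 \wedge 1$, then apply the weighted Young inequality.

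First I would restrict to finite $p$ (the case $p=\infty$ is easier and classical) and pick $\phi \in \Gg$ with $\phi = 1$ on $B$, setting $\phi_\lambda = \phi(\cdot/\lambda)$. Since $\supp \hat f \subset \lambda B$, we have
$$
(1-e^{t\Delta})f = \mcl F^{-1}\!\Ll(\hat f \, \phi_\lambda\,(1-e^{-t|\cdot|^2})\Rr) = g_{\lambda,t} \star f,
$$
and by the same change of variables as in Lemma~\ref{l:smooth-heat},
$g_{\lambda,t}(x) = (2\pi)^{-d} \lambda^d \, \ov g_{\lambda,t}(\lambda x)$, where
$$
\ov g_{\lambda,t}(x) = \int e^{i x\cdot \ze}\,\phi(\ze)\,\bigl(1-e^{-t\lambda^2|\ze|^2}\bigr)\,\d\ze.
$$

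The main technical step is the pointwise bound
$$
|\ov g_{\lambda,t}(x)| \le C\,(t\lambda^2 \wedge 1)\, e^{-c|x|^{1/\th}},
$$
uniformly in $t \ge 0$ and $\lambda \ge 1$. For this I would split cases. When $t\lambda^2 \ge 1$, simply bound $|1-e^{-t\lambda^2|\ze|^2}| \le 1$; then $\phi(\ze)(1-e^{-t\lambda^2|\ze|^2})$ lies in a bounded family of $\Gg$-functions, so Proposition~\ref{p:decay} (or its refinement with a parameter, Proposition~\ref{p:gevrey-ann}) yields the desired stretched-exponential decay, while the prefactor $1$ is equivalent to $t\lambda^2 \wedge 1$. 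When $t\lambda^2 < 1$, use $1-e^{-s} = s\int_0^1 e^{-us}\,\d u$ to factor
$$
1-e^{-t\lambda^2|\ze|^2} = t\lambda^2 |\ze|^2 \int_0^1 e^{-ut\lambda^2|\ze|^2}\,\d u,
$$
so that $\ov g_{\lambda,t}(x) = t\lambda^2 \int_0^1 \mcl F^{-1}\bigl(\phi(\cdot)|\cdot|^2 e^{-u t\lambda^2|\cdot|^2}\bigr)(x)\,\d u$. Since $\phi \in \Gg$ and multiplication/exponentiation preserve Gevrey classes (Proposition~\ref{p:gevrey-mult}), the functions inside the inverse Fourier transform form a bounded family in $\Gg$ (the parameter $ut\lambda^2 \in [0,1]$), and again Proposition~\ref{p:decay} provides the $e^{-c|x|^{1/\th}}$ decay uniformly in $u$ and $t\lambda^2 \le 1$.

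With the pointwise bound in hand, Theorem~\ref{t:young} (using that $\wm$ is $w_{-\mu}$-moderate) gives
$$
\|(1-e^{t\Delta})f\|_{L^p_\mu} \le \|g_{\lambda,t}\, w_{-\mu}^{1/p}\|_{L^1}\,\|f\|_{L^p_\mu},
$$
and the change of variables $y = \lambda x$ together with the above pointwise bound yields
$$
\|g_{\lambda,t}\, w_{-\mu}^{1/p}\|_{L^1} \le C(t\lambda^2 \wedge 1) \int e^{-c|x|^{1/\th}} e^{\mu|x/\lambda|_*^\de/p}\,\d x \le C(t\lambda^2 \wedge 1),
$$
the last integral being bounded uniformly in $\lambda \ge 1$ and $\mu \le \mu_0$ because $1/\th > \de$ by \eqref{e:fixdeth}. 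This is exactly the claimed estimate.

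The only genuinely new ingredient compared to Lemma~\ref{l:smooth-heat} is extracting the factor $t\lambda^2 \wedge 1$ from the symbol $1-e^{-t\lambda^2|\ze|^2}$ while keeping Gevrey control; I expect this to be the main (though mild) obstacle, and the integral representation above is the cleanest way to handle it since it avoids any issue at $\ze = 0$ where $\phi$ need not vanish.
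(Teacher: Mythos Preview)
Your overall strategy---write $(1-e^{t\Delta})f=g_{\lambda,t}\star f$, derive a pointwise bound on the rescaled kernel $\ov g_{\lambda,t}$ with stretched-exponential decay, and apply the weighted Young inequality---is exactly the paper's. Your integral representation $1-e^{-s}=s\int_0^1 e^{-us}\,\d u$ in the regime $t\lambda^2\le 1$ is a clean, explicit version of what the paper calls ``a minor variation of the proof of Proposition~\ref{p:gevrey-ann}''. The only structural difference is that the paper first reduces to the estimate $\|(1-e^{t\Delta})f\|_{L^p_\mu}\le Ct\lambda^2\|f\|_{L^p_\mu}$, pointing to Lemma~\ref{l:smooth-heat} for the complementary bound by a constant, whereas you split into the two cases $t\lambda^2\lessgtr 1$ and try to handle both via the kernel bound.

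Your treatment of the case $t\lambda^2\ge 1$, however, has a gap. The assertion that $\phi(\ze)\bigl(1-e^{-t\lambda^2|\ze|^2}\bigr)$ lies in a bounded family of $\Gg$-functions uniformly in $t\lambda^2\ge 1$ is not correct. Since $B$ is a ball containing the origin and $\phi=1$ there, one has to control derivatives of $e^{-s|\ze|^2}$ at $\ze=0$; but $\partial^n e^{-s|\ze|^2}\big|_{\ze=0}$ is of order $s^{|n|/2}$, so the Gevrey seminorms blow up as $s=t\lambda^2\to\infty$. Proposition~\ref{p:gevrey-ann} does not rescue this: its proof uses precisely the annular support of $\phi$ (giving $|\ze|^2\ge r>0$) to absorb the powers of $t$ into $e^{-tr}$, and that mechanism is unavailable here. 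So neither Proposition~\ref{p:decay} nor Proposition~\ref{p:gevrey-ann} yields the uniform stretched-exponential decay you claim in this regime. The paper sidesteps the issue by the preliminary reduction rather than by a direct kernel estimate; you should follow that route (or otherwise supply a separate argument for the bound by a constant) instead of invoking a uniform Gevrey bound that does not hold.
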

\begin{proof}
Lemma~\ref{l:smooth-heat} makes it clear that showing
\begin{equation}
	\label{e:time-reg-Lp}
	\|(1-e^{t \Delta}) f\|_{L^p_\mu} \le C t\lambda^2 \, \|f\|_{L^p_\mu}
\end{equation}
is sufficient. Let $\phi \in \Gg$ be such that $\phi = 1$ on $B$. As before, we can write
$$
(1-e^{t\Delta}) f = g_{\la,t} \star f,
$$
but this time with 
$$
g_{\la,t}(x) = \frac{\la^d}{(2\pi)^d} \int e^{i\la x \cdot \ze} \phi(\ze) (1-e^{-t|\la\ze|^2}) \, \d \ze,
$$
which we can in turn decompose as $g_{\la,t}(x) = (2\pi)^{-d} \la^d \ \ov{g}_{\la,t}(\la x)$ for
$$
\ov{g}_{\la,t}(x) = \int e^{i x \cdot \ze} \phi(\ze) (1-e^{-t|\la\ze|^2}) \, \d \ze.
$$
A minor variation of the proof of Proposition~\ref{p:gevrey-ann} shows that 
$$
|\ov{g}_{\la,t}(x)| \le C t \la^2 e^{-c|x|^{1/\th}},
$$
from which \eqref{e:time-reg-Lp} follows as in the proof of Lemma~\ref{l:smooth-heat}.
\end{proof}

%
%
%
%
%
%
%
%
\section{Weighted Besov spaces}
\label{s:besov}
We now introduce weighted Besov spaces, and use the results of the preceding section to deduce several important properties of these spaces. We will study how they relate to each other via continuous (or compact) embeddings and interpolations, their duality properties, the smoothing effect of the heat flow. Besides, a fundamental feature of Besov spaces for our purpose is their multiplicative structure: one can extend the multiplication $(f,g) \to fg$ to a continuous map on suitable Besov spaces. In large measure, our results parallel those for unweighted Besov spaces (and our arguments are inspired by those of \cite[Chapter~2]{bcd}), although the proofs often become more subtle.

\subsection{Definition, continuous embeddings and interpolation}

For future reference, let us define the annulus
\begin{equation}
\label{e:def:mclC}
\mcl{C}^\star = B(0,8/3) \setminus B(0,3/4).
\end{equation}
It is straightforward to adapt the proof of \cite[Proposition~2.10]{bcd} (using Proposition~\ref{p:gevrey-mult}) to show that there exist $\td{\chi}, \chi \in \Gg$ taking values in $[0,1]$ and such that
\begin{equation}
\label{chi-prop1}
\supp \, \td{\chi} \subset B(0,4/3),  
\end{equation}
\begin{equation}
\label{chi-prop2}
\supp \, {\chi} \subset \mcl{C}^\star,
\end{equation}
\begin{equation}
\label{chi-prop3}
\forall \ze \in \R^d, \ \td{\chi}(\ze) + \sum_{k = 0}^{+\infty} \chi(\ze/2^k) = 1.
\end{equation}
We use this dyadic partition of unity to decompose any function $f \in C^\infty_c$ as a sum of functions with localized spectrum. More precisely, we let
\begin{equation}
\label{e:def:chik}
\chi_{-1} = \td{\chi}, \qquad \chi_k = \chi(\cdot/2^k) \quad (k \ge 0),
\end{equation}
and for $k \ge -1$ integer,
$$
\delta_k f = \F^{-1} \Ll(\chi_k \, \hat{f}\Rr), \qquad S_k f = \sum_{j < k} \delta_j f
$$
(where the sum runs over $j \ge -1$), so that at least formally, $S_k f \rightarrow f$ as $k$ tends to infinity. 
For any $\alpha \in \R$, $\mu > 0$, $p,q \in [1,+ \infty]$ and $f \in C^\infty_c$, we define 
\begin{equation*}
\| f \|_{\Bb}:= \Ll[ \sum_{k = -1}^{\infty} \Ll( 2^{\al k } \| \dk f \|_{L^p_\mu} \Rr)^q \Rr]^{\frac{1}{q}}\; = \Ll\| \Ll( 2^{\al k } \| \dk f \|_{L^p_\mu} \Rr)_{k \ge -1} \Rr\|_{\ell^q},
\end{equation*}
with the usual interpretation as a supremum for $q = \infty$. The Besov space $\Bb$ consists of the completion of $C^\infty_c$ with respect to this norm. We denote by $\B^{\al,0}_{p,q}$ the space obtained in the same way but with $L^p_\mu$ replaced by the ``flat'' space $L^p$.

\begin{rem}
\label{r:completion}
We depart from the habit of defining $\Bb$ as the space of distributions for which $\|f\|_{\Bb}$ is finite. Our definition coincides with the usual one as soon as both $p$ and $q$ are finite, but yields a strictly smaller space when at least one of these indices is $\infty$.
There are several advantages to this different definition. First, it slightly simplifies the proofs of some estimates, by letting us show the estimate for functions in $C^\infty_c$ and then using a density argument. It also ensures that the Besov spaces are separable, which has a number of advantages when considering probability measures thereon. But perhaps the most important reason is related to the presence of weights. As was already alluded to in Remark~\ref{r:range of f's}, there is no canonical embedding of $\Bb$ into the space $\mcl S'$ of Schwartz distributions, because elements of $\Bb$ are allowed to grow too fast at infinity. On the other hand, one can check that elements of $\Bb$ define linear forms on $C^\infty_c$. Yet, our definition of $\|f\|_{\Bb}$ does not make sense as it stands for general linear forms on $C^\infty_c$. We believe that it is possible to overcome this problem by making use of results such as Proposition~\ref{p:decay}, and carry this construction on a less standard space of distributions (we think of the dual of the space of functions whose Fourier transform belongs to $\Gg$, with a suitable topology). However, we find our approach technically simpler.
\end{rem}

In the definition, we are using implicitly the fact that $\|f\|_{\Bb}$ is finite for every $f \in C^\infty_c$; this can easily be checked. Before doing so, we introduce the notation
\begin{equation}
\label{e:def:etak}
\eta_k = \F^{-1}(\chi_k), \qquad \eta = \eta_0,
\end{equation}
so that for $k \ge 0$, $\eta_k = 2^{kd} \eta(2^k\cdot)$. 
\begin{lem}
\label{l:smooth}
Let $\al \in \R$ and $p,q \in [1,\infty]$. If $f \in C^\infty_c$, then $\|f\|_{\Bb}$ is finite.
\end{lem}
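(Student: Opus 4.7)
The plan is to bound each dyadic block $\|\delta_k f\|_{L^p_\mu}$ by a geometric factor $C_N \, 2^{-2Nk}$ for arbitrarily large $N$, which immediately implies that the sequence $(2^{\alpha k}\|\delta_k f\|_{L^p_\mu})_{k \ge -1}$ decays super-polynomially and therefore lies in $\ell^q$ for every $q \in [1,\infty]$. The key mechanism is that $f \in C^\infty_c$ makes $(-\Delta)^N f$ a compactly supported smooth function (so its $L^p_\mu$ norm is finite for every $N$), while the spectral projector $\delta_k$ for $k \ge 0$ localises frequencies in the annulus $2^k \mcl C^\star$ bounded away from zero, allowing any power of the Laplacian to be absorbed at the cost of $2^{-2Nk}$.

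For the high-frequency blocks $k \ge 0$, I would write $\chi_k(\zeta) = 2^{-2Nk} |\zeta|^{2N}\, \psi(\zeta/2^k)$, where $\psi(\zeta) := \chi(\zeta)/|\zeta|^{2N}$. Since $\chi \in \Gg$ has support inside $\mcl C^\star \subset \R^d \setminus \{0\}$ and $|\cdot|^{-2N}$ is real-analytic on a neighbourhood of $\mcl C^\star$, one can first multiply $|\cdot|^{-2N}$ by a Gevrey cutoff equal to $1$ near $\mcl C^\star$ and vanishing near the origin, and then multiply by $\chi$; Proposition~\ref{p:gevrey-mult} shows that $\psi \in \Gg$. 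This yields
\[
\delta_k f \;=\; 2^{-2Nk}\, \mcl F^{-1}\bigl(\psi(\cdot/2^k)\bigr) \star (-\Delta)^N f.
\]
Noting that $w_\mu^{1/p} = w_{\mu/p}$ is $w_{-\mu/p}$-moderate by \eqref{e:w-moderate}, the weighted Young inequality (Theorem~\ref{t:young}) with exponents $(r,p_1,p_2) = (p,1,p)$ gives
\[
\|\delta_k f\|_{L^p_\mu} \;\le\; \bigl\|\mcl F^{-1}(\psi(\cdot/2^k))\bigr\|_{L^1_{-\mu/p}} \;\|(-\Delta)^N f\|_{L^p_\mu}\; 2^{-2Nk}.
\]
Lemma~\ref{l:scaling} applied at $p=1$ bounds the first factor uniformly in $k$, and the second factor is finite because $(-\Delta)^N f$ has compact support (on which $w_\mu$ is bounded).

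For the low-frequency block $k=-1$, I would apply weighted Young directly to $\delta_{-1}f = \eta_{-1} \star f$. Proposition~\ref{p:decay} yields $|\eta_{-1}(x)| \le C e^{-c|x|^{1/\theta}}$, and since \eqref{e:fixdeth} ensures $1/\theta > \delta$, the weight $e^{\mu|x|_*^\delta/p}$ is absorbed, giving $\|\eta_{-1}\|_{L^1_{-\mu/p}} < \infty$; combined with $\|f\|_{L^p_\mu} < \infty$ (since $f$ has compact support), this yields a finite bound on $\|\delta_{-1}f\|_{L^p_\mu}$. Finally, choosing $N$ with $2N > \alpha$ makes $\sum_k 2^{q(\alpha-2N)k} < \infty$, so $\|f\|_{\Bb} < \infty$.

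The only non-routine step is the construction of $\psi \in \Gg$ as a product of $\chi$ with a Gevrey-regularised version of $|\cdot|^{-2N}$; once that is in place, all bounds reduce to the scaling estimates of Section~\ref{s:ineq} plus the compactness of the support of $f$.
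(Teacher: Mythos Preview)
Your argument is correct and follows essentially the same strategy as the paper: both proofs trade powers of $(-\Delta)$ against the frequency localisation to extract a factor $2^{-2Nk}$, with the paper defining $\widetilde\eta_n = (-\Delta)^{-n}\eta \in L^1$ and applying the \emph{unweighted} Young inequality (tacitly using $w_\mu \le 1$ since $\mu \ge 0$), whereas you construct $\psi = \chi/|\cdot|^{2N} \in \Gg$ and use the weighted Young inequality together with Lemma~\ref{l:scaling}. Your route is slightly more elaborate but has the minor advantage of not relying on the sign of $\mu$.
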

\begin{proof}
Observe that for $k \ge 0$,
$$
\de_k f(x) = 2^{kd} \int f(y) \eta(2^{k}(x-y)) \, \d y.
$$
The Fourier transform of $\eta$ vanishes in a neighbourhood of the origin. Hence, for every positive integer $n$, the function $\widetilde{\eta}_n := (-\Delta)^{-n} \eta$ is well-defined and in $L^1$, and moreover,
\begin{equation}
\label{bound-dek-smooth}
\de_k f(x) = 2^{k(d-2n)} \int (-\Delta)^n f(y) \  \widetilde{\eta}_n(2^k(x-y)) \, \d y.
\end{equation}
By the (unweighted) Young inequality,
\begin{equation}
\label{e:bound-Sobolev}
\|\de_k f\|_{L^p} \lesssim  2^{k(d-1-2n)} \| (-\Delta)^n f\|_{L^p}.
\end{equation}
In particular, for any given $\al \in \R$, the sequence $(\|\de_k f\|_{L^p})_{k \in \N}$ decays faster than $2^{-\al k}$ as $k$ tends to infinity.
\end{proof}
\begin{rem}
\label{r:besov-mu}
If $\al_1 \le \al_2$, then uniformly over $\mu, p,q$,
\begin{equation}
\label{e:r1}
\|f\|_{\B^{\al_1,\mu}_{p,q}} \lesssim \|f\|_{\B^{\al_2,\mu}_{p,q}},
\end{equation}
where here and throughout, we understand that the inequality above holds for every $f \in \B^{\al_2,\mu}_{p,q}$. Indeed, this is clear if $f \in C^\infty_c$, and then we argue by density.
Similarly, if $q_1 \ge q_2$, then 
\begin{equation}\label{e:pandq}
\|f\|_{\B^{\al,\mu}_{p,q_1}} \le \|f\|_{\B^{\al,\mu}_{p,q_2}},
\end{equation}
while if $p_1 \le p_2$, then thee exists $C < \infty$ such that uniformly over $\al$, $\mu \neq 0$ and $q$,
\begin{equation}
\label{e:r2}
\|f\|_{\B^{\al,\mu}_{p_1,q}} \le C  \mu^{-\frac{d}{\de}\Ll( \frac{1}{p_1} - \frac{1}{p_2}    \Rr)} \|f\|_{\B^{\al,\mu}_{p_2,q}}	.
\end{equation}
\end{rem}
\begin{rem}
\label{r:embed-q}
Since
\begin{equation}
\label{e:estimdk}
\|\dk f\|_{\Lm^p} \le \frac{\| f \|_{\Bb} }{2^{\al k}},
\end{equation}
it follows that for every $\be < \al$,
$$
\|f\|_{\B^{\be,\mu}_{p,1}} = \sum_{k = -1}^{+\infty} 2^{\be k} \|\dk f\|_{\Lm^p} \le \frac{2^{(\al-\be)}}{1-2^{-(\al-\be)}} \|f\|_{\Bb}.
$$
Hence, for every $q,q' \in [1,\infty]$, we have $\|f\|_{\B^{\be,\mu}_{p,q'}} \lesssim \|f\|_{\Bb}$ uniformly over $\mu$. 
\end{rem}
\begin{rem}
\label{r:embed-lp}
The space $\B^{0,\mu}_{p,1}$ is continuously embedded in $\Lm^p$. Indeed, 
$$
\|f\|_{\Lm^p} \le \sum_{k =-1}^{+\infty} \|\dk f\|_{\Lm^p} = \| f \|_{\B^{0,\mu}_{p,1}}.
$$
\end{rem}
\begin{rem}
\label{r:lp-embed}
Conversely, $\Lm^p$ is continuously embedded in $\B^{0,\mu}_{p,\infty}$. Indeed, by Theorem~\ref{t:young},
$$
\|f\|_{\B^{0,\mu}_{p,\infty}} = \sup_{k \ge -1} \|\dk f\|_{\Lm^p} \le \sup_{k \ge -1} \|\eta_k\|_{L^1_{-\mu}} \, \|f\|_{\Lm^p},
$$
with $\sup \|\eta_k\|_{L^1_{-\mu}} < \infty$ by Lemma~\ref{l:scaling}. Moreover, for each given $\mu_0$, the inequality holds uniformly over $\mu \le \mu_0$.
\end{rem}

We also have

\begin{prop}[Besov embedding]
\label{p:embed} Let $\al \le \be \in \R$ and $p \ge r \in [1,\infty]$ be such that 
$$
\be = \alpha + d\Ll(\frac{1}{r} - \frac{1}{p}\Rr),
$$
and let $\mu_0 > 0$. There exists $C < \infty$ such that for every $q \in [1,\infty]$ and $\mu \le \mu_0$, 
$$
\|f \|_{\Bb} \le C \| f \|_{\B^{\be,\mu r/p}_{r,q}}.
$$
\end{prop}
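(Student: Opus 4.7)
The plan is to reduce the inequality to a Littlewood--Paley block estimate and apply Bernstein's lemma (Lemma~\ref{l:bernstein}) with $k = 0$ (no derivatives). For each block $\delta_k f$, the Fourier transform is supported in the annulus $2^k \mcl{C}^\star$ (or in $B(0,4/3)$ when $k = -1$), so by Bernstein's lemma applied with $\lambda = 2^k \vee 1$ and exponents $p \ge r$,
\[
\| \dk f \|_{L^p_\mu} \le C \, 2^{kd\left(\frac{1}{r} - \frac{1}{p}\right)} \| \dk f \|_{L^r_{\mu r/p}},
\]
with a constant $C$ depending on $\mu_0$ but uniform over $\mu \le \mu_0$ (this is exactly the content of Lemma~\ref{l:bernstein}, where the crucial matching of the weight exponent $\mu r/p$ on the right-hand side is built in).

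Next, I would multiply this estimate by $2^{\alpha k}$ and invoke the relation $\beta = \alpha + d(1/r - 1/p)$ to obtain
\[
2^{\alpha k} \| \dk f \|_{L^p_\mu} \le C \, 2^{\beta k} \| \dk f \|_{L^r_{\mu r/p}}.
\]
Taking the $\ell^q$ norm of both sides in the index $k \ge -1$ and recognising the right-hand side as $C \, \| f \|_{\B^{\be,\mu r/p}_{r,q}}$ yields the desired bound.

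Strictly speaking, the definition in this paper takes $\Bb$ to be the completion of $C^\infty_c$, so the final step is a short density argument: the inequality holds for $f \in C^\infty_c$ (where $\|f\|_{\Bb}$ is finite by Lemma~\ref{l:smooth}), and extends by continuity to $\B^{\be,\mu r/p}_{r,q}$. There is no substantial obstacle here; the only mild care needed is to handle the low-frequency block $k = -1$ separately in Bernstein (since the support is a ball rather than an annulus, but Lemma~\ref{l:bernstein} is stated precisely for balls, so this causes no issue), and to verify that the constant produced by Bernstein is uniform over $\mu \le \mu_0$ and over $q$.
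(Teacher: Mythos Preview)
Your proof is correct and follows essentially the same approach as the paper: apply Bernstein's lemma (Lemma~\ref{l:bernstein}) with no derivatives to each Littlewood--Paley block, multiply by $2^{\al k}$, and take the $\ell^q$ norm. The paper's own proof is simply a terser version of yours, omitting the explicit density remark and the separate discussion of the $k=-1$ block.
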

\begin{proof}
It suffices to show the result for $f \in C^\infty_c$. By Lemma~\ref{l:bernstein}, 
$$
\|\dk f \|_{L^p_\mu} \lesssim 2^{k d \Ll(\frac{1}{r} - \frac{1}{p}\Rr)} \, \|\dk f\|_{L^r_{\mu r /p}},
$$
from which the result follows.
\end{proof}

Another notable consequence of Bernstein's lemma is the following.
\begin{prop}[Effect of derivatives]
	\label{p:derivatives}
	Let $\al \in \R$, $k = (k_1,\ldots,k_d) \in \N^d$, $p,q \in [1,\infty]$ and $\mu_0 > 0$. There exists $C < \infty$ such that for every $\mu \le \mu_0$,
	\begin{equation}
	\label{e:derivatives}
	\| \dr^k f \|_{\Bpq^{\al-|k|,\mu}} \le C \|f\|_{\Bb} \qquad (|k| = k_1 + \cdots + k_d).
	\end{equation}
\end{prop}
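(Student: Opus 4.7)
The inequality is essentially a direct consequence of Bernstein's lemma (Lemma~\ref{l:bernstein}) applied to each Littlewood--Paley piece, together with the commutation of Fourier multipliers with differentiation. The plan is to first establish the estimate for test functions $f \in C^\infty_c$ (where both $\partial^k f$ and the Besov norms are unambiguously defined), and then extend to $f \in \Bb$ by density, simultaneously defining $\partial^k$ as a bounded linear operator $\Bb \to \Bpq^{\al-|k|,\mu}$.

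The key observation is that since $\dk f = \F^{-1}(\chi_k \hat f)$ and differentiation corresponds to multiplication by a polynomial on the Fourier side, differentiation commutes with the Littlewood--Paley blocks:
\begin{equation*}
\dk(\dr^k f) = \dr^k(\dk f).
\end{equation*}
(Here I am abusing notation: the index $k$ plays two different roles.) Now for each $j \ge 0$, the function $\dj f$ has Fourier support in $2^j \mcl C^\star \subset 2^j B(0,8/3)$, so Bernstein's lemma (with $p=q$, which preserves the weight $\mu$) yields
\begin{equation*}
\|\dr^k \dj f\|_{L^p_\mu} \le C\, 2^{j|k|}\, \|\dj f\|_{L^p_\mu},
\end{equation*}
uniformly over $\mu \le \mu_0$. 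For $j = -1$, the Fourier support lies in the fixed ball $B(0,4/3)$, so Bernstein gives $\|\dr^k \delta_{-1} f\|_{L^p_\mu} \le C \|\delta_{-1} f\|_{L^p_\mu}$, which we may absorb into the same bound at the cost of enlarging $C$.

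Combining these estimates with the definition of the Besov norm,
\begin{equation*}
\|\dr^k f\|_{\Bpq^{\al-|k|,\mu}} = \Bigl\|\bigl(2^{(\al-|k|)j}\|\dj \dr^k f\|_{L^p_\mu}\bigr)_{j\ge -1}\Bigr\|_{\ell^q} \le C \Bigl\|\bigl(2^{\al j}\|\dj f\|_{L^p_\mu}\bigr)_{j\ge -1}\Bigr\|_{\ell^q} = C \|f\|_{\Bb},
\end{equation*}
which is the desired inequality. There is no real obstacle here; the only mild subtlety is the uniformity of the constant over $\mu \le \mu_0$, which is already built into Bernstein's lemma, and the fact that one must first work on $C^\infty_c$ and extend by density to make sense of $\dr^k f$ for $f \in \Bb$ (recall from Remark~\ref{r:completion} that elements of $\Bb$ are not a priori Schwartz distributions, but the continuity estimate we have just derived lets us define $\dr^k$ as a bounded linear map by the standard completion argument).
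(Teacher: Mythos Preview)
Your proof is correct and follows essentially the same approach as the paper's: commute $\partial^k$ with the Littlewood--Paley projections, apply Bernstein's lemma (Lemma~\ref{l:bernstein}) with $p=q$ so the weight is preserved, and extend by density from $C^\infty_c$ as in Remark~\ref{r:derivatives}. The only cosmetic difference is that the paper uses the index $l$ for the Littlewood--Paley block to avoid the clash with the multi-index $k$ that you flagged yourself.
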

\begin{rem}
\label{r:derivatives}
We have not given a meaning to $\dr^k f$ for a general $f \in \Bb$. But once \eqref{e:derivatives} is established for arbitrary $f \in C^\infty_c$, we can define $\dr^k f \in \Bpq^{\al-|k|,\mu}$ for any $f \in \Bb$ by means of an approximating sequence in $C^\infty_c$, and \eqref{e:derivatives} is then automatically satisfied for every $f \in \Bb$.
\end{rem}
\begin{proof}[Proof of Proposition~\ref{p:derivatives}]
As for Proposition~\ref{p:embed}, the result is a direct consequence of Lemma~\ref{l:bernstein}, since the latter ensures that
$$
\|\de_l (\dr^k f) \|_{\Lm^p} = \|\dr^k (\de_l  f) \|_{\Lm^p} \lesssim 2^{l |k|} \| \de_l f\|_{\Lm^p}.
$$
\end{proof}

We now turn to interpolation inequalities between Besov spaces. 
\begin{prop}[Interpolation inequalities]
Let $\al_0, \al_1 \in \R$, $p_0,q_0,p_1,q_1 \in [1,\infty]$ and $\nu \in [0,1]$. Defining $\al = (1-\nu)\al_0 + \nu \al_1$ and $p,q \in [1,\infty]$ such that
$$
\frac{1}{p} = \frac{1-\nu}{p_0} + \frac{\nu}{p_1} \quad \text{ and } \quad \frac{1}{q} = \frac{1-\nu}{q_0} + \frac{\nu}{q_1},
$$
we have
$$
\|f\|_{\Bb} \le \|f\|^{1-\nu}_{\B^{\al_0,\mu}_{p_0,q_0}} \, \|f\|^\nu_{\B^{\al_1,\mu}_{p_1,q_1}}.
$$
\label{p:interpol}
\end{prop}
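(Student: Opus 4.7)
The proof should reduce to a double application of Hölder's inequality: one at the level of each Littlewood-Paley block (in the physical space with weight $w_\mu$), and one on the dyadic scales (at the $\ell^q$ level). The crucial algebraic observation is that the weight exponent $\mu$ is the same in all three spaces, so the weight itself splits multiplicatively as $w_\mu = w_\mu^{(1-\nu)p/p_0}\cdot w_\mu^{\nu p/p_1}$, because $(1-\nu)p/p_0 + \nu p/p_1 = 1$ by the very definition of $p$.

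The plan is as follows. First I would work with an arbitrary $f \in C^\infty_c$, so that all Besov norms under consideration are finite by Lemma~\ref{l:smooth}. For each fixed $k \ge -1$, I would apply Hölder's inequality to
$$
\int |\dk f|^p\,w_\mu \, \d x = \int \bigl(|\dk f|^{p_0} w_\mu\bigr)^{(1-\nu)p/p_0} \, \bigl(|\dk f|^{p_1} w_\mu\bigr)^{\nu p/p_1}\,\d x
$$
with the conjugate exponents $p_0/((1-\nu)p)$ and $p_1/(\nu p)$, giving
$$
\|\dk f\|_{L^p_\mu} \le \|\dk f\|_{L^{p_0}_\mu}^{1-\nu}\,\|\dk f\|_{L^{p_1}_\mu}^{\nu}.
$$
The cases where one of the $p_i$ is infinite are handled by the usual convention. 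Multiplying by $2^{\al k}$ and decomposing $2^{\al k} = 2^{(1-\nu)\al_0 k}\cdot 2^{\nu \al_1 k}$ (which is legitimate because $\al = (1-\nu)\al_0 + \nu\al_1$) yields
$$
2^{\al k}\|\dk f\|_{L^p_\mu} \le \bigl(2^{\al_0 k}\|\dk f\|_{L^{p_0}_\mu}\bigr)^{1-\nu}\,\bigl(2^{\al_1 k}\|\dk f\|_{L^{p_1}_\mu}\bigr)^{\nu}.
$$
Then I would take the $\ell^q$ norm of both sides and apply Hölder's inequality on the sequence side, with exponents $q_0/((1-\nu)q)$ and $q_1/(\nu q)$ (which are conjugate by the definition of $q$). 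This directly gives the claimed inequality for $f \in C^\infty_c$.

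To extend the estimate to general elements of $\Bb$, I would use the fact (recall Remark~\ref{r:completion}) that our Besov spaces are defined as completions of $C^\infty_c$. If $f$ is simultaneously the limit of a sequence $(f_n)$ in $\B^{\al_0,\mu}_{p_0,q_0}$ and of a sequence $(g_n)$ in $\B^{\al_1,\mu}_{p_1,q_1}$, a diagonal extraction combined with the inequality for smooth functions shows that one can build a single Cauchy sequence in both spaces (and hence also in $\Bb$ by the same inequality applied to differences), whose limit is $f$. Passing to the limit in the inequality then gives the general case.

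I do not foresee a serious obstacle here: the entire argument is driven by Hölder, and the only delicate point is the density argument needed to circumvent our restrictive definition of Besov spaces as completions of $C^\infty_c$. This is essentially formal once one notices that the very inequality under proof guarantees that a sequence which is Cauchy in both $\B^{\al_i,\mu}_{p_i,q_i}$ is automatically Cauchy in $\Bb$.
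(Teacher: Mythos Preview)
Your proof is correct, and it takes a genuinely different route from the paper's. The paper proves the inequality via complex interpolation: it writes $\|f\|_{\Bb}$ by duality against sequences of simple functions, introduces an analytic family $\psi(z)$ obtained by complexifying the exponents, and applies the three-lines lemma on the strip $\{0 \le \mathsf{Re}(z) \le 1\}$. Your argument is more elementary: it is the direct ``double H\"older'' proof, first interpolating $\|\dk f\|_{L^p_\mu}$ between $\|\dk f\|_{L^{p_0}_\mu}$ and $\|\dk f\|_{L^{p_1}_\mu}$ pointwise in $k$ (which works precisely because the weight $w_\mu$ is common to all three spaces and factors as you indicate), and then interpolating the $\ell^q$ norm of the resulting sequence. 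Both arguments yield the sharp constant $1$. The complex method generalises to settings where the intermediate space is not so explicitly describable, but here, where the Besov norm is a visible $\ell^q(L^p_\mu)$ composite, your approach is the cleaner one. The density argument you sketch at the end is adequate and in fact the paper does not spell it out either.
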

\begin{proof}
We provide a brief proof for the reader's convenience. Recall that we denote by $(\, \cdot \,, \, \cdot \,)_{\mu}$ the scalar product in $L^2_\mu$. We observe that for $f \in C^\infty_c$,
$$
\|f\|_{\Bb} = \lim_{N \to \infty} \sup \sum_{k = -1}^N 2^{\al k} \, \frac{(\dk f,g_k)_\mu \, u_k}{\|g_k\|_{L^{p'}} \, \|u\|_{\ell^{q'}}},
$$
where $p'$ and $q'$ are the conjugate exponents of $p$ and $q$ respectively, and the supremum is taken over all sequences $u$ and all simple functions $g_{-1}, \ldots,g_{N}$ (i.e.\ measurable functions that take a finite number of different values). For a complex number $z$ with $\mathsf{Re}(z) \in [0,1]$, we define
$$
g_{k,z} = |g_k|^{p \Ll[ \frac{1-z}{p_0} + \frac{z}{p_1} \Rr]} \frac{g_k}{|g_k|},
$$
where we understand the \rhs\ as being $0$ on the set $\{g_k = 0\}$. Similarly, we let $u_z$ be the sequence defined by
$$
u_{z,k} = |u_k|^{q \Ll[ \frac{1-z}{q_0} + \frac{z}{q_1} \Rr]} \frac{u_k}{|u_k|},
$$
and 
$$
\psi(z) = \sum_{k = -1}^N 2^{ k\Ll[ (1-z)\al_0 + z \al_1 \Rr]} \, \frac{(\dk f,g_{k,z})_\mu \, u_{z,k}}{\|g_k\|_{L^{p'}} \, \|u\|_{\ell^{q'}}}.
$$
The function $\psi$ is holomorphic and bounded on $\{\mathsf{Re}(z) \in [0,1]\}$, so by the three-lines lemma (see e.g.\ \cite[Appendix to IX.4]{rs2}),
$$
\psi(\nu) \le \sup_{\mathsf{Re}(z) = 0} |\psi(z)|^{1-\nu} \ \sup_{\mathsf{Re}(z) = 1} |\psi(z)|^\nu,
$$
and the result follows by checking that
$$
\sup_{\mathsf{Re}(z) = 0} |\psi(z)| \le \Ll\| \Ll( 2^{\al_0 k} \| \dk f \|_{L^{p_0}_\mu} \Rr)_{-1\le k \le N}  \Rr\|_{\ell^{q_0}} 
$$
and the similar inequality on $\mathsf{Re}(z) = 1$.
\end{proof}

\subsection{Effect of the heat flow}

The smoothing effect of the heat flow, as measured in Besov spaces, takes the following form.
\begin{prop}[Smoothing of the heat flow in Besov spaces]
\label{p:smooth-besov}
Let $\al \ge\be \in \R$, $\mu_0 > 0$ and $p,q \in [1,\infty]$. There exists $C < \infty$ such that uniformly over $\mu \le \mu_0$ and $t > 0$,
$$
\|e^{t\Delta} f\|_{\Bb} \le C \, t^{\frac{\be-\al}{2}} \, \|f\|_{\Bpq^{\be,\mu}}.
$$
\end{prop}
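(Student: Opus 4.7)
The plan is to estimate each Littlewood--Paley block of $e^{t\Delta} f$ separately. Since $\de_k$ and $e^{t\Delta}$ are both Fourier multipliers they commute, so $\de_k (e^{t\Delta} f) = e^{t\Delta} \de_k f$. By the density of $C^\infty_c$ in $\Bpq^{\be,\mu}$, I will establish the bound for $f \in C^\infty_c$ and then pass to the limit, so that $e^{t\Delta}$ is defined on $\Bpq^{\be,\mu}$ at the same time.

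For each $k \ge 0$, the Fourier support of $\de_k f$ is contained in the dilated annulus $2^k \mcl{C}^\star$. Lemma~\ref{l:smooth-heat} (applied with $\lambda = 2^k$) then delivers, uniformly in $\mu \le \mu_0$,
$$
\|e^{t\Delta} \de_k f\|_{L^p_\mu} \le C \, e^{-c \, t \, 2^{2k}} \, \|\de_k f\|_{L^p_\mu}.
$$
I multiply by $2^{\al k}$ and pull out $2^{\be k}$; the decisive prefactor is then $(2^{2k})^{(\al-\be)/2} e^{-c t 2^{2k}}$. Substituting $y = t \, 2^{2k}$ rewrites it as $t^{(\be-\al)/2} y^{(\al-\be)/2} e^{-c y}$, and since $\al \ge \be$ the map $y \mapsto y^{(\al-\be)/2} e^{-cy}$ is bounded on $[0, \infty)$. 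This produces exactly the desired prefactor $t^{(\be-\al)/2}$ for every block $k \ge 0$.

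The remaining subtlety, which I expect to be the main obstacle, is the low-frequency block $k = -1$: its Fourier support is the ball $B(0, 4/3)$ rather than an annulus, so Lemma~\ref{l:smooth-heat} does not apply directly. I will circumvent this by picking $\ph \in \Gg$ with $\ph = 1$ on $B(0, 4/3)$ and writing $e^{t\Delta} \de_{-1} f = h_t \star \de_{-1} f$, where $h_t = \F^{-1}(\ph \, e^{-t|\cdot|^2})$. Stability of Gevrey classes under products (Proposition~\ref{p:gevrey-mult}) together with Proposition~\ref{p:decay} (or a minor variant of Proposition~\ref{p:gevrey-ann}) gives $|h_t(x)| \le C \, e^{-c |x|^{1/\th}}$, uniformly for $t$ in any bounded interval, and the weighted Young inequality (Theorem~\ref{t:young}) then yields $\|e^{t\Delta} \de_{-1} f\|_{L^p_\mu} \le C \|\de_{-1} f\|_{L^p_\mu}$. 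Since $\al \ge \be$, the factor $t^{(\be-\al)/2}$ is bounded below on bounded time intervals, so this contribution is absorbed into $C \, t^{(\be-\al)/2} \, 2^{-\be} \|\de_{-1} f\|_{L^p_\mu}$. Taking the $\ell^q$ norm over $k \ge -1$ then concludes the proof.
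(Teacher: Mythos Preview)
Your proof matches the paper's: commute $\de_k$ with $e^{t\Delta}$, apply Lemma~\ref{l:smooth-heat} block by block, and observe that $(t2^{2k})^{(\al-\be)/2}e^{-ct2^{2k}}$ is bounded uniformly in $k$ and $t$. You are actually more careful than the paper about the low-frequency block $k=-1$, which the paper's proof runs through Lemma~\ref{l:smooth-heat} without comment even though that lemma requires annular Fourier support; your Gevrey--Young argument for $h_t$ is the honest fix, and the restriction to bounded time intervals it incurs is harmless since every application of the proposition in the paper takes place on a finite interval $[0,T]$.
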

\begin{proof}
Since $\Delta$ acts by multiplication in Fourier space, we have $\dk \Ll(e^{t\Delta} f\Rr) = e^{t\Delta} \Ll(\dk f\Rr)$. By Lemma~\ref{l:smooth-heat},
$$
\|e^{t\Delta} \Ll(\dk f\Rr)\|_{\Lm^p} \lesssim e^{-ct2^{2k}}\, \|\dk f\|_{\Lm^p},
$$
so
$$
2^{\al k} \|\dk \Ll(e^{t\Delta} f\Rr)\|_{\Lm^p} \lesssim t^{\frac{\be-\al}{2}} \,\Ll[(t 2^{2k})^{\frac{\al-\be}{2}} e^{-ct2^{2k}} \Rr] 2^{\be k} \| \dk f\|_{\Lm^p}.
$$
The term between square brackets is bounded uniformly over $t$ and $k$. Taking the $\ell^q$ norm of both sides of the inequality, we get the result.
\end{proof}
\begin{prop}[Time regularity of the heat flow in Besov spaces]
	Let $\al \le \be \in \R$ be such that $\be - \al \le 2$, $\mu_0 > 0$ and $p,q \in [1,\infty]$. There exists $C < \infty$ such that uniformly over $\mu \le \mu_0$ and $t > 0$,
	$$
	\|(1-e^{t\Delta}) f \|_{\Bb} \le C t^{\frac{\be-\al}{2}} \|f\|_{\Bpq^{\be,\mu}}.
	$$
	\label{p:time-reg-flow}
\end{prop}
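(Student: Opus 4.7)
The approach is to apply the $L^p_\mu$ version of time regularity (Lemma~\ref{l:time-reg-Lp}) to each Littlewood--Paley block and then sum. As a preliminary reduction, it suffices to establish the inequality for $f \in C^\infty_c$ and then extend by density, exactly in the spirit of Remark~\ref{r:derivatives}.

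Since $e^{t\Delta}$ is a Fourier multiplier, it commutes with $\dk$, so $\dk\bigl((1-e^{t\Delta})f\bigr) = (1-e^{t\Delta})\dk f$. The Fourier support of $\dk f$ is contained in a ball of radius $O(2^k)$ (namely $B(0,4/3)$ for $k=-1$ and $2^k B(0,8/3)$ for $k \ge 0$), so Lemma~\ref{l:time-reg-Lp} applies with $\lambda \sim 2^k \vee 1$ and gives
$$
\|(1-e^{t\Delta})\dk f\|_{L^p_\mu} \le C\,(t\,2^{2k} \wedge 1)\,\|\dk f\|_{L^p_\mu},
$$
uniformly over $\mu \le \mu_0$.

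The core analytic step is then the elementary inequality, valid for every $s \ge 0$ and every $\gamma \in [0,2]$,
$$
s \wedge 1 \le s^{\gamma/2},
$$
which we apply with $s = t\,2^{2k}$ and $\gamma = \be - \al \in [0,2]$. Multiplying the previous display by $2^{\al k}$ and inserting this bound yields
$$
2^{\al k}\,\|\dk\bigl((1-e^{t\Delta})f\bigr)\|_{L^p_\mu} \le C\,t^{(\be-\al)/2}\,2^{\be k}\,\|\dk f\|_{L^p_\mu}.
$$

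Taking the $\ell^q$ norm of both sides in $k \ge -1$ gives exactly the claimed inequality. No real obstacle is anticipated: the ``time regularity'' estimate at the level of single Littlewood--Paley blocks has already been done in Lemma~\ref{l:time-reg-Lp}, and the mild constraint $\be - \al \le 2$ enters precisely to make the elementary inequality $s \wedge 1 \le s^{\gamma/2}$ hold, which is the only place where the restriction on $\be - \al$ plays a role.
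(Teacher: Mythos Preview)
Your proof is correct and follows essentially the same approach as the paper: apply Lemma~\ref{l:time-reg-Lp} blockwise, use that $(t\,2^{2k}\wedge 1)\le (t\,2^{2k})^{(\be-\al)/2}$ for $\be-\al\in[0,2]$, and take the $\ell^q$ norm. The paper phrases the key inequality as ``$(t\,2^{2k})^{(\al-\be)/2}(t\,2^{2k}\wedge 1)$ is bounded uniformly,'' which is the same statement, and omits the density and commutation remarks you spell out.
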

\begin{proof}
By Lemma~\ref{l:time-reg-Lp},
\begin{align*}
	2^{\al k} \|\dk \Ll(\Ll(1-e^{t\Delta}\Rr) f\Rr)\|_{\Lm^p} & \lesssim 2^{\al k}\Ll(t 2^{2k} \wedge 1\Rr)  \| \dk f\|_{\Lm^p} \\
	& \lesssim t^{\frac{\be-\al}{2}} \ \Ll[\Ll(t 2^{2k} \Rr)^{\frac{\al-\be}{2}} \Ll(t 2^{2k} \wedge 1\Rr) \Rr] \ 2^{\be k} \| \dk f\|_{\Lm^p}.
\end{align*}
The result follows since the term between square brackets is bounded uniformly over $t$ and $k$.
\end{proof}
\begin{rem}
\label{r:continuity}
By the same reasoning, we obtain that if $f \in \Bb$, then $t \mapsto e^{t\Delta} f$ is continuous in $\Bb$. Indeed, it suffices to check the continuity at time $0$. By Lemma~\ref{l:time-reg-Lp}, uniformly over $k$,
$$
\|\dk \Ll(\Ll(1-e^{t\Delta}\Rr) f\Rr)\|_{\Lm^p} \lesssim \|\dk f\|_{\Lm^p},
$$
and for every $k$, the left-hand side above tends to $0$ as $t$ tends to $0$. If $f \in \Bb$, then this ensures that
$$
\Ll\|(1-e^{t\Delta})f\Rr\|_{\Bb} = \Ll\|\Ll( 2^{\al k} \Ll\|\dk \Ll(\Ll(1-e^{t\Delta}\Rr) f\Rr)\Rr\|_{\Lm^p} \Rr)_{k \ge -1} \Rr\|_{\ell^q} \xrightarrow[k \to \infty]{}0,
$$
since $2^{\al k} \|\dk f\|_{\Lm^p} \xrightarrow[k \to \infty]{}0$ even when $q = \infty$, see \eqref{e:def:ellstar} and Remark~\ref{r:completion}.
\end{rem}
\subsection{Multiplicative structure}

The following lemma provides a convenient way to check whether a function belongs to a Besov space. We refer to \cite[Lemma~2.69]{bcd} for a proof (which is an application of Young's inequality). 
\begin{lem}[Series criterion I]
\label{l:crit}
Let $\al \in \R$, $p,q \in [1,\infty]$ and let $\mcl{C}$ be an annulus. There exists $C < \infty$ such that the following holds uniformly over $\mu$. If $(f_k)_{k \in \N}$ is a sequence of functions in $\mcl S$ such that
$$
\supp \hat{f}_k \subset 2^k \mcl{C} \quad \text{and} \quad \Ll( 2^{\al k } \| f_k \|_{L^p_\mu} \Rr)_{k \in \N} \in {\ell^q},
$$
then
$$
f := \sum_{k = 0}^{+\infty}f_k \in \Bb \quad \text{and} \quad \|f\|_{\Bb} \le C \Ll\| \Ll( 2^{\al k } \| f_k \|_{L^p_\mu} \Rr)_{k \in \N} \Rr\|_{\ell^q}.
$$
\end{lem}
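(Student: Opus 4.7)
\medskip

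\noindent\textbf{Proof plan for Lemma~\ref{l:crit}.}
The plan is a direct adaptation of the unweighted argument in \cite[Lemma~2.69]{bcd}, with the only new ingredient being the weighted Young inequality (Theorem~\ref{t:young}) and the scaling estimate (Lemma~\ref{l:scaling}).

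First I would use the spectral localisation. Since $\supp \widehat{f_k} \subset 2^k \mcl C$ and $\supp \chi_j$ lies in a fixed annulus (or ball, if $j=-1$) of dyadic size $2^j$, there is an integer $N_0 = N_0(\mcl C)$ depending only on $\mcl C$ such that $\chi_j \cdot \widehat{f_k} \equiv 0$ whenever $|j-k| > N_0$. Consequently $\delta_j f_k = 0$ for such pairs, and at least formally
\[
\delta_j f \;=\; \sum_{k \,:\, |j-k|\le N_0} \delta_j f_k.
\]

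Next I would estimate each term $\|\delta_j f_k\|_{L^p_\mu}$. Writing $\delta_j f_k = \eta_j \ast f_k$ and recalling $\|h\|_{L^p_\mu} = \|h\, w_\mu^{1/p}\|_{L^p}$, the fact that $w_\mu$ is $w_{-\mu}$-moderate implies that $w_\mu^{1/p}$ is $w_{-\mu}^{1/p}$-moderate. Applying Theorem~\ref{t:young} with exponents $(r,p,q)=(p,1,p)$,
\[
\|\delta_j f_k\|_{L^p_\mu} \;\le\; \|\eta_j\,w_{-\mu}^{1/p}\|_{L^1}\,\|f_k\|_{L^p_\mu} \;=\; \|\eta_j\|_{L^1_{-\mu/p}}\,\|f_k\|_{L^p_\mu}.
\]
By Lemma~\ref{l:scaling} applied with $p=1$ (so $p'=\infty$) and with $\phi = \chi$ or $\phi = \tilde\chi$, the factor $\|\eta_j\|_{L^1_{-\mu/p}}$ is bounded by a constant uniform in $j \ge -1$ and in $\mu \le \mu_0$ for any fixed $\mu_0$. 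This gives $\|\delta_j f_k\|_{L^p_\mu} \lesssim \|f_k\|_{L^p_\mu}$ uniformly.

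Third, I would combine the two bounds and pass to the $\ell^q$ norm in $j$. Setting $b_k = 2^{\alpha k}\|f_k\|_{L^p_\mu}$ (with $b_{-1} = 0$) and $\varphi(\ell) = \mathbf 1_{|\ell|\le N_0}\,2^{\alpha \ell}$,
\[
2^{\alpha j}\|\delta_j f\|_{L^p_\mu} \;\lesssim\; \sum_{k\,:\,|j-k|\le N_0} 2^{\alpha(j-k)}\, b_k \;=\; (\varphi \ast b)(j),
\]
where $\ast$ denotes discrete convolution on $\Z$. Since $\varphi$ has finite support, $\|\varphi\|_{\ell^1} < \infty$, and discrete Young's inequality yields
\[
\Ll\|\Ll( 2^{\alpha j}\|\delta_j f\|_{L^p_\mu}\Rr)_{j\ge -1}\Rr\|_{\ell^q} \;\lesssim\; \|b\|_{\ell^q} \;=\; \Ll\|\Ll( 2^{\alpha k}\|f_k\|_{L^p_\mu}\Rr)_{k\in \N}\Rr\|_{\ell^q},
\]
which is the claimed norm bound.

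Finally, I would address the mild subtlety of showing $f \in \Bb$ (rather than merely a finite norm), which is needed because of our convention (Remark~\ref{r:completion}) defining $\Bb$ as the completion of $C^\infty_c$. The estimate above applied to tail sums $\sum_{k=M+1}^N f_k$ shows that the partial sums form a Cauchy sequence in the Besov norm; approximating each $f_k \in \mcl S$ by compactly supported smooth functions and diagonalising produces a Cauchy sequence in $C^\infty_c$, whose limit must be $f$. The main obstacle is essentially bookkeeping the spectral-overlap integer $N_0$; the analytical heart is the one-line estimate via weighted Young, which is why Theorem~\ref{t:young} and the uniform bound from Lemma~\ref{l:scaling} were set up precisely in this form.
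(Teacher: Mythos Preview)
Your proposal is correct and follows exactly the approach the paper intends: the paper does not give a proof but refers to \cite[Lemma~2.69]{bcd}, noting that it is ``an application of Young's inequality'', and your adaptation---spectral finite overlap, then the weighted Young inequality (Theorem~\ref{t:young}) with Lemma~\ref{l:scaling} to bound $\|\eta_j\|_{L^1_{-\mu/p}}$ uniformly, followed by the discrete $\ell^q$ convolution estimate---is precisely that argument transplanted into the weighted setting. The closing remark on membership in $\Bb$ (density of $C^\infty_c$ in $\mcl S$ for the Besov norm, then Cauchy tails) is the right way to handle the paper's completion convention from Remark~\ref{r:completion}; one small point is that the uniformity in $\mu$ you obtain is over $\mu\le\mu_0$ for each fixed $\mu_0$, matching how the paper's auxiliary lemmas are stated rather than the slightly loose ``uniformly over $\mu$'' in the lemma's wording.
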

\begin{rem}
\label{r:converse-series}
Let $\eta' \in \Gg$ be supported in an annulus, and $\eta'_k = 2^{kd} \eta'(2^k \cdot)$. By the same reasoning, for every $\al \in \R$ and $p,q \in [1,\infty]$, there exists $C < \infty$ such that
$$
\Ll\|\Ll(2^{\al k} \|\eta'_k \star f\|_{\Lm^p}\Rr)_{k \in \N}\Rr\|_{\ell^q} \le C \|f\|_{\Bb}.
$$
In particular, up to an equivalence of norms, the definition of Besov spaces does not depend on the choice of the functions $\td \chi$ and $\chi$.
\end{rem}
If the support of the Fourier transforms are only localized in balls instead of annuli, we get the same result provided that $\alpha > 0$ (see \cite[Lemma~2.84]{bcd}).
\begin{lem}[Series criterion II]
\label{l:crit2}
Let $\al > 0$, $p,q \in [1,\infty]$ and let $B$ be a ball. There exists $C < \infty$ such that the following holds uniformly over $\mu$. If $(f_k)_{k \in \N}$ is a sequence of functions in $\mcl S$ such that
$$
\supp \hat{f}_k \subset 2^k B \quad \text{and} \quad  \Ll( 2^{\al k } \| f_k \|_{L^p_\mu} \Rr)_{k \in \N} \in {\ell^q} ,
$$
then
$$
f := \sum_{k = 0}^{+\infty}f_k \in \Bb \quad \text{and} \quad \|f\|_{\Bb} \le C \Ll\| \Ll( 2^{\al k } \| f_k \|_{L^p_\mu} \Rr)_{k \in \N} \Rr\|_{\ell^q}.
$$
\end{lem}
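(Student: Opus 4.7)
The strategy parallels that of Lemma~\ref{l:crit}, but with one essential modification: because the spectra of the $f_k$ sit in balls rather than annuli, the high-frequency block $\dk f$ now receives contributions from infinitely many $f_k$'s. The positivity of $\alpha$ will be crucial to control this tail.

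First I would carry out the Fourier support analysis. Writing $\delta_j f_k = \eta_j \star f_k$, the Fourier transform equals $\chi_j \hat{f_k}$, which vanishes unless $\supp \chi_j \cap 2^k B \ne \emptyset$. Since $\supp \chi_j \subset 2^j \mcl{C}^\star$ for $j \ge 0$, there is an integer $N_0$ (depending only on the radius of $B$) such that $\delta_j f_k = 0$ whenever $j \ge 0$ and $k < j - N_0$. Assuming the series converges in a suitable weak sense (which can be verified using that the partial sums of the $f_k$ form a Cauchy sequence once the estimate below is established), one has
$$
\delta_j f = \sum_{k \ge (j-N_0) \vee 0} \delta_j f_k \qquad (j \ge 0),
$$
while $\delta_{-1} f = \sum_{k \ge 0} \delta_{-1} f_k$.

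The next ingredient is a uniform convolution bound: by Theorem~\ref{t:young}, $\|\delta_j f_k\|_{L^p_\mu} \le \|\eta_j\|_{L^1_{-\mu}} \|f_k\|_{L^p_\mu}$. Since $\chi \in \Gg$, Proposition~\ref{p:decay} yields $|\eta(x)| \le C e^{-c|x|^{1/\theta}}$, whence after the change of variables $y = 2^j x$ and using that $|y/2^j|_* \le |y|_*$ for $j \ge 0$,
$$
\|\eta_j\|_{L^1_{-\mu}} \le C \int e^{-c|y|^{1/\theta}} \, e^{\mu|y|_*^\delta} \, \d y,
$$
which is finite (and bounded uniformly in $j \ge 0$ and $\mu$ in the required range) because $\delta < 1/\theta$. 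A similar estimate holds for $\eta_{-1}$.

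Putting these together, set $c_k = 2^{\alpha k}\|f_k\|_{L^p_\mu}$ and $b_j = 2^{\alpha j}\|\dk f\|_{L^p_\mu}$. For $j \ge 0$,
$$
b_j \ls \sum_{k \ge (j-N_0) \vee 0} 2^{\alpha(j-k)} c_k,
$$
which is a discrete convolution of $c$ with the sequence $(a_m)_{m \in \Z}$ defined by $a_m = 2^{\alpha m} \1_{m \le N_0}$. The assumption $\alpha > 0$ makes $a \in \ell^1$ with $\|a\|_{\ell^1} \le 2^{\alpha N_0}/(1-2^{-\alpha})$, so Young's inequality for sequences gives $\|b\|_{\ell^q} \ls \|c\|_{\ell^q}$. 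The $j=-1$ term is absorbed into the constant by a direct geometric estimate $\|\delta_{-1} f\|_{L^p_\mu} \ls \sum_{k \ge 0} 2^{-\alpha k} c_k$ and Hölder.

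The main obstacle is really Step 1: identifying the correct truncation index and, in the same breath, justifying that the partial sums $S_N = \sum_{k=0}^N f_k$ actually converge to an element of the Banach space $\Bb$ (in the sense of Remark~\ref{r:completion}, i.e.\ the completion of $C^\infty_c$). The estimate above applied to tails $\sum_{k=N}^{M} f_k$ shows that $(S_N)$ is Cauchy in $\Bb$; density of $C^\infty_c$ in each $f_k$ (obtained by mollification and smooth cut-off, using the uniformity in $\mu$ above to control the weighted norms) then places the limit in the completion, giving $f \in \Bb$ together with the announced bound.
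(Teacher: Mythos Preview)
Your argument is correct and is precisely the standard proof (the one in \cite[Lemma~2.84]{bcd} to which the paper defers): exploit that $\delta_j f_k$ vanishes unless $k \ge j - N_0$, bound $\|\delta_j f_k\|_{L^p_\mu} \lesssim \|f_k\|_{L^p_\mu}$ via the uniform $L^1_{-\mu}$ bound on $\eta_j$ (this is exactly Lemma~\ref{l:scaling}, which you rederived), and conclude by the $\ell^1$--$\ell^q$ convolution inequality using $\alpha > 0$. The paper does not give its own proof here, so there is nothing further to compare; your handling of the completion issue (Remark~\ref{r:completion}) via Cauchy tails and $C^\infty_c$ approximation of each $f_k$ is the right way to close that loose end in the weighted setting.
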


For $f, g \in C^\infty_c$, we introduce the \emph{paraproduct}
$$
f \ol g = \sum_{j < k-1} \de_j f \ \de_k g = \sum_{k} S_{k-1} f \ \de_k g,
$$
and the \emph{product remainder} (or \emph{resonant term})
$$
f \oo g = \sum_{|j-k| \le 1} \de_j f \ \de_k g,
$$
and we write $f \og g = g \ol f$. We have the \emph{Bony decomposition}
\begin{equation}
\label{e:Bony}
fg = f \ol g + f \oo g + f \og g.
\end{equation}
The goal is to extend the notion of the product $fg$ to elements $f$ and $g$ of suitable Besov spaces, by showing that each of the terms in this decomposition has a natural extension. Here are the key estimates.
\begin{thm}[Paraproduct estimates]
\label{t:para}
(1) Let $\al, \al_1, \al_2 \in \R$ and $p,p_1,p_2,q \in [1,\infty]$ be such that
$$
\al_1 \neq 0, \quad \al = (\al_1\wedge 0) + \al_2 \quad \text{and} \quad \frac{1}{p} = \frac{1}{p_1} + \frac{1}{p_2},
$$
where we write $\al_1\wedge 0 = \min(\al_1,0)$. The mapping $(f,g) \mapsto f \ol g$ (defined for $f,g \in C^\infty_c$) can be extended to a continuous bilinear map from $B^{\al_1,\mu}_{p_1,\infty} \times {B^{\al_2,\mu}_{p_2,q}}$ to $\Bb$, Moreover, there exists $C < \infty$ such that uniformly over~$\mu$,
\begin{equation}
\label{e:para}
\|f \ol g\|_{\B^{\al,\mu}_{p,q}} \le C \, \|f\|_{B^{\al_1,\mu}_{p_1,\infty}} \, \|g\|_{B^{\al_2,\mu}_{p_2,q}}.
\end{equation}
(2) Let  $\al_1, \al_2 \in \R$ be such that $\al := \al_1 + \al_2 > 0$, and let $p,p_1,p_2,q$ be as above. The mapping $(f,g) \mapsto f \oo g$ can be extended to a continuous bilinear map from $B^{\al_1,\mu}_{p_1,\infty} \times {B^{\al_2,\mu}_{p_2,q}}$ to $\Bb$, Moreover, there exists $C < \infty$ such that uniformly over~$\mu$, 
$$
\|f \oo g\|_{\B^{\al,\mu}_{p,q}} \le C \, \|f\|_{B^{\al_1,\mu}_{p_1,\infty}} \, \|g\|_{B^{\al_2,\mu}_{p_2,q}}.
$$
\end{thm}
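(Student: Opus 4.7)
My approach follows the classical Bony pattern, adapted to the weighted setting. The two crucial ingredients are: (i) a \emph{weighted Hölder inequality}, $\|fg\|_{L^p_\mu} \le \|f\|_{L^{p_1}_\mu}\,\|g\|_{L^{p_2}_\mu}$ when $1/p = 1/p_1 + 1/p_2$, with the \emph{same} weight parameter $\mu$ on both sides --- this follows at once by writing $\wm = \wm^{p/p_1}\,\wm^{p/p_2}$ and applying the classical Hölder inequality; and (ii) Lemmas~\ref{l:crit} and~\ref{l:crit2}, which reduce Besov estimates of sums of frequency-localised pieces to $\ell^q$ estimates on the block decomposition. It suffices to prove both bounds for $f, g \in C^\infty_c$, the extension to the full product space being by density and uniformity of the bounds in $\mu$.

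For part (1), I would write $f \ol g = \sum_k (S_{k-1} f)(\dk g)$. A Minkowski-sum calculation on Fourier supports shows that $\supp \widehat{(S_{k-1} f)(\dk g)} \subset 2^k \td{\mcl C}$ for a fixed annulus $\td{\mcl C}$ bounded away from $0$; this uses that $\supp \widehat{S_{k-1} f}$ lies in a ball of radius $\ls 2^{k-1}$ while $\supp \widehat{\dk g} \subset 2^k \mcl C^\star$. By weighted Hölder,
\[
\|(S_{k-1} f)(\dk g)\|_{L^p_\mu} \le \|S_{k-1} f\|_{L^{p_1}_\mu}\,\|\dk g\|_{L^{p_2}_\mu}.
\]
The key auxiliary estimate is $\|S_{k-1} f\|_{L^{p_1}_\mu} \ls 2^{-(\al_1 \wedge 0) k}\,\|f\|_{\B^{\al_1,\mu}_{p_1,\infty}}$, obtained from $\|S_{k-1} f\|_{L^{p_1}_\mu} \le \sum_{j < k-1} \|\delta_j f\|_{L^{p_1}_\mu}$ together with $\|\delta_j f\|_{L^{p_1}_\mu} \le 2^{-\al_1 j}\,\|f\|_{\B^{\al_1,\mu}_{p_1,\infty}}$: when $\al_1 > 0$ the sum is a convergent geometric series, and when $\al_1 < 0$ it is dominated by its largest term. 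The assumption $\al_1 \neq 0$ rules out the borderline logarithmic case. Combining with $\al = (\al_1 \wedge 0) + \al_2$, this yields
\[
2^{\al k}\,\|(S_{k-1} f)(\dk g)\|_{L^p_\mu} \ls \|f\|_{\B^{\al_1,\mu}_{p_1,\infty}}\cdot 2^{\al_2 k}\,\|\dk g\|_{L^{p_2}_\mu}.
\]
Taking the $\ell^q$ norm in $k$ and invoking Lemma~\ref{l:crit} finishes the proof.

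For part (2), I would reorganise $f \oo g = \sum_{|j-k|\le 1} (\delta_j f)(\dk g)$ by grouping summands according to the common value of $m = \max(j,k)$ --- at most three pairs $(j,k)$ per $m$. The Fourier support of each such block sits in $2^j \mcl C^\star + 2^k \mcl C^\star$, which is contained only in a \emph{ball} of radius $\ls 2^m$ centered at the origin, since the two annuli can cancel near $0$. This is precisely the situation covered by Lemma~\ref{l:crit2}, which requires $\al > 0$. The estimate
\[
2^{\al m}\,\|(\delta_j f)(\dk g)\|_{L^p_\mu} \ls \|f\|_{\B^{\al_1,\mu}_{p_1,\infty}}\cdot 2^{\al_2 k}\,\|\dk g\|_{L^{p_2}_\mu},
\]
valid whenever $|j-m|,\,|k-m|\le 1$, then yields the claim after taking the $\ell^q$ norm.

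I expect the main delicate point to be the case $\al_1 < 0$ in part (1): there $S_{k-1} f$ does not converge as $k \to \infty$ but rather grows like $2^{-\al_1 k}$, and the success of the argument hinges on this growth being exactly absorbed by the choice $\al = \al_1 + \al_2$ rather than $\al = \al_2$. Everything else reduces to careful bookkeeping of Fourier supports, the weighted Hölder inequality, and the two series criteria.
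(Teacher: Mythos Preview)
Your proposal is correct and follows essentially the same route as the paper: weighted H\"older on each block, the $\|S_{k-1}f\|_{L^{p_1}_\mu}\lesssim 2^{-(\al_1\wedge 0)k}\|f\|_{\B^{\al_1,\mu}_{p_1,\infty}}$ estimate from the geometric sum, then Lemma~\ref{l:crit} for the paraproduct and Lemma~\ref{l:crit2} for the resonant term. Your treatment of part~(2) is in fact slightly more explicit than the paper's, which simply remarks that ``the proof remains the same, except that we need to impose $\al>0$''; your grouping by $m=\max(j,k)$ makes the application of Lemma~\ref{l:crit2} transparent.
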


\begin{proof}
Let $f, g \in C^\infty_c$, and let $\td{\mcl{C}} = B(0,2/3) + \mcl{C}^\star$, where $\mcl{C}^\star$ was defined in \eqref{e:def:mclC}. One can check that $\td{\mcl{C}}$ is an annulus. By (\ref{chi-prop1}--\ref{chi-prop2}), for $k \ge 0$, the Fourier spectrum of $S_{k-1} f \ \de_k g$ is contained in $2^k \td{\mcl{C}}$. (The term indexed by $k = -1$ is null.) By Lemma~\ref{l:crit}, in order to estimate $\|f \ol g\|_{\Bb}$, we need to bound
$$
\Ll\| \Ll( 2^{\al k } \| S_{k-1} f \ \de_k g \|_{L^p_\mu} \Rr)_{k \in \N} \Rr\|_{\ell^q}.
$$
By H\"older's inequality,
\begin{equation}
\label{e:even-weights}
\|S_{k-1} f \ \dk g\|_{L^p_\mu} \le \|S_{k-1} f\|_{L^{p_1}_\mu} \ \| \dk g\|_{L^{p_2}_\mu},
\end{equation}
while by definition,
$$
\|\de_j f\|_{L^{p_1}_\mu} \le 2^{-\al_1 j} \, \|f\|_{\B^{\al_1,\mu}_{p_1,\infty}},
$$
so that
$$
\|S_{k-1} f\|_{\Lm^{p_1}} \le \sum_{j = -1}^{k-2}  \|\de_j f\|_{L^{p_1}_\mu} \le  \sum_{j = -1}^{k-2} 2^{-\al_1 j} \,\|f\|_{\B^{\al_1,\mu}_{p_1,\infty}}\lesssim 2^{-(\al_1\wedge 0) k} \, \|f\|_{\B^{\al_1,\mu}_{p_1,\infty}},
$$
and thus
$$
\|S_{k-1} f \ \dk g\|_{L^p_\mu} \lesssim 2^{-(\al_1\wedge 0) k} \, \|f\|_{\B^{\al_1,\mu}_{p_1,\infty}} \ \| \dk g\|_{L^{p_2}_\mu}.
$$
Multiplying both sides by $2^{\al k}$ and taking the $\ell^q$ norm, we obtain \eqref{e:para}.

For $f \oo g$, we only know that for some ball $\td{B}$, if $|j-k| \le 1$, then the Fourier spectrum of $\de_j f \, \dk g$ is contained in $2^k \td{B}$. We must thus rather use Lemma~\ref{l:crit2} instead of Lemma~\ref{l:crit}. The proof remains the same, except that we need to impose $\al > 0$. 
\end{proof}
\begin{rem}
	\label{r:change-the-weights}
	We can also distribute the weights unevenly between the two terms in the \rhs\ of \eqref{e:para}. Indeed, if $\nu_1,\nu_2 > 0$ are such that
	\begin{equation*}
		\frac{1}{p} = \frac{\nu_1}{p_1} + \frac{\nu_2}{p_2} ,  
	\end{equation*}
	then
	\begin{equation}
		\label{e:modif-Hold}
		\|f g\|_{\Lm^p} = \|f \wm^{\nu_1/p_1} \ g \wm^{\nu_2/p_2}\|_{L^p} \le \|f\|_{L_{\nu_1\mu}^{p_1}} \ \|g\|_{L_{\nu_2 \mu}^{p_2}}
	\end{equation}
	by Hölder's inequality. Using this inequality in \eqref{e:even-weights}, we see that we can replace the \rhs\ of \eqref{e:para} by
	$$
	C \, \|f\|_{\B^{\al_1,\mu_1}_{p_1,\infty}} \, \|g\|_{\B^{\al_2,\mu_2}_{p_2,q}}
	$$
	if desired, provided that $\mu_1 = \nu_1 \mu$ and $\mu_2 = \nu_2 \mu$ satisfy
	\begin{equation}
		\label{e:def:mu1mu2}
		\frac{\mu}{p} = \frac{\mu_1}{p_1} + \frac{\mu_2}{p_2} .  
	\end{equation}
	
\end{rem}
Theorem~\ref{t:para} can be turned into multiplicative inequalities, which will play a central role in our analysis. We treat separately the cases of positive and negative regularity. 
\begin{cor}[Multiplicative inequality I]
\label{c:multipl1}
Let $\al > 0$, $p,q \in [1,\infty]$, $\nu \in [0,1]$, and $\mu_0 > 0$. There exists $C < \infty$ such that for every $\mu \le \mu_0$, if
$$
\be_1 = \al + (1-\nu) \, \frac{d}{p}, \quad \be_2 = \al + \nu \, \frac{d}{p}, \quad \mu_1 = \nu \mu, \quad \mu_2 = (1-\nu)\mu
$$
and if $p_1,p_2 \in [1,\infty]$ are defined by
\begin{equation}
\label{e:def:p12}
\frac{1}{p_1} = \frac{\nu}{p} \quad \text{and} \quad \frac{1}{p_2} = \frac{1-\nu}{p},
\end{equation}
then the mapping $(f,g) \mapsto fg$ can be extended to a continuous linear map from $\B^{\al,\mu}_{p_1,q} \times  {\B^{\al,\mu}_{p_2,q}}$ to $\Bb$, and moreover,
$$
\|f g\|_{\Bb} \le C \|f\|_{\B^{\al,\mu}_{p_1,q}} \, \|g\|_{\B^{\al,\mu}_{p_2,q}} \le C^2 \, \|f\|_{\Bpq^{\be_1,\mu_1}} \, \|g\|_{\Bpq^{\be_2,\mu_2}}.
$$
\end{cor}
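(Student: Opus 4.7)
The plan is to write $fg = f \ol g + f \oo g + f \og g$ via the Bony decomposition \eqref{e:Bony} and bound the three pieces separately using Theorem~\ref{t:para}; the argument is carried out for $f, g \in C^\infty_c$ and the bilinear map $(f,g) \mapsto fg$ then extends to a continuous map on the product of Besov spaces by the standard density argument, each factor being defined as a completion of $C^\infty_c$.

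For the two paraproducts, I would apply part~(1) of Theorem~\ref{t:para} with $\al_1 = \al_2 = \al$, so that both conditions $\al_1 \neq 0$ and $\al = (\al_1 \wedge 0) + \al_2$ hold because $\al > 0$; the Hölder relation $1/p = 1/p_1 + 1/p_2$ is built into the statement. This yields
\begin{equation*}
\|f \ol g\|_{\Bb} \lesssim \|f\|_{\B^{\al,\mu}_{p_1,\infty}} \, \|g\|_{\B^{\al,\mu}_{p_2,q}} \le \|f\|_{\B^{\al,\mu}_{p_1,q}} \, \|g\|_{\B^{\al,\mu}_{p_2,q}},
\end{equation*}
the last step using \eqref{e:pandq}, and the symmetric argument with $f$ and $g$ swapped handles $f \og g = g \ol f$. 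For the resonant term, since $\al_1 + \al_2 = 2\al > 0$, part~(2) of Theorem~\ref{t:para} gives a bound of the same form but with $\B^{2\al,\mu}_{p,q}$ on the left, and \eqref{e:r1} then downgrades the regularity index from $2\al$ to $\al$. Summing the three estimates yields the first inequality.

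The second inequality is obtained by applying Proposition~\ref{p:embed} separately to $f$ and to $g$. Since $1/p_1 = \nu/p \le 1/p$, we have $p_1 \ge p$, and a direct check confirms that $\be_1 = \al + d(1/p - 1/p_1)$ and $\mu_1 = \mu \cdot p/p_1$ match exactly the indices required by the proposition to give $\|f\|_{\B^{\al,\mu}_{p_1,q}} \lesssim \|f\|_{\Bpq^{\be_1,\mu_1}}$; the symmetric computation produces the analogous bound for $g$, with $\be_2 = \al + d(1/p - 1/p_2) = \al + \nu d/p$ and $\mu_2 = \mu \cdot p/p_2 = (1-\nu)\mu$. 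Multiplying the two bounds concludes the argument. No step here strikes me as a real obstacle; the only care needed is to keep track of how the weight $\mu$ is reallocated between the two factors in the embedding step.
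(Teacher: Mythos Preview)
Your proof is correct and follows essentially the same approach as the paper: the first inequality via the Bony decomposition \eqref{e:Bony} and Theorem~\ref{t:para} with $\al_1 = \al_2 = \al$, the second via Proposition~\ref{p:embed}. You simply spell out the index bookkeeping (the use of \eqref{e:pandq}, \eqref{e:r1}, and the verification that $\be_i$, $\mu_i$ match the embedding) that the paper leaves implicit.
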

\begin{proof}
The first inequality follows from the decomposition~\eqref{e:Bony} and Theorem~\ref{t:para} with $\al_1 = \al_2 = \al$. The second one follows from Proposition~\ref{p:embed}.
\end{proof}
\begin{rem}
	\label{r:multipl1}
	If we assume instead that
	$$
	\frac1p = \frac1{p_1} + \frac1{p_2} \quad \text{ and }  \quad  \frac{\mu}{p} = \frac{\mu_1}{p_1} + \frac{\mu_2}{p_2} ,
	$$
	then by Remark~\ref{r:change-the-weights}, we also have
	$$
	\|f g\|_{\Bb} \le C \|f\|_{\B^{\al,\mu_1}_{p_1,q}} \, \|g\|_{\B^{\al,\mu_2}_{p_2,q}}.
	$$
\end{rem}
\begin{cor}[Multiplicative inequality II]
\label{c:multipl2}
Let $\al < 0 < \be$ be such that $\al + \be > 0$, $p, q \in [1,\infty]$,  $\nu \in [0,1]$ and $\mu_0 > 0$. There exists $C < \infty$ such that for every $\mu \le \mu_0$, if
$$
\al' = \al + (1-\nu) \, \frac{d}{p}, \quad \be' = \be + \nu \, \frac{d}{p}, \quad \mu_1 = \nu \mu, \quad \mu_2 = (1-\nu) \mu,
$$
and if $p_1,p_2 \in [1,\infty]$ are defined by \eqref{e:def:p12}, then the mapping $(f,g) \mapsto fg$ can be extended to a continuous linear map from $\B^{\al,\mu}_{p_1,q} \times  {\B^{\be,\mu}_{p_2,q}}$ to $\Bb$, and moreover,
\begin{equation}
\label{e:multipl2}
\|f g\|_{\Bb} \le C \|f\|_{B^{\al,\mu}_{p_1,q}} \, \|g\|_{B^{\be,\mu}_{p_2,q}} \le C^2 \, \|f\|_{\Bpq^{\al',\mu_1}} \, \|g\|_{\Bpq^{\be',\mu_2}}.
\end{equation}
\end{cor}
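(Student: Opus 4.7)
The plan is to combine the Bony decomposition $fg = f \ol g + f \oo g + f \og g$ from \eqref{e:Bony} with the paraproduct estimates of Theorem~\ref{t:para}, bounding each piece in a Besov space of regularity at least $\al$ and then invoking the embedding $\B^{\gamma,\mu}_{p,q} \hookrightarrow \B^{\al,\mu}_{p,q}$ for $\gamma \ge \al$ from Remark~\ref{r:besov-mu}.

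First I would handle each of the three terms. For $f \ol g$, apply Theorem~\ref{t:para}(1) with exponents of regularity $\al_1 = \al \neq 0$ and $\al_2 = \be$, and exponents of integrability $p_1, p_2$ summing to $1/p$ as in \eqref{e:def:p12}. Since $\al < 0$, the target regularity is $(\al \wedge 0) + \be = \al + \be$, yielding
$$
\|f \ol g\|_{\B^{\al+\be,\mu}_{p,q}} \lesssim \|f\|_{\B^{\al,\mu}_{p_1,\infty}} \, \|g\|_{\B^{\be,\mu}_{p_2,q}}.
$$
For $f \og g = g \ol f$, apply Theorem~\ref{t:para}(1) again but with the roles exchanged, so $\al_1 = \be \neq 0$ and $\al_2 = \al$; now $(\be \wedge 0) + \al = \al$ since $\be > 0$, giving
$$
\|f \og g\|_{\B^{\al,\mu}_{p,q}} \lesssim \|g\|_{\B^{\be,\mu}_{p_2,\infty}} \, \|f\|_{\B^{\al,\mu}_{p_1,q}}.
$$
For $f \oo g$, apply Theorem~\ref{t:para}(2), whose hypothesis $\al + \be > 0$ is precisely what we have assumed, producing the same estimate as for $f \ol g$ but in $\B^{\al+\be,\mu}_{p,q}$. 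Using \eqref{e:pandq} to pass from the $\infty$ lower index to $q$ where needed, and using Remark~\ref{r:besov-mu} together with $\al + \be \ge \al$ to embed the two terms whose output sits in $\B^{\al+\be,\mu}_{p,q}$ into $\B^{\al,\mu}_{p,q}$, summing the three contributions gives the first inequality in \eqref{e:multipl2}.

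For the second inequality, I would apply the Besov embedding (Proposition~\ref{p:embed}) separately to $f$ and $g$. For $f$, the target space is $\B^{\al,\mu}_{p_1,q}$ and the source space is $\B^{\al',\mu_1}_{p,q}$; since $1/p_1 = \nu/p \le 1/p$ we have $p_1 \ge p$, and one checks $\al' = \al + d(1/p - 1/p_1) = \al + (1-\nu)d/p$ as prescribed, while $\mu_1 = \mu \cdot p/p_1 = \nu \mu$ matches the formula for how the weight index transforms under the embedding. An identical argument applied to $g$ yields the bound by $\|g\|_{\B^{\be',\mu_2}_{p,q}}$, completing the proof. Finally, extending from $C^\infty_c$ to the stated Besov spaces is achieved by the standard density argument, analogous to the one used in Theorem~\ref{t:para}.

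The argument is essentially a bookkeeping exercise combining existing tools; the main thing to watch is that, in each of the three Bony pieces, the regularity index of the first factor is nonzero (ensured by $\al < 0$ for the paraproduct and by $\be > 0$ for the reverse paraproduct) and that $\al + \be > 0$ is used exactly once, for the resonant term. No redistribution of the weight as in Remark~\ref{r:change-the-weights} is required at this stage, because the weight parameter $\mu$ is the same in all factors of the first inequality.
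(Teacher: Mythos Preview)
Your proof is correct and follows essentially the same approach as the paper's own proof: decompose $fg$ via Bony's decomposition, apply Theorem~\ref{t:para} to each of $f \ol g$, $f \oo g$ (landing in $\B^{\al+\be,\mu}_{p,q}$) and $f \og g$ (landing directly in $\B^{\al,\mu}_{p,q}$), embed the first two via Remark~\ref{r:besov-mu}, and then obtain the second inequality from Proposition~\ref{p:embed}. Your write-up is in fact slightly more explicit than the paper's in handling the passage from the lower index $\infty$ to $q$ via \eqref{e:pandq} and in verifying the arithmetic of the weight and regularity exponents in the embedding step.
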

\begin{proof}
We get from Theorem~\ref{t:para} that
$$
\|f \ol g\|_{\Bpq^{\al,\mu}} \lesssim \|f \ol g\|_{\Bpq^{\al+\be,\mu}} \lesssim \|f\|_{B^{\al,\mu}_{p_1,q}} \, \|g\|_{B^{\be,\mu}_{p_2,q}},
$$
and the same estimate holds for $f \oo g$ instead of $f \ol g$. Moreover,
$$
\|f \og g \|_{\Bb} \lesssim \|f\|_{B^{\al,\mu}_{p_1,q}} \, \|g\|_{B^{\be,\mu}_{p_2,q}}.
$$
These estimates lead to the first inequality. The second one follows as before using Proposition~\ref{p:embed}.
\end{proof}
\begin{rem}
	\label{r:multipl2}
	If we assume instead that
	$$
	\frac1p = \frac1{p_1} + \frac1{p_2}  \quad \text{ and }  \quad  \frac{\mu}{p} = \frac{\mu_1}{p_1} + \frac{\mu_2}{p_2} ,
	$$
	then by Remark~\ref{r:change-the-weights}, we also have
	$$
	\|f g\|_{\Bb} \le C \|f\|_{B^{\al,\mu_1}_{p_1,q}} \, \|g\|_{B^{\be,\mu_2}_{p_2,q}} .
	$$
\end{rem}

\subsection{Duality}
Recall that $(\cdot,\cdot)_{\mu}$ denotes the scalar product in $L^2_{\mu}$.
\begin{prop}[duality]
\label{p:dual}
Let $\al \in [0,1)$ and $\mu_0 \in \R$. There exists $C < \infty$ such that the following holds. Let $p,q \in [1,\infty]$, $p'$ and $q'$ be their respective conjugate exponents, and $\mu \le \mu_0$. The mapping $(f,g) \mapsto (f,g)_\mu$ (defined for $f,g \in C^\infty_c$) extends uniquely to a continuous bilinear form on $\Bpq^{\al,\mu} \times \B^{-\al,\mu}_{p',q'}$, and moreover,
\begin{equation}
\label{e:dual}
\Ll|(f, g)_{\mu}\Rr| \le C \,  \|f\|_{\Bpq^{\al,\mu}} \, \|g\|_{\B^{-\al,\mu}_{p',q'}}.
\end{equation}
\end{prop}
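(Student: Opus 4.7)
The plan is to reduce to $f, g \in C^\infty_c$ by density (both Besov spaces are defined as completions of $C^\infty_c$) and then exploit the Littlewood-Paley decomposition to write
\[
(f, g)_\mu = \sum_{j, k \ge -1} (\delta_j f, \delta_k g)_\mu,
\]
splitting this double sum into a near-diagonal part $|j-k|\le 1$ and an off-diagonal part $|j-k|\ge 2$, and treating each separately. The near-diagonal contribution is controlled by weighted Hölder: writing $w_\mu = w_\mu^{1/p}w_\mu^{1/p'}$ yields $|(\delta_j f,\delta_k g)_\mu|\le \|\delta_j f\|_{L^p_\mu}\|\delta_k g\|_{L^{p'}_\mu}$, and summing over the $O(1)$ near-diagonal pairs for each $j$, then applying Hölder to the sequences $(2^{\alpha j}\|\delta_j f\|_{L^p_\mu})_j$ and $(2^{-\alpha k}\|\delta_k g\|_{L^{p'}_\mu})_k$ with conjugate exponents $q,q'$, produces a bound by $C\|f\|_{\Bpq^{\alpha,\mu}}\|g\|_{\B^{-\alpha,\mu}_{p',q'}}$.

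The main step is the off-diagonal part, where I would establish
\[
|(\delta_j f, \delta_k g)_\mu|\le C_M\,2^{-M|j-k|}\,\|\delta_j f\|_{L^p_\mu}\|\delta_k g\|_{L^{p'}_\mu},\qquad |j-k|\ge 2,
\]
for arbitrarily large $M$, uniformly in $\mu\le\mu_0$. Since $\delta<1$ the weight $w_\mu$ lies in $\mcl S$, so its Littlewood-Paley blocks satisfy $\|\delta_l w_\mu\|_{L^\infty}\lesssim 2^{-lN}$ for any $N$; decomposing $w_\mu=\sum_l\delta_l w_\mu$ inside the pairing and observing that the Fourier support of $\delta_j f\cdot\delta_k g\cdot\delta_l w_\mu$ contains $0$ only when the three dyadic scales $2^j,2^k,2^l$ are ``triangle-compatible'' (which for $|j-k|\ge 2$ forces $l\sim\max(j,k)$) extracts the claimed decay. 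Young's inequality for sequences, applied to the translation-invariant kernel $2^{-M|n|+\alpha n}$ (which is $\ell^1$-summable as soon as $M>|\alpha|$), then converts this pointwise bound into the global one.

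The main obstacle is keeping the constants $C_M$ uniform in $\mu\in[0,\mu_0]$: at $\mu=0$ the off-diagonal terms vanish exactly by Fourier orthogonality, but the low-order Schwartz semi-norms of $\widehat{w_\mu}$ blow up as $\mu\to 0^+$ (the Fourier transform concentrates into a Dirac). Only the high-order semi-norms---which in fact tend to zero with $\mu$---enter the estimate when $|j-k|$ is large, so a careful quantitative tracking, or an explicit $O(\mu)$ expansion of $w_\mu$ around $\mu=0$ to handle intermediate values of $|j-k|$, is required to make the above argument uniform.
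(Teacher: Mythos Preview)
Your overall structure is right, and the paper does exactly the same near/off-diagonal split followed by Young's inequality on sequences. But your off-diagonal estimate, as written, has a gap that is more serious than the $\mu\to 0$ uniformity issue you flag.

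When you decompose $w_\mu=\sum_l\delta_l w_\mu$ and isolate the surviving $l\sim \max(j,k)$, you bound the triple integral using $\|\delta_l w_\mu\|_{L^\infty}$. This leaves you with $\int|\delta_j f||\delta_k g|\,\d x$ \emph{without any weight}, i.e.\ with $\|\delta_j f\|_{L^p}\|\delta_k g\|_{L^{p'}}$ rather than the weighted norms $\|\delta_j f\|_{L^p_\mu}\|\delta_k g\|_{L^{p'}_\mu}$ you claim. For $f,g$ only in the weighted spaces these unweighted quantities need not even be finite. What you actually need is the \emph{relative} bound
\[
\bigl\|\delta_l w_\mu\,/\,w_\mu\bigr\|_{L^\infty}\ \lesssim\ 2^{-\al' l}
\qquad\text{uniformly in }\mu\le\mu_0,
\]
so that factoring $\delta_l w_\mu=(\delta_l w_\mu/w_\mu)\cdot w_\mu$ puts the weight back into the H\"older step. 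This is precisely what the paper proves (in the equivalent form $|w_\mu\star\check\phi_k|\lesssim 2^{-\al'k}w_\mu$): since the frequency localisation forces $\check\phi_k$ to have mean zero, one writes
\[
w_\mu\star\check\phi_k(x)=\int\bigl(w_\mu(x-2^{-k}y)-w_\mu(x)\bigr)\check\phi(y)\,\d y,
\]
and uses the pointwise Lipschitz-type control $|w_\mu(x+z)-w_\mu(x)|\lesssim |z|\,w_\mu(x)$ for $|z|\le 1$ (coming from $|\nabla w_\mu|\lesssim \mu\,w_\mu\le\mu_0\,w_\mu$), together with the stretched-exponential decay of $\check\phi$ from the Gevrey construction to handle the tail $|y|\gg 1$. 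This yields $2^{-\al'k}w_\mu$ for any $\al'<1$, which is all that is needed since $\al<1$.

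Note that this also dissolves your uniformity concern: the relevant constant is the Lipschitz constant of $w_\mu$ relative to $w_\mu$, which is $O(\mu)$ and hence bounded for $\mu\le\mu_0$; the Schwartz seminorms of $\widehat{w_\mu}$ never enter. Your proposed ``$O(\mu)$ expansion of $w_\mu$ around $\mu=0$'' is in fact the right idea, but it should be carried out pointwise in physical space on $w_\mu$ itself, not on $\widehat{w_\mu}$.
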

\begin{proof}
It suffices to show that for every $f,g \in C^\infty_c$,
\begin{equation}
\label{e:dual-goal}
\sum_{k,l\ge -1}\Ll|(\dk f, \de_l g)_\mu\Rr| \lesssim   \|f\|_{\Bpq^{\al,\mu}} \, \|g\|_{\B^{-\al,\mu}_{p',q'}}.
\end{equation}
We decompose the proof into three steps.

\medskip 

\noindent \emph{Step 1.} Let $\ov{\mcl{C}}$ denote the annulus $B(0,10/3) \setminus B(0,1/12)$. Let $\phi \in \Gg$ be such that $\phi(0) = 0$ and $\phi = 1$ on the annulus $\ov{\mcl{C}}$, let $\phi_k = \phi(\cdot/2^k)$ and let $\check{\phi}_k = \F^{-1}(\phi_k)$. In this step, we show that for every $l \ge -1$ and every $k \ge l+2$,
\begin{equation}
\label{e:dual-step1}
(\dk f, \de_l g)_\mu = \int \dk f \, \de_l g \, (w_\mu \star \check{\phi}_k)
\end{equation}
By Parseval's identity,
$$
{(2\pi)^d}(\dk f, \de_l g)_\mu = \int \widehat{\dk f} \, \Ll( \widehat{\de_l g} \star \widehat{w_\mu} \Rr) = \int \widehat{\dk f}(\ze_1) \, \widehat{\de_l g}(\ze_2) \, \widehat{w_\mu}(\ze_1-\ze_2) \, \d \ze_1 \, \d \ze_2.
$$
For $l \ge 0$, the integrand on the \rhs\ is zero unless $\ze_1 \in 2^k \mcl{C}^\star$ and $\ze_2 \in 2^l \mcl{C}^\star$, where we recall that $\mcl{C}^\star$ was defined in \eqref{e:def:mclC}. Since we assume that $k \ge l+2$, the integrand being non-zero implies that $\ze_1-\ze_2 \in 2^k\ov{\mcl{C}}$. This conclusion remains valid when $l = -1$. Hence, multiplying the integrand by $\phi_k(\ze_1-\ze_2)$ does not change the value of the integral, and using Parseval's identity again, we obtain \eqref{e:dual-step1}.

\medskip

\noindent \emph{Step 2.} Let $\al' \in (\al,1)$. We now show that
\begin{equation}
\label{e:dual-step2}
\Ll|w_\mu \star \check{\phi}_k\Rr| \lesssim 2^{-\al'k} w_\mu.
\end{equation}
By a change of variables,
$$
w_\mu \star \check{\phi}_k (x) = \int w_\mu(x-y) \check{\phi}_k(y) \, \d y = \int w_\mu \Ll( x-\frac{y}{2^k} \Rr)  \check{\phi}(y) \, \d y.
$$
Since $\phi(0) = 0$, we have $\int \check{\phi} = 0$, and thus
$$
w_\mu \star \check{\phi}_k (x) = \int \Ll(w_\mu \Ll( x-\frac{y}{2^k} \Rr) -w_\mu(x) \Rr) \check{\phi}(y) \, \d y.
$$
Recall that by Proposition~\ref{p:decay}, $\check{\phi}(y) \lesssim e^{-c|y|^{1/\th}}$ for some $c > 0$, and that
$$
\frac{w_\mu \Ll( x-\frac{y}{2^k} \Rr)}{w_\mu(x)} \le w_{-\mu} \Ll( \frac{y}{2^k} \Rr) .
$$
We first analyse the integral over $|y| \ge 2^{l_0}$, where $l_0 \ge 0$ will be chosen later, and decompose it as the union of $2^l \le |y| < 2^{l+1}$, $l \ge l_0$. We obtain
\begin{multline}
\label{e:dual-step22}
\Ll|\int_{|y|\ge 2^{l_0}} \Ll(w_\mu \Ll( x-\frac{y}{2^k} \Rr) -w_\mu(x) \Rr) \check{\phi}(y) \, \d y \Rr| \\
\lesssim w_\mu(x) \sum_{l = l_0}^{+\infty} 2^{dl} \exp \Ll( 2 \mu 2^{\de(l+1-k)} - c2^{l/\th} \Rr) .
\end{multline}
The summand in the sum above is bounded by
$$
\exp \Ll( 2 \mu 2^{\de l} - c 2^{l/\th} + dl \log(2) \Rr) ,
$$
so if we choose $l_0$ to be the smallest integer such that $2^{l_0} \ge k^{2\theta}$, then the sum in the \rhs\ of \eqref{e:dual-step22} tends to $0$ super-exponentially fast as $k$ tends to infinity. In order to show \eqref{e:dual-step2}, it thus suffices to show that
\begin{equation}
\label{e:dual-step23}
\Ll|\int_{|y|\le 2k^{2\th}} \Ll(w_\mu \Ll( x-\frac{y}{2^k} \Rr) -w_\mu(x) \Rr) \check{\phi}(y) \, \d y \Rr| \lesssim 2^{-\al'k} w_\mu(x).
\end{equation}
By the definition of $\wm$, we have $|\wm(x-z) - \wm(x)| \lesssim |z|  \wm(x)$ uniformly over $x \in \R^d$ and $z$ such that $|z| \le 1$. The \lhs\ of \eqref{e:dual-step23} is thus bounded by a constant times
$$
\frac{k^{2\th}}{2^k} \, \wm(x) \, \|\check{\phi}\|_{L^1}
$$
with $\|\check{\phi}\|_{L^1} < \infty$, so the proof of \eqref{e:dual-step2} is complete.

\medskip 

\noindent \emph{Step 3.} Combining the results of the two previous steps and H\"older's inequality, we obtain that for every $l \ge -1$ and every $k \ge l+2$,
\begin{align}
\Ll|(\dk f, \de_l g)_\mu\Rr| & \lesssim  2^{-\al' k} \int|\dk f \,\de_l g| \, w_\mu \notag \\
& \lesssim  2^{-\al' k} \|\dk f\|_{\Lm^p} \, \|\de_l g\|_{\Lm^{p'}}.
\label{e:even-weight}
\end{align}
For any $k$ and $l$, we also have $\Ll|(\dk f, \de_l g)_\mu\Rr| \le \|\dk f\|_{\Lm^p} \, \|\de_l g\|_{\Lm^{p'}}$, so for any $k$ and $l$, we have
\begin{equation}
\label{e:dual-step3-0}
\Ll|(\dk f, \de_l g)_\mu\Rr| \lesssim 2^{-\al'|k-l|}\, \|\dk f\|_{\Lm^p} \, \|\de_l g\|_{\Lm^{p'}}.
\end{equation}
Let us write $u_k = \1_{k\ge -1} 2^{-\al k} \, \|\dk g\|_{\Lm^{p'}}$ , $v_k =  2^{-(\al'-\al)|k|}$, and $u\star v$ for the convolution of the sequences $u$ and $v$ (indexed by $\Z$).
By H\"older's inequality,
\begin{eqnarray*}
\sum_{k,l\ge -1} \Ll|(\dk f, \de_l g)_\mu\Rr| & \lesssim & \sum_{k\ge -1} 2^{\al k} \|\dk f\|_{\Lm^p} \sum_{l \ge -1} 2^{-\al l} \|\de_l g\|_{\Lm^{p'}}\,  2^{\al(k-l) - \al'|k-l|}\\
& \lesssim & \Ll\| \Ll( 2^{\al k} \|\dk f\|_{\Lm^p} \Rr)_{k \ge -1}  \Rr\|_{\ell^q} \, \Ll\| u \star v \Rr\|_{\ell^{q'}} .
\end{eqnarray*}
By the standard Young inequality for sequences, $\Ll\| u \star v \Rr\|_{\ell^{q'}}\le \Ll\| u \Rr\|_{\ell^{q'}}\Ll\| v \Rr\|_{\ell^{1}}$, and $\Ll\| v \Rr\|_{\ell^{1}} < \infty$, so we obtain \eqref{e:dual-goal}.
\end{proof}
\begin{rem}
	\label{r:dual}
	Although this will not be used below, note that one can spread the weights unevenly in \eqref{e:dual} if desired. Using the modified Hölder inequality \eqref{e:modif-Hold} in \eqref{e:even-weight} shows that for $\mu_1$, $\mu_2 > 0$ such that 
	$$
	\mu = \frac{\mu_1}{p} + \frac{\mu_2}{p'},
	$$
	one has
	$$
	\Ll|(f, g)_{\mu}\Rr| \le C \,  \|f\|_{\Bpq^{\al,\mu_1}} \, \|g\|_{\B^{-\al,\mu_2}_{p',q'}}.
	$$
\end{rem}

\subsection{Besov and Sobolev norms}

We now show that for every $\al \in (0,1)$, one can estimate the Besov norm $\| \, \cdot \,\|_{\B^{\al,\mu}_{1,1}}$ in terms of $\|f\|_{\Lm^1}$ and $\|\nabla f \|_{\Lm^1}$.

\begin{prop}[Estimate in terms of $\na f$]
\label{p:estimate}
Let $\alpha \in (0,1)$ and $\mu_0 > 0$. There exists $C < \infty$ such that for every $\mu \le \mu_0$, 
$$
C^{-1} \| f\|_{\B^{\al,\mu}_{1,1}} \le  \| f \|_{\Lm^1}^{1- \alpha} \, \| \na f \|_{\Lm^1}^\al  + \| f \|_{\Lm^1}\;.
$$
\end{prop}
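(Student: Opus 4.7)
The argument is a Littlewood-Paley interpolation between $L^1_\mu$ and $W^{1,1}_\mu$, in the spirit of the classical (unweighted) proof but adapted to the weighted setting via Theorem~\ref{t:young} and Lemma~\ref{l:scaling}. Fix an integer $k_0 \ge 0$ to be optimized at the end and split
$$
\|f\|_{\B^{\al,\mu}_{1,1}} = \sum_{k = -1}^{k_0} 2^{\al k}\,\|\dk f\|_{\Lm^1} + \sum_{k > k_0} 2^{\al k}\,\|\dk f\|_{\Lm^1}.
$$
For the low-frequency block I write $\dk f = \eta_k \ast f$ and apply the weighted Young inequality (Theorem~\ref{t:young}, with $v = w_{-\mu}$, $w = \wm$, and $p = q = r = 1$), yielding $\|\dk f\|_{\Lm^1} \le \|\eta_k\|_{L^1_{-\mu}}\,\|f\|_{\Lm^1}$. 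Lemma~\ref{l:scaling} (with $p = 1$, so $\la^{d/p'} = 1$) bounds $\|\eta_k\|_{L^1_{-\mu}}$ uniformly in $k \ge 0$ and $\mu \le \mu_0$, and the $k = -1$ term is identical. Summing produces a contribution of order $2^{\al k_0}\,\|f\|_{\Lm^1}$.

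For the high-frequency block I extract one full derivative, using crucially that $\chi_k$ vanishes on a neighbourhood of the origin for $k \ge 0$. Define
$$
\psi(\ze) = \frac{-i\ze}{|\ze|^2}\,\chi(\ze),
$$
extended by $0$ outside $\supp\chi$. Since $\chi \equiv 0$ on $\{|\ze| < 3/4\}$ and $-i\ze/|\ze|^2$ is real-analytic on a neighbourhood of $\supp\chi \subset \mcl{C}^\star$, stability of Gevrey classes under multiplication (Proposition~\ref{p:gevrey-mult}) gives $\psi \in \Gg$. As $\psi(\ze)\cdot(i\ze) = \chi(\ze)$, the rescaling $\psi_k(\ze) := 2^{-k}\,\psi(\ze/2^k)$ satisfies $\chi_k\,\hat f = \psi_k\cdot\widehat{\na f}$, whence
$$
\dk f = 2^{-k}\,\bigl(\F^{-1}\psi\bigr)_k \ast \na f \qquad (k \ge 0),
$$
with componentwise convolution and dot product. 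Applying Theorem~\ref{t:young} to each coordinate and invoking Lemma~\ref{l:scaling} once more for the Gevrey function $\F^{-1}\psi$ (a vector-valued Schwartz function) gives $\|\dk f\|_{\Lm^1} \lesssim 2^{-k}\,\|\na f\|_{\Lm^1}$, uniformly in $k \ge 0$ and $\mu \le \mu_0$. Summing, which converges thanks to $\al < 1$, produces a contribution of order $2^{(\al-1)k_0}\,\|\na f\|_{\Lm^1}$.

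Combining the two bounds gives $\|f\|_{\B^{\al,\mu}_{1,1}} \lesssim 2^{\al k_0}\,\|f\|_{\Lm^1} + 2^{(\al-1)k_0}\,\|\na f\|_{\Lm^1}$. If $\|\na f\|_{\Lm^1} \le \|f\|_{\Lm^1}$ I take $k_0 = 0$ and absorb both contributions into the $\|f\|_{\Lm^1}$ term on the right-hand side of the target inequality. Otherwise I let $k_0$ be the largest non-negative integer with $2^{k_0} \le \|\na f\|_{\Lm^1}/\|f\|_{\Lm^1}$; this balances the two terms and yields exactly $\|f\|_{\Lm^1}^{1-\al}\,\|\na f\|_{\Lm^1}^\al$. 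The only delicate step is the derivative extraction in the high-frequency block: one must check that $\psi \in \Gg$ so that Lemma~\ref{l:scaling} provides a \emph{uniform-in-$k$} weighted-$L^1$ bound on $(\F^{-1}\psi)_k$, which is precisely where the choice \eqref{e:fixdeth} of $\de < 1$ and $\th > 1$ plays a role. Everything else is a straightforward adaptation of the classical (unweighted) interpolation argument.
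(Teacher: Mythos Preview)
Your proof is correct and follows the same overall strategy as the paper's: split at a frequency threshold, bound low frequencies via weighted Young plus Lemma~\ref{l:scaling}, bound high frequencies by extracting a derivative, then optimise. The low-frequency block and the final optimisation are identical to the paper's treatment.

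The one genuine difference is how you extract the derivative in the high-frequency block. The paper uses that $\int \eta_k = 0$ to write $\dk f(x) = \int \eta_k(y)\bigl(f(x-y)-f(x)\bigr)\,\d y$, then invokes the mean-value inequality $|f(x-y)-f(x)| \le |y|\int_0^1|\na f(x-ty)|\,\d t$; because the weight $w_{-\mu}$ is unbounded at infinity, this forces a split into $|y|\le 1$ and $|y|>1$, the latter contributing an extra $\|f\|_{\Lm^1}$ term in the high-frequency bound. Your Fourier-multiplier approach --- constructing $\psi \in \Gg$ with $\psi(\ze)\cdot i\ze = \chi(\ze)$ and writing $\dk f = 2^{-k}(\F^{-1}\psi)_k \ast \na f$ --- is cleaner: it avoids the real-space case distinction entirely and yields the sharper bound $\|\dk f\|_{\Lm^1} \lesssim 2^{-k}\|\na f\|_{\Lm^1}$ directly. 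The paper's route is slightly more elementary (no need to verify the Gevrey regularity of $\psi$), but yours buys a shorter argument once Lemma~\ref{l:scaling} is in hand.
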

\begin{rem}
	\label{r:estimate}
	In particular, there exists $C < \infty$ such that
	$$
	C^{-1} \|f\|_{\B^{\alpha,\mu}_{1,1}} \le  \|\nabla f\|_{L^1_\mu} + \|f\|_{L^1_\mu}.
	$$
\end{rem}
\begin{proof}[Proof of Proposition~\ref{p:estimate}]
It suffices to prove the result for $f \in C^\infty_c$. For $\ell \ge 0$, we define the projectors
\begin{align*}
 \Pl f = \sum_{-1 \leq k \leq \ell} \delta_k f \qquad \text{and} \qquad \Plp f = \sum_{k \ge \ell +1} \delta_k f \;,
\end{align*}
so that $f = \Pl f + \Plp f$, and by the triangle inequality,
\begin{align*}
\| f\|_{\B^{\al,\mu}_{1,1}} \leq  \| \Pl f \|_{\B^{\al,\mu}_{1,1}} + \| \Plp f \|_{\B^{\al,\mu}_{1,1}} \;.
\end{align*}
For the first term, recalling \eqref{e:def:etak}, we get 
\begin{align}
\| \Pl f \|_{\B^{\al,\mu}_{1,1}} &= \sum_{-1 \leq k \leq \ell } 2^{k \alpha} \| \delta_k f \|_{L^1_\mu}  \notag
=  \sum_{-1 \leq k \leq \ell } 2^{k \alpha}  \| \eta_k \star f \|_{\Lm^1}\\
&\le  \sum_{-1 \leq k \leq \ell } 2^{k \alpha}  \| \eta_k \|_{L_{-\mu}^1} \, \|  f \|_{\Lm^1} \ls 2^{\ell \alpha} \| f\|_{\Lm^1}\;, \label{e:goodBound1}
\end{align}
where we used Theorem~\ref{t:young} (Young's inequality) and then Lemma~\ref{l:scaling}.
On the other hand, using the fact that for $k \geq 0$, the function $\eta_k$ has vanishing integral, we get
\begin{align*}
\| \Plp f \|_{\B^{\al,\mu}_{1,1}} &= \sum_{ k\geq \ell+1 } 2^{k \alpha} \| \delta_k f \|_{L^1_\mu} \\
&=  \sum_{k\ge \ell+1 } 2^{k \alpha}  \int_{\R^d} \Big|\int_{\R^d} \eta_k(y) \big(f(x-y) -f(x) \big) \,dy \Big| \, e^{-\mu |x|_*^\de} \, dx  \\
&  \leq  \sum_{k\ge \ell+1} 2^{k \alpha}  2^{-k} \int_{\R^d} | 2^{k} y|  \eta_k(y) \Big( \int_{\R^d} \frac{|f(x-y) -f(x)|}{|y|} \,e^{-\mu |x|_*^\de} \, dx \Big)  \, dy  \;.
\end{align*}
By \eqref{e:w-moderate},
$$
\int f(x-y) e^{-\mu |x|^\de} \, dx \le e^{\mu |y|_*^\de} \, \|f\|_{\Lm^1},
$$
and as a consequence,
$$
\int_{|y| > 1} | 2^{k} y|  \eta_k(y) \Big( \int_{\R^d} \frac{|f(x-y) -f(x)|}{|y|} \,e^{-\mu |x|_*^\de} \, dx \Big)  \, dy  \; \le  2 \| \, |2^k \cdot  | \, \eta_k \|_{L_{-\mu}^1} \, \|f\|_{\Lm^1},
$$
with $\| \, |2^k \cdot  | \, \eta_k \|_{L_{-\mu}^1} \ls 1$ by Remark~\ref{r:scaling-poly}.
Moreover, for all $x,y \in \R^d$,
\begin{align*}
\frac{|f(x-y) -f(x)|}{|y|}  =& \frac{1}{|y|} \Big|\int_{0}^1 \nabla f(x -ty) \cdot y \,  dt\Big| \\
\leq& \int_{0}^1 \big| \nabla f(x -ty)\big|  \,  dt,
\end{align*}
so using that $\| \, |2^k \cdot  | \, \eta_k \|_{L^1} \lesssim 1$, 
$$
\int_{|y| \le 1} | 2^{k} y|  \eta_k(y) \Big( \int_{\R^d} \frac{|f(x-y) -f(x)|}{|y|} \,e^{-\mu |x|_*^\de} \, dx \Big)  \, dy  \; \ls   \, \|\na f\|_{\Lm^1}.
$$
To sum up, we have shown that
\begin{align}
\| \Plp f \|_{\B^{\al,\mu}_{1,1}} & \ls  \sum_{k\ge \ell+1 } 2^{k (\alpha-1)}    \left( \| f \|_{\Lm^1 } + \|\na f\|_{\Lm^1} \right) \ls 2^{\ell (\alpha-1)} \left( \| f \|_{\Lm^1 } + \|\na f\|_{\Lm^1}. \right), \label{e:goodBound2}
\end{align}
and thus
$$
\| f\|_{\B^{\al,\mu}_{1,1}} \ls 2^{\ell \al} \| f \|_{\Lm^1 } + 2^{\ell (\alpha-1)} \left( \| f \|_{\Lm^1 } + \|\na f\|_{\Lm^1} \right).
$$
The result then follows by optimizing over $\ell$.
\end{proof}

\subsection{Compact embedding} 
\label{ss:comp}
Finally, we prove a compactness embedding between Besov spaces. An efficient way to proceed is to show that it follows from a similar result in unweighted Besov spaces. We first need to ``project'' onto unweighted spaces.

\begin{prop}[Projection on unweighted Besov spaces]
\label{p:embed-to-unweighted}
Let $\al \in \R$, $p,q \in [1,\infty]$ and $\ph \in C^\infty_c$. The mapping $f \mapsto \ph f$ extends to a continuous linear map from $\Bb$ to $\B^{\al,0}_{p,q}$.
\end{prop}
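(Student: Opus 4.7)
The plan is to work with a fixed $f \in C^\infty_c$ and prove the bound $\|\varphi f\|_{\B^{\al,0}_{p,q}} \lesssim \|f\|_{\Bb}$, then extend to general $f$ by density. The main tool will be the paraproduct decomposition $\varphi f = \varphi \ol f + \varphi \oo f + \varphi \og f$, estimating each piece in the \emph{unweighted} Besov space.

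The technical heart of the argument is the following pair of pointwise decay estimates, which exploit that $\varphi$ is compactly supported and that the Fourier-localisation kernels have rapid decay. Writing $S_{k-1}\varphi = \tilde\eta_{k-1} \star \varphi$ and $\delta_k \varphi = \eta_k \star \varphi$, with $\tilde\eta, \eta \in \Gg$-related, Proposition~\ref{p:decay} gives $|\eta(z)|, |\tilde\eta(z)| \lesssim e^{-c|z|^{1/\th}}$. Since $\varphi$ is supported in some ball $B(0,R)$, for $|x| \ge 2R$ we control the convolution pointwise by $|\eta_k(x-y)|$ at distance $|x-y|\ge|x|/2$, which yields super-polynomial spatial decay. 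Because by \eqref{e:fixdeth} we have $\delta < 1/\theta$, this decay dominates the growth of $w_\mu^{-1/p}= e^{\mu|x|_*^\de/p}$; careful accounting of the $2^{kd}$ prefactor against the super-exponential $e^{-c 2^{k/\th}|x|^{1/\th}}$ gives
\begin{equation*}
M_1 := \sup_{k \ge -1} \|S_{k-1}\varphi \cdot w_\mu^{-1/p}\|_{L^\infty} < \infty, \qquad \|\delta_k \varphi \cdot w_\mu^{-1/p}\|_{L^\infty} \lesssim_N 2^{-kN} \quad (k \ge 0,\ N \in \N),
\end{equation*}
the second using in addition the vanishing moments of $\eta$ ($k \ge 0$) together with smoothness of $\varphi$ in the region $|x| \le 2R$. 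I expect establishing these uniform bounds, and in particular reconciling the super-exponential decay with the $2^{kd}$ blow-up from the dyadic rescaling, to be the main obstacle.

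Given these bounds, the three paraproduct pieces are handled as follows. For $\varphi \ol f = \sum_k S_{k-1}\varphi \cdot \delta_k f$, each summand has Fourier support in an annulus of scale $2^k$, so the unweighted series criterion (Lemma~\ref{l:crit}) applies, and Hölder's inequality gives $\|S_{k-1}\varphi\cdot\delta_k f\|_{L^p}\le M_1\,\|\delta_k f\|_{L^p_\mu}$, from which the $\B^{\al,0}_{p,q}$ bound follows. The symmetric piece $\varphi \og f = \sum_k S_{k-1} f \cdot \delta_k \varphi$ is treated identically, using the super-polynomial decay of $\|\delta_k\varphi\cdot w_\mu^{-1/p}\|_{L^\infty}$ combined with $\|S_{k-1} f\|_{L^p_\mu}\lesssim 2^{-k(\al\wedge 0)}\|f\|_{\Bb}$; taking $N$ larger than $|\al|+d$ makes the $\ell^q$ norm finite trivially.

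For the resonant piece $\varphi \oo f = \sum_{|j-k|\le 1} \delta_j\varphi\cdot\delta_k f$, each block $g_k$ has Fourier support only in a \emph{ball} of scale $2^k$, so Lemma~\ref{l:crit} is not directly available. When $\al > 0$ we instead invoke Lemma~\ref{l:crit2}. When $\al \le 0$, the bound $\|g_k\|_{L^p}\lesssim_N 2^{-kN}\|f\|_{\Bb}$ (valid for every $N$, thanks to the second decay estimate) shows that $\varphi \oo f \in \B^{\be,0}_{p,1}$ for \emph{every} $\be \in \R$, and we conclude via the embedding $\B^{\be,0}_{p,1} \hookrightarrow \B^{\al,0}_{p,q}$ for $\be > \al$ (Remark~\ref{r:embed-q}). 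Summing the three contributions and extending by density completes the proof.
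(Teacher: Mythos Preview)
Your approach is essentially the same as the paper's. Both use the Bony decomposition $\varphi f = \varphi \ol f + \varphi \oo f + \varphi \og f$, and both hinge on the same pointwise decay estimates for $S_{k-1}\varphi$ and $\delta_k\varphi$ coming from Proposition~\ref{p:decay} and the compact support of $\varphi$; the paper phrases these as $|S_k\varphi|(x)\lesssim e^{-c|x|^{1/\th}}$ and $|\delta_k\varphi|(x)\lesssim 2^{kd}e^{-c|2^kx|^{1/\th}}$ for $|x|$ large, which is equivalent to your formulation in terms of $\|\,\cdot\, w_\mu^{-1/p}\|_{L^\infty}$ once one uses $\delta<1/\theta$. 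The only cosmetic difference is that you make the treatment of the resonant term more explicit, splitting into $\al>0$ (direct use of Lemma~\ref{l:crit2}) and $\al\le 0$ (super-polynomial decay forces membership in $\B^{\be,0}_{p,1}$ for any $\be>0$, then embed), whereas the paper simply asserts that ``the same conclusion also holds for $f\mapsto\varphi\oo f$''; your version is arguably clearer on this point.
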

\begin{proof}
The proof is similar to that of Corollaries~\ref{c:multipl1} and \ref{c:multipl2}. We need a modification of Theorem~\ref{t:para} that allows the destination space to be unweighted. 

Recall the definition of $\eta$ and $(\eta_k)_{k \ge 0}$ in \eqref{e:def:etak}.
By Proposition~\ref{p:decay}, 
$$
|\eta|(x) \lesssim e^{-c |x|^{1/\th}}.
$$
Let $M$ be such that the support of $\ph$ is contained in $[-M/2,M/2]^d$. Since for every $k \ge 0$,
$$
\de_k \ph(x) = 2^{kd} \int \ph(y) \eta(2^{k}(x-y)) \, \d y
$$
with $\ph$ compactly supported, we obtain that for every $k \ge 1$ and $|x| \ge M$ (redefining $c$ to be smaller),
\begin{equation}
\label{e:dec-dek}
|\de_k \ph|(x) \lesssim 2^{kd} e^{-c |2^k x|^{1/\th}},
\end{equation}
and thus for every $|x| \ge M$,
$$
|S_{k} \ph|(x) \le \sum_{j < k} |\de_j \ph|(x) \lesssim e^{-c |x|^{1/\th}}.
$$
By Lemma~\ref{l:smooth}, we also know that $\|S_k \ph\|_{L^\infty}$ is bounded uniformly over $k$. Hence, we can mimic the proof of Theorem~\ref{t:para} but with \eqref{e:even-weights} replaced by
$$
 \|S_{k-1} \ph \ \dk f\|_{L^p} \lesssim \| \dk f\|_{L^{p}_\mu},
$$
where the implicit constant is uniform over $k$. We thus obtain that the mapping $f \mapsto \ph \ol f$ extends to a continuous linear map from $\Bb$ to $\B^{\al,0}_{p,q}$. Similarly, for every $\beta \in \R$,
$$
 \|S_{k-1} f \ \dk \ph\|_{L^p} \lesssim 2^{-\beta k} \| S_{k-1} f\|_{L^{p}_\mu},
$$
where we used \eqref{e:dec-dek} and the fact, ensured by Lemma~\ref{l:smooth}, that $\|\dk \ph\|_{L^\infty} \lesssim 2^{-\beta k}$ (and where the implicit constants depend on~$\beta$). Hence, for every $\be \in \R$, the mapping $f \mapsto f \ol \ph$ extends to a continuous linear map from $\Bb$ to $\B^{\al+\be,0}_{p,q}$. Moreover, the same conclusion also holds for the mapping $f \mapsto \ph \oo f$,  so the proof is complete.
\end{proof}

We will also need the following result.

\begin{lem}[Besov norms on large scales]
\label{l:compact-support} Let $\ph \in C^\infty_c$, and for $m \ge 1$, let $\ph_m = \ph(\cdot/m)$. For every $\al \in \R$, $p,q \in [1,\infty]$ and $\mu > 0$, we have
$$
\sup_{m \ge 1} \|\ph_m\|_{\Bb} < \infty.
$$
Moreover, if the support of $\ph$ is contained in an annulus and $p \neq \infty$, then
$$
\|\ph_m\|_{\Bb} \xrightarrow[m \to \infty]{} 0. 
$$
\end{lem}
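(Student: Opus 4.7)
The plan is to estimate each Littlewood--Paley block $\dk\ph_m$ uniformly in $m\ge 1$ by exploiting two independent properties of the weight $\wm$: first, that it is globally integrable on $\R^d$; second, that it decays faster than any polynomial at infinity. Following the representation used in the proof of Lemma~\ref{l:smooth}, for $k\ge 0$ and any integer $n\ge 0$ I would write
$$
\dk\ph_m = 2^{k(d-2n)}\,\widetilde\eta_n(2^k\cdot) \star (-\Delta)^n\ph_m,
$$
where $\widetilde\eta_n=(-\Delta)^{-n}\eta$ is still the inverse Fourier transform of a Gevrey function supported in an annulus. Theorem~\ref{t:young} combined with Lemma~\ref{l:scaling} then yields, uniformly in $m\ge 1$, $k\ge 0$ and $\mu$ in a bounded set, the key estimate
$$
\|\dk\ph_m\|_{L^p_\mu} \lesssim 2^{-2nk}\,\|(-\Delta)^n\ph_m\|_{L^p_\mu},
$$
together with the simpler bound $\|\de_{-1}\ph_m\|_{L^p_\mu}\lesssim\|\ph_m\|_{L^p_\mu}$ for $k=-1$.

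For the uniform-boundedness statement, I would then use $(-\Delta)^n\ph_m = m^{-2n}[(-\Delta)^n\ph](\cdot/m)$ together with the elementary observation that for every $\psi\in C^\infty_c$,
$$
\|\psi(\cdot/m)\|_{L^p_\mu}\le \|\psi\|_{L^\infty}\Ll(\int_{\R^d}\wm(x)\,\d x\Rr)^{1/p}
$$
uniformly in $m\ge 1$ (reducing trivially to $\|\psi\|_{L^\infty}$ when $p=\infty$), since $\psi$ is bounded and $\wm$ is integrable. Hence $\|(-\Delta)^n\ph_m\|_{L^p_\mu}\lesssim 1$ uniformly, and choosing $n$ so that $2n>\al$ makes the sequence $(2^{\al k}\|\dk\ph_m\|_{L^p_\mu})_{k\ge -1}$ dominated by a fixed $\ell^q$ sequence, from which the first assertion follows at once.

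For the decay statement, the hypothesis $\supp\ph\subset\{r\le|x|\le R\}$ now forces $\supp\ph_m\subset\{mr\le|x|\le mR\}$, where $\wm(x)\le e^{-\mu(mr)^\de}$. Rerunning the above estimate on $\|\psi(\cdot/m)\|_{L^p_\mu}$ for $\psi$ supported in the annulus of $\ph$ gives the sharpened bound $\lesssim m^{d/p}e^{-\mu(mr)^\de/p}$, hence
$$
\|\dk\ph_m\|_{L^p_\mu}\lesssim 2^{-2nk}\,m^{d/p}\,e^{-\mu(mr)^\de/p},
$$
and summing (or taking the supremum) over $k$ with $2n>\al$ produces $\|\ph_m\|_{\Bb}\lesssim m^{d/p}e^{-\mu(mr)^\de/p}$, which tends to $0$ as $m\to\infty$ since $\de>0$ and $p<\infty$ beat the polynomial prefactor. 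The main subtlety to keep in mind is that the weight $\wm$ plays two genuinely different roles: for the first assertion it is used merely as a globally integrable majorant taming the growing support of $\ph_m$, whereas for the second one one must activate its super-polynomial decay at infinity. This is also precisely why $p=\infty$ must be excluded from the second statement: in that case $\|\cdot\|_{L^\infty_\mu}=\|\cdot\|_{L^\infty}$ is scale invariant under $\ph\mapsto\ph(\cdot/m)$, so the weight disappears from the norm.
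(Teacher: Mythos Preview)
Your proof is correct and follows essentially the same approach as the paper: both rely on the representation $\dk\ph_m = 2^{k(d-2n)}\widetilde\eta_n(2^k\cdot)\star(-\Delta)^n\ph_m$ together with the weighted Young inequality to reduce matters to uniform bounds (resp.\ convergence to zero) of $\|(-\Delta)^n\ph_m\|_{L^p_\mu}$, which in turn follow from the integrability (resp.\ decay at infinity) of $w_\mu$. Your version is slightly more quantitative in the second part, producing an explicit stretched-exponential rate rather than the paper's softer observation that $\int_{|x|\ge \eps m} w_\mu(x)\,\d x\to 0$, but the argument is the same.
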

\begin{proof}
The identity  $\| \ph_m \|_{L^\infty} = \| \ph \|_{L^\infty}$ is obvious, and 
for $p < \infty$ we have
\begin{align*}
\| \ph_m \|_{L^p_\mu} = \int_{\R^d} |\ph(x/m)|^p \, w_\mu(x) \d x \leq \| \ph \|_{L^\infty} \int_{\R^d} w_{\mu}(x) \d x \;,
\end{align*}
and in particular, the left-hand side is bounded uniformly over $m$. Similarly, for any multi-index $k \in \N^d$, we have $\| \partial^k \ph_m \|_{L^\infty} = \frac{1}{m^{|k|}}\| \partial^k \ph \|_{L^\infty}$, so that $\|\partial^k \ph_m \|_{\Lm^p}$ is bounded uniformly over $m$. 

If in addition, $p < \infty$ and the support of $\ph$ does not intersect $B(0,\eps)$ (for some $\eps > 0$), we get in the same way that
\begin{align*}
\| \partial^k \ph_m \|_{L^p_\mu} \leq \frac{1}{m^{|k|}} \| \partial_k \ph \|_{L^\infty}   \int_{|x| \ge \eps m} w_{\mu}(x) \d x \xrightarrow[m \to \infty]{} 0 \;.
\end{align*}
By \eqref{bound-dek-smooth}, Proposition~\ref{p:decay} and Theorem~\ref{t:young}, we have
$$
\|\de_k \ph_m\|_{L^p_\mu} \lesssim 2^{k(d-2n)} \|(-\Delta)^n \ph_m\|_{\Lm^p},
$$
and we thus obtain the announced results.
\end{proof}
\begin{prop}[Compact embedding]
\label{p:compact}
Let $\al > \al' \in \R$, $p,q \in [1,\infty]$ with $p \neq \infty$, and $\mu' > \mu > 0$. The embedding $\Bb \subset \B^{\al',\mu'}_{p,1}$ is compact.
\end{prop}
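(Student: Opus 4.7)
My strategy is to exploit the two gains in the embedding separately: the regularity gap $\al' < \al$ will control the high-frequency tail uniformly in $n$, while the strict weight inequality $\mu' > \mu$ will yield compactness of each individual Littlewood-Paley block $\delta_k f_n$ in $L^p_{\mu'}$. A diagonal extraction then concludes. Let $(f_n)$ be a bounded sequence in $\B^{\al,\mu}_{p,q}$, with $M := \sup_n \|f_n\|_{\B^{\al,\mu}_{p,q}}$; since $\B^{\al',\mu'}_{p,1}$ is complete, it suffices to extract a Cauchy subsequence there.

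First, using $w_{\mu'} \le w_\mu$ together with H\"older's inequality in $\ell^q$, for every $K \ge -1$,
$$
\sum_{k \ge K} 2^{\al' k}\,\|\delta_k f_n\|_{L^p_{\mu'}}
\le \sum_{k \ge K} 2^{(\al'-\al)k}\bigl(2^{\al k}\|\delta_k f_n\|_{L^p_{\mu}}\bigr)
\le \bigl\|(2^{(\al'-\al)k})_{k\ge K}\bigr\|_{\ell^{q'}} M,
$$
and this vanishes as $K \to \infty$, since $\al' < \al$. This gives uniform control of the high-frequency part.

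Second, for each fixed $k \ge -1$, I claim $(\delta_k f_n)_n$ is precompact in $L^p_{\mu'}$, which I verify via the Kolmogorov-Riesz-Fr\'echet criterion. Boundedness in $L^p_{\mu'}$ is inherited from $L^p_\mu$. Tightness at infinity is the crucial place where $\mu' > \mu$ enters:
$$
\int_{|x|\ge R} |\delta_k f_n|^p\,w_{\mu'}
\le e^{-(\mu'-\mu) R^\de}\,\|\delta_k f_n\|_{L^p_\mu}^p,
$$
which vanishes uniformly in $n$ as $R \to \infty$. Equicontinuity of translates on each ball $B(0,R)$ follows from Bernstein's lemma (Lemma~\ref{l:bernstein}), which bounds $\|\na \delta_k f_n\|_{L^p_\mu}$ in terms of $\|\delta_k f_n\|_{L^p_\mu}$, combined with the fact that $w_\mu$ is bounded above and below on $B(0,R)$. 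A diagonal extraction over $k$ then yields a subsequence, still denoted $(f_n)$, with $\delta_k f_n$ convergent in $L^p_{\mu'}$ for every $k \ge -1$.

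Given $\eps > 0$, I choose $K$ making the tail estimate at most $\eps/2$ for every $n$, and then $N$ large enough that $\sum_{k<K} 2^{\al' k}\|\delta_k f_n - \delta_k f_{n'}\|_{L^p_{\mu'}} \le \eps/2$ for $n, n' \ge N$; this shows the extracted subsequence is Cauchy in $\B^{\al',\mu'}_{p,1}$. The main obstacle is the block-wise compactness in $L^p_{\mu'}$, where the tightness from the weight improvement must be combined cleanly with the local equicontinuity from Bernstein's inequality; the rest is a routine tail plus diagonal argument. An alternative route, closer to the hint given in the paper, would replace this step by cutting $f_n$ with $\psi_m \in C^\infty_c$: on the one hand, $\psi_m f_n$ is bounded in $\B^{\al,0}_{p,q}$ by Proposition~\ref{p:embed-to-unweighted} and supported in a fixed compact set, so classical Rellich-type compactness applies in the unweighted Besov spaces; on the other, the tail $(1-\psi_m)f_n$ can be controlled via a multiplicative inequality (Corollary~\ref{c:multipl1} together with Remark~\ref{r:multipl1}), exploiting the exponential smallness of $\|1-\psi_m\|$ in suitably weighted norms.
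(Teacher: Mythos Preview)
Your proof is correct and takes a genuinely different route from the paper's. The paper localises \emph{in space}: it multiplies $f_n$ by a cutoff $\ph_m = \ph(\cdot/m)$, invokes Proposition~\ref{p:embed-to-unweighted} to land in an unweighted Besov space, and then cites the classical compact embedding from \cite[Theorem~2.94]{bcd}; the remaining work is to show that the limits $f^{(m)}$ form a Cauchy sequence and that $\ph_m f_n \to f_n$ uniformly in $n$, both via the multiplicative inequalities and Lemma~\ref{l:compact-support} applied to $1-\ph_m$. You instead localise \emph{in frequency}: you control the high-$k$ tail uniformly via the regularity gap $\al'<\al$, and for each fixed block $\delta_k f_n$ you run Kolmogorov--Riesz--Fr\'echet directly in $L^p_{\mu'}$, with tightness coming from the strict inequality $\mu'>\mu$ and equicontinuity from Bernstein's lemma.

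Your approach is more self-contained --- it does not rely on the unweighted compact embedding as a black box --- and arguably more elementary, since the three ingredients (tail bound, tightness, Bernstein) are already available in the paper. The paper's route, on the other hand, is more modular and makes transparent that the weighted result is a corollary of the unweighted one plus decay of the weight; it also avoids having to verify the Kolmogorov--Riesz--Fr\'echet hypotheses in a weighted space, which, while routine, requires a little care (e.g.\ passing translations through the weight via the $v$-moderate property, as you implicitly do). Your closing paragraph correctly identifies this second route as the one the paper actually takes.
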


\begin{proof}
Let $(f_n)_{n \in \N}$ be a sequence that is bounded in $\Bb$. We wish to find a subsequence that converges in $\B^{\al',\mu'}_{p,1}$. 

Let $\ph \in C^\infty_c$ be such that
$$
\ph = 1 \text{ on } B(0,1), \quad \ph = 0 \text{ outside of } B(0,2),
$$
let $\ph_m = \ph(\cdot /m)$, $\al_1 < \al$ and $\mu_1 > \mu$. For every fixed $m$, up to extracting a subsequence (which we keep denoting by $(f_n)$ for convenience), there exists $f^{(m)} \in \B^{\al_1,0}_{p,1}$ such that 
$$
\|\ph_m f_n - f^{(m)}\|_{\B^{\al_1,0}_{p,1}} \xrightarrow[n \to \infty]{} 0.
$$
Indeed, this follows from Lemma~\ref{p:embed-to-unweighted} and \cite[Theorem~2.94]{bcd}. In particular, we have $f^{(m)} \in \B^{\al_1, \mu_1}_{p,1}$ and
$$
\|\ph_m f_n - f^{(m)}\|_{\B^{\al_1,\mu_1}_{p,1}} \xrightarrow[n \to \infty]{} 0.
$$
The convergence along a subsequence holds for any given $m$, and thus up to a diagonal extraction, it holds for every $m$ simultaneously. 
By the boundedness assumption on the sequence $(f_n)$ and the multiplicative inequalities (Corollaries~\ref{c:multipl1} and \ref{c:multipl2}), 
$$
\| f^{(m)} \|_{\B^{\al_1,\mu_1}_{p,1}} = \lim_{n \to \infty} \| \ph_m f_n \|_{\B^{\al_1,\mu_1}_{p,1}}  
\lesssim \|\ph_m\|_{\B^{\td \al,\td \mu}_{p,1}} \, \limsup_{n \to \infty} \|f_n\|_{\B^{\al,\mu}_{p,1}}\lesssim \|\ph_m\|_{\B^{\td \al,\td \mu}_{p,1}},
$$
for some $\td \al$ and $\td \mu >0$ depending only on $\al_1, \al, \mu$ and $\mu_1$. By Lemma~\ref{l:compact-support}, $\|\ph_m\|_{\B^{\td \al,\td \mu}_{p,1}}$ is bounded uniformly over $m$, so that the same holds for $\| f^{(m)} \|_{\B^{\al_1,\mu_1}_{p,1}}$.

It is clear that for $m \le k$, the distributions $f^{(m)}$ and $f^{(k)}$ coincide on $B(0,m)$. In particular, for $m \le k$,
$$
f^{(m)} - f^{(k)} = (1-\ph_{m/2} )(f^{(m)} - f^{(k)}).
$$
Letting $\al_2 < \al_1$ and $\mu_2 > \mu_1$, we can use the multiplicative inequalities to obtain, for every $m \le k$,
$$
\| f^{(m)} - f^{(k)} \|_{\B^{\al_2,\mu_2}_{p,1}} \lesssim \|1-\ph_m\|_{\B^{\ov \al,\ov \mu}_{p,1}} \, \|f^{(m)} - f^{(k)} \|_{\B^{\al_1,\mu_1}_{p,1}} \lesssim \|1-\ph_m\|_{\B^{\ov \al,\ov \mu}_{p,1}} ,
$$
for some $\ov \al$ and $\ov \mu > 0$ depending only on $\al_1, \al_2, \mu_1$ and $\mu_2$ (where we used the fact that $\| f^{(m)} \|_{\B^{\al_1,\mu_1}_{p,1}}$ is uniformly bounded in the last step). By Lemma~\ref{l:compact-support}, it follows that the sequence $(f^{(m)})$ is a Cauchy sequence in $\B^{\al_2,\mu_2}_{p,1}$, and thus converges to some $f \in \B^{\al_2,\mu_2}_{p,1}$. 
We have the diagram
\begin{displaymath}
\label{diagram}
\begin{array}{cccc}
\ph_m f_n & \xrightarrow[n \to \infty]{} &  f^{(m)} & \\
\downarrow &  & \downarrow & (m \to \infty) \\
f_n & \xrightarrow[n \to \infty]{} &  f, &
\end{array}
\end{displaymath}
where every convergence holds in $\B^{\al_2,\mu_2}_{p,1}$, and the bottom horizontal arrow is the convergence we want to show. This convergence is proved by observing that the convergence of $\ph_m f_n$ to $f_n$ as $m$ tends to infinity is uniform in $n$. Indeed, it suffices to observe that
$$
\| f_n - \ph_m f_n \|_{\B^{\al_2,\mu_2}_{p,1}} \lesssim \|1-\ph_m\|_{\B^{\ov \al,\ov \mu}_{p,1}} \, \|f_n \|_{\B^{\al_1,\mu_1}_{p,1}} \lesssim  \|1-\ph_m\|_{\B^{\ov \al,\ov \mu}_{p,1}} ,
$$
and to use again the fact that $\|1-\ph_m\|_{\B^{\ov \al,\ov \mu}_{p,1}} \to 0$ as $m$ tends to infinity.
\end{proof}

\section{Other scales of weights}
\label{s:DifferentWeights}
We now indicate how the above results carry over to polynomial weights, and to flat weights on cubes for periodic functions. Throughout the paper, when we refer to a result of Section~\ref{s:besov} in the context of polynomial weights or in the periodic context, it is understood that the result is adapted according to the rules we now explain. In the previous sections, we were careful to give estimates that hold uniformly over $\mu \le \mu_0$, for some given $\mu_0$. Such uniformity will only be used for exponential weights, so the reader may leave this aspect aside.

\subsection{Polynomial weights}
For $\si \in \R$, we let
\begin{equation}
\label{e:def:wh-w}
\wh w_\si (x) = |x|_*^{-\si},
\end{equation}
and write $\hL^p$ for the space $L^p(\R^d, \wh w_\si(x) \d x)$. From now on, we will always assume that $\si \ge 0$. It is clear that 
$$
\frac{\wh w_\si (x+y)}{\wh w_\si (y)} \lesssim |x|_*^\si = \wh w_{-\si},
$$
and $\wh w_{-\si}$ is dominated by $w_{-1}$. Moreover, the weight $\wh w_\si$ is integrable if and only if $\si > d$. In Section~\ref{s:ineq}, there is only one place where we assumed the weight to be integrable (that is, $\mu \neq 0$), namely in \eqref{e:r2}. We simply need to replace the assumption that $\mu \neq 0$ by the assumption that $\si > d$. That is, if $\si > d$ and $p_1 \le p_2$, then there exists $C < \infty$ such that
\begin{equation}
\label{e:r2-poly}
\| f \|_{\hB^{\al,\si}_{p_1,q}} \le C \| f \|_{\hB^{\al,\si}_{p_2,q}},
\end{equation}
which is the analogue of \eqref{e:r2}. All the other estimates of Section~\ref{s:ineq} are valid for every $\si \ge 0$ if one replaces $\Lm^p$ by $\hL^p$ throughout. As an example, the conclusion of Lemma~\ref{l:bernstein} becomes
$$
\supp \hat{f} \subset \lambda B \quad \Rightarrow \quad \|\partial^k f\|_{\hL^p} \le C \,  \lambda^{|k|+d\Ll(\frac{1}{q}- \frac{1}{p}\Rr)} \,  \|f\|_{\wh L^q_{\si q/p}}.
$$
For any $\alpha \in \R$, $\si > 0$, $p,q \in [1,+ \infty]$ and $f \in C^\infty_c$, we define 
\begin{equation}
\label{e:def:hB}
\| f \|_{\hB^{\al,\si}_{p,q}}:=  \Ll\| \Ll( 2^{\al k } \| \dk f \|_{\hL^p} \Rr)_{k \ge -1} \Rr\|_{\ell^q}.
\end{equation}
The Besov space $\hB^{\al,\si}_{p,q}$ is the completion of $C^\infty_c$ with respect to this norm. The results of Section~\ref{s:besov} then follow seamlessly, simply replacing $\Bb$ by $\hB^{\al,\si}_{p,q}$ throughout. As an example, the conclusion of Proposition~\ref{p:embed} becomes 
$$
\|f\|_{\hB^{\al,\si}_{p,q}} \lesssim \|f\|_{\hB^{\al,\si r/p}_{r,q}}.
$$
There is one more place where we used the property that the weight is integrable, namely in Subsection~\ref{ss:comp} on the compactness embedding (in Lemma~\ref{l:compact-support}). Again, we simply need to add the requirement that $\si > d$ to obtain the result for polynomial weights. Explicitly, the compactness embedding becomes:
\begin{prop}[Compact embedding]
\label{p:compact-poly}
Let $\al > \al' \in \R$, $p, q \in [1,\infty]$ with $p \neq \infty$ and $\si' > \si > d$. The embedding $\hB^{\al,\si}_{p,1} \subset \hB^{\al',\si'}_{p,1}$ is compact.
\end{prop}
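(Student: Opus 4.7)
The proof follows exactly the structure of Proposition~\ref{p:compact}, substituting the scale of polynomial weights for that of exponential weights. The role played there by the integrability of $w_\mu$ will now be taken by the integrability of $\wh w_\si(x) = |x|_*^{-\si}$, which is precisely the reason for the hypothesis $\si > d$.

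My plan is as follows. Let $(f_n)_{n \in \N}$ be bounded in $\hB^{\al,\si}_{p,1}$. Fix a cutoff $\ph \in C^\infty_c$ with $\ph = 1$ on $B(0,1)$ and $\supp \ph \subset B(0,2)$, set $\ph_m = \ph(\cdot/m)$, and choose intermediate parameters $\al_1 \in (\al',\al)$ and $\si_1 \in (d,\si')$. By the polynomial-weight analogue of Proposition~\ref{p:embed-to-unweighted}, multiplication by $\ph_m$ sends $\hB^{\al,\si}_{p,1}$ continuously into the unweighted space $\B^{\al,0}_{p,1}$. The classical compact embedding $\B^{\al,0}_{p,1} \hookrightarrow \B^{\al_1,0}_{p,1}$ from \cite[Theorem~2.94]{bcd} (applicable since $p \neq \infty$) then lets me extract, for each $m$, a subsequence along which $\ph_m f_n$ converges in $\B^{\al_1,0}_{p,1}$ to some limit $f^{(m)}$. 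Since $\wh w_{\si_1} \le 1$ pointwise, the same convergence holds in $\hB^{\al_1,\si_1}_{p,1}$. A diagonal extraction produces a single subsequence, still denoted $(f_n)$, for which $\ph_m f_n \to f^{(m)}$ in $\hB^{\al_1,\si_1}_{p,1}$ for every $m$.

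Next I would show that $(f^{(m)})_m$ is Cauchy in $\hB^{\al',\si'}_{p,1}$. For $m \le k$, the limits $f^{(m)}$ and $f^{(k)}$ coincide on $B(0,m)$ (as limits of $\ph_m f_n$ and $\ph_k f_n$), hence $f^{(m)} - f^{(k)} = (1 - \ph_{m/2})(f^{(m)} - f^{(k)})$. Applying the polynomial-weight versions of the multiplicative inequalities (Corollaries~\ref{c:multipl1}--\ref{c:multipl2}, choosing Corollary~\ref{c:multipl2} if $\al' < 0$ and exploiting that $1-\ph_{m/2}$ is smooth), together with the uniform bound $\sup_m \|f^{(m)}\|_{\hB^{\al_1,\si_1}_{p,1}} < \infty$ inherited from $\sup_n \|f_n\|_{\hB^{\al,\si}_{p,1}}$, one obtains
\[
\|f^{(m)} - f^{(k)}\|_{\hB^{\al',\si'}_{p,1}} \lesssim \|1 - \ph_{m/2}\|_{\hB^{\bar\al,\bar\si}_{p,1}}
\]
for auxiliary parameters $\bar\al \in \R$ and $\bar\si > d$ chosen to match the constraints of the multiplicative inequality. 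The polynomial-weight analogue of Lemma~\ref{l:compact-support} then forces the right-hand side to zero as $m \to \infty$, so $f^{(m)}$ converges to some $f \in \hB^{\al',\si'}_{p,1}$. The convergence $f_n \to f$ in $\hB^{\al',\si'}_{p,1}$ then follows from the triangle inequality
\[
\|f_n - f\|_{\hB^{\al',\si'}_{p,1}} \le \|\ph_m f_n - f^{(m)}\|_{\hB^{\al',\si'}_{p,1}} + \|f^{(m)} - f\|_{\hB^{\al',\si'}_{p,1}} + \|(1 - \ph_m) f_n\|_{\hB^{\al',\si'}_{p,1}},
\]
letting first $n \to \infty$ and then $m \to \infty$, with the last term controlled uniformly in $n$ by $\|1-\ph_m\|_{\hB^{\bar\al,\bar\si}_{p,1}}$ via another application of the multiplicative inequality.

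The only step that requires genuine attention is the polynomial-weight analogue of Lemma~\ref{l:compact-support}, namely $\|1 - \ph_{m/2}\|_{\hB^{\bar\al,\bar\si}_{p,1}} \to 0$ as $m \to \infty$. The function $1 - \ph_{m/2}$ is supported in $\{|x| \ge m/2\}$, and on this set $\int \wh w_{\bar\si}(x) \, \d x \to 0$ precisely because $\bar\si > d$ makes the weight integrable, by dominated convergence. With this ingredient in hand, the argument used for Lemma~\ref{l:compact-support}, based on the decay estimate \eqref{bound-dek-smooth} and Young's inequality, transfers verbatim to the polynomial setting, so no substantially new idea is needed.
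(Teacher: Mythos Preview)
Your proposal is correct and follows essentially the same approach as the paper. Indeed, the paper does not give a separate proof of Proposition~\ref{p:compact-poly}: in Section~\ref{s:DifferentWeights} it simply notes that the proof of Proposition~\ref{p:compact} carries over once the requirement $\si > d$ is imposed (so that the polynomial-weight analogue of Lemma~\ref{l:compact-support} holds), and you have accurately spelled out this transference, including the identification of the integrability of $\wh w_{\bar\si}$ as the one place where $\si > d$ enters.

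One small adjustment: when choosing the intermediate weight $\si_1$, take $\si_1 \in (\si,\si')$ rather than merely $\si_1 \in (d,\si')$. This mirrors the paper's choice $\mu_1 > \mu$ in the exponential case and ensures that the auxiliary weight $\bar\si$ produced by the multiplicative inequality (which is, roughly, the difference between the weight on the left and the weight carried by $f_n$) is positive and can be taken larger than $d$, so that the polynomial analogue of Lemma~\ref{l:compact-support} applies to $\ph_m$ and $1-\ph_{m/2}$. With that tweak the argument goes through exactly as you describe.
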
 

\subsection{Flat weights on cubes} 
We will also need to use Besov spaces that are tailored for the periodic setting. One natural option would be to base the definition of these Besov spaces on discrete Fourier series (those adapted to the torus under consideration). We choose a slightly different route here: we prefer to view functions on the torus as periodic functions on the full space, and to use the continuous Fourier transform of these functions instead. The latter approach has two advantages. First, the construction is closer to what was done for the weighted Besov spaces above, so the extension of the previous results to these spaces is more transparent. Secondly, we will build solutions on the full space as limits of solutions on increasingly large tori, so viewing functions on a torus as periodic functions on the full space is more natural for our later purpose.

We say that a function $f : \R^d \to \R$ is $M$-periodic if for every $z \in M\Z^d$, we have $f(z+\cdot)=f$. We write $\td{L}_M^p$ for the $L^p$ space of $M$-periodic functions equipped with the Lebesgue measure on $[-M/2,M/2]^d$, and $\|\cdot\|_{\tL^p}$ for the associated norm. For every $M$-periodic function $f$, we define
$$
\|f\|_{\tBb} := \Ll\|  \Ll( 2^{\al k} \| \dk f \|_{\td{L}_M^p}  \Rr)_{k \ge -1}  \Rr\|_{\ell^q},
$$
and let $\tBb$ be the completion of the set of $M$-periodic functions in $C^\infty$ with respect to the norm $\|  \cdot  \|_{\tBb}$. We note that if $f,g: \R^d \to \R$ are measurable functions with $f$ being $M$-periodic, then for $r,p,q$ as in Theorem~\ref{t:young}, we have
$$
\|f \star g \|_{\tL^r} \le C \|f\|_{\tL^p} \, \|g\|_{L^q}
$$
(recall that $\|\cdot\|_{L^q}$ denotes the $L^q$ norm with respect to the Lebesgue measure on the full space). From this form of the Young inequality, one can then proceed as before. For instance, the analogue of Lemma~\ref{l:bernstein} reads: if $f$ is an $M$-periodic function with $\supp \hat f \subset \la B$, then $\|\dr^k f\|_{\tL^p} \le C \,  \lambda^{|k| + d\Ll(\frac{1}{q}- \frac{1}{p}\Rr)} \,  \|f\|_{\tL^q}$ (we view $f$ as an element of $\mcl S'$ to make sense of $\hat f \in \mcl S'$). In short, since the weight is now flat, its powers are trivial and the inequalities simplify. 

For every $M$-periodic function $f$, we define
$$
\|f\|_{\tBb} := \Ll\|  \Ll( 2^{\al k} \| \dk f \|_{\td{L}_M^p}  \Rr)_{k \ge -1}  \Rr\|_{\ell^q},
$$
and let $\tBb$ be the completion of the set of $M$-periodic functions in $C^\infty$ with respect to the norm $\|  \cdot  \|_{\tBb}$. Section~\ref{s:besov} then carries over with obvious changes. For instance, for $\al,\be,p$ and $r$ as in Proposition~\ref{p:embed}, we have $\|f\|_{\tBb} \le C \|f\|_{\tB^{\be,M}_{r,q}}$.

%

%
%
%
%
%
%
%

\section{The stochastic heat equation and its Wick powers}
\label{sec:Probability}
Now that the necessary facts about weighted Besov spaces are established, the remaining sections are devoted to the actual construction of the solution of the stochastic quantisation equation \eqref{e:eqX}. In this section, we provide some probabilistic preliminaries and perform the construction of solution of the stochastic heat equation and its Wick powers in a weighted Besov space. 

We start by recalling the definition of  a space-time white noise $\xi$. Formally, $\xi$ is a centred Gaussian distribution on $\R \times \R^d$ with covariance 
\begin{equation}\label{e:delta}
\E \xi(t,x) \, \xi(t^{\prime}, x^{\prime}) = \delta(t - t^{\prime}) \, \delta^d(x - x^{\prime}) \;,
\end{equation}
where $\delta( \cdot) $ denotes the Dirac delta function and $\delta^d( \cdot) $ is the Dirac delta function over $\R^d$. Of course, \eqref{e:delta} does not make sense as it stands, but it can be made rigorous easily, by integrating against a test function $\ph \colon \R \times \R^d \to \R$ and postulating that
\begin{equation}\label{e:delta1}
\E\; \xi(\ph)^2 = \| \ph \|_{L^2(\R \times \R^d)}^2 \;. 
\end{equation}
\begin{defi}\label{def:whitenoise}
Space-time white noise is a family of centred Gaussian random variables $\{ \xi(\ph),  \, \ph \in L^2(\R \times \R^d)\}$, such that \eqref{e:delta1} holds.
\end{defi}
The existence of space-time white noise in the sense of Definition~\ref{def:whitenoise} on some probability space $(\Omega, \mathcal{F}, \mathbb{P})$ follows immediately from the Kolmogorov extension theorem. Furthermore, the mapping $\ph \mapsto \xi(\ph)$ is automatically linear  on $\Hh := L^2(\R \times \R^d)$ (in the sense that for every $\ph_{1},\ph_2$ we have almost surely $\xi(\ph_1) + \xi(\ph_2) = \xi(\ph_1 +\ph_2)$). Below, we will often use the somewhat formal notation
\begin{align*}
\xi(\ph)=: \int_{\R \times \R^d} \ph (z) \, \xi(\d z) \;,
\end{align*}
although $\xi$ is almost surely not a measure. In particular, for a given $\ph \in L^2(\R \times \R^d)$, the random variable $\xi(\ph)$ is only defined outside of a set of measure zero, and a priori this set depends on the choice of $\ph$. 

We will need a periodised (in space) white noise, defined on the same probability space. For every $M>0$ and $\ph \in L^2(\R \times \R^d)$, we set 
\begin{align}\label{e:xiM}
\xi_M(\ph) = \int_{\R \times \R^d} \ph(z) \, \xi_M(\d z)  := \xi(\ph_M )\,\; ,
\end{align}
where 
\begin{align}\label{e:phiM}
\ph_M(t,x) =  \mathbf{1}_{[-\frac{M}{2} ,\frac{M}{2}]^d}(x) \sum_{y \in M\Z^d} \ph(t,x +  y) \;.
\end{align}
For every $\ph \in L^2(\R \times \R^d)$, we have
\begin{align*}
\| \ph_M  \|_{L^2(\R \times \R^d)} \leq \| \ph \|_{L^2(\R \times \R^d)} \;,
\end{align*}
so that \eqref{e:xiM} makes sense. Furthermore, for every $\ph$ with support contained in $\R \times [\frac{M}{2}, \frac{M}{2} ]^d$, we have $\xi(\ph) = \xi_M(\ph)$ almost surely. 

From now on, we assume that $\Ff$ is the completion of the sigma-algebra generated by $\{ \xi(\ph) \colon \ph \in \Hh\}$. Under this assumption, it is well-known that we have the orthogonal decomposition
\begin{equation*}
 L^2(\Omega, \Ff, \mathbb{P}) = \bigoplus_{k \geq 0} \Hhk \;,
 \end{equation*}
where $\Hhk$ denotes the $k$-th homogeneous Wiener chaos. More precisely, $\Hh^{(0)} = \R$, $\Hh^{(1)} =  \{ \xi(\ph) \colon \ph \in \Hh\}$, and for every $k \geq 2$, we have
\begin{equation*}
\Hhk =\Big\{ I_k(\ph) \colon \, \ph \in \Hh^{\otimes_{\mathrm{sym}} k }    \Big \} \;.
\end{equation*}
Here $\Hh^{\otimes_{\mathrm{sym}} k }  $ denotes the set of square integrable, symmetric kernels $\ph \colon (\R  \times \R^d)^k \to \R$, and $I_k$ denotes the $k$-fold iterated stochastic integral (see \cite[Sec 1.1.1 and 1.1.2]{Nualart}). Below we will often use the notation
\begin{align*}
I_k(\ph) = \int_{(\R \times \R^d)^k} \ph (z_1, \ldots, z_k) \, \xi( \d z_1) \, \ldots \,\xi(\d z_k) \;.
\end{align*}
Let us recall that for every kernel $\ph \in  \Hh^{\otimes_{\mathrm{sym}} k }$, we have the following isometry
\begin{align}
\E \Big( \int_{(\R \times \R^d)^k} & \ph (z_1, \ldots, z_k) \, \xi( \d z_1) \, \ldots \xi(\d z_k) \Big)^2 \notag\\
& = k! \int_{(\R \times \R^d)^k} \ph(z_1, \ldots, z_k)^2 \, \d z_1 \, \ldots \d z_k \;.\label{e:Isometry}
\end{align}
We also recall Nelson's estimate, which states that for every $k \geq 1$ and $p \geq 2$, there exists a $C_{k,p}$ such that for every $X \in \Hh^{(k)}$,
\begin{align}\label{e:Nelson}
\big( \E|X|^p \big)^{\frac{2}{p}} \leq  C_{k,p} \, \E X^2 \;. 
\end{align}

Below we will also need the following \textit{Kolmogorov lemma}. Recall that  according to \eqref{e:def:etak}, for $k \geq -1$, we have $\eta_k = \F^{-1} \chi_k$.  
\begin{lem}\label{le:Kolmogorov}
Let $(t,\ph) \mapsto \cZ(t, \ph)$ be a map from $\R \times L^2(\R^d)\to  L^2(\Omega, \mathcal{F},\P)$ which is linear and continuous in $\ph$. Assume  that for some  $p >1$, $\alpha \in \R$ and $\kappa> \frac{1}{p}$ and all $T >0$, there exists a function $K_T \in L^\infty(\R^d)$, such that for all $k \geq -1$, $x \in \R^d$ and $s,t \in [-T,T]$,
\begin{align}
\E |\cZ(t, \eta_k(\cdot - x))|^p &\leq K_T(x)^p  \, 2^{-k\alpha p} \label{e:Kolmog0}\\
\E |\cZ(t, \eta_k(\cdot - x))- \cZ(s, \eta_k(\cdot - x))|^p &\leq K_T(x)^p \, 2^{-k(\alpha-\ka) p} |t-s|^{\ka p}\;. \label{e:Kolmo2}
\end{align}
Then there exists a random distribution $\tZ$ which takes values in $C(\R, \hB_{p,p}^{\alpha', \si} )$ for any $\alpha'<  \alpha - \ka$ and $\si>2$,  and which satisfies for all $t \in \R$ and $\ph \in \mathcal{S}(\R^d)$ 
\begin{align}\label{e:Kolmog2}
\cZ(t, \ph) = ( \tZ(t), \ph) \qquad \text{almost surely.} 
\end{align}
Furthermore, for every $T>0$, we have
\begin{align}\label{e:Kolmog3}
\E \sup_{-T \leq t \leq T} \| \tZ(t, \cdot) \|_{\hB_{p,p}^{\alpha', \si}}^p \leq C(T, \alpha, \alpha',p) \int_{\R^d} K_T(x)^p \, \wh w_\si(x)  \d x  \;.
\end{align}
\end{lem}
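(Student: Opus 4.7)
My plan is to realise $\tZ(t)$ as a sum over Littlewood--Paley blocks, exploiting the observation that evaluating the $k$-th block of the distribution at a point $x$ amounts to testing $\cZ(t, \cdot)$ against the translated kernel $\eta_k(\cdot - x)$. Concretely, for $k \ge -1$ I would set
$$
\tZ_k(t, x) := \cZ(t, \eta_k(\cdot - x)).
$$
A preliminary technical step is to produce a jointly measurable version of $(t, x, \omega) \mapsto \tZ_k(t, x)(\omega)$. The map $x \mapsto \eta_k(\cdot - x)$ is smooth and bounded as an $L^2(\R^d)$-valued function, so \eqref{e:Kolmog0}--\eqref{e:Kolmo2} combined with a classical spatial Kolmogorov argument will single out such a version.

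Once this is in place, Fubini and \eqref{e:Kolmog0} yield
$$
\E \|\tZ_k(t, \cdot)\|_{\hL^p}^p \ls 2^{-k\al p} \int_{\R^d} K_T(x)^p \, \wh w_\si(x) \, \d x,
$$
and a similar bound on space-time increments follows from \eqref{e:Kolmo2}. Weighting by $2^{k \al'' p}$ and summing in $k$ shows that the partial sums $S_N(t) := \sum_{k \le N} \tZ_k(t, \cdot)$ are Cauchy in $L^p(\Omega; \hB^{\al'', \si}_{p,p})$ for every $\al'' < \al$, and I would define $\tZ(t)$ as their limit. The support property \eqref{chi-prop2} ($\chi_j \chi_k$ vanishes unless $|j-k| \le 1$) ensures that $\dk \tZ(t) = \tZ_k(t, \cdot)$ almost surely. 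For the identification \eqref{e:Kolmog2} with $\ph \in \Ss(\R^d)$, I would write $\int \eta_k(\cdot - x) \ph(x) \, \d x = \dk \ph$ as a Bochner integral in $L^2(\R^d)$ and use the linearity and $L^2$-continuity of $\cZ(t, \cdot)$ together with the convergence of $\sum_k \dk \ph$ to $\ph$ in $L^2$ to conclude.

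The last ingredient is time continuity, where the assumption $\ka > 1/p$ enters crucially. Summing the block-wise increment estimate coming from \eqref{e:Kolmo2} with weights $2^{k \al' p}$, one obtains
$$
\E \| \tZ(t) - \tZ(s) \|_{\hB^{\al', \si}_{p,p}}^p \ls |t-s|^{\ka p} \int_{\R^d} K_T(x)^p \, \wh w_\si(x) \, \d x
$$
for every $\al' < \al - \ka$. Since $\ka p > 1$, the classical Kolmogorov--Chentsov continuity theorem, applied in the separable Banach space $\hB^{\al', \si}_{p,p}$ (the separability being guaranteed by our definition of the Besov spaces, see Remark~\ref{r:completion}), delivers a continuous modification on $[-T, T]$ and the bound \eqref{e:Kolmog3}; globalisation in $t$ is obtained by exhausting $\R$ by compact intervals.

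The main obstacle is reconciling three competing regularity losses: one from summing the Littlewood--Paley series (forcing $\al' < \al$), one from transferring time increments into spatial regularity (a loss of $\ka$), and one from the Kolmogorov--Chentsov criterion (requiring a Hölder exponent strictly greater than $1/p$ in order to produce a continuous modification). The hypotheses $\al' < \al - \ka$ and $\ka > 1/p$ are precisely those that allow all three constraints to be simultaneously satisfied with a finite $p$-th moment, and any sharpening of them would require either higher moments or a more delicate argument such as a chaining or Besov-type embedding on the time variable.
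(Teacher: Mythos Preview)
Your overall architecture---define $\tZ_k(t,x) = \cZ(t,\eta_k(\cdot-x))$, bound each block in $\hL^p$ by Fubini and \eqref{e:Kolmog0}, sum to obtain $\tZ$, identify via a Bochner integral, then run Kolmogorov--Chentsov in time using \eqref{e:Kolmo2}---is exactly the paper's, and the time-continuity step and the final estimate \eqref{e:Kolmog3} go through as you describe.

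The substantive difference is the ``preliminary technical step'': producing a version of $\tZ_k(t,x)$ defined for all $x$ simultaneously. You propose a spatial Kolmogorov argument, but the hypotheses do not furnish the needed $L^p$ bounds on \emph{spatial} increments. Linearity and $L^2$-continuity of $\cZ(t,\cdot)$ only give
\[
\E\big|\cZ(t,\eta_k(\cdot-x)) - \cZ(t,\eta_k(\cdot-y))\big|^2 \ls_k |x-y|^2,
\]
and in $\R^d$ with $d\ge 2$ the exponent $2$ is at or below the Kolmogorov threshold, so no continuous modification is produced. In the applications to Wick powers one could rescue this via Nelson's estimate \eqref{e:Nelson}, but the lemma as stated does not assume it. The claim can still be salvaged---a jointly measurable modification exists by the general result for stochastically continuous processes, and that suffices for Fubini---but the argument you wrote does not quite deliver it.

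The paper handles this step differently, via a Shannon-sampling trick: it defines $\cZ_k(t,\cdot)$ first on a lattice $\eps_k\Z^d$ (countably many null sets to discard) and then extends to all of $\R^d$ by interpolating against a band-limited kernel, see \eqref{e:Kolm2}. This produces, for every $\omega$, a smooth function of $x$ whose Fourier support is automatically contained in a fixed annulus \eqref{e:Kolm3}, so the series criterion Lemma~\ref{l:crit} applies directly. In your route the Fourier-support property has to be checked a posteriori via Bochner; note also that the support condition only yields $\dk \tZ = \dk(\tZ_{k-1}+\tZ_k+\tZ_{k+1})$, not the identity $\dk \tZ = \tZ_k$ you assert, so the norm bound really does go through Lemma~\ref{l:crit} rather than by equality of blocks. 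The sampling construction sidesteps both the measurable-selection issue and this bookkeeping at once.
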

We stated this lemma in the case of polynomial weights, but the same statement holds if $\wh w_\si$ is replaced by another integrable weight, e.g. stretched exponential. In particular, the following  modification of Lemma~\ref{le:Kolmogorov} follows by the same method, and we omit the proof.
\begin{lem}\label{le:KolmogorovBis}
Let $\cZ$ satisfy the assumptions \eqref{e:Kolmog0} and \eqref{e:Kolmo2} of Lemma~\ref{le:Kolmogorov}.
If in addition, $\cZ$ has the property that for some fixed $M>0$ and all $t\in \R$, $\ph \in \mathcal{S}( \R^d)$ and $z \in M\Z^d$,
\begin{align}
\cZ(t, \ph) = \cZ(t, \ph( \cdot - z)) \qquad \text{almost surely,} \label{e:conditionPeriodic}
\end{align}
then $\tZ$ takes values in the Besov space  $\tilde{B}^{\alpha',M}_{p,p}$ of periodic functions and
\begin{align}\label{e:Kolmog3A}
\E \sup_{-T \leq t \leq T} \| \tZ(t) \|_{\tilde{B}_{p,p}^{\alpha', M}}^p \leq C(T, \alpha, \alpha',p) \int_{[-\frac{M}{2}, \frac{M}{2}]^d} K_T(x)^p \,  \d x  \;.
\end{align}
\end{lem}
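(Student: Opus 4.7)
The plan is to mimic the proof sketch of Lemma~\ref{le:Kolmogorov}, adapting it to the flat weight on the cube $[-M/2, M/2]^d$ and exploiting the extra hypothesis~\eqref{e:conditionPeriodic} to secure periodicity. For each $k \geq -1$, $t \in \R$ and $x \in \R^d$, I will set
\[
\dk \tZ(t)(x) := \cZ(t, \eta_k(\cdot - x)),
\]
which is well-defined as a random variable since $\eta_k(\cdot - x) \in L^2(\R^d)$. By the $L^2$-continuity of $\cZ(t, \cdot)$, the map $x \mapsto \dk\tZ(t)(x)$ admits a continuous modification, yielding a jointly measurable version in $(t, x, \omega)$. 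Applying~\eqref{e:conditionPeriodic} to $\varphi = \eta_k(\cdot - x)$ gives $\dk\tZ(t)(x) = \dk\tZ(t)(x + z)$ almost surely for every fixed $x$ and every $z \in M\Z^d$, and the continuous modification then promotes this to almost-sure $M$-periodicity of $\dk\tZ(t)$ in $x$.

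Next, Fubini combined with~\eqref{e:Kolmog0} gives
\[
\E \|\dk\tZ(t)\|^p_{\tL^p} \leq 2^{-k\alpha p} \int_{[-M/2,M/2]^d} K_T(x)^p \, \d x,
\]
and an analogous Hölder-in-time estimate follows from~\eqref{e:Kolmo2}. Multiplying by $2^{\alpha' k p}$, summing over $k$, and invoking the periodic analogue of the series criterion (Lemma~\ref{l:crit}, cf.\ Section~\ref{s:DifferentWeights}), I assemble $\tZ(t) := \sum_k \dk\tZ(t)$ in $\tilde{B}^{\alpha',M}_{p,p}$ and obtain both
\[
\E \|\tZ(t)\|^p_{\tilde{B}^{\alpha',M}_{p,p}} \ls \int_{[-M/2,M/2]^d} K_T(x)^p \, \d x
\]
and
\[
\E \|\tZ(t) - \tZ(s)\|^p_{\tilde{B}^{\alpha',M}_{p,p}} \ls |t-s|^{\kappa p} \int_{[-M/2,M/2]^d} K_T(x)^p \, \d x,
\]
where the first bound requires only $\alpha' < \alpha$ and the second the sharper $\alpha' < \alpha - \kappa$ for the geometric sum in $k$ to converge. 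Since $\kappa p > 1$, the Kolmogorov continuity theorem applied to the Banach-space-valued process $t \mapsto \tZ(t)$ produces a continuous modification taking values in $\tilde{B}^{\alpha',M}_{p,p}$, and the standard chaining estimate yields the uniform moment bound~\eqref{e:Kolmog3A}.

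The identification~\eqref{e:Kolmog2} is established pointwise in $(t, \varphi)$. For $\varphi \in \mcl{S}(\R^d)$, Fubini gives
\[
(\dk\tZ(t), \varphi) = \int \cZ(t, \eta_k(\cdot - x)) \varphi(x) \, \d x = \cZ(t, \eta_k * \varphi) = \cZ(t, \de_k \varphi)
\]
almost surely; summing over $k \geq -1$ and using the $L^2$-continuity of $\cZ(t, \cdot)$ together with $\sum_k \de_k \varphi = \varphi$ in $L^2(\R^d)$ yields $(\tZ(t), \varphi) = \cZ(t, \varphi)$. The main technical subtlety I expect is the careful handling of null sets, so that the a.s.\ periodicity holds simultaneously in the continuous parameter $x$ and the identification holds simultaneously for a dense countable family of test functions; this is routine once one exploits the $L^2$-continuity of $\cZ(t, \cdot)$ in its second argument, but it is the one ingredient where extra care (beyond the estimates above) is genuinely needed.
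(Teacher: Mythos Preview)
The paper omits the proof entirely, saying only that it ``follows by the same method'' as Lemma~\ref{le:Kolmogorov}, so the relevant comparison is with the proof of that lemma.

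Your outline is in the right spirit, but there is a genuine gap at the step ``by the $L^2$-continuity of $\cZ(t,\cdot)$, the map $x\mapsto\dk\tZ(t)(x)$ admits a continuous modification''. Continuity of $\ph\mapsto\cZ(t,\ph)$ from $L^2(\R^d)$ into $L^2(\Omega)$ only yields \emph{stochastic} continuity of $x\mapsto\cZ(t,\eta_k(\cdot-x))$; it does not by itself produce an almost-surely continuous version. For that you would need $p$-th moment bounds on spatial increments with exponent exceeding $d$, and the hypotheses \eqref{e:Kolmog0}--\eqref{e:Kolmo2} give only single-point bounds. The paper flags exactly this issue in the proof of Lemma~\ref{le:Kolmogorov} (``$\cZ(t,\eta_k(\cdot-x))$ is only defined outside of a set of measure zero which depends on $x$'') and resolves it not by a modification argument but by a Shannon-sampling trick: one defines $\cZ_k(t,x)$ via \eqref{e:Kolm1} only on the lattice $\eps_k\Z^d$ and then interpolates by \eqref{e:Kolm2} to obtain a genuine smooth function with controlled Fourier support. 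This simultaneously delivers the annular spectral localisation needed for the series criterion and the identity \eqref{e:Kolm1} for every $x$ almost surely. Your sketch also leaves the Fourier-support hypothesis of Lemma~\ref{l:crit} unverified; the sampling construction gives it for free via \eqref{e:Kolm3}.

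Once the sampling construction is in place, the periodicity, the $\tL^p$ bound via Fubini over $[-M/2,M/2]^d$, the summation over $k$, and the final Kolmogorov argument in time all go through exactly as you describe.
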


\begin{proof}[Proof of Lemma~\ref{le:Kolmogorov}]
For fixed $k \geq -1$ and $t \in \R$, we start by defining a random function $\cZ_k(t, \cdot)$ which will play the role of $\delta_k \tZ(t, \cdot)$. The natural definition would be 
\begin{align}\label{e:Kolm1}
 \cZ_k(t,x) = \cZ(t, \eta_k(\cdot - x))\;.
\end{align}
Unfortunately, $\cZ(t, \eta_k(\cdot -x))$ is only defined outside of a set of measure zero which depends on $x$  (and $t$), so that it is not clear if \eqref{e:Kolm1} can be made sense of for all $x$ simultaneously. We circumvent this problem by the following trick (which is motivated by the Shannon sampling theorem \cite[Section~1.4]{Meyer}). Set $\eps_k = \frac{1}{4}\frac{3 \pi}{2^{k+3}}$, and let $\tc_k \in \mathcal{S}(\R^d)$ be real valued, symmetric, such that $\tc_k$ is constantly equal to $1$  on the annulus $B(0, 2^k \frac{8}{3})\setminus B(0,2^k \frac{3}{4}) $, and such $\tc_k$ vanishes outside the annulus $ B(0,2^k \frac{16}{3}) \setminus B(0,2^k \frac{3}{8}) $ (for $k = -1$ we assume that $\tilde{\chi}_{-1}$ is $1$ on the ball $B(0, \frac{4}{3})$ and vanishes outside of $B(0, \frac{8}{3})$). Finally, define $\te_k = \F^{-1} \tc_k$.

For $x \in \eps_k \Z^d$, we define $ \cZ_k(t,x)$ as in \eqref{e:Kolm1}, and for $x \in \R^d \setminus \eps_k \Z^d$, we set 
\begin{align}\label{e:Kolm2}
\cZ_k(t,x) = \sum_{y \in \eps_k \Z^d} \eps_k^d \cZ_k(t,y) \,\te_k(x-y) \;.
\end{align}
As $\te_k$ is a Schwartz function and the assumption \eqref{e:Kolmog0} implies that e.g. 
\begin{equation*}
\sum_{y \in \eps_k \Z^d} \eps_k^d (1+|y|)^{-d-1} \cZ_k(t,y) < \infty
\end{equation*}
 almost surely, it follows that \eqref{e:Kolm2} defines a smooth random function on a set of measure $1$. Furthermore, we have for $k \geq 0$
\begin{align}\label{e:Kolm3}
\supp \Ff \big(\cZ_k(t, \cdot)\big) \subseteq B\Big(0,2^k \frac{16}{3}\Big) \setminus B\Big(0,2^k \frac{3}{8}\Big) \;,
\end{align} 
and $\supp \Ff \big( \cZ_{-1}(t, \cdot) \big) \subseteq B(0, \frac{8}{3})$.
Next, we set $\tZ(t) = \sum_{k \geq -1} \cZ_k(t, \cdot)$. Equation \eqref{e:Kolm3} implies that this sum converges almost surely when tested against a test function with smooth and compactly supported Fourier transform. 
In the estimates below, we show that $\tZ$ is a well-defined random distribution.

We claim that for fixed $x \in \R^d$,  $\tZ$ satisfies \eqref{e:Kolmog2}, at least if $\ph$ has Fourier transform in $C_c^{\infty}$. Indeed, we have for such   $\ph$
\begin{align*}
( \tZ(t), \ph) &= \sum_{k \geq -1} \big( \cZ_k(t, \cdot),  \ph \big)= \sum_{k \geq -1}\sum_{y \in \eps_k \Z^d} \eps_k^d \int_{\R^d} \cZ_k(t,y) \,\te_k(x-y) \ph(x) \, \d x \;.
\end{align*}
As a next step, we use the fact that both $\te_k$ and $\eta_k$ have a Fourier transform supported in the annulus $ B(0,2^k \frac{16}{3}) \setminus B(0,2^k \frac{3}{8}) $ which implies for our choice of $\eps_k$ that 
\begin{align*}
\sum_{y \in \eps_k \; \Z^d}\eps_k^d \eta_k(y - \cdot) \, \big(\te_k \ast \ph (y)\big) = \int_{\R^d} \eta_k (y -\cdot)\, \big( \te_k \ast \ph(y) \big) \, \d y = \delta_k \ph(\cdot) \,,
\end{align*} 
where the convergence takes place in $L^2(\R^d)$. Hence, by continuity of $\cZ(t, \cdot)$ we can conclude that
\begin{align*}
( \tZ(t), \ph) = \sum_{k \geq -1} \cZ(t, \delta_k \ph) = \cZ(t, \ph) \;.
\end{align*}
In particular, this implies that for every $x$, \eqref{e:Kolm1} holds almost surely.

Now we can estimate for $t \in [-T,T]$
\begin{align*}
\E \|\cZ_k(t, \cdot)  \|_{\wh{L}^p_\si}^p &=
  \int_{\R^d}  \,  \E | \cZ_k(t, x)|^p \; \wh w_\si(x) \d x \\
& = \int_{\R^d}  \,\E| \cZ(t,\eta_k (\cdot -x)) |^p \; \wh w_\si(x)  \d x \leq  2^{-k \alpha p} \int_{\R^d} K_T(x)^p \,\wh w_\si(x)  \d x \;.
\end{align*}
This bound in conjunction with \eqref{e:Kolm3} and the series criterion, Lemma~\ref{l:crit}, implies that for $\alpha' < \alpha$
\begin{align*}
\E\| \tZ(t,\cdot) \|_{\hB_{p,p}^{\alpha',\mu}}^p = \sum_{k \geq -1} 2^{k\alpha^{\prime} p}\, \E \|\cZ_k(t, \cdot)  \|_{\wh L^p_\si}^p \leq C(\alpha, \alpha^{\prime},p)  \int_{\R^d} K_T(x)^p \,\wh w_\si(x) \, \d x \;.
\end{align*} 
In particular, almost surely $\tZ$ is indeed a well-defined distribution in $\hB_{p,p}^{\alpha',\mu}$. Furthermore, the assumption that $\ph$ has compactly supported and smooth Fourier transform can be removed from \eqref{e:Kolmog2} and can be replaced by $\ph \in \mathcal{S}$.

In the same way, we can bound for $s,t \in [-T,T]$
\begin{align*}
\E \|\cZ_k(t, \cdot) - \cZ_k(s, \cdot)  \|_{\wh{L}^p_\si}^p 
& = \int_{\R^d} \,\E| \cZ(t,\eta_k (\cdot -x)) - \cZ(s,\eta_k (\cdot -x))  |^p \,\wh w_\si(x)  \d x \\
&\leq  |t-s|^{\ka p}2^{-k (\alpha - \ka) p} \int_{\R^d} K_T(x)^p \, \wh w_\si(x)  \d x \;.
\end{align*}
So after summing over $k$, the  (usual) Kolmogorov criterion permits to pass to a modification of the process $t \mapsto \tZ(t)$ which satisfies \eqref{e:Kolmog2} and \eqref{e:Kolmog3}.
\end{proof}

With these preliminary results in hand, we can start the construction of the solution of the stochastic heat equation and its Wick powers in weighted Besov spaces, in the case when $d = 2$. The space-time white noise $\xi$  does not satisfy the conditions of Lemma~\ref{le:Kolmogorov}, and indeed it can only be realised as a space-time distribution, and not as a continuous function in time taking values in a space of distributions. 

However, we do not need to perform this construction. Instead, we start directly by analysing the properties of the solution of the stochastic heat equation
\begin{equation}
\label{e:eqZ2}
\Ll\{
\begin{array}{ll}
\dr_t \cZ = \Delta \cZ  + \xi, \qquad \text{on } \R_+ \times \R^2, \\
\cZ(0,\cdot) = 0.
\end{array}
\Rr.
\end{equation}
There are many equivalent ways to interpret this equation. We choose to simply postulate Duhamel's principle, and to define for every $\ph \in L^2(\R^2)$ and every $t \geq 0$
\begin{align}\label{e:defZ}
\cZ(t,\ph) &= \int_{0}^t  \!\! \int_{\R^2} \big(  \ph,  K(t-r, \cdot-y) \big)  \, \xi(\d r, \d y) \;.
\end{align}
Here $K$ denotes the standard Gaussian heat kernel, i.e. for $t >0$ and $x\in \R^2$
\begin{align*}
K(t,x) = \frac{1}{4 \pi } \exp\Big(- \frac{|x|^2}{4t} \Big) \;.
\end{align*}
We also define $\ZM$, the solution of the equation with periodised noise, as in \eqref{e:defZ} with $\xi$ replaced by $\xi_M$.
Furthermore, for any $n \in \N$, we define the Wick powers
\begin{align}
&\cZ^{: n :} (t,\ph)  \notag\\
&=  \int_{([0,t]\times \R^2)^{n}} \big( \ph, \prod_{j=1}^{n} K(t-r_j, \cdot-y_j) \big) \, \xi(\d r_1, \d y_1) \, \ldots \, \xi(\d r_{n}, \d y_{n}) \;, \label{e:Wick}
\end{align}
and as before $\ZM^{: n:}$ is defined by replacing every occurrence of $\xi$ in \eqref{e:Wick} by $\xi_M$.

\begin{thm}\label{thm:BoundsOnZ}
For every  $M \geq 1$ and every integer $n \geq 1$ , there exist modifications $\tZ^{: n :}$ and $\tZM^{: n :}$ (in the sense of Lemma~\ref{le:Kolmogorov}) of $\cZ^{: n : }$ and $\ZM^{: n : }$. 

For every $T>0$, $\al>0$ and  $p>\frac{2}{\al}$,  there exists a constant $C=C(T,p,\alpha)$ such that for all $M \geq 1$ and $\si>2$,
\begin{align}
\E \sup_{0 \leq t \leq T} \|\tZ^{: n :}(t) \|_{\hB_{p,p}^{-\alpha, \si} }^p &\leq C \int_{\R^2} \wh w_\si(x) \, \d x\;,  \label{e:ZZZ1}\\
\E \sup_{0 \leq t \leq T} \|\tZM^{: n :}(t) \|_{\hB_{p,p}^{-\alpha, \si} }^p &\leq C \int_{\R^2} \wh w_\si(x) \, \d x  \,. \label{e:ZZZ2}
\end{align}
Furthermore, $\tZM$ is is $M$-periodic and 
\begin{align}
\E \sup_{0 \leq t \leq T} \|\tZM^{: n :}(t) \|_{\tilde B_{p,p}^{-\alpha, M} }^p &\leq C M^2 \;.\label{e:ZZZ3}
\end{align}
Finally, we have
\begin{align}
\E \sup_{0 \leq t \leq T} \|\tZ^{: n:}(t) - \tZM^{: n :}(t) \|_{\wh B_{p,p}^{-\alpha, \si} }^p &\leq C M^{2-\si}  \;. \label{e:ZZZ4}
\end{align}
\end{thm}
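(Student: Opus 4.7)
The plan is to apply the Kolmogorov-type criteria Lemmas~\ref{le:Kolmogorov} and \ref{le:KolmogorovBis} to each of $\cZ^{:n:}$, $\ZM^{:n:}$, and their difference. Since every random variable of the form $\cZ^{:n:}(t, \eta_k(\cdot-x))$ lies in the $n$-th Wiener chaos $\Hh^{(n)}$, Nelson's estimate \eqref{e:Nelson} reduces the $p$-th moment bounds demanded by the Kolmogorov lemma to bounds on second moments, at the price of a constant depending on $n$ and $p$. The second moments are then computed via the Wiener--It\^o isometry \eqref{e:Isometry}.

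Concretely, writing $\cZ^{:n:}(t,\eta_k(\cdot-x)) = I_n(\Pi_{t,k,x})$ with the symmetric kernel
\begin{equation*}
\Pi_{t,k,x}(r_1,y_1,\dots,r_n,y_n) = \mathbf{1}_{\{r_1,\dots,r_n \le t\}}\int_{\R^2} \eta_k(z-x)\prod_{j=1}^n K(t-r_j,z-y_j)\,\d z,
\end{equation*}
the semigroup identity $\int K(\tau,z-y)K(\tau,z'-y)\,\d y = K(2\tau,z-z')$ reduces $\|\Pi_{t,k,x}\|_{L^2}^2$ to a double integral in $z,z'$ of $\eta_k(z-x)\eta_k(z'-x)$ against $\prod_{j=1}^n \int_0^t K(2(t-r_j),z-z')\,\d r_j$. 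In two dimensions each factor in the product is bounded by $C(T)(1+\log_+|z-z'|^{-1})$, so after rescaling $z,z'$ by $2^{-k}$ around $x$ one obtains $\E|\cZ^{:n:}(t,\eta_k(\cdot-x))|^2 \le C(T,n)(1+k)^n$, uniformly in $x$. This polynomial growth in $k$ is harmless in Lemma~\ref{le:Kolmogorov} and accommodates any small $\alpha>0$. For the time increments \eqref{e:Kolmo2}, the difference $\Pi_{t,k,x}-\Pi_{s,k,x}$ splits into a contribution from $r_j\in[s,t]$ and a contribution from $K(t-r_j,\cdot)-K(s-r_j,\cdot)$; exploiting $|1-e^{-\tau|\zeta|^2}|\lesssim(\tau|\zeta|^2)^\kappa$ together with the Fourier support of $\eta_k$ produces the factor $|t-s|^{\kappa p}2^{k\kappa p}$ for $\kappa>0$ small. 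Since the resulting constant $K_T(x)$ is independent of $x$, Lemma~\ref{le:Kolmogorov} gives \eqref{e:ZZZ1}, with the integrability of $\wh w_\si$ for $\si>2$ producing the right-hand side.

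The periodic bound \eqref{e:ZZZ2} follows from the identical computation, because $\xi_M(\psi)=\xi(\psi_M)$ and $\|\psi_M\|_{L^2}\le\|\psi\|_{L^2}$, so the relevant $L^2$ norm of the kernel can only decrease upon periodisation. For \eqref{e:ZZZ3} I verify the periodicity condition \eqref{e:conditionPeriodic} required by Lemma~\ref{le:KolmogorovBis}: a direct change of variables in \eqref{e:xiM} shows that for every $z\in M\Z^2$ one has $\int g(y-z)\,\xi_M(\d r,\d y)=\int g(y)\,\xi_M(\d r,\d y)$ almost surely, which transfers immediately to the $n$-fold iterated integrals defining $\ZM^{:n:}$. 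The $x$-independent second-moment bound then produces the right-hand side $C|[-M/2,M/2]^2|=CM^2$.

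The main obstacle is the comparison estimate \eqref{e:ZZZ4}. By the isometry, $\E|(\cZ^{:n:}-\ZM^{:n:})(t,\eta_k(\cdot-x))|^2=n!\,\|\Pi_{t,k,x}-\Pi^M_{t,k,x}\|_{L^2}^2$, where $\Pi^M$ is the analogous kernel built from $\xi_M$; a telescoping argument bounds this difference by a sum of $n$ terms in each of which one factor $K(t-r_j,z-y_j)$ is replaced by the pointwise error coming from the periodisation of $\xi$. A direct pointwise estimate using the Gaussian decay of $K$ on scale $\sqrt{T}$ and the concentration of $\eta_k$ near $x$ shows that $\E|(\cZ^{:n:}-\ZM^{:n:})(t,\eta_k(\cdot-x))|^2$ is exponentially small in $M$ for $|x|\le M/4$, while being uniformly bounded in $x$ otherwise. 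Integrating against the polynomial weight $\wh w_\si(x)=|x|_*^{-\si}$ yields a total of order $M^{2-\si}$ for $\si>2$, dominated by the region $|x|>M/4$. The time-increment bound is handled by the same argument applied to $\Pi_{t,k,x}-\Pi_{s,k,x}-(\Pi^M_{t,k,x}-\Pi^M_{s,k,x})$, and Nelson's estimate promotes the result to the $L^p$ estimate \eqref{e:ZZZ4}. The technical heart of the argument is the pointwise kernel analysis; the $n$-fold chaos structure only contributes combinatorial constants depending on $n$.
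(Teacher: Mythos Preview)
Your overall strategy matches the paper's: reduce to second moments via Nelson's estimate and the Wiener--It\^o isometry, then verify the hypotheses of Lemmas~\ref{le:Kolmogorov}--\ref{le:KolmogorovBis}. The logarithmic bound you derive for $\E|\cZ^{:n:}(t,\eta_k(\cdot-x))|^2$ is the paper's \eqref{e:Zconclusion1}, and your split for the difference at $|x|\sim M$ mirrors the paper's.

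There is one genuine gap. For \eqref{e:ZZZ2} you invoke $\|\psi_M\|_{L^2}\le\|\psi\|_{L^2}$ to argue that periodisation can only shrink the kernel's $L^2$ norm, but this inequality is \emph{false} in general: with $M=1$ and $\psi=\mathbf 1_{[0,1/4]}+\mathbf 1_{[1,5/4]}$ one gets $\|\psi_M\|_{L^2}^2=1>\tfrac12=\|\psi\|_{L^2}^2$. Although the paper states this inequality when introducing $\xi_M$, its proof of the theorem does \emph{not} use it; instead it computes the periodic covariance kernel $\mathscr K_M$ of \eqref{e:defKM} directly and shows in Lemma~\ref{le:GenCov} that $\mathscr K_M(t_1,t_2;x)\lesssim 1+\log_+(|x|_M^{-1})$, the sum over nonzero lattice translates contributing only $O(M^{-2m})$. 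That is what you need here.

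Your remaining deviations --- the Fourier-side bound $|1-e^{-\tau|\zeta|^2}|\lesssim(\tau|\zeta|^2)^\kappa$ for the time increment, and the telescoping of $\Pi-\Pi^M$ for \eqref{e:ZZZ4} --- are viable alternatives to the paper's route, which packages all the analytic work into position-space kernel estimates: the time increment via the bound \eqref{e:Kbound2} on $\mathscr K_M(t_1,t_1;\cdot)-\mathscr K_M(t_1,t_2;\cdot)$, and the difference via the cross-covariance $\mathscr K_{M,\infty}$ of Lemma~\ref{le:kernelMInftyBounds} assembled into the kernel $\mathscr R^n_M$. The paper's organisation has the advantage that the two kernel lemmas handle every $n$ uniformly, whereas your telescoping must track mixed products of periodised and non-periodised heat kernels; but the content is essentially the same.
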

Throughout the proof of Theorem~\ref{thm:BoundsOnZ} (including the two lemmas at the end), we use the following conventions: for $x>0$, we set  $\log_+(x) = \log(x) \vee 0$. Furthermore, we set
$$
|x|_M = \Ll|
\begin{array}{ll}
\inf \{|x + y|;\; y \in M\Z^2 \} & \text{if } M < \infty, \\
|x| & \text{if } M = \infty.
\end{array}
\right.
$$
Sometimes it is convenient to write $\cZ_{\infty}^{: n :} = \cZ^{: n :}$.
\begin{proof}
Equation \eqref{e:Isometry} and polarisation show that for any  $n$,   $\ph_1, \ph_2 \in \Ss(\R^2)$,  $t_1, t_2 >0$, and for any $M\in [1,\infty]$, we have
\begin{align*}
\E ( \ZM^{: n :}(t_1)&, \ph_1) ( \ZM^{: n :}(t_2), \ph_2)  =\\
& n!  \int_{\R^2} \int_{\R^2} \ph(x_1)   \ph(x_2) \big( \mathscr{K}_M(t_1,t_2;x_1 - x_2) \big)^n \; \d x_1\, \d x_2 \;.
\end{align*}
For $M=\infty$, the generalised covariance in the last expression is given by 
\begin{align}
\mathscr{K}_{\infty}(t_1,t_2;x_1 -x_2) &=\int_{0}^{t_1 \wedge t_2}\int_{\R^2}  K(t_1-r, x_1 -z) \,  K(t_2-r, x_2-z) \; \d r \, \d z\notag \\
& = \frac{1}{8\pi} \int_{|t_1-t_2 |}^{t_1 + t_2 } \frac{1}{\ell}  \exp\Big(-\frac{ |x_1 -x_2|^2}{4\ell}\Big) \; \d \ell \;,\label{e:defKinfty}
\end{align}
while for finite $0<M<\infty$, we get
\begin{align}
\mathscr{K}_{M}(t_1,t_2;x_1 -x_2) 
& = \frac{1}{8\pi} \int_{|t_1-t_2 |}^{t_1 + t_2 } \frac{1}{\ell} \sum_{y \in M\Z^2 } \exp\Big(-\frac{ |x_1 -x_2 - y |^2}{4\ell}\Big) \; \d \ell \;.\label{e:defKM}
\end{align}
 Pointwise bounds on the kernels $\mathscr{K}_M$ are derived in Lemma~\ref{le:GenCov} below.

From the bounds provided in Lemma~\ref{le:GenCov} we get for any $k \geq -1$ any $x \in \R^d$, any $t \leq T$ and for $1 \leq M \leq \infty$
\begin{align}
\E |&\ZM^{: n :}(t,\eta_k(\cdot - x))|^2 = \E |\ZM^{: n :}(t,\eta_k)|^2 \notag \\ 
&=n! \int_{\R^2}\int_{\R^2} \eta_k(x_1) \eta_k(x_2)\, \big( \mathscr{K}_M(t,t, x_1 - x_2) \big)^n \, \d x_1 \, \d x_2 \notag \\
&\stackrel{\eqref{e:Kbound1}}{\ls_{T,n}}    \int_{\R^2}\int_{\R^2} \eta_k(x_1) \eta_k(x_2) \big( 1 +\log_+(|x_1 -x_2|_M^{-1})\big)^n \, \d x_1 \, \d x_2 \notag\;.
\end{align}
The integral involving the kernel $1$ poses no problems, because the $\eta_k$ are uniformly bounded in $L^1$.   We  split the integral over the logarithmic kernel  into an integral over $\{ |x|_1, |x_2| \leq \frac{M}{4}\}$ and an integral over the remaining part of $\R^2 \times \R^2$. For the first integral, we get for $k \geq 0$
\begin{align}
 \int_{|x_1| \leq \frac{M}{4}} & \int_{|x_2| \leq \frac{M}{4}} \eta_k(x_1) \eta_k(x_2) \log_+(|x_1 -x_2|_M^{-1})^n \, \d x_1 \, \d x_2\notag \\
&\ls_{n}   \int_{\R^2}  \int_{\R^2}\eta_0(x_1) \eta_0(x_2) \big( \log_+(|x_1 -x_2|^{-1}) + k \log(2) \big)^n \, \d x_1 \, \d x_2 \notag\\
& \ls_{n} (1+ k^n) \;.\label{e:CalcLogScaling}
\end{align}
Here we have used the fact that $\eta_{k}(\cdot) = 2^{2k}\eta_0(2^k \cdot)$. The integral in the second line converges because $\eta_0$ is a Schwartz function. 

The integral over $\R^2 \times \R^2 \setminus \{ |x_1|, |x_2| \leq \frac{M}{4} \}$ can then easily be seen to be uniformly bounded in $k$ and $M$ using the decay of the $\eta_k$. More precisely, one uses the fact that 
 for $k \geq 0$ and any $m>2$ we have $\eta_k(x) = 2^{2k} \eta(2^k x) \ls_m \frac{1}{2^{m-2}|x|^m}$.
The integral in the case $k = -1$ can easily be checked  to converge as well. Hence, summarising these calculations and applying Nelson's estimate \eqref{e:Nelson} for any $p \geq 2$, we get uniformly over $t \leq T$, $x \in \R^2$, $M \in [1,\infty]$ and $k \geq -1$
\begin{align}\label{e:Zconclusion1}
\E |&\ZM^{: n :}(t,\eta_k(\cdot - x))|^p \ls_{T,n,p} 1+ |k|^{\frac{np}{2}}\;.
\end{align}

In the same way, for $0 \leq t_1,t_2 \leq T$ we have for $M \in [1,\infty]$
\begin{align*}
\E |&\ZM^{: n :}(t_1,\eta_k(\cdot - x)) - \ZM^{: n :}(t_2, \eta_k(\cdot - x))  |^2 \\
&=n! \int_{\R^2}\int_{\R^2} \eta_k(x_1) \eta_k(x_2)\, \mathscr{Q}_M^n(t_1,t_2, x_1 - x_2) \, \d x_1 \, \d x_2 \;,
\end{align*}
where the kernel $\mathscr{Q}^n$  satisfies for $0 < \la \leq 1$ 
\begin{align*}
\mathscr{Q}^n_{\delta}(t_1,t_2, x)  &= \big( \mathscr{K}_M(t_1,t_1, x) \big)^n + \big( \mathscr{K}_M(t_2,t_2, x) \big)^n - 2 \big( \mathscr{K}_M(t_1,t_2, x) \big)^n \\
 &\stackrel{\eqref{e:Kbound1} \eqref{e:Kbound2}}{\ls_{T,n,\la}} \frac{|t_1 - t_2|^{\la} }{|x|_M^{2\la}} \big(1+ \log_+(|x|_M^{-1})^{n-1})\;.
\end{align*}
Then performing a similar calculation to \eqref{e:CalcLogScaling} using the fact that $\eta_{k}(\cdot) = 2^{2k}\eta_0(2^k \cdot)$ and that $\eta_0$ is a Schwartz function, as well as Nelson's estimate, we get
\begin{align}
\E |&\cZ^{: n :}(t_1, \eta_k(\cdot - x)) - \cZ^{: n :}(t_2, \eta_k(\cdot - x))  |^p \notag\\
 &\qquad \qquad\qquad\qquad \ls_{T,n,\la,p}  |t_1 - t_2|^\frac{\la p}{2} \, 2^{p k \la} \,|k|^{\frac{p (n-1)}{2}} \;. \label{e:Zconclusion2}
\end{align}
The bounds \eqref{e:Zconclusion1} and \eqref{e:Zconclusion2} permit to invoke Lemma~\ref{le:Kolmogorov} (and Lemma~\ref{le:KolmogorovBis}) to conclude that \eqref{e:ZZZ1}, \eqref{e:ZZZ2}, and \eqref{e:ZZZ3} hold.

We proceed to bound the difference $\DM  := \ZM^{: n :} - \cZ^{: n :}$. Unlike the preceding calculation, we need to make use of the decay of the weight in an essential way. Indeed, for $|x| \geq \frac{M}{16}$ we simply bound 

\begin{align}
\E |&\DM(t,\eta_k(\cdot - x)) |^p\notag\\
& \qquad \ls_p  \E |\ZM^{: n :}(t,\eta_k(\cdot - x))|^p + \E|   \cZ^{: n :}(t, \eta_k(\cdot - x))  |^p \stackrel{\eqref{e:Zconclusion1}}{\ls_{T,n,p}} 1+ |k|^{\frac{np}{2}} \;, \label{e:DZM1}
\end{align}
and in the same way
\begin{align}
\E \big|& \DM (t_1,\eta_k(\cdot - x)) - \DM (t_2,\eta_k(\cdot - x))  \big|^p \stackrel{\eqref{e:Zconclusion2}}{\ls_{T,n,\la, p}}   |t_1 - t_2|^\frac{\la p}{2} \, 2^{p k \la} \,|k|^{\frac{p (n-1)}{2}} \;. \label{e:ZZZZ}
\end{align}
Hence, for such $x$ the difference $\DM$ satisfies the bounds required to apply Lemma~\ref{le:Kolmogorov} for a function $K_T$ that does not depend on $x$. If $|x| \leq \frac{M}{16}$ we write
\begin{align*}
\E |& \DM(t,\eta_k(\cdot - x))  |^2\\
&\qquad \qquad =n! \int_{\R^2}\int_{\R^2} \eta_k(x_1) \eta_k(x_2)\, \mathscr{R}_M^n(t; x-x_1, x-x_2) \, \d x_1 \, \d x_2 \;.
\end{align*}
The kernel appearing in this expression is given by 
\begin{align*}
&\mathscr{R}_M^n(t; x_1, x_2) \\
&\big( \mathscr{K}(t,t, x_1 - x_2) \big)^n + \big( \mathscr{K}_M(t,t, x_1 - x_2) \big)^n - 2 \big( \mathscr{K}_{M,\infty}(t; x_1, x_2) \big)^n\;,
\end{align*}
 where 
 \begin{align}
 &\mathscr{K}_{M,\infty}(t,x_1,x_2) \notag\\
&= \sum_{y \in M \Z^2}\int_{0}^{t}\int_{[-\frac{M}{2},\frac{M}{2}]^2}  K(t-r, x_1 - x_2 -z) \,  K(t-r, -z - y) \; \d r \, \d z \;. \label{e:KMI}
 \end{align}
Estimates for the kernel $\mathscr{K}_{M,\infty}$  are collected in Lemma~\ref{le:kernelMInftyBounds}. These bounds yield for $|x-x_1|, |x-x_2| \leq \frac{M}{8}$ and any $m >1$
\begin{align*}
\big| \mathscr{R}_M^n(t; x-x_1, x-x_2) \big| \ls_{T,m}  (1 + \log_+(|x_1 -x_2|^{-1})^{n-1}) \frac{1}{M^{2m}}\;.
\end{align*}
We get
\begin{align*}
\int_{|x_1| \leq \frac{M}{16}}&\int_{|x_2 \leq \frac{M}{16}} \eta_k(x_1) \eta_k(x_2)\, \mathscr{R}_M^n(t; x-x_1, x-x_2) \, \d x_1 \, \d x_2\\
& \ls_{T,m} \frac{1}{M^{2m}} \int_{\R^2} \int_{\R^2} \eta_k(x_1) \eta_k(x_2)  (1 + \log_+(|x_1 -x_2|^{-1})^{n-1}) \; \d x_1 \, \d x_2 \\
&\stackrel{\eqref{e:CalcLogScaling}}{\ls_{T,m}} \frac{1}{M^{2m}} (1+ |k|^{n-1})\;.
\end{align*}
If one of the $x_i$ satisfies $|x_i| \geq \frac{1}{M^{2m}}$, we can use the fact that the $\eta_k$ are rescaled Schwartz functions to get the same polynomial decay of arbitrary order  in $M$ also in the integral over the rest of $\R^2$. Applying Nelson's estimate once more and merging with \eqref{e:DZM1} we get 
\begin{align*}
\E |&\DM (t,\eta_k(\cdot - x)) |^p \ls_{T,m,p} (1+ |k|^{\frac{p n}{2}}) \; K_T(x) \;,
\end{align*}
where $K_T(x) = 1$ for $|x| \geq \frac{M}{16}$ and $K_T(x) \ls M^{-2m}$ else. Interpolating this bound with \eqref{e:ZZZZ}, it is easy to obtain a similar bound for the time increments. Hence we can conclude by observing that for any $m$ large enough,
\begin{align*}
\int_{\R^2} K_T(x) \; \wh w_\sigma(x) dx \ls_m M^{-2m +2} + \int_{|x| \geq \frac{M}{16}} \wh w_{\si} (x) dx \ls_{\si} M^{2 - \si} \;.
\end{align*}
\end{proof}

\begin{lem}\label{le:GenCov}
Let $\mathscr{K}_M$ be defined by \eqref{e:defKinfty} and \eqref{e:defKM}.
Then for every $T>0$ we get uniformly for $x \in \R^2$ and $0 \leq t_1,t_2 \leq T$ and $M \in [1,\infty]$
\begin{align}
  \mathscr{K}_M(t_1,t_2;x) &\leq C(T) \big(1 +  \log_+  (|x|^{-1}_M  \big)\big)\;. \label{e:Kbound1}
\end{align} 
Furthermore, for any $\lambda \in (0,1]$ and $0 \leq t_1,t_2 \leq T$
\begin{align}
\big|&  \mathscr{K}_M(t_1,t_1;x) - \mathscr{K}_M(t_1,t_2;x)  \big|  \leq C(T,\la) \frac{ |t_1 -t_2|^\la}{|x|_M^{2\la}  } \;.\label{e:Kbound2}
 \end{align}
\end{lem}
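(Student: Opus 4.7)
The plan is to bound both quantities by direct analysis of the $\ell$-integrals defining $\mathscr{K}_M$. The central preliminary estimate is a uniform pointwise bound on the Gaussian sum
\begin{equation*}
G_M(\ell, x) := \sum_{y \in M\Z^2} e^{-|x-y|^2/(4\ell)},
\end{equation*}
valid for $0 < \ell \leq 2T$ and $M \in [1,\infty]$ (with the convention $G_\infty(\ell,x) = e^{-|x|^2/(4\ell)}$). We pick $y^* \in M\Z^2$ closest to $x$, so that $|x-y^*| = |x|_M \leq M\sqrt{2}/2$, and the term $y = y^*$ contributes $e^{-|x|_M^2/(4\ell)}$. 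For $y = y^* + Mn$ with $n \in \Z^2 \setminus \{0\}$, the triangle inequality gives $|x-y| \geq M(|n| - \sqrt{2}/2)$, so the remainder sum is controlled by a Gaussian lattice sum of the form $\sum_{n \neq 0} \exp(-cM^2 |n|^2/\ell)$, which is easily seen to be $\leq C \min(1, \ell/M^2)$ by separating the regimes $\ell \leq M^2$ and $\ell > M^2$. This yields
\begin{equation*}
G_M(\ell, x) \leq e^{-|x|_M^2/(4\ell)} + C \min(1, \ell/M^2) \qquad (0 < \ell \leq 2T, \ M \in [1,\infty]).
\end{equation*}

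For \eqref{e:Kbound1}, we integrate this pointwise estimate over $\ell \in [|t_1-t_2|, t_1+t_2] \subset (0,2T]$. The substitution $u = |x|_M^2/(4\ell)$ reduces the main contribution to an incomplete exponential integral, and standard estimates on $\int_a^\infty e^{-u}/u\, du$ yield $\int_0^{2T} e^{-|x|_M^2/(4\ell)}\, d\ell/\ell \leq C(T)(1 + \log_+(|x|_M^{-1}))$. The tail $\int_0^{2T} \min(1, \ell/M^2)\, d\ell/\ell$ is uniformly bounded in $M \geq 1$ by splitting at $\ell = M^2 \wedge 2T$.

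For \eqref{e:Kbound2}, the key device is the interpolation $e^{-a} \leq C_\lambda a^{-\lambda}$ for $a > 0$ and $\lambda \in (0,1]$, which converts Gaussian decay into the polynomial weight $|x|_M^{-2\lambda}$. Assuming WLOG $t_1 \leq t_2$, we rewrite the difference of kernels as a difference of $\ell$-integrals: when $t_2 \leq 3t_1$, it reduces to $\int_0^{t_2-t_1} - \int_{2t_1}^{t_1+t_2}$ (both intervals of length $t_2 - t_1$); in the remaining case $t_2 > 3t_1$, the two integrals $\int_0^{2t_1}$ and $\int_{t_2-t_1}^{t_1+t_2}$ are disjoint, but then $(t_2-t_1)^\lambda \geq (2t_1)^\lambda$ provides enough slack to bound each integral separately. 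On any such interval $I$ of length at most $|t_1-t_2|$, the main term is then controlled by
\begin{equation*}
|x|_M^{-2\lambda} \int_I \ell^{\lambda-1}\, d\ell \leq C_\lambda |x|_M^{-2\lambda} |t_1-t_2|^\lambda,
\end{equation*}
where the subadditivity $(a+b)^\lambda \leq a^\lambda + b^\lambda$ handles the upper endpoints. The tail integrals contribute at most $C|t_1-t_2|/M^2$, which is in turn dominated by $C(T,\lambda)|t_1-t_2|^\lambda |x|_M^{-2\lambda}$ by combining $|x|_M \leq M\sqrt{2}/2$ with $M \geq 1$ and $|t_1-t_2| \leq 2T$.

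The main subtlety will be the case analysis in \eqref{e:Kbound2}, ensuring that every tail and boundary contribution maps cleanly into the target form. The geometric constraint $|x|_M \leq M\sqrt{2}/2$ is precisely what allows the $M^{-2}$ factor from the periodic-sum tail to be absorbed into $|x|_M^{-2\lambda}$ at the cost of a $T,\lambda$-dependent constant; without this upper bound on $|x|_M$, the tail contribution could not be reconciled with the claimed bound.
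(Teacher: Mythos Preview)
Your proof is correct and follows essentially the same strategy as the paper's: isolate the nearest lattice point (you via the preliminary bound on $G_M$, the paper via a WLOG reduction to $|x|_M=|x|$), control the remaining lattice sum by a summable tail, and for \eqref{e:Kbound2} convert the Gaussian into a power via $e^{-a}\lesssim a^{-\lambda}$ and integrate over intervals of total length $\lesssim|t_1-t_2|$. One small caveat: your ``WLOG $t_1\le t_2$'' is not literally valid since $\mathscr{K}_M(t_1,t_1;x)-\mathscr{K}_M(t_1,t_2;x)$ is asymmetric in $t_1,t_2$, but the omitted case $t_1>t_2$ is strictly easier (the leftover intervals $[0,t_1-t_2]$ and $[t_1+t_2,2t_1]$ each have length exactly $|t_1-t_2|$, so your $3t_1$ dichotomy is not needed there).
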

\begin{proof}
We first show \eqref{e:Kbound1}.  For $M<\infty$, we can assume without loss of generality that the infimum $\inf\{|x+y| \colon y \in M\Z^2 \}$ is realised for $y =0$ which implies that $|x| \leq \frac{M}{\sqrt{2}}$.

Then the term corresponding to $y =0$ in \eqref{e:defKM} can be bounded by
\begin{align}
\frac{1}{8\pi}& \int_{|t_1-t_2 |}^{t_1 + t_2 } \frac{1}{\ell}  \exp\Big(-\frac{ |x|^2}{4\ell}\Big) \; \d \ell\notag\\
 & \ls \int_0^{2T}  \frac{1}{\ell}  \exp\Big(-\frac{ |x|^2}{4\ell}\Big) \; \d \ell 
 \ls \int_0^1  \frac{1}{\ell}  \exp\Big(-\frac{ 1}{4\ell}\Big) \; d \ell +  \int_1^{1 \vee \frac{2T}{|x|^2}}  \frac{1}{\ell}  \; \d \ell \notag\\
 & \ls_{T} 
 1 + \log_+ \big( |x|^{-1} \big) \,.\notag
\end{align}
This calculation already shows the desired bound \eqref{e:Kbound1} in the case $M=\infty$. For $M<\infty$ for $z = |x+ y|$ we use the bound
\begin{align*}
\frac{1}{8\pi}& \int_{|t_1-t_2 |}^{t_1 + t_2 } \frac{1}{\ell}  \exp\Big(-\frac{ |z|^2}{4\ell}\Big) \; \d \ell
  \ls_m \int_0^{2T}  \frac{1}{\ell} \Big( \frac{\ell}{|z|^2} \Big)^m  \; \d \ell  \ls_{m} \frac{T^m}{|z|^{2m}}\;,
\end{align*}
valid for every $m >0$. To bound  the sum over $y \neq 0$ in \eqref{e:defKM}  we choose an $m>1$ and obtain  
\begin{align}
\frac{1}{8\pi} &\int_{|t_1-t_2 |}^{t_1 + t_2 } \frac{1}{\ell} \sum_{y \in M\Z^2 \setminus \{0\} } \exp\Big(-\frac{ |x - y |^2}{4\ell}\Big) \; \d \ell \notag\\
&\ls_{m,T} \sum_{y \in M\Z^2 \setminus \{0\}} \frac{1}{|x+y|^{2m}} \ls_{m,T} \sum_{y \in M\Z^2 \setminus \{0\}} \frac{1}{|y|^{2m}}  \ls_{m,T} M^{-{2m}} \;.\notag
\end{align}
Here we have made use of the fact that $|x| \leq \frac{M}{\sqrt{2}}$ implies that $|x+y| \geq (1- \frac{1}{\sqrt{2}} ) |y|$ for all $y \in M \Z^2$. Hence, \eqref{e:Kbound1} is established. 

To see \eqref{e:Kbound2}  we can again assume without loss of generality that in the case $M<\infty$ we have $|x|_M =|x|$ and hence  $|x| \leq \frac{1}{\sqrt{2}}M$.
We get  for any $M \in [1,\infty)$
\begin{align}\label{e:KRechnung}
\big| \mathscr{K}_M(t_1,t_1;x)  - \mathscr{K}_M(t_1,t_2;x) \big| &\ls \int_{I_1 \cup I_2} \frac{1}{\ell} \sum_{y \in M\Z^2}\exp\Big(\frac{|x+y|^2}{4\ell} \Big) \d \ell \;.
\end{align}
Here the intervals $I_1$ and $I_2$ are given by
\begin{align*}
I_1 =& [0, |t_1 - t_2 | ] \quad \text{and} \quad
I_2 = [t_1 + t_2 , 2 t_1 ]_+ \; 
\end{align*}
with the convention $[a,b]_+ = [a,b]$ if $a \leq b$ and $[a,b]_+ = [b,a]$ else. In particular, we have $|I_1|= |I_2| = |t_1-t_2|$ and both intervals are contained in $[0,2T]$. In the case $M=\infty$ the sum in  \eqref{e:KRechnung} has to be replaced by the single term corresponding to $y=0$.

For each term in \eqref{e:KRechnung} we get, setting $z = x+y$   and choosing $m >0$ appropriately
\begin{align*}
\int_{I_1 \cup I_2} \frac{1}{\ell} \exp\Big(\frac{|z|^2}{4\ell} \Big) \d \ell \ls_m \int_{I_1 \cup I_2} \frac{1}{\ell}   \Big( \frac{\ell}{|z|^2} \Big)^m \, \d \ell \ls_{m,T}  \frac{|t_1 -t_2|^{m \wedge 1}}{|z|^{2m}}  \;.
\end{align*}
For the term involving $y=0$ we choose $m = \la \in (0,1]$ in this bound. As above this already establishes \eqref{e:Kbound2} in the case $M=\infty$. For $M<\infty$ we use an arbitrary $m >1$ to bound the terms for $y \neq 0$. We then obtain, using as above that $|x +y | \geq (1 - \frac{1}{\sqrt{2}}) |y|$
\begin{align*}
\sum_{y \in M \Z^2 \setminus \{0 \}}  \int_{I_1 \cup I_2} \frac{1}{\ell} \exp\Big(\frac{|x+y|^2}{2\ell} \Big) \d \ell &\ls _{m,T} |t_1 - t_2|  \sum_{y \in M \Z^2 \setminus \{0 \}}  \frac{1}{|y|^{2m}} \\
& \ls_{m,T}  |t_1 - t_2| M^{-2m}  \;.
\end{align*}
So, \eqref{e:Kbound2} follows as well. 
\end{proof}
\begin{lem}\label{le:kernelMInftyBounds}
Let $\mathscr{K}_{M,\infty}$ be defined by \eqref{e:KMI} and $\mathscr{K}_\infty$, $\mathscr{K}_M$ by \eqref{e:defKinfty}, \eqref{e:defKM}. For $0 \leq t \leq T$,  $1 \leq M < \infty$ and $x_1,x_2 \in \R^2$ with $|x_1|, |x_2| \leq \frac{M}{8}$ we have  
\begin{align}
\mathscr{K}_{M,\infty}(t; x_1, x_2)  \leq C(T) (1 + \log_+(|x_1 -x_2|^{-1}) \;. \label{e:KMIBoundA}
\end{align}
Furthermore, under the same assumptions on $t,M,x_1,x_2$ we have for every $m >1$
\begin{align}
\big| \mathscr{K}_{M,\infty}(t; x_1, x_2) - \mathscr{K}_\infty(t,t;  x_1- x_2) \big| \leq C(T,m) \frac{1}{M^{2m}} \;, \label{e:KMIBoundB}\\
\big| \mathscr{K}_{M,\infty}(t; x_1, x_2) - \mathscr{K}_M(t,t;  x_1- x_2) \big| \leq C(T,m) \frac{1}{M^{2m}}\;. \label{e:KMIBoundC}
\end{align}
\end{lem}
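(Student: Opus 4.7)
The plan is to exploit the rapid Gaussian decay of the heat kernel $K$ to show that $\mathscr{K}_{M,\infty}$ is extremely close both to $\mathscr{K}_\infty$ and to $\mathscr{K}_M$ under the hypothesis $|x_1|,|x_2| \le M/8$. First observe that inspection of the integrand in \eqref{e:KMI} shows that $\mathscr{K}_{M,\infty}(t,x_1,x_2)$ depends on $(x_1,x_2)$ only through the difference $x := x_1 - x_2$, which then satisfies $|x| \le M/4$. I would isolate the $y = 0$ contribution from the sum over $M\Z^2$ and write $\mathscr{K}_{M,\infty}(t,x_1,x_2) = g^{(0)}_M(t,x) + \sum_{y\neq 0} g^{(y)}_M(t,x)$.

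For the $y = 0$ term, I would compare with $\mathscr{K}_\infty(t,t,x)$ after rewriting the latter via a change of variables as $\int_0^t \int_{\R^2} K(t-r, x-z)\, K(t-r, z)\, \d z\, \d r$. The difference is the integral of the same integrand over $\R^2 \setminus [-M/2,M/2]^2$. On this set $|z| > M/2$, so $K(t-r, z) \le (4\pi(t-r))^{-1} e^{-M^2/(16(t-r))}$; bounding the remaining factor by $\int K(t-r, x-z)\, \d z \le 1$ and applying the elementary inequality $e^{-a} \le C_m a^{-m}$ (valid for any $m>0$), I obtain that the difference is $O(M^{-2m})$ for every $m > 0$.

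For $y \in M\Z^2 \setminus \{0\}$, the key geometric observation is that $z \in [-M/2, M/2]^2$ together with $|y| \ge M$ implies $|z + y| \ge |y| - M\sqrt{2}/2 \ge (1-\sqrt{2}/2)\, |y|$. Hence $K(t-r, -z-y) \le (4\pi(t-r))^{-1} e^{-c|y|^2/(4(t-r))}$ with $c = (1-\sqrt{2}/2)^2$. Bounding $\int K(t-r, x-z)\, \d z \le 1$ and applying $e^{-a} \le C_m a^{-m}$ once more yields $g^{(y)}_M(t,x) \le C(T,m)\, |y|^{-2m}$. Summing over $y \in M\Z^2 \setminus \{0\}$ and using the convergence of $\sum_{y' \in \Z^2 \setminus \{0\}} |y'|^{-2m}$ for $m > 1$ gives a total contribution of order $M^{-2m}$; combining with the bound from the previous paragraph proves \eqref{e:KMIBoundB}.

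Bound \eqref{e:KMIBoundA} is then an immediate corollary of \eqref{e:KMIBoundB} and \eqref{e:Kbound1} applied to $\mathscr{K}_\infty(t,t,x_1-x_2)$, since $M \ge 1$ allows the $O(M^{-2m})$ remainder to be absorbed into the constant. For \eqref{e:KMIBoundC} I would use the identity $\mathscr{K}_M(t,t,x) = \sum_{y \in M\Z^2} \mathscr{K}_\infty(t,t,x-y)$, evident from comparing \eqref{e:defKinfty} and \eqref{e:defKM}; for $|x| \le M/4$ and $|y| \ge M$ one has $|x-y| \ge 3|y|/4$, so the same polynomial-decay trick applied to \eqref{e:defKinfty} gives $|\mathscr{K}_M(t,t,x) - \mathscr{K}_\infty(t,t,x)| \le C(T,m)\,M^{-2m}$, and the triangle inequality with \eqref{e:KMIBoundB} delivers \eqref{e:KMIBoundC}. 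There is no conceptual obstacle in this argument; the only minor technical care is the requirement $m > 1$ to ensure convergence of the two-dimensional lattice sum, which forces the resulting bound to hold for every $m > 1$ (equivalently, for every $m > 0$ after relabelling).
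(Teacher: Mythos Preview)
Your proof is correct and uses the same core mechanism as the paper: Gaussian tails of $K$ converted to polynomial decay via $e^{-a}\le C_m a^{-m}$, the geometric lower bound $|z+y|\ge (1-1/\sqrt2)|y|$ for $z\in[-M/2,M/2]^2$ and $y\in M\Z^2\setminus\{0\}$, and summability of $|y|^{-2m}$ over the lattice for $m>1$.

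The organisation differs slightly. The paper proves \eqref{e:KMIBoundA} first by splitting off the $y=0$ term and bounding it by $\mathscr{K}_\infty$, then proves \eqref{e:KMIBoundB} and \eqref{e:KMIBoundC} each directly: for \eqref{e:KMIBoundB} it bounds the missing integral over $\R^2\setminus[-M/2,M/2]^2$ via the factor $K(t-r,x_1-x_2-z)$ (using $|x_1-x_2-z|\ge M/4$) rather than $K(t-r,-z)$ as you do, and for \eqref{e:KMIBoundC} it writes $\mathscr{K}_{M,\infty}-\mathscr{K}_M$ as an integral involving $(K-K_M)(t-r,x-z)$ against the periodic kernel $K_M(t-r,-z)$ and bounds $K-K_M$ directly. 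Your route---proving \eqref{e:KMIBoundB} first, deducing \eqref{e:KMIBoundA} from it and \eqref{e:Kbound1}, and obtaining \eqref{e:KMIBoundC} by combining \eqref{e:KMIBoundB} with the bound $|\mathscr{K}_M-\mathscr{K}_\infty|\le C(T,m)M^{-2m}$ (itself just the $y\neq0$ tail of the identity $\mathscr{K}_M=\sum_y\mathscr{K}_\infty(\cdot-y)$, already handled in Lemma~\ref{le:GenCov})---is a bit more economical and avoids repeating the tail estimate three times.
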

\begin{proof}
We start by establishing \eqref{e:KMIBoundA}. The  term in \eqref{e:KMI} corresponding to $y=0$  can be bounded easily
\begin{eqnarray*}
\int_{0}^{t}\int_{[-\frac{M}{2},\frac{M}{2}]^2}  K(t-r, x_1 - x_2 -z) \,  K(t-r, -z ) \; \d r \, \d z \\
\leq \mathscr{K}_\infty(t,t;x_1 - x_2) \stackrel{\eqref{e:Kbound1}}{\ls_T} (1 + \log_+(|x_1 - x_2|^{-1}) \;,
\end{eqnarray*}
where in the last inequality we have used the fact that $|x_1|$, $|x_2| \leq \frac{M}{8}$ implies that $|x_1 - x_2|_M = |x_1 - x_2|$.
For the remaining terms we use the fact that for $z \in [-\frac{M}{2}, \frac{M}{2}]^2$ and $y \in M\Z^2 \setminus \{0 \}$ we have $|z+y| \geq |y| - |z| \geq (1- \frac{1}{\sqrt{2}}) |y|$. We obtain for every $m>1$
\begin{align}
 \sum_{y \in M\Z^2 \setminus\{0\}}K(t-r, -z - y) \ls_m  \sum_{y \in M\Z^2\setminus\{0\}}  \frac{1}{t-r} \Big(\frac{t-r}{|z+y|^2}\Big)^m \ls_{m} \frac{(t-r)^{m-1}}{M^{2m}}\;, \label{e:MalWieder}
\end{align}
which implies that
\begin{align}
 &\sum_{y \in M \Z^2 \setminus \{ 0 \} } \int_{0}^{t}\int_{[-\frac{M}{2},\frac{M}{2}]^2}  K(t-r, x_1 - x_2 -z) \,  K(t-r, -z - y) \; \d r \, \d z  \notag\\
 & \quad \ls_m \frac{1}{M^{2m}}\int_{0}^{t}\int_{\R^2} (t-r)^{m-1}  K(t-r, x_1 - x_2 -z) \,   \; \d r \, \d z \ls_{m,T} \frac{1}{M^{2m}} \;. \label{e:mittenInDerRechnung}
\end{align}
So, \eqref{e:KMIBoundA} follows.

To see \eqref{e:KMIBoundB} we write
\begin{align*}
 &\mathscr{K}_{M,\infty}(t;x_1,x_2) - \mathscr{K}_\infty(t,t; x_1 - x_2) \\
 &=  \sum_{y \in M \Z^2 \setminus \{ 0 \} } \int_{0}^{t}\int_{[-\frac{M}{2},\frac{M}{2}]^2}  K(t-r, x_1 - x_2 -z) \,  K(t-r, -z - y) \; \d r \, \d z \\
&- \int_{0}^{t}\int_{\R^2 \setminus [-\frac{M}{2},\frac{M}{2}]^2}  K(t-r, x_1 - x_2 -z) \,  K(t-r, -z ) \; \d r \, \d z \;. 
\end{align*}
We have already seen above in \eqref{e:mittenInDerRechnung} that the first term on the right hand side is bounded by $\frac{C(m,T)}{M^{2m}}$ for any $m>1$.
For the second term, our assumption $|x_1| , |x_2| \leq \frac{M}{8}$ (which implies that $|x_1 - x_2 | \leq \frac{M}{4}$) enters, because it implies that for any $z \notin [\frac{M}{2} \frac{M}{2}]$ and for $m>1$ we have 
\begin{align*}
 K(t-r, x_1 - x_2 -z) \ls \frac{1}{t-r} \exp\Big(- \frac{M^2}{16(t-r)} \Big) \ls_{m} \frac{1}{t-r}\Big( \frac{t-r}{M^2} \Big)^m \;.
\end{align*} 
Therefore, we can conclude that 
\begin{align*}
 &\int_{0}^{t}\int_{\R^2 \setminus [-\frac{M}{2},\frac{M}{2}]^2}  K(t-r, x_1 - x_2 -z) \,  K(t-r, -z ) \; \d r \, \d z\\
 &  \ls_m  \frac{1}{M^{2m}} \int_{0}^{t}\int_{\R^2} (t-r)^{m-1}   K(t-r, -z ) dr \, dz  \ls_{m,T} \frac{1}{M^{2m}} \;,
\end{align*}
and \eqref{e:KMIBoundB} is established. 

Finally, in a similar way we get
\begin{align}
 &\mathscr{K}_{M,\infty}(t;x_1,x_2) - \mathscr{K}_M(t,t; x_1 - x_2) \notag\\
 &=  \int_{0}^{t}\int_{[-\frac{M}{2},\frac{M}{2}]^2}  (K(t-r, x_1 - x_2 -z) - K_M(t-r, x_1 - x_2 -z)) \notag\\
 & \qquad \qquad \times   K_M(t-r, -z ) \; \d r \, \d z \;. \label{e:erstmalLetzte}
\end{align}
We use once more the fact that $|x_1 + x_2| \leq \frac{M}{4}$  and argue as in \eqref{e:MalWieder} to see that uniformly over  $z \in [-\frac{M}{2},\frac{M}{2}]^2$ we have
\begin{align*}
\big| K(t-r, x_1 - x_2 -z) - K_M(t-r, x_1 - x_2 -z)\big| \ls_m  \frac{1}{t-r}\Big(\frac{t-r}{M}\Big)^m \,.
\end{align*}
So that after integrating out the remaining kernel $K_M$ in \eqref{e:erstmalLetzte} we get the bound \eqref{e:KMIBoundC}.
\end{proof}
It remains to treat the case of  non-zero  initial condition $X_0$ for \eqref{e:eqZ2}. We will need the following lemma only in the case $d=2$, but as it causes no extra effort, we state and prove it for arbitrary spatial dimension.

\begin{lem}\label{l:periodisation1} 
Let $|\al| < 1$, $1 \leq p < \infty $ and $\si >2$. Fix  $X \in \hB_{p,p}^{\alpha, \si} $. For every $M \geq 1$, there exists an $M$-periodic distribution $X_M \in \tilde B_{p,p}^{\alpha, M} $ such that for every test function $\ph$ with compact support contained in 
$B(0, \frac{M}{4})$, we have 
\begin{equation}
\label{e:ppty_centre}
( X_M, \ph) = (X, \ph).
\end{equation}
Furthermore, the $X_M$ are bounded in  $\hB_{p,p}^{\alpha, \si}$ uniformly in $M$ and converge to $X$ in every space $\hB_{p,p}^{\td{\alpha}, \td{\si}}$ for $\td{\alpha}<\alpha$ and $\td{\si}>\si$. 
\end{lem}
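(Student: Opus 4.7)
The plan is to construct $X_M$ by a cutoff-and-periodize procedure and verify the four properties in turn. First pick $\chi \in C^\infty_c(\R^d)$ with $\chi \equiv 1$ on $B(0, 1/4)$ and support strictly inside the unit cube $(-1/2, 1/2)^d$, then set $\chi_M = \chi(\cdot/M)$, so that $\chi_M \equiv 1$ on $B(0, M/4)$ and $\chi_M$ is supported in a compact subset of the fundamental cube $Q_0 = [-M/2, M/2]^d$. Define
\begin{equation*}
X_M := \sum_{z \in M\Z^d} (\chi_M X)(\cdot - z).
\end{equation*}
Because distinct translates $(\chi_M X)(\cdot - z)$ have pairwise disjoint supports, the sum is locally a single term and $X_M$ is a well-defined $M$-periodic distribution, agreeing with $g := \chi_M X$ on $Q_0$. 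Property \eqref{e:ppty_centre} is then immediate: if $\mathrm{supp}\,\varphi \subset B(0, M/4)$, only the $z=0$ term contributes and $\chi_M \equiv 1$ on $\mathrm{supp}\,\varphi$, so $(X_M, \varphi) = (X, \varphi)$.

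For the uniform-in-$M$ bound in $\hat{B}^{\alpha,\sigma}_{p,p}$, I exploit periodicity to rewrite, for every $k \ge -1$,
\begin{equation*}
\|\delta_k X_M\|_{\hat{L}^p_\sigma}^p = \int_{Q_0} |\delta_k X_M(y)|^p \Bigl( \sum_{z \in M\Z^d} |y + z|_*^{-\sigma} \Bigr)\, dy,
\end{equation*}
and observe that for $y \in Q_0$ and $\sigma > d = 2$, the inner sum is bounded by a constant independent of $M$. Decomposing $\delta_k X_M(y) = \delta_k g(y) + \sum_{z \neq 0} (\delta_k g)(y - z)$ on $Q_0$ and using the rapid Schwartz decay of $\eta_k$ to absorb the tail, the problem reduces to estimating the Besov norm of $g = \chi_M X$. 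The latter follows from a multiplicative inequality (Corollary~\ref{c:multipl1} or~\ref{c:multipl2} in their polynomial-weight form), combined with the uniform-in-$M$ bound on $\chi_M$ in a suitable Besov space provided by the polynomial-weight version of Lemma~\ref{l:compact-support}. This gives $\|X_M\|_{\hat{B}^{\alpha,\sigma}_{p,p}} \lesssim \|X\|_{\hat{B}^{\alpha,\sigma}_{p,p}}$, uniformly in $M \ge 1$.

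Membership of $X_M$ in $\tilde{B}^{\alpha, M}_{p,p}$ for each fixed $M$ follows by the same Littlewood--Paley decomposition but now with the flat weight: since the tail $\sum_{z \neq 0} (\delta_k g)(\cdot - z)$ is absolutely summable in $L^p(Q_0)$ by the Schwartz decay of $\eta_k$, we obtain $\|\delta_k X_M\|_{\tilde{L}^p_M} \lesssim \|\delta_k g\|_{L^p(\R^d)}$, whence $\|X_M\|_{\tilde{B}^{\alpha,M}_{p,p}} \lesssim \|g\|_{B^{\alpha,0}_{p,p}} < \infty$ by the polynomial-weight analogue of Proposition~\ref{p:embed-to-unweighted} applied to the compactly supported distribution $g$. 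The convergence $X_M \to X$ in $\hat{B}^{\tilde\alpha, \tilde\sigma}_{p,p}$ for every $\tilde\alpha < \alpha$ and $\tilde\sigma > \sigma$ is then automatic: by the uniform bound and the compact embedding of Proposition~\ref{p:compact-poly}, the family $(X_M)_{M \ge 1}$ is relatively compact in the target space, while \eqref{e:ppty_centre} (for any $\varphi \in C^\infty_c$ with $\mathrm{supp}\,\varphi \subset B(0, R)$, one has $(X_M, \varphi) = (X, \varphi)$ as soon as $M \ge 4R$) forces every limit point to coincide with $X$ as a distribution. Uniqueness of the limit yields convergence of the whole sequence.

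The main technical obstacle is the uniform Besov-level control of the tail $\sum_{z \neq 0} (\delta_k g)(\cdot - z)$: although the pointwise Schwartz bound on $\eta_k$ together with the compactness of $\mathrm{supp}\,g$ makes each translate small, one must track carefully the $k$-dependence of the resulting estimates and their interplay with the polynomial weight, so that after multiplication by $2^{\alpha k}$ and summation in $k$ the contribution is controlled by $\|X\|_{\hat{B}^{\alpha,\sigma}_{p,p}}$ uniformly in $M$. The condition $\sigma > d$ is precisely what is needed to absorb the periodic summation of the weight into a finite constant.
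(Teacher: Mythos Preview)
Your approach is essentially the same as the paper's---cutoff by $\chi_M$, periodise, split $\delta_k X_M$ into the central copy and the lattice tail, control the tail pointwise, and close via the multiplicative inequality for $g=\chi_M X$---and the convergence argument via compactness and identification of the limit through~\eqref{e:ppty_centre} is exactly what the paper does.

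There is, however, a genuine gap in your main-term reduction. After the periodicity rewrite you bound the inner sum $\sum_{z\in M\Z^d}|y+z|_*^{-\sigma}$ by a constant independent of $M$. This is true but too crude: it only yields $\|\delta_k X_M\|_{\hat L^p_\sigma}^p \lesssim \int_{Q_0}|\delta_k X_M|^p$, i.e.\ a \emph{flat} $L^p(Q_0)$ norm. Consequently the main term you are left with is $\int_{Q_0}|\delta_k g|^p$, and after summing in $k$ you need a uniform-in-$M$ bound on the \emph{unweighted} Besov norm $\|g\|_{B^{\alpha,0}_{p,p}}$. The polynomial-weight multiplicative inequality you invoke only controls the \emph{weighted} norm $\|g\|_{\hat B^{\alpha,\sigma}_{p,p}}$, and the unweighted norm is in general \emph{not} bounded uniformly in $M$: for any $X\in\hat B^{\alpha,\sigma}_{p,p}$ that is bounded below at infinity (permitted since $\sigma>d$) one has $\|\chi_M X\|_{L^p}^p\sim M^d\to\infty$. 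The fix, which is what the paper does, is to sharpen the weight estimate to
\[
\sum_{z\in M\Z^d}\hat w_\sigma(y+z)\ \lesssim\ \hat w_\sigma(y)\qquad(y\in Q_0),
\]
using that the $z\neq 0$ contribution is $\lesssim M^{-\sigma}\lesssim \hat w_\sigma(y)$ for $y\in Q_0$. This keeps the weight and gives $\|\delta_k X_M^{\mathrm{(in)}}\|_{\hat L^p_\sigma}\lesssim\|\delta_k g\|_{\hat L^p_\sigma}$, after which your multiplicative-inequality step closes correctly.

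A second, smaller point: for the tail $\sum_{z\neq 0}(\delta_k g)(y-z)$, the phrase ``Schwartz decay of $\eta_k$'' hides a nontrivial step when $\alpha<0$, since $g$ is then a genuine distribution and one cannot simply push absolute values inside $\eta_k\star g$. The paper obtains the needed pointwise decay $|\delta_k g|(x)\lesssim |2^k x|^{-l}\|g\|_{\hat B^{\alpha,\sigma}_{p,p}}$ for $|x|\ge M$ via the duality estimate of Proposition~\ref{p:dual}; you should invoke the same mechanism.
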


\begin{proof}
Let $\ph \in C^\infty_c$ be such that 
$$
\ph = 1 \text{ on } B(0,1/4), \quad \ph = 0 \text{ outside of } B(0,1/3),
$$
and let $\ph_M = \ph(\cdot/M)$. We define
$$
\td{X}_M = \ph_M X.
$$
By Lemma~\ref{l:compact-support} and the multiplicative inequality, $\td{X}_M$ is well-defined as an element of $\hB^{\al,\si}_{p,p}$, and moreover,
$$
\sup_{M \ge 1} \|\td{X}_M\|_{\hB^{\al,\si}_{p,p}} < \infty.
$$
Since $\td{X}_M$ is supported in $B(0,M/3)$, we can define the $M$-periodic distribution
$$
X_M = \sum_{z \in M\Z^d} \td{X}_M(\cdot - z).
$$
This distribution satisfies \eqref{e:ppty_centre}. We now show that 
$$
\sup_{M \ge 1} \|X_M\|_{\hB^{\al,\si}_{p,p}} < \infty.
$$
(It will be clear that the proof can be adapted to yield that for every $M$, $X_M$ is in $\tilde B_{p,p}^{\alpha, M}$.)
We start by writing, for any $k \geq -1$ and $M \geq 1$,
\begin{align*}
\dk X_M = \dXi + \dXo,
\end{align*}
where 
\begin{align*}
\dXi &= \sum_{z \in M \Z^d} \dk  \td{X}_M (\cdot - z) \mathbf{1}_{[-\frac{M}{2}, \frac{M}{2})^d}( \cdot -z) ,\\
\dXo &=  \sum_{z \in M \Z^d} \dk  \td{X}_M (\cdot - z) \big(1 - \mathbf{1}_{[-\frac{M}{2}, \frac{M}{2})^d} ( \cdot -z)\big) \;.
\end{align*}
For $\dXi$, we get
\begin{align}
\| \dXi \|_{\hat{L}_\si^p}^p& =  \sum_{z \in M\Z^d} \int \big| \dk \td{X}_M( x- z) \big|^p \mathbf{1}_{[-\frac{M}{2}, \frac{M}{2})^d}( x -z) \, \hat{w}_\si(x) \, \d x  \notag\\
& =  \int_{{[-\frac{M}{2}, \frac{M}{2})^d} } \big| \dk \td{X}_M( x) \big|^p  \sum_{z \in M\Z^d} \hat{w}_\si(x+z) \, \d x  \notag \;.
\end{align}
For $x \in [\frac{M}{2}, \frac{M}{2})^d$, we can write
\begin{align*}
\sum_{z \in M\Z^d} \hat{w}_\si(x+z)& =   \hat{w}_\si(x) + \sum_{\substack{z \in M\Z^d\\z \neq 0}} \frac{1}{|x+z|_*^{\sigma}}  \ls \hat{w}_\si(x) + \sum_{\substack{z \in M\Z^d\\z \neq 0}} \frac{1}{|z|^{\sigma}}  \\
&\ls \hat{w}_\si(x) + M^{-\si} \;,
\end{align*}
where we have used that fact that uniformly over $M \geq 1$, $x \in  [\frac{M}{2}, \frac{M}{2})^d$ and $0 \neq z \in M \Z^d$, we have $|z | \ls  |x + z| $. Observing that uniformly over $M\geq 1$ and $x \in  [\frac{M}{2}, \frac{M}{2})^d$, we have $w_\si(x) \gtrsim M^{-\si}$, we can conclude that 
\begin{align*}
\sum_{z \in M\Z^d} \hat{w}_\si(x+z) \ls \hat{w}_\si(x) \;,
\end{align*}
and hence $\| \dXi \|_{\hL^p}^p \ls \|\dk \td X_M\|_{\hL^p}^p $.

 In order to treat $\dXo$, we  
recall that $\dk \td X_M = \eta_k \star \td X_M$, where $\eta_k = 2^{2k} \eta(2^{k} \cdot)$ and $\eta \in \mcl S$. Formally, 
$$
|\dk \td X_M|(x) = \int \eta_k(x-\cdot) \td X_M = \int \frac{\eta_k(x-\cdot)}{\wh w_\si } \, \ph_{2M} \, \td X_M \, \wh w_\si ,
$$
and arguing by density, we obtain by Proposition~\ref{p:dual} that
$$
|\dk \td X_M|(x) \le \Ll\| \frac{\eta_k(x-\cdot)}{\wh w_\si}\ph_{2M} \Rr\|_{\hB^{-\al,\si}_{p,p}} \, \|\td X_M\|_{\hB^{\al,\si}_{p,p}}.
$$
Since $\eta \in \mcl S$, it is straightforward to check (using for instance \eqref{e:bound-Sobolev}) that for any $l$, there exists $C < \infty$ such that uniformly over $M \ge 1$ and $|x| \ge M$,
$$
|\dk \td X_M|(x) \le \frac{C}{|2^kx|^l} \|\td X_M\|_{\hB^{\al,\si}_{p,p}}.
$$
This bound allows to give a uniform bound on $\dXo(x)$. We can assume without loss of generality that $x \in  [\frac{M}{2}, \frac{M}{2})^d$  and write
\begin{align*}
\dXo(x) &= \sum_{\substack{z \in M \Z^d\\ z \neq 0}} \dk  \td{X}_M (x- z) \ls \|\td X_M\|_{\hB^{\al,\si}_{p,p}}  \sum_{\substack{z \in M \Z^d\\ z \neq 0}} \frac{C}{2^{kl}   | z+x|^l} \\
&\ls  \|\td X_M\|_{\hB^{\al,\si}_{p,p}} 2^{-kl}  M^{-l} \;.
\end{align*}
Integrating the $p$-th power of this uniform bound against $\hat{w}_\si$, we obtain the desired bound on $\| \dXo \|_{\hL^p}^p $, and hence the uniform in $M$ bound on  $\| X_M \|_{\hB_{p,p}^{\alpha, \si}}^p$. The fact that $\| X_M \|_{\tB_{p,p}^{\alpha, M}}^p <\infty$ follows by the same arguments.

To see the convergence of $X_M$ to $X$ as $M $ tends to infinity, we only need to recall that by Proposition~\ref{p:compact} and \eqref{e:pandq}, the embedding of $\hat{B}_{p,p}^{\alpha, \si} $ into $\hat{B}_{p,p}^{\td{\alpha}, \td{\si}} $ is compact, and that every accumulation point is identified to be $X$ by \eqref{e:ppty_centre}. 
\end{proof}

For  $M \geq 1$ and every integer $n \geq 1$, let $\cZ^{: n :}$ and $\ZM^{: n :}$ be the modifications constructed in Theorem~\ref{thm:BoundsOnZ} (we drop the tildes for notational convenience). 
We define 
\begin{align*}
V(t) = e^{\Delta t} X_0 \qquad V_M(t) = e^{\Delta t} X_{0;M} \;,
\end{align*}
where $X_{0;M}$ is the periodic distribution constructed from $X_0$ as  in Lemma \ref{l:periodisation1}. We also define
\begin{align*}
\mathfrak{c}(t) = \int_t^1 \int_{\R^2} K(r,x)^2 \, \d x = \frac{ \log(t^{-1})}{8 \pi}, \qquad \mathfrak{c}_M(t) = \int_t^1 \int_{[\frac{-M}{2}, \frac{M}{2} ]^2} K_M(r,x)^2 \, \d x.
\end{align*}
It is easy to see that uniformly in $t \in [0,1]$,
\begin{align}
\big| \mathfrak{c}(t) - \mathfrak{c}_M(t) \big| \leq 4 \| K \mathbf{1}_M \|_{L^2( [0,1] \times \R^2)} ^2 \leq \Big(\frac{2}{\pi}\Big)^{\frac32} \frac{1}{M}e^{-\frac{M^2}{2}} ,  \label{e:ct}
\end{align}
where $\mathbf{1}_M$ is used as a shorthand for the indicator function of the set $\{ (t,x) \colon x \notin [-\frac{M}{2} \frac{M}{2}]^2 \}$.

Finally, we set
\begin{align}
Z^{: 1 :}_t &= Z_t = \cZ(t) + V(t) \;, \label{e:DefZN}\\
Z^{: 2 :}_t &= \big(\cZ^{: 2 : }(t) - \mathfrak{c}(t) \big) + 2 \cZ(t) V(t) + V(t)^2 \;,\notag\\
Z^{: 3 :}_t &= \big(\cZ^{: 3 : }(t) -3 \mathfrak{c}(t) \cZ(t) \big)+ 3 \big(\cZ^{: 2 : }(t)- \mathfrak{c}(t) \big) V(t) + 3 \cZ(t) V^2(t) + V(t)^3 \;,\notag
\end{align}
and we define $Z_{t;M}^{: 1 :}$, $Z^{: 2 :}_{t;M}$, $Z^{: 3 :}_{t;M}$ by replacing all the distributions $\cZ, \cZ^{: 2 :}, \cZ^{: 3 :}, V$ and $\mathfrak{c}$ by  $\ZM, \ZM^{: 2 :}, \ZM^{: 3 :}, V_M, \mathfrak{c}_M$ in the definitions above. 
\begin{rem}\label{rem:RenConst}
We introduce the additional constant $\mathfrak{c}(t)$ to be consistent with the smooth approximation referred to in Remark~\ref{rem:SmoothApproximation} (and with \cite{dPD, Martin1} as well as our companion paper \cite{JCH}).

 As in Section~\ref{s:mainResult}, let $\xi_\delta$ be a regularised space-time white noise,  and let $\cZ_\delta$ be the solution of the stochastic heat equation \eqref{e:eqZ2} with $\xi$ replaced by $\xi_\delta$. For every positive $\delta$, $\cZ_\delta$ is a smooth function, and hence arbitrary powers of it can be defined without ambiguity. Furthermore, It\^o's formula for iterated stochastic integrals (see \cite[Prop 1.1.3]{Nualart}) shows that 
\begin{align*}
\cZ_\delta  \to \cZ,   \qquad \cZ_\delta^2 - \overline{\mathfrak{c}}_\delta(t) \to \cZ^{:2:}, \qquad \cZ_\delta^3 - 3 \overline{\mathfrak{c}}_\delta(t) \cZ_\delta \to \cZ^{:3:} \;,
\end{align*}
where $\overline{\mathfrak{c}}_\delta(t) = \| K \mathbf{1}_{[0,t] \times \R^2} \ast \rho_\delta \|_{L^2(\R \times \R^2)}^2$. Indeed, \cite[Prop 1.1.3]{Nualart} immediately implies that this convergence holds in $L^2(\Omega, \mathcal{F}, \mathbb{P})$ for fixed time $t$ and test function~$\ph$. Adapting the argument in Theorem~\ref{thm:BoundsOnZ}, it is also possible to establish that this convergence holds in weighted Besov spaces (see \cite[Section~10]{Martin1} for calculations in this spirit).

However, above we announced that the renormalised powers arise as limit of a renormalisation procedure with time-independent normalisation constants. The time dependence of the $\overline{\mathfrak{c}}(t)$ is somewhat artificial and only arises because of our choice to start \eqref{e:eqZ2} with zero as initial datum. 

This issue can easily be removed by setting
\begin{align*}
\mathfrak{c}_\delta = \| K \mathbf{1}_{[0,1] \times \R^2} \ast \rho_\delta \|_{L^2(\R \times \R^2)}^2
\end{align*}
and defining the renormalised powers as limits of 
\begin{align*}
\cZ_\delta^2 - \mathfrak{c}_\delta  \qquad \text{and} \qquad \cZ_\delta^3 - 3 \mathfrak{c}_\delta \cZ_\delta  
\end{align*}
The $\mathfrak{c}(t)$ in the definition of $Z_t^{ n :}$ arise as the limit of $ \mathfrak{c}_\delta - \overline{\mathfrak{c}}_\delta(t) $.
\end{rem}

We summarise the results of this section in the following corollaries. The second integrability index of the weighted Besov spaces will not be important, and from now on we will always choose it as $\infty$. The first corollary will be used as input in the construction of the periodic solutions on a torus, see Section~\ref{s:torus}.
\begin{cor}\label{cor:BoundsOnZ1} 
Fix $0 <\bar{\al} \leq 1$ and $T>0$, $\si >2$ and $p \geq 1$. We assume that $X_0 \in \hB_{p,\infty}^{-\bar{\al}, \si}$.

Then for $n=1,2,3$, $\alpha = \bar{\alpha} +\frac{2}{p}$ and $\alpha' >\alpha$ we have for every $1 \leq M <\infty$
\begin{align}
\E  \sup_{0 < t \leq T}   t^{ (n-1) \alpha' p} \|Z_{t;M}^{: n :} \|_{\tB_{\infty,\infty}^{-\alpha, M }}^p < \infty \;. \label{e:ZFinalM}
\end{align}
\end{cor}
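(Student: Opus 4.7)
The plan is to expand each $Z_{t;M}^{:n:}$ via \eqref{e:DefZN} (applied to the periodised objects) as a sum of terms of the form $\ZM^{:k:}(t)\,V_M(t)^{n-k}$ for $0\le k\le n$, together with the log-divergent scalar corrections involving $\mathfrak{c}_M(t)$, and to bound each summand separately in $\tB^{-\alpha,M}_{\infty,\infty}$. The three ingredients are Theorem~\ref{thm:BoundsOnZ} for the stochastic factors, Proposition~\ref{p:smooth-besov} combined with Lemma~\ref{l:periodisation1} for the smoothing of the deterministic factor $V_M(t)=e^{t\Delta}X_{0;M}$, and the multiplicative inequalities of Corollaries~\ref{c:multipl1}--\ref{c:multipl2}. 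The time weight $t^{(n-1)\alpha' p}$ in \eqref{e:ZFinalM} is precisely designed to absorb the negative powers of $t$ produced by the heat-flow smoothing.

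For the stochastic factor, pick $\tilde\alpha\in(0,\alpha)$ and $\tilde p$ large enough that both $\tilde p>2/\tilde\alpha$ and $\tilde\alpha+2/\tilde p<\alpha$. Theorem~\ref{thm:BoundsOnZ} then yields
\[
\E\sup_{0\le t\le T}\|\ZM^{:k:}(t)\|_{\tB^{-\tilde\alpha,M}_{\tilde p,\tilde p}}^{\tilde p}<\infty,
\]
and the periodic Besov embedding (Proposition~\ref{p:embed}, adapted as in Section~\ref{s:DifferentWeights}) propagates this bound to $\tB^{-\alpha,M}_{\infty,\infty}$. For the deterministic factor, Lemma~\ref{l:periodisation1} places $X_{0;M}$ in $\tB^{-\bar\alpha,M}_{p,\infty}$, and Proposition~\ref{p:smooth-besov} combined with the same Besov embedding yields, for every $\beta\ge -\alpha$,
\[
\|V_M(t)\|_{\tB^{\beta,M}_{\infty,\infty}}\;\lesssim\;t^{-(\alpha+\beta)/2}\,\|X_{0;M}\|_{\tB^{-\bar\alpha,M}_{p,\infty}},
\]
using the identity $\alpha=\bar\alpha+2/p$. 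Mixed summands $\ZM^{:k:}(t)V_M(t)^{n-k}$ are then bounded by Corollary~\ref{c:multipl2} with the first factor in $\tB^{-\alpha,M}_{\infty,\infty}$ and the second in $\tB^{\beta,M}_{\infty,\infty}$ for some $\beta>\alpha$, the power $V_M(t)^{n-k}$ being estimated by iterating Corollary~\ref{c:multipl1} in the positive-regularity space; pure $V_M(t)^j$ terms are handled by Corollary~\ref{c:multipl1} directly and then embedded trivially into $\tB^{-\alpha,M}_{\infty,\infty}$. The corrections $\mathfrak{c}_M(t)$ contribute only factors of $\log(1/t)$, easily absorbed by any positive power of $t$.

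The only genuine balancing consists in checking that for each $(n,k)$ one can choose $\beta$ so that the combined power of $t$, namely $(n-1)\alpha'-(n-k)(\alpha+\beta)/2$, is non-negative while the constraints on $\beta$ from the multiplicative inequalities are met. The worst case is $(n,k)=(3,0)$, which requires $\beta\le (4\alpha'-3\alpha)/3$; since $\alpha'>\alpha>0$ this upper bound is strictly positive, and the other cases are less restrictive, so a suitable $\beta$ can always be found. Once $\beta$ is fixed, each summand is dominated by a constant times $\sup_{0<t\le T}\|\ZM^{:k:}(t)\|^p_{\tB^{-\alpha,M}_{\infty,\infty}}\cdot\|X_{0;M}\|^{(n-k)p}_{\tB^{-\bar\alpha,M}_{p,\infty}}$, whose expectation is finite, and summing the finitely many contributions concludes the proof of~\eqref{e:ZFinalM}.
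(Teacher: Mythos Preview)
Your proposal is correct and follows the same approach as the paper: expand $Z^{:n:}_{t;M}$ into the pieces $\ZM^{:k:}(t)\,V_M(t)^{n-k}$ (with logarithmic corrections), control the stochastic factors in $\tB^{-\alpha,M}_{\infty,\infty}$ via Theorem~\ref{thm:BoundsOnZ} and the Besov embedding, control $V_M$ via Proposition~\ref{p:smooth-besov} and Lemma~\ref{l:periodisation1}, and combine using the multiplicative inequalities. The paper's proof is simply terser about the time-weight bookkeeping that you spell out explicitly.

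One minor remark: your identification of $(n,k)=(3,0)$ as the worst case is not quite right. For $(2,0)$ the constraint is $\beta\le\alpha'-\alpha$, which is tighter than $\beta\le(4\alpha'-3\alpha)/3$. This does not affect the argument, since for each summand you may choose a different $\beta$, and every constraint is satisfied by some $\beta\in(\alpha,2\alpha'-\alpha)$ (or, for the pure $V_M^n$ terms, some $\beta\in(0,\alpha'-\alpha)$); but the claim ``the other cases are less restrictive'' should be dropped.
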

\begin{proof}
First we observe that for $M< \infty$ the bound \eqref{e:ZZZ3} implies for all $n$
\begin{align*}
 \E  \sup_{0 \leq t \leq T}    \|\ZM^{: n :}(t) \|_{\tB_{\infty,\infty}^{-\alpha, M}}^p < \infty \;.
\end{align*}
Indeed, we can chose $\bar{\alpha} = \frac{\alpha}{2}$ and  $\bar{p} = \frac{4}{\alpha} \vee p $ to get
\begin{eqnarray*}
 \E  \sup_{0 \leq t \leq T}    \|\ZM^{: n :}(t) \|_{\tB_{\infty,\infty}^{-\alpha, M} }^p   \!\!\!\!\!\!\!\!\!\!\!\!\!\!  &\leq& \!\!\!\!\!\!\!  \Big(\E  \sup_{0 \leq t \leq T}    \|\ZM^{: n :}(t) \|_{\tB_{\infty,\infty}^{-\alpha, M}}^{\bar{p}} \Big)^{\frac{p}{\bar{p}}}  \\
 &\overset{\text{Prop.} \ref{p:embed}, \eqref{e:pandq}}{\ls}&  \!\!\!\!\!\!\! \Big(\E  \sup_{0 \leq t \leq T}    \|\ZM^{: n :}(t) \|_{\tB_{\bar{p},\bar{p}}^{-\bar{\alpha}, M}}^{\bar{p}} \Big)^{\frac{p}{\bar{p}}} < \infty\;.
\end{eqnarray*}
We get for any $\be \geq -\alpha$
\begin{align*}
\| V_M(t) \|_{\tB_{\infty,\infty}^{\beta, M}} &\overset{\text{Prop.}~\ref{p:smooth-besov}}{\ls} t^{-\frac{\alpha +\beta }{2}} \| X_{0;M} \|_{\tB_{\infty,\infty}^{-\alpha, M}} \ls t^{-\frac{\alpha +\beta }{2}} \| X_{0;M} \|_{\tB_{p ,p}^{-\bar{\alpha} , M}} \\
&\overset{\text{Lem.}~\ref{l:periodisation1}}{\ls}   t^{-\frac{\alpha +\beta }{2}} \| X_0 \|_{\hB_{p,p}^{-\bar{\alpha}, \si}} \;.
\end{align*}
Furthermore, \eqref{e:ct} implies that $\mathfrak{c}_M(t) \ls 1 + (\log(t^{-1})\vee 0) $ uniformly over $t \in [0,T]$. The desired bound \eqref{e:ZFinalM} then follows from the multiplicative inequality, Corollary~\ref{c:multipl2}.
\end{proof}
The following corollary will be used together with the a priori bounds of Section~\ref{s:full1}, to show the convergence (along a subsequence) of the periodised solutions when $M$ goes to infinity (see Section~\ref{s:exist-full}).
\begin{cor}\label{cor:BoundsOnZ2}
Fix $0<\al \leq 1$, $T>0$, $\si >2$, $p \geq 3$, and assume that $X_0 \in \hB_{p,\infty}^{-\al, \si}$.
For every $\alpha'> \alpha$ and $\td{\si} > \si$, with probability one, there exists a sequence $(M_k)_{k} $ going to infinity such that for $n = 1,2,3$,
\begin{align*}
 \sup_{k} \sup_{0 < t \leq T}   t^{ (n-1) \alpha' } \|Z_{t;M_k}^{: n :} \|_{\hB_{\frac{p}{n},\infty}^{-\alpha, \si}} < \infty \;,\\
\lim_{k \to \infty}  \sup_{0 < t \leq T } t^{(n-1)\alpha' } \|Z^{:n:}_t -Z_{t;M_k}^{: n :} \|_{\hB_{\frac{p}{n},\infty}^{-\alpha', \td{\si}}} =0\;.
\end{align*}
\end{cor}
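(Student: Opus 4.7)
The plan is to combine Theorem~\ref{thm:BoundsOnZ} for the pure Wick powers, Lemma~\ref{l:periodisation1} together with Proposition~\ref{p:smooth-besov} for the heat extension $V_M(t) = e^{t\Delta}X_{0;M}$ of the initial datum, the multiplicative inequality of Corollary~\ref{c:multipl2} to combine the two, and a Borel--Cantelli extraction to pass from moment bounds to almost sure statements along a sequence $M_k \to \infty$.

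I would first fix auxiliary parameters $\alpha_0 \in (0, \alpha)$, $\beta \in (\alpha, 2\alpha' - \alpha)$ and a large integrability exponent $p_0$ with $p_0 \geq p$ and $p_0 > 2/\alpha_0$. Applying Theorem~\ref{thm:BoundsOnZ} with $(p_0, \si, \alpha_0)$ in place of $(p, \si, \alpha)$ yields, for every $n \in \{1,2,3\}$,
$$\E \sup_{t \in [0,T]} \|\cZ^{:n:}(t) - \cZ_M^{:n:}(t)\|^{p_0}_{\hB^{-\alpha_0, \si}_{p_0, p_0}} \leq C\, M^{2-\si},$$
together with uniform-in-$M$ moment bounds on $\cZ_M^{:n:}(t)$ in the same space. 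Setting $M_k = 2^k$, Markov's inequality combined with Borel--Cantelli will give, almost surely for each $n \in \{1,2,3\}$,
$$\sup_{t \in [0,T]} \|\cZ^{:n:}(t) - \cZ_{M_k}^{:n:}(t)\|_{\hB^{-\alpha_0, \si}_{p_0, p_0}} \longrightarrow 0 \quad (k \to \infty),$$
and $\sup_k \sup_t \|\cZ_{M_k}^{:n:}(t)\|_{\hB^{-\alpha_0, \si}_{p_0, p_0}} < \infty$. I would then use successively the polynomial-weight analogue of~\eqref{e:r2} (legitimate since $\si > 2$), \eqref{e:pandq}, and Remark~\ref{r:besov-mu} to transfer these a.s.\ statements to any weaker space $\hB^{-\alpha_1, \td\si}_{p/n, \infty}$ with $\alpha_1 > \alpha_0$ and $\td\si \geq \si$.

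Next, for the deterministic piece, Proposition~\ref{p:smooth-besov} provides the uniform-in-$M$ bound $\|V_M(t)\|_{\hB^{\beta, \si}_{p, \infty}} \lesssim t^{-(\alpha+\beta)/2}$ (the uniform bound on $\|X_{0;M}\|_{\hB^{-\alpha, \si}_{p, \infty}}$ coming from Lemma~\ref{l:periodisation1}), as well as the convergence
$$\|V_{M_k}(t) - V(t)\|_{\hB^{\beta, \td\si}_{p, \infty}} \lesssim t^{-(\alpha'+\beta)/2}\|X_{0;M_k} - X_0\|_{\hB^{-\alpha', \td\si}_{p, \infty}} \longrightarrow 0,$$
the convergence of the right-hand norm being delivered by Lemma~\ref{l:periodisation1} after using Remark~\ref{r:embed-q} to embed $\hB^{-\alpha, \si}_{p, \infty}$ into $\hB^{-\alpha', \si}_{p, p}$. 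Finally, $|\mathfrak{c}(t) - \mathfrak{c}_{M_k}(t)|$ decays super-exponentially in $M_k$ by~\eqref{e:ct}, while $|\mathfrak{c}(t)|$ and $|\mathfrak{c}_{M_k}(t)|$ are both $\lesssim 1 + \log_+(1/t)$.

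The final step is to expand each $Z^{:n:}_{t;M_k}$ according to~\eqref{e:DefZN} and to estimate every resulting term via Corollary~\ref{c:multipl2} (using Remark~\ref{r:multipl2} to apportion the weight among the factors). A generic summand of the form $(\cZ_{M_k}^{:j:}(t) - \mathrm{const})\, V_{M_k}(t)^{n-j}$ is handled by pairing the single negative-regularity factor at $-\alpha_0$ with the $n-j \leq n-1$ positive-regularity factors at $\beta > \alpha$, yielding a contribution to $\hB^{-\alpha, \si}_{p/n, \infty}$ (respectively $\hB^{-\alpha', \td\si}_{p/n, \infty}$ for the difference) with norm $\lesssim t^{-(n-j)(\alpha+\beta)/2} \leq t^{-(n-1)\alpha'}$, where the last inequality uses $\beta < 2\alpha' - \alpha$. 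The same estimate applied to $Z^{:n:}_t - Z^{:n:}_{t;M_k}$, expanded telescopically so that every summand contains exactly one ``small'' factor ($\cZ^{:j:} - \cZ_{M_k}^{:j:}$, $V - V_{M_k}$, or $\mathfrak{c} - \mathfrak{c}_{M_k}$), delivers the claimed convergence. The main obstacle will be the bookkeeping of auxiliary parameters so that all embeddings and multiplicative inequalities are valid simultaneously for $n = 1, 2, 3$; it is precisely the hypothesis $\alpha' > \alpha$ that makes the interval $(\alpha, 2\alpha' - \alpha)$ nonempty and thus permits the argument to close.
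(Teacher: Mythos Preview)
Your overall strategy coincides with the paper's: extract an almost-sure subsequence from the moment bounds of Theorem~\ref{thm:BoundsOnZ} via Borel--Cantelli, control $V_M$ through Lemma~\ref{l:periodisation1} and Proposition~\ref{p:smooth-besov}, and then assemble the pieces of~\eqref{e:DefZN} using the multiplicative inequalities. The bookkeeping, however, has one genuine gap.

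The problem is the pure $V_{M_k}(t)^{n}$ term in the expansion of $Z^{:n:}_{t;M_k}$ (and likewise $V(t)^{n}$ in $Z^{:n:}_t$). In your ``generic summand'' description you assert $n-j\le n-1$, but this fails precisely when $j=0$. If you place all $n$ copies of $V$ at regularity $\beta$, Corollary~\ref{c:multipl1} and Proposition~\ref{p:smooth-besov} give
\[
\|V_{M_k}(t)^{n}\|_{\hB^{-\alpha,\si}_{p/n,\infty}}\ \lesssim\ \|V_{M_k}(t)\|_{\hB^{\beta,\si}_{p,\infty}}^{\,n}\ \lesssim\ t^{-n(\alpha+\beta)/2}\,,
\]
and asking that $t^{(n-1)\alpha'}\,t^{-n(\alpha+\beta)/2}$ stay bounded for $n=2$ forces $\alpha+\beta\le\alpha'$; together with your constraint $\beta>\alpha$ this would require $\alpha'>2\alpha$, which is not assumed. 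The same issue recurs in the telescoped difference $V^{n}-V_M^{\,n}$, since you also place the ``small'' factor $V-V_M$ at regularity~$\beta$.

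The fix, which is exactly what the paper does, is to leave one factor at the negative regularity where it carries no blow-up in $t$. Concretely, for the uniform bound write $V^{n}=V\cdot V^{n-1}$, keep the first $V$ in $\hB^{-\alpha,\si}_{p,\infty}$ (where $\|V(t)\|\lesssim\|X_0\|$ uniformly in $t$), and lift only the remaining $n-1$ factors to $\beta$; Corollary~\ref{c:multipl2} then yields $\|V^{n}\|_{\hB^{-\alpha,\si}_{p/n,\infty}}\lesssim t^{-(n-1)(\alpha+\beta)/2}$, which matches your intended exponent and closes under $\beta<2\alpha'-\alpha$. For the convergence, keep the difference $V-V_M$ (or $V_M-V$) at regularity $-\alpha'$ in $\hB^{-\alpha',\td\si}_{p,\infty}$, where by Lemma~\ref{l:periodisation1} it is $o(1)$ with no time singularity, and lift the other $n-1$ factors to some $\beta'\in(\alpha',2\alpha'-\alpha)$ so that Corollary~\ref{c:multipl2} applies with target regularity $-\alpha'$; this again produces the exponent $-(n-1)(\alpha+\beta')/2$. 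With this adjustment your sketch goes through and coincides with the paper's argument.
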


\begin{proof}
By \eqref{e:ZZZ1} and \eqref{e:ZZZ4}, with probability one, there exists a sequence $(M_k)_k$ going to infinity and such that for $n=1,2,3$,
\begin{align*}
\sup_{k} \sup_{0 \leq t \leq T} &\| \cZ_{M_k}^{: n :}(t)\|_{\wh B_{\frac{p}{n},\infty}^{-\alpha, \si} } < \infty,\\
\lim_{k \to \infty} \sup_{0 \leq t \leq T} &\|\cZ^{: n:}(t) - \cZ_{M_k}^{: n :}(t)\|_{\wh B_{\frac{p}{n},\infty}^{-\alpha, \td{\si}} } =0\;.
\end{align*}
Furthermore, according to \eqref{e:ct}, we have 
\begin{align*}
\mathfrak{c}_M(t) &\ls 1+ \big( \log (t^{-1}) \vee 0\big) \;, \\
\lim_{M \to \infty }\sup_{ 0 \leq t \leq T} \big| \mathfrak{c}_M(t) - \mathfrak{c}(t)\big| &=0 \;,
\end{align*}
where the implicit constant in the first inequality is uniform in $M$. Finally, Lemma~\ref{l:periodisation1} and Proposition~\ref{p:smooth-besov} imply that 
\begin{align*}
\| V_M(t) \|_{\hB_{p,\infty}^{\gamma, \si} } \ls t^{-\frac{\gamma- \alpha}{2}} \qquad  \lim_{M \to \infty }\sup_{0 \leq t \leq T }t^{\frac{\gamma - \alpha'}{2}}  \| V_M(t) - V(t)  \|_{\hB_{p,\infty}^{\gamma , \td{\si}} } =0\;,
\end{align*}
where  the first inequality is valid for  any $\gamma \geq - \alpha$ and the second for $\gamma \geq -\alpha'$.

This bound and the multiplicative inequalities in Corollaries~\ref{c:multipl1} and~\ref{c:multipl2} imply that  for any $\alpha'' > \alpha$ and  $\ga \geq -\alpha$ in the first inequality and for $\alpha'' > \alpha'$ and $\ga \geq -\alpha'$ in the second inequality, we get for $n=1,2,3$ that
\begin{align*}
&\| V_M^n(t) \|_{\hB_{\frac{p}{n},\infty}^{\gamma, \si} } \ls t^{-\frac{(n-1)\alpha'' +\ga + n\alpha}{2} }  \\
&  \lim_{M \to \infty }\sup_{0 \leq t \leq T }t^{\frac{(n-1)\alpha'' +\ga +(n-1)\alpha +\alpha'}{2} }  \| V_M^n(t) - V^n(t)  \|_{\hB_{\frac{p}{n},\infty}^{\gamma , \td{\si}} } =0\;.
\end{align*}
We can always choose $\alpha''$ such that $\frac{\alpha'' + \alpha}{2} < \alpha'$. Then the desired bound follows from another application of the multiplicative inequality, Corollary~\ref{c:multipl2}, using at several places the fact that according to \eqref{e:pandq} decreasing the integrability index $p$ only makes a Besov norm weaker.
\end{proof}
%
%
%
%
%
%
\section{Construction of solutions on the torus}
\label{s:torus}
The aim of this section is to show existence and uniqueness of global solutions of \eqref{e:eqY} in the periodic case.

\medskip

Since for the most part, the index $q$ in $\tBb$ will not play an important role in our analysis, we introduce the slightly lighter notation
\begin{equation}
\label{e:def:short-tB}
\tB^{\al,M}_{p} := \tB^{\al,M}_{p,\infty}.
\end{equation}

\medskip

Let $0 < \al < \al' < 1/3$ (that we think of as being small), $\beta \in (1, 2)$ (that 
we think of as being close to $2$) and $T > 0$. 
For $\un{Z} = (Z,\Zdd,\Ztt)$ in the set
$$
\mcl{C}([0,T],\tB_\infty^{-\al,M}) \times \mcl{C}((0,T],\tB_\infty^{-\al,M}) \times \mcl{C}((0,T],\tB_\infty^{-\al,M}),
$$
we write
\begin{equation}
	\label{e:def:Z-norm}
	\|\un{Z}\|_{\td{\msc{Z}}^{M}_\infty} 
	:= \sup_{0 \le t \le T} \Ll(\|Z_t\|_{\tB_\infty^{-\al,M}} \, \vee \,  t^{\al'} \|\Zdd_t\|_{\tB_\infty^{-\al,M}} \, \vee \, t^{2\al'} \|\Ztt_t\|_{\tB_\infty^{-\al,M}} \Rr)
\end{equation}
(recall that $a \vee b$ stands for the maximum between $a$ and $b$). We let $\td{\msc{Z}}^{M}_\infty$ be the set of $\un{Z}$ such that this norm is finite.
%
%
%
For $Y \in \mcl{C}([0,T],\tB^{\be,M}_\infty)$, $\un{Z} = (Z, \Zdd,\Ztt) \in \td{\msc{Z}}^{M}_\infty$ and $t \le T$, we write 
\begin{equation}
\label{e:def:Psi}
\Psi(Y_t,\un{Z}_t) = -Y_t^3 - 3 Y_t^2 Z_t - 3 Y_t \Zdd_t - \Ztt_t + a(Y_t+Z_t).
\end{equation}
The multiplicative inequalities imply that $\Psi(Y_t,\un{Z}_t)$ is well-defined for every $t > 0$ (it actually suffices that $Y$ belong to $\mcl{C}([0,T],\tB^{\al'',M}_\infty)$ for some $\al'' > \al$). For a given $Y_0 \in \tB_\infty^{\be,M}$ and $T > 0$, we wish to solve
\begin{equation}
\label{e:eq-for-Y-torus}
\Ll\{ 
\begin{array}{l}
\partial_t Y = \Delta Y + \Psi(Y,\un{Z}) \qquad \text{on } [0,T] \times \R^2, \\
Y(0,\cdot) = Y_0,
\end{array}
\Rr.
\end{equation}
which we interpret in the mild form. That is, 
we say that $Y$ solves \eqref{e:eq-for-Y-torus} if $Y \in \mcl{C}([0,T],\tB^{\be,M}_\infty)$ and if for every $t \le T$,
$$
Y_t = e^{t\Delta} Y_0 + \int_0^t e^{(t-s)\Delta} \Psi(Y_s,\un{Z}_s) \, \d s.
$$
We let $\td{S}_\infty^{T,M}(Y_0,\un{Z})$ 
be the set of solutions of \eqref{e:eq-for-Y-torus}.

\medskip

The goal of this section is to prove the following global existence and uniqueness result for \eqref{e:eq-for-Y-torus}.

\begin{thm}[global existence and uniqueness on the torus]
	\label{t:global-torus}
For every $\be \in (1,2)$, the following holds for $0 < \al < \al'$ sufficiently small.
	Let $T > 0$, $M > 0$, $\un{Z} =  (Z, \Zdd,\Ztt) \in \td{\msc{Z}}^{M}_\infty$ and $Y_0 \in \tB^{\be,M}_\infty$. There exists exactly one solution of equation~\eqref{e:eq-for-Y-torus} over the time interval $[0,T]$. In other words, the set $\td{S}_\infty^{T,M}(Y_0,\un{Z})$ is a singleton.
\end{thm}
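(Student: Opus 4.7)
\medskip

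\noindent\textbf{Proof plan.}

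\emph{Step 1 (Local existence via fixed point).} Following \cite{dPD}, I would define the map
$$
\mathcal{T}(Y)_t := e^{t\Delta} Y_0 + \int_0^t e^{(t-s)\Delta}\Psi(Y_s,\underline{Z}_s)\,\d s
$$
on a closed ball of radius $R$ in $\mathcal{C}([0,T^\star],\tilde{B}^{\beta,M}_\infty)$. Using the smoothing estimate of Proposition~\ref{p:smooth-besov} together with the multiplicative inequalities of Corollaries~\ref{c:multipl1}--\ref{c:multipl2}, each term of $\Psi(Y_s,\underline{Z}_s) = -Y_s^3 -3Y_s^2 Z_s - 3Y_s Z_s^{(2)} - Z_s^{(3)} + a(Y_s+Z_s)$ lies in a Besov space of regularity at worst $-\alpha$. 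The worst contribution to $\|\mathcal{T}(Y)_t\|_{\tilde B^{\beta,M}_\infty}$ is therefore of the form
$$
\int_0^t (t-s)^{-(\beta+\alpha)/2}\, s^{-2\alpha'}\,\d s \cdot \text{(polynomial in $R$ and $\|\underline{Z}\|_{\tilde{\mathscr{Z}}^M_\infty}$)},
$$
which is integrable provided $\beta<2$ and $\alpha,\alpha'$ are small enough. A standard argument then yields contraction for $T^\star=T^\star(\|Y_0\|_{\tilde B^{\beta,M}_\infty},\|\underline{Z}\|_{\tilde{\mathscr{Z}}^M_\infty})$ small, giving local existence and uniqueness.

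\emph{Step 2 (A priori $L^p$ estimate).} The key difficulty, and the one place where we go beyond \cite{dPD}, is to prove non-explosion for \emph{arbitrary} initial data. The strategy is to exploit the strong dissipation coming from the $-Y^3$ term. Formally, test the equation~\eqref{e:eq-for-Y-torus} against $|Y|^{p-2}Y$ for even integer $p$: the Laplacian yields a nonnegative dissipation, $-Y^3$ contributes $-\|Y\|_{L^{p+2}}^{p+2}$, and the remaining terms $-3Y^2 Z$, $-3Y Z^{(2)}$, $-Z^{(3)}$ are estimated via paraproduct/duality bounds (Theorem~\ref{t:para} and Proposition~\ref{p:dual}) in terms of the Wick powers' $\tilde B^{-\alpha,M}_\infty$ norms and a small fraction of $\|Y\|_{L^{p+2}}^{p+2}$, which can be absorbed. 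This leads to a differential inequality of Bernoulli type
$$
\frac{\d}{\d t}\|Y_t\|_{L^p}^p + c\,\|Y_t\|_{L^{p+2}}^{p+2} \leq C\Bigl(1+\|\underline{Z}_t\|_{\tilde{\mathscr{Z}}^M_\infty}\Bigr)^{N}\Bigl(1+\|Y_t\|_{L^p}^p\Bigr)
$$
(with $N$ depending on $p$), yielding a uniform-in-$[0,T]$ bound on $\|Y_t\|_{L^p}$ depending only on the data. Since the rigorous computation requires $Y$ to be sufficiently regular to be tested against $|Y|^{p-2}Y$, I would either justify it on a smooth approximation (e.g.\ Galerkin or mollification of $\underline Z$) and pass to the limit, or reinterpret the pairing via the Bony decomposition~\eqref{e:Bony} making use of Proposition~\ref{p:dual}.

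\emph{Step 3 (Bootstrap to $\tilde B^{\beta,M}_\infty$ and iteration).} Once an $L^p$ bound is secured for large $p$, the mild formulation combined with Proposition~\ref{p:smooth-besov} and the multiplicative inequalities upgrades it, in arbitrarily short time, to a bound on $\|Y_t\|_{\tilde B^{\beta,M}_\infty}$ depending only on $T$, $\|Y_0\|_{\tilde B^{\beta,M}_\infty}$ and $\|\underline Z\|_{\tilde{\mathscr{Z}}^M_\infty}$. This a priori control rules out blow-up at the boundary of the maximal interval of existence given by Step~1, so the local solution extends to all of $[0,T]$.

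\emph{Step 4 (Global uniqueness).} Given two solutions $Y^{(1)}, Y^{(2)}$, set $W := Y^{(1)}-Y^{(2)}$, which satisfies
$$
\partial_t W = \Delta W - (Y^{(1)2}+Y^{(1)}Y^{(2)}+Y^{(2)2})\,W - 3(Y^{(1)}+Y^{(2)})W\,Z - 3W\,Z^{(2)} + aW,
$$
with $W_0=0$. Testing in $L^2$ on the torus (where the flat weight makes Sobolev-type manipulations elementary), the cubic dissipative term gives a non-positive contribution, and the $Z^{(2)}$ and $Z$ terms are controlled by $\|Y^{(i)}\|_{\tilde B^{\beta,M}_\infty}$ and $\|\underline Z\|_{\tilde{\mathscr{Z}}^M_\infty}$ via the multiplicative inequalities. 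Gronwall's lemma then forces $W\equiv 0$. The main obstacle throughout is Step~2: achieving the absorption of the distributional terms $Y^2 Z, YZ^{(2)}, Z^{(3)}$ into the $-\|Y\|_{L^{p+2}}^{p+2}$ dissipation in a way that is robust enough to produce bounds independent of the length of the time interval.
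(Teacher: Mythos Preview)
Your plan is broadly correct and parallels the paper's architecture, but it departs from the paper at a structurally important point and glosses over a nontrivial step there.

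The paper's local existence (Theorem~\ref{t:local}) is deliberately set up so that the local time $T^\star$ depends on $Y_0$ \emph{only through} $\|Y_0\|_{\tL^p}$, not through $\|Y_0\|_{\tB^{\be,M}_\infty}$. To achieve this, the fixed point is first run in a lower-regularity space with the time-weighted norm $\sup_{t\le T^\star} t^\ga \|Y_t\|_{\tB^{\al'',M}_\infty}$ (the weight $t^\ga$ absorbs the heat smoothing needed to pass from $\tL^p$ data to positive Besov regularity), and only afterwards is the solution bootstrapped into $\mcl{C}([0,T^\star],\tB^{\be,M}_\infty)$. With $T^\star$ tied to the $\tL^p$ norm, the a priori bound of Step~2 is \emph{immediately} enough for the gluing; no a priori control on $\|Y_t\|_{\tB^{\be,M}_\infty}$ is ever needed.

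In your Step~1, by contrast, $T^\star$ depends on $\|Y_0\|_{\tB^{\be,M}_\infty}$, so your Step~3 must manufacture an a priori bound on this norm from the $\tL^p$ bound. This is possible but is \emph{not} the direct consequence of ``mild formulation plus smoothing'' that you describe: to estimate $\|\Psi_s\|_{\tB^{-\al,M}_\infty}$ (in particular the term $Y_s^2 Z_s$) you need $Y_s$ in a Besov space of \emph{positive} regularity, which the $\tL^p$ bound does not provide. You would have to first recover a time-weighted bound of the form $t^\ga\|Y_t\|_{\tB^{\al'',M}_\infty}\lesssim 1$ from the mild formulation --- essentially redoing the paper's two-stage local argument as an a priori estimate --- before the upgrade to $\tB^{\be,M}_\infty$ goes through. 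As written, your Step~3 has a gap here.

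Two further differences of approach. For Step~2, the paper does not pass through a Galerkin or mollified approximation; instead it first proves $\ka$-H\"older continuity of $t\mapsto Y_t$ in $\tL^\infty$ for some $\ka>1/2$ (Proposition~\ref{p:time-reg}) and then justifies the pairing with $Y_t^{p-1}$ by a Riemann-sum/Young-integral argument on fine subdivisions (Proposition~\ref{p:test}). The absorption is also sharper than your Bernoulli inequality suggests: both the gradient term $\|Y_s^{p-2}|\na Y_s|^2\|_{\tL^1}$ and the term $\|Y_s\|_{\tL^{p+2}}^{p+2}$ are used (via Proposition~\ref{p:estimate}), and one obtains directly $\|Y_t\|_{\tL^p}^p\le \|Y_0\|_{\tL^p}^p + C$ without a Gronwall step. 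For Step~4, the paper does not run a separate $L^2$ Gronwall: uniqueness follows by iterating the contraction from the fixed-point argument on subintervals of length $T^{\star\star}$ determined by $\sup_t\|Y^{(i)}_t\|_{\tB^{\be,M}_\infty}$. Your alternatives would also work on the torus, but the paper's route is more economical.
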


\begin{thm}[local existence and uniqueness on the torus]
Let $p \in [1,\infty)$ be such that
\begin{equation}
\label{e:cond-local}
\frac{\al' + \be}{2} + 3\Ll(\al' + \frac{1}{p} \Rr) < 1,
\end{equation}
let $\un{Z} = (Z, \Zdd,\Ztt) \in \td{\msc{Z}}^{M}_\infty$ and $K > 0$. There exists $T^{\star}$ such that for every $Y_0 \in \tB^{\be,M}_\infty$ satisfying $\|Y_0\|_{\tL^p} \le K$, equation \eqref{e:eq-for-Y-torus} has exactly one solution over the time interval $[0,T^{\star} \wedge T]$. In other words, the set $\td{S}_\infty^{T^{\star} \wedge T,M}(Y_0,\un{Z})$ is a singleton. 
\label{t:local}
\end{thm}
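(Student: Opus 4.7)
The strategy is a Picard iteration in the spirit of \cite{dPD}, adapted to accommodate initial data controlled only in $\tL^p$. Define the map
\[
(\Gamma Y)_t := e^{t\Delta} Y_0 + \int_0^t e^{(t-s)\Delta} \Psi(Y_s,\un Z_s)\, ds,
\]
so that $Y$ solves \eqref{e:eq-for-Y-torus} iff $Y = \Gamma(Y)$. The plan is to find a fixed point of $\Gamma$ in a closed ball of
\[
\BK := \Bigl\{ Y \in C([0,T^\star],\tL^p) \cap C((0,T^\star],\tB^{\be,M}_\infty) : \|Y\|_{\BK}<\infty \Bigr\},
\]
with
\[
\|Y\|_{\BK} := \sup_{t \le T^\star} \|Y_t\|_{\tL^p} + \sup_{0 < t \le T^\star} t^{\be/2 + 1/p}\|Y_t\|_{\tB^{\be,M}_\infty}.
\]
The time weight $\be/2 + 1/p$ is dictated by the sharp smoothing estimate $\|e^{t\Delta} Y_0\|_{\tB^{\be,M}_\infty} \lesssim t^{-(\be + 2/p)/2}\|Y_0\|_{\tL^p}$, which chains $\tL^p \hookrightarrow \tB^{0,M}_{p,\infty} \hookrightarrow \tB^{-2/p,M}_\infty$ (Remark~\ref{r:lp-embed} and Proposition~\ref{p:embed} in the toroidal setting) with Proposition~\ref{p:smooth-besov}. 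Combined with $\tL^p$-contractivity of the heat semigroup, this controls the linear part of $\Gamma$ by a multiple of $K$.

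The crux is the estimate of the Duhamel term. Each monomial of $\Psi(Y_s,\un Z_s)$ will be bounded in $\tB^{-\al',M}_\infty$, from which Proposition~\ref{p:smooth-besov} returns a factor $(t-s)^{-(\al'+\be)/2}$ on the way back to $\tB^{\be,M}_\infty$. To treat the cubic $Y^3$, the plan is to interpolate between $\tL^p$ and $\tB^{\be,M}_\infty$ with exponent $\nu := 2(\al'+1/p)/(\be+2/p) \in (0,1)$: Proposition~\ref{p:interpol} yields
\[
\|Y_s\|_{\tB^{\nu\be,M}_{p/(1-\nu),\infty}} \lesssim \|Y_s\|_{\tL^p}^{1-\nu}\,\|Y_s\|_{\tB^{\be,M}_\infty}^\nu,
\]
iterating Corollary~\ref{c:multipl1} gives $\|Y_s^3\|_{\tB^{\nu\be,M}_{p/(3(1-\nu))}} \lesssim \|Y_s\|_{\tB^{\nu\be,M}_{p/(1-\nu)}}^3$, and Proposition~\ref{p:embed} embeds this into $\tB^{\nu\be-6(1-\nu)/p,M}_\infty$. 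For $p$ large enough the chosen $\nu$ simultaneously achieves $\nu\be-6(1-\nu)/p \ge -\al'$ and produces the time weight $3(\al'+1/p)$:
\[
\|Y_s^3\|_{\tB^{-\al',M}_\infty} \lesssim K^{3(1-\nu)} R^{3\nu}\, s^{-3(\al'+1/p)}.
\]
Analogous interpolations yield $\|Y_s^2 Z_s\|_{\tB^{-\al',M}_\infty} \lesssim s^{-2(\al'+1/p)}$ and $\|Y_s \Zdd_s\|_{\tB^{-\al',M}_\infty} \lesssim s^{-(\al'+1/p)-\al'}$ (with constants depending on $K$, $R$ and $\|\un Z\|_{\td{\msc{Z}}^M_\infty}$), while $\|\Ztt_s\|_{\tB^{-\al',M}_\infty} \lesssim s^{-2\al'}\|\un Z\|_{\td{\msc{Z}}^M_\infty}$ is immediate.

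Consequently the Duhamel term is controlled by integrals of the form $\int_0^t (t-s)^{-(\al'+\be)/2} s^{-\kappa}\, ds$ with $\kappa \le 3(\al'+1/p)$, and hypothesis \eqref{e:cond-local} is precisely what makes the worst of these converge and leaves a positive power of $t$ after multiplying by $t^{\be/2+1/p}$. Choosing first a radius $R$ adapted to $K$ and $\|\un Z\|_{\td{\msc{Z}}^M_\infty}$ and then $T^\star$ sufficiently small, $\Gamma$ stabilises the corresponding ball. Re-running the same computation on $\Gamma(Y) - \Gamma(\tilde Y)$ and using the polynomial structure of $\Psi$ to extract one factor $Y - \tilde Y$ from each term produces a Lipschitz constant $\le 1/2$ after further shrinking $T^\star$; Banach's theorem then delivers a unique fixed point in the ball. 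Uniqueness in $\td S^{T^\star \wedge T,M}_\infty(Y_0,\un Z)$ follows by a standard patching argument: any candidate solution automatically lies in $\tL^p$ (via $\tB^{\be,M}_\infty \hookrightarrow \tL^\infty \hookrightarrow \tL^p$ on the torus) and so, on a short enough subinterval, sits in a suitable $\BK$-ball and coincides with the fixed point there. The principal obstacle is the simultaneous bookkeeping: the chosen $\nu$ must both bring the Besov embedding into $\tB^{-\al',M}_\infty$ and realise the time weight prescribed by \eqref{e:cond-local}, which is achievable precisely in the parameter range of the statement.
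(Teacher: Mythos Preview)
Your one-step scheme (fixed point directly in $C((0,T^\star],\tB^{\be,M}_\infty)$ with auxiliary $\tL^p$ control, handling the cubic term by interpolation) differs from the paper, which splits the argument in two: first a contraction in $\{Y:\sup_t t^\gamma\|Y_t\|_{\tB^{\al'',M}_\infty}\le 1\}$ for some $\al''$ just above $\al'$ and $\gamma$ just above $\al''+1/p$, and then a separate bootstrap showing the fixed point actually lies in $C([0,T^\star],\tB^{\be,M}_\infty)$ once one remembers $Y_0\in\tB^{\be,M}_\infty$. The reason the paper works at low regularity is that there $3\gamma<1$ holds automatically, so $\|Y_s^3\|_{\tB^{-\al,M}_\infty}\lesssim s^{-3\gamma}$ is integrable without any interpolation trick; the passage to $\be$ costs only $(t-s)^{-(\al+\be)/2}$ and is done a~posteriori.

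As written, your argument has a genuine gap. With $\nu=2(\al'+1/p)/(\be+2/p)$ one computes $\nu\be=2\al'+2(1-\nu)/p$, so after cubing in $\tB^{\nu\be,M}_{p/(3(1-\nu))}$ the embedding into $\tB^{-\al',M}_\infty$ requires $\nu\be-6(1-\nu)/p\ge-\al'$, i.e.\ $3\al'\ge 4(1-\nu)/p$, which forces roughly $p\gtrsim 4/(3\al')$. This is strictly stronger than \eqref{e:cond-local}: for instance $\be=3/2$, $\al'=10^{-2}$ makes \eqref{e:cond-local} hold already for $p\approx 14$, while your embedding needs $p\gtrsim 130$. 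Since the theorem is stated for \emph{every} $p$ satisfying \eqref{e:cond-local}, the phrase ``for $p$ large enough'' is an extra unproved hypothesis, and the closing claim that the bookkeeping succeeds ``precisely in the parameter range of the statement'' is not correct. The cheapest repair is to embed \emph{each factor} $Y_s$ into $\tB^{2\al',M}_\infty$ (via the same interpolation, at a cost of $2(1-\nu)/p$ per factor) \emph{before} taking the product; this lands exactly on the paper's low-regularity estimate $\|Y_s\|_{\tB^{\al'',M}_\infty}\lesssim s^{-(\al'+1/p)}$ and removes the spurious constraint on $p$. Equivalently, run the contraction at regularity $\al''$ from the start and bootstrap to $\be$ afterwards --- which is precisely what the paper does.
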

\begin{rem}
The condition on $\al'$, $\be$ and $p$ displayed in \eqref{e:cond-local} can be improved. Note that it suffices to assume $\al'$ to be sufficiently small to ensure that a $p \in [1,\infty)$ exists that satisfies \eqref{e:cond-local}. 
\end{rem}
\begin{proof}[Proof of Theorem~\ref{t:local}]
An important aspect of the theorem is that we want $T^{\star}$ to depend on $Y_0$ only through the bound $K$ on $\|Y_0\|_{\tL^p}$. In order to achieve this, we split the construction of a solution into two steps. In the first step, we construct a mild solution using only the information that $Y_0 \in \tL^p$; the price to pay for this is that the solution will be defined in a larger space than anticipated. In the second step, we show using the additional information that $Y_0 \in \tBB{\be}$ that the solution thus constructed belongs to $\mcl{C}([0,T^{\star}],\tBB{\be})$. Finally, we argue about uniqueness in a third step.

\medskip

\noindent{\emph{Step 1}}. In view of \eqref{e:cond-local}, we can choose $\al'' > \al'$ such that
$$
\frac{\al'' + \be}{2} + 3\Ll(\al'' + \frac{1}{p}\Rr) < 1.
$$
We construct a solution of \eqref{e:eq-for-Y-torus} in $\mcl{C}((0,T^{\star}],\tBB{\al''})$ by a fixed point argument. Let $\gamma$ be such that
\begin{equation}
\label{e:cond:al''}
\ga > \al'' + \frac{1}{p} \quad \text{and} \quad \frac{\al'' + \be}{2} + 3\ga < 1.
\end{equation}
For notational convenience, we assume that $T \ge 1$. For any $T^{\star} \le 1$, we define the norm
$$
\tn Y \tn_{T^{\star}} = \sup_{0 \le t \le T^{\star}} t^\ga \|Y_t\|_{\tB_\infty^{\al'',M}},
$$
and the ball
$$
\BK = \{Y \in \mcl{C}((0,T],\tB_\infty^{\al'',M}) : \tn Y \tn_{T^{\star}} \le 1 \}.
$$
For $Y \in \BK$, we let
\begin{equation}
\label{e:def:MT}
\MT Y (t) = e^{t\Delta} Y_0 + \int_0^t e^{(t-s)\Delta} \Psi(Y_s,\un{Z}_s) \, \d s \qquad (t \le T^{\star}).
\end{equation}
Our aim is to show that for $T^{\star}$ small enough, the operator $\MT$ is a contraction from $\BK$ into itself. To begin with, it is clear that for every $t$, $\MT Y (t)$ is $M$-periodic. We now argue that for $T^{\star}$ sufficiently small, $\MT$ maps $\BK$ into itself. (The periodic versions of) Propositions~\ref{p:embed} and \ref{p:smooth-besov} ensure that
$$
\|e^{t\Delta} Y_0 \|_{\tB^{\al'',M}_\infty} \lesssim \|e^{t\Delta} Y_0 \|_{\tB^{\al''+\frac{2}{p},M}_p} \lesssim t^{-\Ll( \frac{\al''}{2} + \frac{1}{p} \Rr)} \|Y_0\|_{\tB^{0,M}_{p}} \lesssim t^{-\Ll( \frac{\al''}{2} + \frac{1}{p} \Rr)} \|Y_0\|_{\tL^p},
$$
where we used Remark~\ref{r:lp-embed} in the last step.
From this, one can check that $t \mapsto e^{t\Delta} Y_0$ is in $\mcl{C}((0,T],\tB^{\al'',M}_\infty)$, and moreover,
$$
\tn e^{\cdot \Delta} Y_0 \tn_{T^{\star}}
$$
can be made arbitrarily small by taking $T^{\star}$ small enough (in terms of $K$) since $\ga > \frac{\al''}{2}+\frac1p$.

Concerning the integral term in \eqref{e:def:MT}, we observe that by the multiplicative inequalities (and the trivial embedding of Besov spaces $\tBB{\al}$ as $\al$ varies),
\begin{multline}
\label{e:mult-ineq-on-Psi}
\|\Psi(Y_s,\un{Z}_s)\|_{\tBB{-\al}} \lesssim \|Y_s\|_{\tBB{\al''}}^3 + \|Y_s\|_{\tBB{\al''}}^2 \|Z_s\|_{\tBB{-\al}} \\ 
	+ \|Y_s\|_{\tBB{\al''}} \|\Zdd_s\|_{\tBB{-\al}} + \|\Ztt_s\|_{\tBB{-\al}} + \|Y_s\|_{\tBB{\al''}} + \|Z_s\|_{\tBB{-\al}} .
\end{multline}
Moreover, since $\|\un{Z}\|_{\td{\msc{Z}}^{M}_\infty}$ is finite (see \eqref{e:def:Z-norm}), if $Y \in \BK$, then
\begin{equation*}
\|\Psi(Y_s,\un{Z}_s)\|_{\tBB{-\al}} \lesssim s^{-3\ga} + s^{-2\ga} + s^{-\ga -\al'} + s^{-2\al'} \lesssim s^{-3\ga} ,
\end{equation*}
since we assume that $\ga > \al'' > \al'$. As a consequence, using Proposition~\ref{p:smooth-besov} again,
\begin{eqnarray*}
\Ll\| \int_0^t e^{(t-s) \Delta} \Psi(Y_s,\un{Z}_s) \, \d s \Rr\|_{\tBB{\al''}} & \lesssim & \int_0^t (t-s)^{-\frac{\al + \al''}{2}} \|\Psi(Y_s,\un{Z}_s) \|_{\tBB{-\al}} \, \d s \\
& \stackrel{(\al \le \al'')}{\lesssim} & \int_0^t (t-s)^{-\al''} s^{-3\ga} \, \d s \\
& \lesssim & t^{1-\al'' - 3\ga} ,
\end{eqnarray*}
where we used the fact that $\al''<1$ and $3\ga < 1$ in the last step. 
Recalling \eqref{e:cond:al''} and $\be > 1$, we see that $1-\al''-2\ga >2-\be -\al''-6\ga >  0$, so that the right-hand side above multiplied by $t^\ga$ tends to $0$ as $t$ tends to $0$. The fact that the process $t \mapsto \int_0^t e^{(t-s) \Delta} \Psi(Y_s,\un{Z}_s)  \d s$ taking values in $\tBB{\al}$ is continuous poses no additional difficulty. Hence, we have shown that the $\tn \cdot \tn_{T^{\star}}$-norm of this process can be made arbitrarily small provided that we choose $T^{\star}$ small enough. 

This concludes the proof that for $T^{\star}$ sufficiently small, the operator $\MT$ maps $\BK$ into itself. The contraction property then follows along similar lines. 

\medskip 

\noindent{\emph{Step 2}}. We now take the solution $Y \in \mcl{C}((0,T^{\star}],\tBB{\al''})$ constructed in Step 1 and show that it actually belongs to $\mcl{C}([0,T^{\star}],\tBB{\be})$.  Recall that 
$$
Y_t = e^{t \Delta} Y_0 + \int_0^t e^{(t-s)\Delta} \Psi(Y_s,\un{Z}_s) \, \d s,
$$
and that we already know that
$$
t^\ga \|Y_t\|_{\tBB{\al''}} < \infty.
$$
By Remark~\ref{r:continuity}, the function $t \mapsto e^{t \Delta} Y_0$ belongs to $\mcl{C}([0,T^{\star}],\tBB{\be})$. Similarly to what was done in the previous step, we can estimate
\begin{eqnarray*}
\Ll\| \int_0^t e^{(t-s) \Delta} \Psi(Y_s,\un{Z}_s) \, \d s \Rr\|_{\tBB{\be}} & \lesssim & \int_0^t (t-s)^{-\frac{\al + \be}{2}} \|\Psi(Y_s,\un{Z}_s) \|_{\tBB{-\al}} \, \d s \\
& \lesssim & \int_0^t (t-s)^{-\frac{\al + \be}{2}} s^{-3\ga} \, \d s \\
& \lesssim & t^{1-\frac{\al + \be}{2} - 3\ga} ,
\end{eqnarray*}
since we have $\frac{\al + \be}{2} < 1$ and $3\ga < 1$. The continuity of $t \mapsto \int_0^t e^{(t-s) \Delta} \Psi(Y_s,\un{Z}_s)  \d s$ in $\tBB{\be}$ at any point in $(0,T^{\star}]$ follows along the same lines. The continuity at time $0$ follows from the fact that (recall \eqref{e:cond:al''} and $\al < \al''$)
$$
\frac{\al + \be}{2} + 3\ga < 1.
$$

\medskip 

\noindent{\emph{Step 3}}. For $T^{\star}$ as defined by Step 1, let $Y, \td{Y} \in \td{S}_\infty^{T^{\star},M}(Y_0,\un{Z})$. We wish to show that $Y = \td{Y}$. Since $Y,\td{Y} \in \mcl{C}([0,T],\tB^{\be,M}_{\infty})$, it follows from the reasoning in Step 1 that one can find $T^{\star \star} \le T^{\star}$ depending only on 
$$
\sup_{t \le T^{\star}} \|Y_t\|_{\tB^{\be,M}_{\infty}} \quad  \text{ and } \quad \sup_{t \le T^{\star}} \|\td{Y}_t\|_{\tB^{\be,M}_{\infty}}
$$
such that both $Y$ and $\td{Y}$ belong to $\msc{B}_{T^{\star \star}}$. Since $T^{\star \star} \le T^{\star}$, the argument in Step 1 ensures that $\msc{M}_{T^{\star \star}}$ is a contraction from $\msc{B}_{T^{\star \star}}$ into itself. As a consequence, $Y$ and $\td{Y}$ coincide on $[0,T^{\star \star}]$. We can then iterate the reasoning (keeping the same $T^{\star \star}$) and thus guarantee that $Y$ and $\td{Y}$ coincide on the whole interval $[0,T^{\star}]$. (Note that this argument is easily adapted to show the uniqueness part of Theorem~\ref{t:global-torus}.)
\end{proof}

In order to upgrade the local existence result to a global one, we need a control of the $\tL^p$ norm of the solution.

\begin{thm}[A priori estimate on the torus]
\label{t:apriori-torus}
Let $p$ be an even positive integer such that
	\begin{equation}
		\label{e:conds-p}
\al'(p+2) < 1 \quad \text{and} \quad \frac{\al' + \be}{2} + 3\Ll(\al' + \frac{1}{p} \Rr) < 1,
	\end{equation}
let $T > 0$, $M > 0$ and $\un{Z} \in \td{\msc{Z}}^{M}_\infty$. There exists $C < \infty$ such that if $Y \in \td{S}_\infty^{T^{\star},M}(Y_0,\un{Z})$ for some $T^{\star} \le T$, then 
$$
\sup_{0 \le t \le T^{\star}} \|Y_t\|_{\tL^p} \le \|Y_0 \|_{\tL^p} + C.
$$
\end{thm}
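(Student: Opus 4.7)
The plan is to derive a differential inequality for $u(t) := \|Y_t\|_{\tL^p}^p$ by testing \eqref{e:eq-for-Y-torus} against $pY_t^{p-1}$ (which has the same sign as $Y_t$ since $p$ is even) and integrating over the torus. Because $Y \in \mcl{C}([0,T^{\star}], \tBB{\be})$ with $\be > 1$, the computation can be justified after a routine mollification of $Y_0$ and $\un{Z}$ followed by a limiting argument; we argue formally. Integration by parts and the identity $(p-1)\int Y^{p-2}|\nabla Y|^2 = \tfrac{4(p-1)}{p^2}\|\nabla Y^{p/2}\|_{\tL^2}^2$ yield
\begin{equation*}
\frac{1}{p}\frac{d}{dt}u(t) + \frac{4(p-1)}{p^2}\|\nabla Y_t^{p/2}\|_{\tL^2}^2 + \|Y_t\|_{\tL^{p+2}}^{p+2} = a\,u(t) + \sum_{j=0}^{3} R_j(t),
\end{equation*}
with $R_0(t) = a\int Y_t^{p-1} Z_t$, $R_1(t) = -3\int Y_t^{p+1} Z_t$, $R_2(t) = -3\int Y_t^p Z_t^{(2)}$ and $R_3(t) = -\int Y_t^{p-1} Z_t^{(3)}$. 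The two positive quantities on the left are the dissipative terms available to absorb the remainders.

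Each $R_j(t)$ involves a distribution of negative regularity and is estimated through three ingredients. The torus version of the duality inequality (Proposition~\ref{p:dual}) gives $|\int Y_t^k Z_t^{(n)}| \lesssim \|Y_t^k\|_{\tB^{\al,M}_{1,1}}\|Z_t^{(n)}\|_{\tB^{-\al,M}_\infty}$. Proposition~\ref{p:estimate} then gives $\|Y_t^k\|_{\tB^{\al,M}_{1,1}} \lesssim \|Y_t\|_{\tL^k}^{k(1-\al)}\|\nabla Y_t^k\|_{\tL^1}^\al + \|Y_t\|_{\tL^k}^k$. Finally, writing $\nabla Y^k = (2k/p)\,Y^{k-p/2}\nabla Y^{p/2}$ followed by Cauchy--Schwarz yields $\|\nabla Y_t^k\|_{\tL^1} \lesssim \|Y_t\|_{\tL^{2(k-p/2)}}^{k-p/2}\|\nabla Y_t^{p/2}\|_{\tL^2}$. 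Hölder's inequality on the torus then converts the various $\|Y_t\|_{\tL^q}$ into powers of $\|Y_t\|_{\tL^{p+2}}$ (or $\|Y_t\|_{\tL^p}$) at the cost of powers of $M$. Altogether, each $|R_j(t)|$ is dominated by a finite sum of terms of the form
\begin{equation*}
C(M,\|\un{Z}\|_{\td{\msc{Z}}^{M}_\infty})\, t^{-n_j\al'}\, \|Y_t\|_{\tL^{p+2}}^{A_j}\, \|\nabla Y_t^{p/2}\|_{\tL^2}^{B_j},
\end{equation*}
with $n_j\in\{0,1,2\}$, $B_j\in\{0,\al\}$ and $A_j\ge 0$.

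Young's inequality is now applied term by term, distributing mass between $\|Y_t\|_{\tL^{p+2}}^{p+2}$, $\|\nabla Y_t^{p/2}\|_{\tL^2}^2$ and a purely temporal factor. An elementary check of exponents confirms that for $\al<1$ the two ``good'' reciprocal exponents always sum strictly below $1$, so that an arbitrarily small $\eps$-fraction of the dissipative quantities suffices to absorb every $Y$-dependent factor; the hypothesis $\al'(p+2)<1$ is then exactly what forces the remaining temporal exponent to stay below $1$ in every case (the binding one being $R_3$, whose singularity $t^{-2\al'}$ is the strongest). The leftover $a\,u(t)$ term and any power $\|Y_t\|_{\tL^p}^\theta$ with $\theta\le p$ that is produced along the way are absorbed into one further $\eps$-fraction of $\|Y_t\|_{\tL^{p+2}}^{p+2}$, using the Hölder bound $\|Y\|_{\tL^p}\le M^{4/(p(p+2))}\|Y\|_{\tL^{p+2}}$ and a final Young step. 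Summing all contributions yields $\tfrac{d}{dt}u(t) \le H(t)$ with $H\in L^1([0,T])$ depending only on $T$, $M$, $|a|$, $p$ and $\|\un{Z}\|_{\td{\msc{Z}}^{M}_\infty}$, and integration in $t$ gives $u(t)\le u(0) + \|H\|_{L^1([0,T])}$, which is the claimed bound.

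The main obstacle is the careful bookkeeping of Young exponents, which must simultaneously allow the dissipative terms to absorb every $Y$-dependent factor and leave an $L^1([0,T])$-integrable temporal factor for each of $R_1$, $R_2$, $R_3$; the condition $\al'(p+2)<1$ is precisely what makes both requirements compatible. A subsidiary point is justifying the formal energy computation, which one does by mollifying $Y_0$ and $\un{Z}$, running the argument for smooth solutions (where all integrations by parts are classical), and passing to the limit via the continuous dependence established in the proof of Theorem~\ref{t:local}.
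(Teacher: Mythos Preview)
Your proposal is correct and follows essentially the same route as the paper: test against $Y^{p-1}$, use the duality estimate (Proposition~\ref{p:dual}) together with the gradient bound (Proposition~\ref{p:estimate}) on each remainder, and absorb everything into the two dissipative quantities (which the paper calls $A_s=\|Y_s^{p-2}|\nabla Y_s|^2\|_{\tL^1}$ and $B_s=\|Y_s\|_{\tL^{p+2}}^{p+2}$) via Young's inequality, leaving a time factor $s^{-\al'(p+2)}$ that is integrable exactly under \eqref{e:conds-p}.

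The one point of genuine divergence is how the energy identity itself is justified. The paper does \emph{not} mollify: it first shows that mild solutions are weak solutions (Proposition~\ref{p:weak1}), then establishes $\kappa$-H\"older time regularity with $\kappa>1/2$ (Proposition~\ref{p:time-reg}), and uses this to pass to the limit in Riemann sums for $\int_0^t \lan Y_s,\dr_s(Y_s^{p-1})\ran\,\d s$ by a Young-integral argument (Proposition~\ref{p:test}). Your mollification-then-limit route can be made to work, but it is less routine than you suggest: the continuous dependence coming from the contraction in Theorem~\ref{t:local} is only local in time, so passing to the limit on all of $[0,T^\star]$ requires iterating it, which in turn needs a uniform-in-$\eps$ bound on $\|Y^\eps_t\|_{\tL^p}$ to keep the time step from shrinking --- precisely the estimate you have just derived for the smooth approximants. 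This bootstrapping closes (since the given $Y$ is already bounded in $\tBB{\be}$, hence in $\tL^p$, on $[0,T^\star]$), but it should be spelled out rather than dismissed as routine. The paper's direct argument trades this for the H\"older-$\kappa>1/2$ computation, which is self-contained and avoids any auxiliary approximating sequence.
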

At a formal level, the idea for proving this a priori bound consists in testing equation \eqref{e:eq-for-Y-torus} against $Y^{p-1}$, which leads to a useful identity concerning $\dr_t \|Y_t\|_{\tL^p}^p$. The rigorous derivation will need several preliminary steps, starting with a time regularity estimate.
\begin{prop}[Time regularity of solutions]
	\label{p:time-reg}
	Let $Y \in \td{S}_\infty^{T,M}(Y_0,\un{Z})$, and assume that \eqref{e:conds-p} holds for some $p$. For every $\ka < \be/2$, $Y$ is $\ka$-Hölder continuous as a function from $[0,T]$ to $\tL^\infty$.
\end{prop}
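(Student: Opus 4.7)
The plan is to exploit the mild formulation
\begin{equation*}
Y_t - Y_s = (e^{(t-s)\Delta} - I)\,Y_s + \int_s^t e^{(t-r)\Delta}\,\Psi(Y_r,\un Z_r)\,\d r, \qquad 0\le s\le t\le T,
\end{equation*}
bound each term in a periodic Besov space $\tBB{\be'}$ with small $\be' > 0$, and conclude via the continuous embedding $\tBB{\be'} \hookrightarrow \tL^\infty$ (the periodic analogue of Remarks~\ref{r:embed-q} and~\ref{r:embed-lp}). Given $\ka < \be/2$, one fixes $\be' \in (0, \be - 2\ka)$, which is nonempty precisely because $\ka < \be/2$.

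For the first term, Proposition~\ref{p:time-reg-flow} together with $\be' \le \be - 2\ka$ and the uniform bound $\sup_{s' \in [0,T]} \|Y_{s'}\|_{\tBB{\be}} < \infty$ (coming from $Y \in \mcl C([0,T], \tBB{\be})$) gives $\|(e^{(t-s)\Delta} - I)\,Y_s\|_{\tBB{\be'}} \lesssim (t-s)^{(\be-\be')/2} \lesssim (t-s)^\ka$. For the integral term, one invokes the multiplicative inequalities (Corollaries~\ref{c:multipl1} and~\ref{c:multipl2}) exactly as in \eqref{e:mult-ineq-on-Psi}; combined with the uniform bound on $\|Y_r\|_{\tBB{\be}}$ and the blow-up rates encoded in $\|\un Z\|_{\td{\msc Z}^M_\infty}$ (see \eqref{e:def:Z-norm}), this yields $\|\Psi(Y_r, \un Z_r)\|_{\tBB{-\al}} \lesssim 1 + r^{-2\al'}$. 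Proposition~\ref{p:smooth-besov} then gives
\begin{equation*}
\Ll\| \int_s^t e^{(t-r)\Delta}\, \Psi(Y_r, \un Z_r) \, \d r \Rr\|_{\tBB{\be'}} \lesssim \int_s^t (t-r)^{-(\al+\be')/2}\,\bigl(1 + r^{-2\al'}\bigr)\, \d r.
\end{equation*}
A case split according to whether $s \le t/2$ (use $t \le 2(t-s)$ and extend the integral to start at $0$, obtaining a Beta function times $t^{1-(\al+\be')/2 - 2\al'}$) or $s > t/2$ (use $r \ge s > t - s$, hence $r^{-2\al'} \le (t-s)^{-2\al'}$, and integrate the remaining factor) bounds the right-hand side by a constant multiple of $(t-s)^{1 - (\al+\be')/2 - 2\al'}$. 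The integrals converge because $\al + \be' < 2$ and $2\al' < 1$.

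Combining these estimates, $\|Y_t - Y_s\|_{\tBB{\be'}} \lesssim (t-s)^\ka + (t-s)^{1 - (\al+\be')/2 - 2\al'}$. Since $\be' < \be - 2\ka$ and $\al$ may be taken arbitrarily small, the second exponent strictly exceeds $\ka$ as soon as $\al + 4\al' < 2 - \be$; this is granted by the smallness $\al' < (2-\be)/7$ forced by \eqref{e:conds-p}. Applying the Besov embedding $\tBB{\be'} \hookrightarrow \tL^\infty$ yields $\|Y_t - Y_s\|_{\tL^\infty} \lesssim (t-s)^\ka$. The main technical point is simultaneously handling the singular behavior $r^{-2\al'}$ of $\Ztt$ at the origin and the heat-kernel singularity $(t-r)^{-(\al+\be')/2}$ in the same integral; the case split resolves this cleanly, while the conditions \eqref{e:conds-p} provide just enough room on $\al$ and $\al'$ to reach any Hölder exponent strictly below $\be/2$.
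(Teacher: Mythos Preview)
Your proof is correct and follows essentially the same route as the paper's: decompose $Y_t - Y_s$ via the mild formulation, control the linear part with Proposition~\ref{p:time-reg-flow}, and control the Duhamel integral using the bound $\|\Psi_r\|_{\tBB{-\al}} \lesssim r^{-2\al'}$ from \eqref{e:mult-ineq-on-Psi} together with Proposition~\ref{p:smooth-besov}. The paper argues only at $s=0$ and leaves the general case as a straightforward adaptation, whereas you carry out the general increment explicitly and handle the singularity at $r=0$ with the $s\le t/2$ versus $s>t/2$ case split; this is a correct and clean way to make the ``straightforward adaptation'' precise. One cosmetic point: the remark that ``$\al$ may be taken arbitrarily small'' is misleading, since $\al$ is fixed by the setup; however your inequality $\al + 4\al' < 2-\be$ holds anyway because $\al < \al'$ and \eqref{e:conds-p} forces $7\al' < 2-\be$.
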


\begin{proof}
We show Hölder regularity of $Y$ at time $0$, the adaptation to arbitrary times in $[0,T]$ being straightforward. 
For conciseness, we write 
\begin{equation}
\label{e:def:psis}
\Psi_s:=\Psi(Y_s,\un{Z}_s).
\end{equation}
By definition of the notion of solution, we have
$$
Y_t = e^{t\Delta} Y_0 + \int_0^t e^{(t-s)\Delta} \Psi_s \, \d s.
$$
The fact that $t \mapsto e^{t\Delta} Y_0$ is $\ka$-Hölder continuous as a function $[0,T] \to \tL^\infty$ is a consequence of Proposition~\ref{p:time-reg-flow} (and Remarks~\ref{r:embed-q} and \ref{r:embed-lp}) and of the fact that $Y_0 \in \tB^{\be,M}_\infty$. For the time integral, we use \eqref{e:mult-ineq-on-Psi} again (but with the additional information that $Y \in \mcl{C}([0,T],\tB^{\be,M}_\infty)$) to obtain that
\begin{equation}
\label{e:estpsi-torus}
\|\Psi_s\|_{\tBB{-\al}} \lesssim s^{-2\al'},
\end{equation}
so that
\begin{eqnarray*}
\Ll\| \int_0^t e^{(t-s) \Delta} \Psi_s \, \d s \Rr\|_{\tBB{\al}} & \lesssim & \int_0^t (t-s)^{-\al} \|\Psi_s \|_{\tBB{-\al}} \, \d s \\
& \lesssim & \int_0^t (t-s)^{-\al} s^{-2\al'} \, \d s \\
& \stackrel{(\al \le \al')}{\lesssim} & t^{1-3\al'}.
\end{eqnarray*}
The result follows since $1-3\al' > \be/2$ by \eqref{e:conds-p}. 
\end{proof}
We denote by $\lan \, \cdot \, , \, \cdot \, \ran$ the scalar product in $\tL^2$. 
\begin{rem}
\label{r:dual-per}
By Proposition~\ref{p:dual}, for every $\ga \in \R$, the mapping $(f,g) \mapsto \lan f , g \ran$ extends to a continuous bilinear form on $\tBB{\ga} \times \tB^{-\ga,M}_{1,1}$, and thus also on $\tBB{\ga} \times \tB^{-\ga+\eps,M}_{\infty}$ for every $\eps > 0$ by Remarks \ref{r:besov-mu} and \ref{r:embed-q}. In particular, by Lemma~\ref{l:smooth}, we see that for every $\al \in \R$ and $M$-periodic $\phi \in C^\infty$, the mapping $f \mapsto \lan f,\phi \ran$ extends to a continuous linear form on $\tBB{\al}$.
\end{rem}
\begin{prop}[A mild solution is a weak solution]
	\label{p:weak1}
	If $Y \in \td{S}_\infty^{T,M}(Y_0,\un{Z})$, then for every 
	$\phi \in \tB^{1,M}_\infty$ and $t \le T$,
\begin{equation}
\label{e:weak-eq}
	\lan Y_t,\phi \ran - \lan Y_0,\phi \ran = \int_0^t \Ll[ - \lan \na Y_s,\na \phi\ran  + \lan \Psi(Y_s,\un{Z}_s),\phi \ran \Rr] \, \d s.
\end{equation}
\label{p:weak}
\end{prop}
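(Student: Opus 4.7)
The plan is to obtain \eqref{e:weak-eq} from the mild formulation by a direct computation based on the semi-group identity $e^{r\Delta}\phi - \phi = \int_0^r e^{u\Delta}\Delta\phi\,\d u$ and Fubini, then extend from smooth periodic test functions to all $\phi \in \tB^{1,M}_\infty$ by density. Since $\tB^{1,M}_\infty$ is, by definition, the completion of $M$-periodic $C^\infty$-functions, and since each of the three pairings appearing in \eqref{e:weak-eq} is continuous in $\phi$ with respect to the $\tB^{1,M}_\infty$-topology (by Remark~\ref{r:dual-per}, using that $\na Y_s \in \mcl C([0,T],\tB^{\be-1,M}_\infty)$ with $\be - 1 > 0$ thanks to Proposition~\ref{p:derivatives}, and that $\|\Psi_s\|_{\tB^{-\al,M}_\infty} \lesssim s^{-2\al'}$ with $2\al' < 1$ by \eqref{e:estpsi-torus}), it suffices to prove \eqref{e:weak-eq} for $\phi \in C^\infty$ that is $M$-periodic.

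For such $\phi$, first pair the mild formulation with $\phi$ and use the self-adjointness of $e^{t\Delta}$ in $\tL^2$:
\[
\lan Y_t,\phi\ran = \lan Y_0, e^{t\Delta}\phi\ran + \int_0^t \lan \Psi_s, e^{(t-s)\Delta}\phi \ran \, \d s .
\]
Since $\phi$ is smooth and periodic, $r \mapsto e^{r\Delta}\phi$ is $C^1$ into $\tB^{\ga,M}_\infty$ for every $\ga$, and $e^{r\Delta}\phi = \phi + \int_0^r e^{u\Delta}\Delta\phi\,\d u$ with the integral converging absolutely in any such space. Plugging this identity in both the initial-datum term (with $r=t$) and the forcing term (with $r=t-s$) yields
\[
\lan Y_t,\phi\ran - \lan Y_0,\phi\ran = \int_0^t \lan Y_0, e^{r\Delta}\Delta\phi\ran\,\d r + \int_0^t \lan \Psi_s,\phi\ran\,\d s + \int_0^t\!\!\int_s^t \lan \Psi_s, e^{(r-s)\Delta}\Delta\phi\ran \,\d r\,\d s .
\]

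Next, apply Fubini to the double integral. This is justified by the bound
\[
|\lan \Psi_s, e^{(r-s)\Delta}\Delta\phi\ran| \lesssim \|\Psi_s\|_{\tB^{-\al,M}_\infty}\, \|e^{(r-s)\Delta}\Delta\phi\|_{\tB^{\al+\eps,M}_\infty} \lesssim s^{-2\al'}(r-s)^{-(1+\al+\eps)/2}\|\phi\|_{\tB^{1,M}_\infty},
\]
(by Remark~\ref{r:dual-per} and Proposition~\ref{p:smooth-besov}, with $\eps > 0$ chosen so that $\al+\eps < 1$), which is integrable on $\{0 \le s \le r \le t\}$. After swapping the order of integration and using self-adjointness of $e^{(r-s)\Delta}$ inside the inner integral, one recognises
\[
\int_0^r \lan \Psi_s, e^{(r-s)\Delta}\Delta\phi\ran\,\d s = \Ll\lan \int_0^r e^{(r-s)\Delta}\Psi_s\,\d s,\, \Delta\phi \Rr\ran = \lan Y_r - e^{r\Delta}Y_0, \Delta\phi\ran ,
\]
by the mild formula for $Y_r$. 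Combined with the first term on the right-hand side (which gives $\int_0^t \lan e^{r\Delta}Y_0,\Delta\phi\ran\,\d r$ after moving the semi-group to the other slot), the two $e^{r\Delta}Y_0$ contributions cancel, and we end up with
\[
\lan Y_t,\phi\ran - \lan Y_0,\phi\ran = \int_0^t \lan \Psi_s,\phi\ran\,\d s + \int_0^t \lan Y_r, \Delta\phi\ran\,\d r .
\]

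Finally, since $\phi$ is smooth and periodic and $Y_r \in \tB^{\be,M}_\infty$ with $\be > 1$ (so that $\na Y_r \in \tB^{\be-1,M}_\infty$ is meaningful by Proposition~\ref{p:derivatives}), the integration-by-parts identity $\lan Y_r,\Delta\phi\ran = -\lan \na Y_r,\na\phi\ran$ holds (approximate $Y_r$ by its mollifications $e^{\eta\Delta}Y_r$, integrate by parts in the smooth setting, and pass to the limit $\eta \to 0$ using the duality of Proposition~\ref{p:dual} together with Remark~\ref{r:continuity}). This yields \eqref{e:weak-eq} for smooth $\phi$ and completes the proof after the density argument described above. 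The main technical point is verifying the Fubini hypothesis and the continuity-in-$\phi$ estimates needed to carry out the density step, both of which are routine bookkeeping with the multiplicative/duality/smoothing estimates of Section~\ref{s:besov}.
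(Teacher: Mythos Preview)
Your argument is correct and follows essentially the same route as the paper: first establish the weak identity with $\Delta\phi$ for smooth periodic test functions via the semigroup calculus and Fubini, then integrate by parts to get the gradient form, and finally extend by density to $\phi \in \tB^{1,M}_\infty$ using Remark~\ref{r:dual-per}, Proposition~\ref{p:derivatives}, and the bound \eqref{e:estpsi-torus}. The only cosmetic difference is the order of the computation in Step~1: the paper starts from $\int_0^t \lan Y_s,\Delta\phi\ran\,\d s$, expands $Y_s$ via the mild formula and recognises the result, whereas you start from $\lan Y_t,\phi\ran$, expand the semigroup acting on $\phi$, and recognise $Y_r$ inside the double integral---but the manipulations (self-adjointness of $e^{t\Delta}$, the identity $\partial_r e^{r\Delta}\phi = e^{r\Delta}\Delta\phi$, and Fubini) are identical.
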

\begin{proof}
\noindent \emph{Step 1.} Recall our notation \eqref{e:def:psis}. We first show that for every $M$-periodic $\phi \in C^\infty$ and $t \le T$,
\begin{equation}
	\label{e:weak1}
	\lan Y_t,\phi \ran - \lan Y_0,\phi \ran = \int_0^t \Ll[ \lan Y_s,\Delta \phi\ran + \lan \Psi_s,\phi \ran \Rr] \, \d s.
\end{equation}
Since $Y$ is a mild solution of \eqref{e:eq-for-Y-torus},
$$
\int_0^t \lan Y_s, \Delta \phi \ran \, \d s = \int_0^t \lan e^{s\Delta}Y_0 + \int_0^s e^{(s-u)\Delta} \Psi_u \, \d u, \Delta \phi \ran \, \d s.
$$
Observe that for every $s > 0$,
$$
\lan e^{s\Delta} Y_0,\Delta \phi \ran = \lan\Delta e^{s\Delta} Y_0,\phi\ran =  \dr_s \lan e^{s\Delta} Y_0,\phi\ran,
$$
so that
$$
\int_0^t \lan e^{s\Delta} Y_0,\Delta \phi \ran \, \d s = \lan e^{t\Delta} Y_0, \phi \ran - \lan Y_0,\phi \ran.
$$
Similarly, we compute
\begin{eqnarray*}
\int_0^t \int_0^s \Delta e^{(s-u)\Delta} \Psi_u \, \d u \d s & = & \int_0^t \int_u^{t} \Delta e^{(s-u)\Delta} \Psi_u \, \d s \d u \\
& = & \int_0^t (e^{(t-u)\Delta} -\mathrm{Id})\Psi_u \, \d u.
\end{eqnarray*}
Combining the two, we obtain
\begin{multline*}
\int_0^t \lan e^{s\Delta} Y_0,\Delta \phi \ran \, \d s \\
= \lan e^{t\Delta} Y_0 + \int_0^t e^{(t-u) \Delta} \Psi_u \, \d u, \phi \ran - \int_0^t \lan \Psi_u,\phi \ran \, \d u - \lan Y_0,\phi \ran,
\end{multline*}
which is \eqref{e:weak1}.

\noindent \emph{Step 2.} We now conclude the proof. First, the mapping $f \mapsto \lan \na f, \na \phi \ran$ is continuous over $\tBB{\be}$, so we have $\lan Y_s, \Delta \phi \ran = - \lan \na Y_s, \na \phi \ran$. Hence, for every $\phi \in \mcl C^\infty(\R^2)$,
$$
\lan Y_t,\phi \ran - \lan Y_0,\phi \ran = \int_0^t \Ll[ - \lan \na Y_s,\na \phi \ran + \lan \Psi_s,\phi \ran \Rr] \, \d s.
$$
It then suffices to argue by density using \eqref{e:estpsi-torus}, Proposition~\ref{p:derivatives} and Remark~\ref{r:dual-per}. 
\end{proof}

We are now ready to derive a rigorous version of the identity on $\dr_t \|Y_t\|_{\tL^p}^p$ alluded to before.
\begin{prop}[Testing against $Y_t^{p-1}$]
	Let $Y \in \td{S}_\infty^{T,M}(Y_0,\un{Z})$. For every even positive integer $p$ such that \eqref{e:conds-p} holds,
	\label{p:test}
	$$
	\frac{1}{p}\Ll(\|Y_t\|^p_{\tL^p} - \|Y_0\|^p_{\tL^p}\Rr) = \int_0^t \Ll[ -(p-1)\lan \na Y_s, Y_s^{p-2}\na Y_s \ran + \lan \Psi_s,Y_s^{p-1} \ran \Rr] \, \d s.
$$
\end{prop}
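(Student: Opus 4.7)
The plan is to make rigorous the formal calculation consisting of testing the equation $\dr_t Y_t = \Delta Y_t + \Psi_t$ against $Y_t^{p-1}$ and integrating by parts the Laplacian term. Since $Y$ is only a mild solution, and in particular is not known to be differentiable in $t$ with values in any space containing $\Psi_s$, the natural strategy is to regularize $Y$ in time, apply a classical chain rule to the smooth approximation, and then pass to the limit.

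Concretely, for $\eps \in (0,T)$ I would introduce the forward time-average
$$
Y^\eps_s = \frac{1}{\eps} \int_s^{s+\eps} Y_u \, \d u, \qquad \Psi^\eps_s = \frac{1}{\eps} \int_s^{s+\eps} \Psi_u \, \d u,
$$
defined on $[0, T-\eps]$. Using the weak formulation provided by Proposition~\ref{p:weak}, one checks that $s \mapsto Y^\eps_s$ is continuously differentiable into $\tBB{\be}$ (strong differentiability follows from the continuity of $Y$ in this space), and that
$$
\dr_s Y^\eps_s = \Delta Y^\eps_s + \Psi^\eps_s
$$
as an identity in $\tBB{-\al}$. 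Since $\be > 1$, the embedding $\tBB{\be} \hookrightarrow W^{1,\infty}$ gives $Y^\eps_s, \na Y^\eps_s \in \tL^\infty$ uniformly in $s$. I would then apply the classical chain rule to $s \mapsto \frac{1}{p}\|Y^\eps_s\|_{\tL^p}^p$, whose Fréchet derivative is $f \mapsto \lan f^{p-1}, \, \cdot \, \ran$. After integration by parts (justified by the spatial regularity of $Y^\eps_s$), this gives
$$
\frac{\d}{\d s}\frac{1}{p}\|Y^\eps_s\|_{\tL^p}^p = -(p-1)\lan \na Y^\eps_s, (Y^\eps_s)^{p-2} \na Y^\eps_s\ran + \lan \Psi^\eps_s, (Y^\eps_s)^{p-1}\ran ,
$$
and integrating in $s \in [0,t]$ yields the target identity with $(Y^\eps, \Psi^\eps)$ in place of $(Y, \Psi)$.

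The main work is the limit $\eps \to 0$. Continuity of $Y$ in $\tBB{\be}$ gives $Y^\eps \to Y$ in $\mcl{C}([0,T'],\tBB{\be})$ for every $T' < T$, which combined with the $W^{1,\infty}$ embedding handles the left-hand side and the quadratic-gradient term by dominated convergence (using uniform $\tL^\infty$ bounds on both $Y^\eps_s$ and $\na Y^\eps_s$). The delicate term is $\lan \Psi^\eps_s, (Y^\eps_s)^{p-1}\ran$: for this I would use Proposition~\ref{p:dual} together with the multiplicative inequality Corollary~\ref{c:multipl1} to show that $(Y^\eps_s)^{p-1} \to Y_s^{p-1}$ in $\tBB{\gamma}$ for some $\gamma > \al$, uniformly in $s \in [0,T']$, and that $\Psi^\eps \to \Psi$ in $L^1([0,T],\tBB{-\al})$. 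The latter uses the bound $\|\Psi_s\|_{\tBB{-\al}} \lesssim s^{-2\al'}$ from \eqref{e:estpsi-torus} together with $2\al' < 1$, which is guaranteed by \eqref{e:conds-p}.

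The main obstacle is precisely this last convergence, since $\Psi_s$ is singular as $s \to 0$ and only lives in a space of negative regularity; the choice of time-averaging (rather than spatial mollification) is dictated by this, because it preserves the full spatial regularity $\tBB{\be}$ of $Y$ that is needed to pair against $\Psi_s$ through the duality of Proposition~\ref{p:dual}, while at the same time providing the smoothness in $s$ required to apply the classical chain rule.
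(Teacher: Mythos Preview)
Your approach is correct and genuinely different from the paper's. The paper proceeds by a Young-integral argument: it writes
\[
\|Y_v\|^p_{\tL^p} - \|Y_u\|^p_{\tL^p} = \lan Y_v - Y_u, Y_u^{p-1}\ran + \lan Y_v, Y_v^{p-1} - Y_u^{p-1}\ran,
\]
applies Proposition~\ref{p:weak1} with the frozen test function $Y_u^{p-1}$ to the first term, and sums over a partition $\un{t}$ of $[0,t]$. The resulting Riemann sum $\mfk{I}(\un{t})$ converges to the desired integral by continuity, while the ``remainder'' $\mfk{S}(\un{t}) = \sum_i \lan Y_{t_{i+1}}, Y_{t_{i+1}}^{p-1} - Y_{t_i}^{p-1}\ran$ is shown to converge to $\tfrac{p-1}{p}(\|Y_t\|^p - \|Y_0\|^p)$ by exploiting the $\ka$-H\"older time regularity of $Y$ with $\ka > 1/2$ from Proposition~\ref{p:time-reg}. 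In essence, the paper is making sense of the Young integral $\int Y_s^{p-1}\,\d Y_s$.

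Your time-mollification route bypasses this entirely: once $Y^\eps$ is $C^1$ in time with values in $\tBB{\be}$, the classical chain rule applies directly, and the passage to the limit needs only the continuity of $Y$ in $\tBB{\be}$ together with $\Psi \in L^1((0,T),\tBB{-\al})$. In particular, you never need Proposition~\ref{p:time-reg}, which is a genuine simplification. The paper's approach, on the other hand, is closer to a rough-path philosophy and makes the sewing structure explicit; this is perhaps more robust if one later wants to handle equations where the solution has less than $1/2$ H\"older regularity in time after suitable modifications.

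One minor correction: the identity $\dr_s Y^\eps_s = \Delta Y^\eps_s + \Psi^\eps_s$ holds in $\tBB{\be-2}$ rather than $\tBB{-\al}$, since $\Delta Y^\eps_s$ a priori only lies in $\tBB{\be-2}$ and $\be - 2 < -\al$ under \eqref{e:conds-p}. This does not affect your argument, as $(Y^\eps_s)^{p-1} \in \tBB{\be}$ pairs with elements of $\tBB{\be-2}$ via Proposition~\ref{p:dual} (using $2-\be \in (0,1)$).
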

\begin{rem}
We could of course extend Proposition~\ref{p:weak} to allow for functions $\phi$ that depend smoothly on time. This would bring about the additional term
\begin{equation}
\label{e:Young-integral}
\int_0^t \lan Y_s, \dr_s \phi_s \ran \, \d s.
\end{equation}
However, since $t \mapsto Y_t^{p-1}$ is only Hölder continuous in time (see Proposition~\ref{p:time-reg}), it is not clear a priori how to make sense of \eqref{e:Young-integral} for $\phi = Y^{p-1}$. In essence, the argument below consists in noticing that we can make sense of Young integrals when both the integrand and the integrator are Hölder continuous for an exponent strictly above $1/2$.
\end{rem}
\begin{proof}[Proof of Proposition~\ref{p:test}]
We begin by observing that, for any $0 \le u \le v \le T$,
\begin{align*}
\|Y_v\|^p_{\tL^p} - \|Y_u\|^p_{\tL^p} & =  \lan Y_v, Y_v^{p-1} \ran - \lan Y_u, Y_u^{p-1} \ran \\
& =  \lan Y_v, Y_u^{p-1} \ran - \lan Y_u,Y_u^{p-1} \ran + \lan Y_v, Y_v^{p-1} - Y_u^{p-1} \ran,
\end{align*}
so by Proposition~\ref{p:weak},
\begin{multline*}
	\|Y_v\|^p_{\tL^p} - \|Y_u\|^p_{\tL^p} - \lan Y_v, Y_v^{p-1} - Y_u^{p-1} \ran\\
	= \int_u^v \Ll[-\lan \na Y_s, \na(Y_u^{p-1}) \ran + \lan \Psi_s,Y_u^{p-1} \ran \Rr] \, \d s.
\end{multline*}
For any subdivision $\un{t} = (t_0,\ldots,t_n)$ such that $0 = t_0 \le \cdots \le t_n = t$, we thus have
$$
	\|Y_t\|^p_{\tL^p} - \|Y_0\|^p_{\tL^p} - \mfk{S}(\un{t}) = \mfk{I}(\un{t}),
$$
where we used the shorthand notation
$$
\mfk{S}(\un{t}) = \sum_{i = 0}^{n-1} \lan Y_{t_{i+1}}, Y_{t_{i+1}}^{p-1} - Y_{t_{i}}^{p-1} \ran,
$$
$$
\mfk{I}(\un{t}) = \sum_{i = 0}^{n-1} \int_{t_i}^{t_{i+1}} \Ll[-\lan \na Y_s, \na(Y_{t_i}^{p-1}) \ran + \lan \Psi_s,Y_{t_i}^{p-1} \ran \Rr] \, \d s.
$$
Since $t \mapsto Y_t$ is a continuous function from $[0,T]$ to $\tBB{\be}$ with $\be > 1$, in particular $t \mapsto \na Y_t$ is a continuous function from $[0,T]$ to $\tL^\infty$ by Proposition~\ref{p:derivatives}. Using also \eqref{e:estpsi-torus}, one can check that as the subdivision gets finer and finer,
$$
\mfk{I}(\un{t}) \to \int_0^t \Ll[ -(p-1)\lan \na Y_s, Y_s^{p-2}\na Y_s \ran + \lan \Psi_s,Y_s^{p-1} \ran \Rr] \, \d s.
$$
We now argue that as the subdivision $\un{t}$ gets finer and finer,
\begin{equation}
\label{e:lim-mfkS}
\mfk{S}(\un{t}) \to \frac{p-1}{p} \Ll( 	\|Y_t\|^p_{\tL^p} - \|Y_0\|^p_{\tL^p} \Rr).
\end{equation}
Note that
\begin{equation}
\label{e:decomp-puiss}
Y_{t_{i+1}}^{p-1} - Y_{t_{i}}^{p-1} = (Y_{t_{i+1}} - Y_{t_{i}})(Y_{t_{i+1}}^{p-2} + \cdots + Y_{t_{i}}^{p-2}).
\end{equation}
We show that the contribution of each term in the sum above is asymptotically the same as the contribution of the first one. For instance with the last term,
\begin{multline*}
\lan Y_{t_{i+1}}, (Y_{t_{i+1}} - Y_{t_{i}}) Y_{t_{i+1}}^{p-2} \ran - \lan Y_{t_{i+1}}, (Y_{t_{i+1}} - Y_{t_{i}}) Y_{t_{i}}^{p-2} \ran \\
= \lan (Y_{t_{i+1}} - Y_{t_{i}})^2, Y_{t_{i+1}}(Y_{t_{i+1}}^{p-3} + \cdots + Y_{t_{i}}^{p-3} )\ran.
\end{multline*}
Since $t \mapsto Y_t$ is bounded in $\tL^p$, an application of Hölder's inequality leads to
\begin{multline*}
\Ll|\lan Y_{t_{i+1}}, (Y_{t_{i+1}} - Y_{t_{i}}) Y_{t_{i+1}}^{p-2} \ran - \lan Y_{t_{i+1}}, (Y_{t_{i+1}} - Y_{t_{i}}) Y_{t_{i}}^{p-2} \ran \Rr| \\
=    \Ll|\lan Y_{t_{i+1}}(Y_{t_i}^{p-3} + \cdots + Y_{t_{i+1}}^{p-3}), (Y_{t_{i+1}} - Y_{t_{i}})^2  \ran \Rr|
\lesssim \|Y_{t_{i+1}}-Y_{t_i}\|_{\tL^p}^2 \lesssim |t_{i+1}-t_i|^{2\ka}
\end{multline*}
for some $\ka > 1/2$, see Proposition~\ref{p:time-reg}. We can argue similarly for the other terms in the sum \eqref{e:decomp-puiss}. This estimate implies that as the subdivision $\un{t}$ gets finer and finer, the difference between $\mfk{S}(\un{t})$ and 
\begin{equation}
\label{e:p-1-times}
\sum_{i = 0}^{n-1} \lan Y_{t_{i+1}}, (p-1)Y_{t_{i+1}}^{p-2}(Y_{t_{i+1}} - Y_{t_i}) \ran
\end{equation}
tends to $0$. The same argument also shows that the difference between 
$$
\|Y_t\|^p_{\tL^p} - \|Y_0\|^p_{\tL^p} = \sum_{i=0}^{n-1} \Ll(\|Y_{t_{i+1}}\|^p_{\tL^p} - \|Y_{t_i}\|^p_{\tL^p} \Rr)
$$
and 
\begin{equation}
\label{e:p-times}
\sum_{i = 0}^{n-1} \lan 1, p Y_{t_{i+1}}^{p-1}(Y_{t_{i+1}} - Y_{t_i}) \ran
\end{equation}
tends to $0$. Since the quantity in \eqref{e:p-1-times} is $(p-1)/p$ times that in \eqref{e:p-times}, these two observations imply \eqref{e:lim-mfkS}.
\end{proof}

\begin{proof}[Proof of Theorem~\ref{t:apriori-torus}]
Recall that $\Psi(Y_s,\un{Z}_s)$ is defined as a sum of terms, see \eqref{e:def:Psi}. If we decompose it as $\Psi(Y_s,\un{Z}_s) = -Y_s^3 + \Psi'(Y_s,\un{Z}_s)$, we can then rewrite the identity of Proposition~\ref{p:test} as
\begin{multline}
	\label{e:test-torus}
	\frac{1}{p}\Ll(\|Y_t\|^p_{\tL^p} - \|Y_0\|^p_{\tL^p} \Rr) + \int_0^t \Ll[ (p-1)\Ll\|Y_s^{p-2} \Ll| \na Y_s \Rr|^2 \Rr\|_{\tL^1} + \|Y_s^{p+2}\|_{\tL^{1}} \Rr] \, \d s \\
	= \int_0^t \lan \Psi'(Y_s,\un{Z}_s),Y_s^{p-1} \ran  \, \d s.
\end{multline}
We will now show that the integrand on the \rhs\ is controlled by the integrand on the \lhs. For notational convenience, we write 
$$
A_s = \Ll\|Y_s^{p-2} \Ll| \na Y_s \Rr|^2 \Rr\|_{\tL^1}, \qquad B_s = \|Y_s^{p+2}\|_{\tL^{1}}.
$$
Decomposing $\Psi'$ as a sum of terms, we see that the first term we need to estimate is $\lan Y_s^2 Z_s, Y_s^{p-1} \ran = \lan Y_s^{p+1}, Z_s \ran$. By Proposition~\ref{p:dual}, we have
$$
\Ll| \lan Y_s^{p+1}, Z_s \ran  \Rr| \lesssim \|Y_s^{p+1}\|_{\tB^{\al,M}_{1,1}} \, \|Z_s\|_{\tBB{-\al}}.
$$
By Proposition~\ref{p:estimate}, we get
$$
\|Y_s^{p+1}\|_{\tB^{\al,M}_{1,1}} \lesssim \|Y_s^{p+1}\|_{\tL^1}^{1-\al} \, \|Y_s^p \, \na Y_s \|_{\tL^1}^\al + \|Y_s^{p+1}\|_{\tL^1}.
$$
and by the Cauchy-Schwarz inequality,
$$
\|Y_s^p \, \na Y_s \|^2_{\tL^1} \le \|Y_s^{p-2} \, |\na Y_s|^2\|_{\tL^{1}} \, \|Y_s^{p+2}\|_{\tL^{1}} = A_s \, B_s.
$$
Moreover, Jensen's inequality implies that $\|Y_s^{p+1}\|_{\tL^1} \lesssim B_s^{\frac{p+1}{p+2}}$ (where the implicit constant depends on $M$). Using also the fact that $\sup_{s \le T} \|Z_s\|_{\tBB{-\al}} < \infty$, we get
\begin{equation}
	\label{e:first-control}
	\Ll| \lan Y_s^{p+1}, Z_s \ran  \Rr| \lesssim A_s^{\frac{\al}{2}} \, B_s^{\frac{\al}{2} + (1-\al)\frac{p+1}{p+2} } + B_s^{\frac{p+1}{p+2} }.
\end{equation}
This term is controlled by an arbitrarily small constant times $(A_s + B_s)$. Indeed, we observe that since
$$
\frac{\al}{2} + \Ll( \frac{\al}{2} + (1-\al)\frac{p+1}{p+2} \Rr) < 1,
$$
one can define exponents $\ga_1 < 1, \ga_2 < 1$ such that 
$$
\frac{\al}{2\ga_1} + \frac{1}{\ga_2}\Ll( \frac{\al}{2} + (1-\al)\frac{p+1}{p+2} \Rr) = 1,
$$
and Young's inequality (the one for products, not for convolutions!) implies that
$$
\Ll| \lan Y_s^{p+1}, Z_s \ran  \Rr| \lesssim A_s^{\ga_1} + B_s^{\ga_2} + B_s^{\frac{p+1}{p+2} }.
$$
Since $\sup_{x \ge 0} (-x + x^{\ga}) < \infty$ for any $\ga < 1$, it is clear that the right-hand side above is bounded by $\frac{1}{10}(A_s + B_s)$ plus a universal constant.
	
Estimating the other terms coming from $\Psi'$ is similar. For instance, the second term we need to estimate is $\lan Y_s \Zdd_s, Y_s^{p-1} \ran = \lan Y_s^{p},  \Zdd_s\ran$. By the same reasoning, 
$$
\Ll| \lan Y_s^{p},  \Zdd_s \ran \Rr| \lesssim \|Y_s^p\|_{\tB^{\al,M}_{1,1}} \, \| \Zdd_s \|_{\tBB{-\al}},
$$
with 
\begin{eqnarray*}
	\|Y_s^p\|_{\tB^{\al,M}_{1,1}} & \lesssim &  \|Y_s^{p-1} \na Y_s\|_{\tL^1} + \|Y_s^p\|_{\tL^1} \\
	& \lesssim & \sqrt{A_s \|Y_s^p\|_{\tL^1} }   + \|Y_s^p\|_{\tL^1} \\
	& \lesssim & \sqrt{A_s B_s^{1-\frac{2}{p+2}} }   + B_s^{1-\frac{2}{p+2}  } \\
	& \lesssim & A_s^{1-\frac{1}{p+2}} + B_s^{1- \frac{1}{p+2}}  + B_s^{1-\frac{2}{p+2}  },
\end{eqnarray*}
where for simplicity, we used Remark~\ref{r:estimate} instead of the full strength of Proposition~\ref{p:estimate} in the first line, and then the Cauchy-Schwarz, Jensen and Young inequalities. 
Since $\|\un{Z}\|_{\td{\msc{Z}}^{M}_\infty}$ is finite (see \eqref{e:def:Z-norm}), we have $\|\Zdd_s\|_{\tBB{-\al}} \lesssim s^{-\al'}$, and thus
$$
\Ll| \lan Y_s^{p},  \Zdd_s \ran \Rr| \lesssim \Ll( A_s^{1-\frac{1}{p+2}} + B_s^{1- \frac{1}{p+2}}  + B_s^{1-\frac{2}{p+2}  } \Rr) s^{-\al'}.
$$
Observing that $\sup_{x \ge 0} (-x + a x^{\ga}) \lesssim a^{\frac{1}{1-\ga}}$, we obtain that
\begin{equation}
\label{e:control-thirda}
-\frac{A_s + B_s}{10} + \Ll| \lan Y_s^{p},  \Zdd_s \ran \Rr| \lesssim s^{-\al'(p+2)}.
\end{equation}
Similarly,
$$
 \Ll|  \lan Y_s^{p-1}, \Ztt_s \ran \Rr| \lesssim \Ll( A_s^{1-\frac{2}{p+2}} + B_s^{1- \frac{2}{p+2}  } + B_s^{1-\frac{3}{p+2}} \Rr) s^{-2\al'},
$$
so that 
\begin{equation}
\label{e:control-third}
-\frac{A_s + B_s}{10} + \Ll|  \lan Y_s^{p-1}, \Ztt_s \ran \Rr| \lesssim s^{-\al'(p+2)}.
\end{equation}
The remaining terms are treated in the same way. In short, we have shown that
$$
\frac{1}{p}\Ll(\|Y_t\|^p_{\tL^p} - \|Y_0\|^p_{\tL^p} \Rr)\lesssim \int_0^t s^{-\al'(p+2)} \, \d s.
$$
The integral on the right-hand side is finite since $\al'(p+2) < 1$, so the proof is complete.
\end{proof}
\begin{proof}[Proof of Theorem~\ref{t:global-torus}]
We assume that $\al'$ is sufficiently small that there exists a positive integer such that \eqref{e:conds-p} holds. In order to construct a global solution, we take $T^{\star}$ from Theorem~\ref{t:local} according to the a priori bound on the $\tL^p$ norm of the solution provided by Theorem~\ref{t:apriori-torus}, and then simply glue together local solutions until the time interval $[0,T]$ is covered. Uniqueness was already obtained in Step 3 of Theorem~\ref{t:local}.
\end{proof}

%
%
%
%
%
%
%
%
\section{A priori estimates}
\label{s:full1}
The goal of this section is to derive strong a priori estimates on solutions of \eqref{e:eqY}. This will enable us to show that ``periodised'' solutions of \eqref{e:eqY} constructed in the previous section all belong to suitable compact subsets of polynomially weighted Besov spaces. 

\medskip

Our setting bears similarities with that of the previous section. One main difference with what was done before is that we will work with polynomially weighted Besov spaces instead of ``periodised'' ones. We assume throughout that $\si > d = 2$, and introduce the slightly lighter notation
$$
\hB^{\al,\si}_{p} := \hB^{\al,\si}_{p,\infty}.
$$
Let $0 < \al < \al' < 1/6$, $\beta \in (1, 2)$, $T > 0$ and $p \ge 1$. 
For $\un{Z} = (Z,\Zdd,\Ztt)$ in the set
$$
\mcl{C}([0,T],\hB_{3p}^{-\al,\si}) \times \mcl{C}((0,T],\hB_{2p}^{-\al,\si}) \times \mcl{C}((0,T],\hB_{p}^{-\al,\si}),
$$
we write
\begin{equation}
	\label{e:def:Z-norm-full}
	\|\un{Z}\|_\ZZt
	= \sup_{0 < t \le T} \Ll(\|Z_t\|_{\hB_{3p}^{-\al,\si}} \, \vee \,  t^{\al'} \|\Zdd_t\|_{\hB_{2p}^{-\al,\si}} \, \vee \, t^{2\al'} \|\Ztt_t\|_{\hB_{p}^{-\al,\si}} \Rr).
\end{equation}
We let $\ZZt$ be the set of $\un{Z}$ such that this norm is finite.
%
%
%
For $Y \in \mcl{C}([0,T],\hB^{\be,\si}_{3p})$, $\un{Z} = (Z, \Zdd,\Ztt) \in \ZZt$ and $t \le T$, we write 
\begin{equation}
\label{e:def:Psi:full}
\Psi_t = \Psi(Y_t,\un{Z}_t) = -Y_t^3 - 3 Y_t^2 Z_t - 3 Y_t \Zdd_t - \Ztt_t + a(Y_t+Z_t) .
\end{equation}
The multiplicative inequalities imply that $\Psi(Y_t,\un{Z}_t)$ is well-defined for every $t > 0$, since we assume $p \ge 1$. Since we will ultimately only be interested in solutions with $Y_0 = 0$, we take advantage of this simplification right away and only consider solutions of
\begin{equation}
\label{e:eq-for-Y}
\Ll\{ 
\begin{array}{l}
\partial_t Y = \Delta Y + \Psi(Y,\un{Z}) \qquad \text{on } [0,T] \times \R^2, \\
Y(0,\cdot) = 0.
\end{array}
\Rr.
\end{equation}
Again, we will interpret this equation in the mild form. More precisely, we write $Y \in \wh S^{T,\si}_{3p}(\un{Z})$ if $Y \in \mcl{C}([0,T],\hB^{\be,\si}_{3p})$ and if for every $t \le T$,
$$
Y_t = \int_0^t e^{(t-s)\Delta} \Psi(Y_s,\un{Z}_s) \, \d s.
$$

We will assume throughout that
	\begin{align}
	& p \text{ is an even positive integer such that} \notag \\
& \al'(p+2) < 3/4 \quad \text{and} \quad \frac{\al' + \be}{2} + 3\al' < 1.
	\label{e:conds-p-full}
	\end{align}


Our strategy parallels that of the previous section in that, informally, we want to multiply \eqref{e:eq-for-Y} by $Y^{p-1}$ and integrate to get information on $\dr_t \|Y_t\|_{\hL^p}^p$. In order to carry the argument rigorously, we first need to assert that solutions of \eqref{e:eq-for-Y} have sufficient time regularity.

\begin{prop}[Time regularity of solutions]
\label{p:time-reg-full}
Let $\un{Z} \in \ZZt$ and $\ka < \be/2$. If $Y \in \wh S^{T,\si}_{3p}(\un{Z})$, then $Y$ is $\ka$-Hölder continuous as a function from $[0,T]$ to $\hL^{p}$. 
\end{prop}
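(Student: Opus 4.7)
The argument parallels the proof of Proposition~\ref{p:time-reg}, with polynomially weighted Besov spaces replacing the flat $M$-periodic ones. Starting from the mild formulation $Y_t = \int_0^t e^{(t-u)\Delta}\Psi_u\,\d u$ with $\Psi_u := \Psi(Y_u,\un Z_u)$, and using the semigroup property, I write for $0 \le s \le t \le T$
\begin{equation*}
Y_t - Y_s = (e^{(t-s)\Delta} - 1)Y_s + \int_s^t e^{(t-u)\Delta}\Psi_u\,\d u,
\end{equation*}
and my task is to control each term in $\hL^p$.

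The first step is to bound $\|\Psi_u\|_{\hB^{-\alpha,\sigma}_{p,\infty}}$. Applying the polynomial-weight analogues of Corollaries~\ref{c:multipl1} and~\ref{c:multipl2} term by term in \eqref{e:def:Psi:full}, and redistributing the weight exponent $\sigma$ unevenly among the factors in the spirit of Remarks~\ref{r:multipl1} and \ref{r:multipl2}, the hypothesis $Y \in \mcl C([0,T], \hB^{\beta,\sigma}_{3p})$ combined with $\un Z \in \ZZt$ places each of $Y^3$, $Y^2 Z$, $Y\,Z^{(2)}$ and $Z^{(3)}$ into the target space $\hB^{\cdot,\sigma}_{p,\infty}$ with exactly matching integrability: this is why the indices $3p, 2p, p$ appear in the definition \eqref{e:def:Z-norm-full}. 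Tracking the time singularities of $\un Z$ as in the estimate \eqref{e:estpsi-torus} of the torus case, this yields $\|\Psi_u\|_{\hB^{-\alpha,\sigma}_{p,\infty}} \lesssim u^{-2\alpha'}$.

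The second step turns the $\Psi$ bound and the Besov regularity of $Y_s$ into Hölder regularity in a weighted Besov space with small positive index. Fix $\kappa < \beta/2$ and choose $\alpha_0 > 0$ small enough that $(\alpha+\alpha_0)/2 + 2\alpha' < 1$ and $(\beta-\alpha_0)/2 > \kappa$, which is possible since $\alpha, \alpha' < 1/6$ and \eqref{e:conds-p-full} ensures $\beta/2 > 3\alpha'$. Proposition~\ref{p:time-reg-flow} applied to $(e^{(t-s)\Delta}-1)Y_s$ produces a factor $(t-s)^{(\beta-\alpha_0)/2}$, and Proposition~\ref{p:smooth-besov} applied inside the Duhamel integral gives
\begin{equation*}
\Big\|\int_s^t e^{(t-u)\Delta}\Psi_u\,\d u\Big\|_{\hB^{\alpha_0,\sigma}_{p,\infty}} \lesssim \int_s^t (t-u)^{-(\alpha+\alpha_0)/2}\,u^{-2\alpha'}\,\d u \lesssim (t-s)^{1-(\alpha+\alpha_0)/2 - 2\alpha'},
\end{equation*}
and both exponents exceed $\kappa$ by the choice of $\alpha_0$.

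The final step is to descend to $\hL^p$. Since $\sigma > d$, the chain of embeddings $\hB^{\alpha_0,\sigma}_{3p,\infty} \hookrightarrow \hB^{\alpha_0,\sigma}_{p,\infty}$ (from the polynomial analogue \eqref{e:r2-poly} of \eqref{e:r2}), $\hB^{\alpha_0,\sigma}_{p,\infty} \hookrightarrow \hB^{\alpha_0/2,\sigma}_{p,1}$ (Remark~\ref{r:embed-q}) and $\hB^{\alpha_0/2,\sigma}_{p,1} \hookrightarrow \hL^p$ (Remark~\ref{r:embed-lp}) combine with the two displays above to yield $\|Y_t - Y_s\|_{\hL^p} \lesssim (t-s)^\kappa$. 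The principal technical obstacle is the bookkeeping in the multiplicative inequalities: the factors $Y, Z, Z^{(2)}, Z^{(3)}$ are controlled in spaces with the \emph{same} polynomial weight exponent $\sigma$ but different integrability indices $3p, 3p, 2p, p$, so $\sigma$ must be split unevenly among the factors in each product estimate so that everything lands in $\hB^{-\alpha,\sigma}_{p,\infty}$. Once this bookkeeping is carried out, the remaining steps transpose the torus argument essentially verbatim.
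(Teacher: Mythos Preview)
Your proof is correct and follows essentially the same route as the paper, which packages the $\Psi$ estimate as Lemma~\ref{l:estimpsi} and then argues exactly as you do via Propositions~\ref{p:smooth-besov} and~\ref{p:time-reg-flow}. One minor point: your worry about having to split the weight exponent $\sigma$ unevenly is unnecessary---in the polynomial-weight multiplicative inequalities the exponent $\sigma$ stays the same on every factor (just as $\mu$ does in the stretched-exponential case), and the mismatch in integrability indices is handled instead by the embedding \eqref{e:r2-poly}, e.g.\ $\|Y_s\|_{\hB^{\td\al,\si}_{2p}} \lesssim \|Y_s\|_{\hB^{\td\al,\si}_{3p}}$ since $\sigma > d$; this is how Lemma~\ref{l:estimpsi} proceeds.
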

The proof makes use of the following simple consequence of the multiplicative inequalities, which will be used again later on.
\begin{lem}[Estimating $\Psi$]
\label{l:estimpsi}
Let $\td{\al} > \al$. There exists $C < \infty$ such that
\begin{multline*}
C^{-1} \  \|\Psi(Y_s,\un{Z}_s)\|_{\hB^{-{\al},\si}_{p}} \le \|Y_s\|^3_{\hL^{3p}} + \|Y_s\|^2_{\hB^{\td{\al},\si}_{3p}} \, \|Z_s\|_{\hB^{-\al,\si}_{3p}} \\
+ \|Y_s\|_{\hB^{\td{\al},\si}_{3p}} \, \|\Zdd_s\|_{\hB^{-\al,\si}_{2p}} +  \|\Ztt_s\|_{\hB^{-\al,\si}_{p}} + \|Y_s\|_{\hL^{3p}} + \|Z_s\|_{\hB^{-\al,\si}_{3p}}.
\end{multline*}
\end{lem}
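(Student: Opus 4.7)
The plan is to decompose $\Psi(Y_s,\un Z_s)$ into the five summands from \eqref{e:def:Psi:full} and estimate each in $\hB^{-\al,\si}_p$ separately, absorbing the constant $a$ into $C$. Throughout I use the polynomial-weight versions of the inequalities of Section~\ref{s:besov}, as justified in Section~\ref{s:DifferentWeights}.

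For the cubic term $-Y_s^3$, I first apply Hölder's inequality to the triple product with weight $\wh w_\si$, distributing the weight as $\wh w_\si = \wh w_\si^{1/3}\cdot\wh w_\si^{1/3}\cdot\wh w_\si^{1/3}$ and using the Hölder exponents $3,3,3$, to obtain $\|Y_s^3\|_{\hL^p}\le \|Y_s\|_{\hL^{3p}}^3$. Then I chain the continuous embeddings $\hL^p\subset \hB^{0,\si}_{p,\infty}\subset \hB^{-\al,\si}_p$ (the first being the polynomial-weight version of Remark~\ref{r:lp-embed}, the second from Remark~\ref{r:besov-mu}) to reach the target norm, giving the $\|Y_s\|^3_{\hL^{3p}}$ contribution.

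For the mixed terms $Y_s^2 Z_s$ and $Y_s Z_s^{(2)}$, I apply Corollary~\ref{c:multipl2} (the multiplicative inequality for products of a positive- and a negative-regularity distribution), which is applicable precisely because $\td\al-\al>0$. For $Y_s^2 Z_s$, I split integrabilities via $\tfrac{1}{p}=\tfrac{2}{3p}+\tfrac{1}{3p}$ to get
$$
\|Y_s^2 Z_s\|_{\hB^{-\al,\si}_p} \lesssim \|Y_s^2\|_{\hB^{\td\al,\si}_{3p/2}} \, \|Z_s\|_{\hB^{-\al,\si}_{3p}},
$$
and then bound $\|Y_s^2\|_{\hB^{\td\al,\si}_{3p/2}}\lesssim \|Y_s\|^2_{\hB^{\td\al,\si}_{3p}}$ via Corollary~\ref{c:multipl1} (with $\nu=1/2$). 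For $Y_s Z_s^{(2)}$, splitting via $\tfrac{1}{p}=\tfrac{1}{2p}+\tfrac{1}{2p}$ gives
$$
\|Y_s Z_s^{(2)}\|_{\hB^{-\al,\si}_p} \lesssim \|Y_s\|_{\hB^{\td\al,\si}_{2p}} \, \|Z_s^{(2)}\|_{\hB^{-\al,\si}_{2p}},
$$
and I then invoke the embedding \eqref{e:r2-poly} with $\si>d=2$ to replace $\hB^{\td\al,\si}_{2p}$ by $\hB^{\td\al,\si}_{3p}$ on the right-hand side. The term $-Z_s^{(3)}$ needs no work, and the linear terms $aY_s$ and $aZ_s$ reduce to the last two contributions by the embeddings $\hL^{3p}\subset\hL^p\subset\hB^{0,\si}_{p,\infty}\subset\hB^{-\al,\si}_p$ (again using $\si>d$ for the first) and $\hB^{-\al,\si}_{3p}\subset\hB^{-\al,\si}_p$.

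The only subtlety — and the main point where one must be careful rather than computationally hard — is the bookkeeping of weights and integrability exponents: polynomial weights do not redistribute between factors via $\nu,1-\nu$ as stretched exponential weights do, so one must apply the Corollary~\ref{c:multipl2}-type inequalities with the \emph{same} exponent $\si$ on both factors and on the product. The resulting loss of integrability (from $3p$ down to $p$) is absorbed at the final step using \eqref{e:r2-poly}, which is available precisely because $\si>2$.
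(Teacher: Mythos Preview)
Your proof is correct and follows essentially the same route as the paper: decompose $\Psi$ term by term, use Remark~\ref{r:lp-embed} for $Y_s^3$, apply Corollary~\ref{c:multipl2} then Corollary~\ref{c:multipl1} for the mixed terms, and finish the $Y_s Z_s^{(2)}$ estimate with the embedding \eqref{e:r2-poly}. The only cosmetic difference is that you frame $\|Y_s^3\|_{\hL^p}=\|Y_s\|_{\hL^{3p}}^3$ as a weighted H\"older step, whereas it is in fact an identity (no inequality or weight-splitting is needed there).
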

\begin{proof}
Recalling the decomposition of $\Psi$ in \eqref{e:def:Psi:full}, we begin by observing that, due to Remark~\ref{r:lp-embed},
\begin{equation*}
\|Y_s^3\|_{\hB^{-{{\al}},\si}_{p}} \lesssim \|Y_s^3\|_{\hL^{p}} = \|Y_s\|^3_{\hL^{3p}}.
\end{equation*}
We proceed to estimate
\begin{align*}
\|Y_s^2 Z_s\|_{\hB^{-\al,\si}_{p}} & \lesssim \|Y_s^2\|_{\hB^{\td{\al},\si}_{\frac{3p}{2}}} \, \| Z_s\|_{\hB^{-\al,\si}_{3p}} \\
& \lesssim \|Y_s\|^2_{\hB^{\td{\al},\si}_{3p}} \,\| Z_s\|_{\hB^{-{{\al}},\si}_{3p}},
\end{align*}
where we used Corollary~\ref{c:multipl2} in the first step and Corollary~\ref{c:multipl1} in the second one. Similarly,
\begin{align*}
\|Y_s \Zdd_s\|_{\hB^{-{\al},\si}_{p}} & \lesssim \|Y_s\|_{\hB^{\td \al,\si}_{2p}} \, \| \Zdd_s\|_{\hB^{-{\al},\si}_{2p}} \\
& \lesssim   \|Y_s\|_{\hB^{{\al},\si}_{3p}} \,\| \Zdd_s\|_{\hB^{-{\td{\al}},\si}_{2p}},
\end{align*}
where we used Remark~\ref{r:besov-mu} [or \eqref{e:r2-poly}] in the last step. For the same reasons,
$$
\| \Ztt_s + Y_s + Z_s \|_{\hB^{-{\al},\si}_{p}} \lesssim  \|\Ztt_s\|_{\hB^{{-\al},\si}_{p}} + \|Y_s\|_{\hL^{3p}} + \|Z_s\|_{\hB^{-\al,\si}_{3p}} .
$$
\end{proof}
\begin{proof}[Proof of Proposition~\ref{p:time-reg-full}]
As before, we only discuss Hölder regularity at time $0$. 
By Lemma~\ref{l:estimpsi} and since $\un{Z} \in \ZZt$ and $Y \in \mcl{C}([0,T],\hB^{\be,\si}_{3p})$ (with $\be \ge 1$), we have
\begin{equation}
\label{e:estimPsi}
\|\Psi_s\|_{\hB^{-{\al},\si}_{p}} = \|\Psi(Y_s,\un{Z}_s)\|_{\hB^{-{\al},\si}_{p}}   \lesssim s^{-2\al'}
\end{equation}
(where the implicit constant depends on $Y$ and $\un Z$). Hence, by Proposition~\ref{p:smooth-besov},
\begin{eqnarray*}
\Ll\| \int_0^t e^{(t-s) \Delta} \Psi_s \, \d s \Rr\|_{\hB^{\al,\si}_{p}} & \lesssim & \int_0^t (t-s)^{-\al} \|\Psi_s \|_{\hB^{-\al,\si}_{p}} \, \d s \\
& \lesssim & \int_0^t (t-s)^{-\al} s^{-2\al'} \, \d s \\
& \lesssim & t^{1-3\al'},
\end{eqnarray*}
since $\al' > \al$.
The conclusion then follows by Remark~\ref{r:embed-lp} since $1-3\al' > \be/2$ by \eqref{e:conds-p-full}.
\end{proof}
\begin{rem}
As the proof reveals, the index of Hölder regularity in Proposition~\ref{p:time-reg-full} could be improved (for instance, one can choose $\ka = \be/2$). We prefer to stick to the present version, because it corresponds to the general statement with initial condition $Y_0 \in \hB_{3p}^{\be,\si}$ (compare with Proposition~\ref{p:time-reg}).
\end{rem}
The fact that mild solutions are weak solutions remains valid in the present context (the proof being the same as that of the first step of Proposition~\ref{p:weak1}). We denote by $(\, \cdot \, , \, \cdot \,)$ the scalar product in the unweighted $L^2$ space. 
\begin{prop}[A mild solution is a weak solution]
	If $\un{Z} \in \ZZt$ and $Y \in \wh {S}_{3p}^{T,\si}(\un{Z})$, then for every $\phi \in C^\infty_c$ and $t \le T$,
\begin{equation}
\label{e:weak-eq-full}
	( Y_t,\phi ) = \int_0^t \Ll[ (Y_s,\Delta \phi )  + ( \Psi(Y_s,\un{Z}_s),\phi ) \Rr] \, \d s.
\end{equation}
	\label{p:weak-full}
\end{prop}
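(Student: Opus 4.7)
The plan is to follow verbatim the first step of the proof of Proposition~\ref{p:weak1} (the torus version), since the identity \eqref{e:weak-eq-full} is of the same form, only with the initial condition $Y_0 = 0$ and the unweighted $L^2$ pairing against $\phi \in C^\infty_c$. Starting from the mild formulation, I would compute
\begin{equation*}
\int_0^t (Y_s, \Delta \phi)\, \d s = \int_0^t \Ll( \int_0^s e^{(s-u)\Delta} \Psi_u \, \d u,\, \Delta \phi \Rr) \, \d s,
\end{equation*}
interchange the two time integrals via Fubini, and use the identity $\int_u^t \Delta e^{(s-u)\Delta} \, \d s = e^{(t-u)\Delta} - \mathrm{Id}$ (obtained by self-adjointness of the heat semigroup and recognising $\Delta e^{(s-u)\Delta} = \dr_s e^{(s-u)\Delta}$) to rewrite the right-hand side as
\begin{equation*}
\int_0^t \Ll( \Psi_u,\, (e^{(t-u)\Delta}-\mathrm{Id})\phi \Rr) \, \d u = (Y_t,\phi) - \int_0^t (\Psi_u,\phi)\, \d u,
\end{equation*}
which is exactly \eqref{e:weak-eq-full}.

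The only non-routine issue is the legitimacy of the unweighted pairings and of Fubini's theorem. For every $s > 0$, Lemma~\ref{l:estimpsi} and the assumptions $Y \in \mcl C([0,T], \hB^{\be,\si}_{3p})$, $\un{Z} \in \ZZt$ give the bound \eqref{e:estimPsi}, namely $\|\Psi_s\|_{\hB^{-\al,\si}_p} \lesssim s^{-2\al'}$, which is integrable on $[0,t]$ because $2\al' < 1$ by \eqref{e:conds-p-full}. Since $\phi$ is compactly supported, $\phi$ and the smooth function $e^{(t-u)\Delta}\phi$ (for any $u < t$) pair well against distributions in $\hB^{-\al,\si}_p$: one rewrites $(\Psi_u, \psi) = (\Psi_u, \psi\,|\cdot|_*^\si)_{\hat w_\si}$, and since $\psi \,|\cdot|_*^\si \in C^\infty$ is Schwartz after heat smoothing (or compactly supported in the case $u = t$), it lies in $\hB^{\al,\si}_{p',1}$ with explicit norm control via Proposition~\ref{p:smooth-besov}. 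The polynomial-weight version of Proposition~\ref{p:dual} therefore applies and yields an integrable majorant in $u$, justifying Fubini.

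The main (and only) delicate point is verifying that this duality bound is uniform enough in $u$ to pass the heat semigroup through the pairing; this is where Proposition~\ref{p:smooth-besov} and Remark~\ref{r:continuity}, which guarantee $\|e^{(t-u)\Delta}\phi\|_{\hB^{\al,\si}_{p',1}} \lesssim \|\phi\|_{\hB^{\al,\si}_{p',1}}$ uniformly in $u \in [0,t]$, are essential. Given these uniform estimates, the calculation above is fully rigorous and concludes the proof.
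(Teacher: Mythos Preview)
Your proposal is correct and follows exactly the approach the paper intends: the paper itself proves Proposition~\ref{p:weak-full} by simply referring back to Step~1 of the proof of Proposition~\ref{p:weak1}, and your argument reproduces that step verbatim (with the harmless simplification $Y_0 = 0$). The extra care you take in justifying the unweighted pairings via the weighted duality Proposition~\ref{p:dual} and the uniform-in-$u$ heat-flow bounds is appropriate and does not deviate from the paper's route.
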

This can be upgraded to the following statement. We denote by $(\, \cdot \,, \, \cdot \,)_{\si}$ the scalar product in $\hL^2$, which can be extended to distributions outside of $\hL^2$ through Proposition~\ref{p:dual}. 
\begin{prop}[Weights and more general test functions]
\label{p:weak-f}
There exists $C$ such that the following holds. For every $\un{Z} \in \ZZt$,  $Y \in \wh S^{T,\si}_{3p}(\un{Z})$ and $\phi \in \hB^{1,\si}_{p'}$ (with $p'$ the conjugate exponent of $p$), 
\begin{equation}
\label{e:weak-eq-full-form}
	( Y_t,\phi )_{\si} = \int_0^t \Ll[- (\na Y_s,\na \phi )_{\si}  + ( \Psi(Y_s,\un{Z}_s),\phi )_{\si} \Rr] \, \d s + \msf{Err}(t),
\end{equation}
where the error term satisfies
\begin{equation}
\label{e:boundErr}
\Ll| \msf{Err}(t) \Rr| \le C  \int_0^t (|\na Y_s|, |\phi|)_{\si} \, \d s.
\end{equation}
\end{prop}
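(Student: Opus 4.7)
The key observation is that the polynomial weight $\wh w_\si(x) = |x|_*^{-\si} = (1+|x|^2)^{-\si/2}$ is smooth and strictly positive on $\R^2$, so for any $\phi \in C^\infty_c$ the product $\phi\, \wh w_\si$ is again in $C^\infty_c$. I would first establish the identity \eqref{e:weak-eq-full-form} for such $\phi$ and then extend to $\phi \in \hB^{1,\si}_{p'}$ by density.

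For $\phi \in C^\infty_c$, applying Proposition~\ref{p:weak-full} with test function $\psi := \phi\, \wh w_\si \in C^\infty_c$ yields
\begin{equation*}
(Y_t, \phi)_\si \;=\; \int_0^t \bigl[\, (Y_s, \Delta(\phi\, \wh w_\si)) + (\Psi_s, \phi)_\si\,\bigr]\, ds.
\end{equation*}
Since $\be > 1$, Proposition~\ref{p:derivatives} together with Remarks~\ref{r:embed-q} and \ref{r:embed-lp} imply that $\na Y_s \in \hL^{3p}$ is a genuine function; hence the distributional identity $(Y_s, \Delta\psi) = -(\na Y_s, \na \psi)$, valid for any $\psi \in C^\infty_c$, is a bona fide pointwise integration by parts. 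Expanding $\na(\phi\, \wh w_\si) = (\na \phi)\, \wh w_\si + \phi\, \na \wh w_\si$ gives
\begin{equation*}
(Y_s, \Delta(\phi\, \wh w_\si)) \;=\; -(\na Y_s, \na \phi)_\si \;-\; \int \phi\, \na Y_s \cdot \na \wh w_\si\, dx,
\end{equation*}
so \eqref{e:weak-eq-full-form} holds on $C^\infty_c$ with $\msf{Err}(t) = -\int_0^t\!\int \phi\, \na Y_s \cdot \na \wh w_\si\, dx\, ds$. A direct computation gives $|\na \wh w_\si(x)| = \si |x|(1+|x|^2)^{-\si/2 - 1} \le \si\, \wh w_\si(x)$, which immediately produces $|\msf{Err}(t)| \le \si \int_0^t (|\na Y_s|, |\phi|)_\si\, ds$, as claimed in \eqref{e:boundErr}.

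To extend to general $\phi \in \hB^{1,\si}_{p'}$, I would invoke density of $C^\infty_c$ and check continuity of each term in the $\hB^{1,\si}_{p'}$-norm. For $(Y_t, \phi)_\si$ and $(\Psi_s, \phi)_\si$, the polynomial-weight analogue of Proposition~\ref{p:dual} applies, using $Y_t \in \hB^{\be,\si}_{3p} \hookrightarrow \hB^{\be,\si}_p$ (via \eqref{e:r2-poly}) and $\Psi_s \in \hB^{-\al,\si}_p$ (via Lemma~\ref{l:estimpsi} and \eqref{e:estimPsi}), for suitably small $\al \in (0,1)$. For $(\na Y_s, \na \phi)_\si$ and the pointwise bound $(|\na Y_s|, |\phi|)_\si$, both factors are honest functions in weighted $L^p$ and $L^{p'}$ respectively (by the embeddings above and Remark~\ref{r:embed-lp} applied to $\na \phi \in \hB^{0,\si}_{p'}$), so weighted Hölder suffices. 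Time integrability of the resulting bounds on $[0,t]$ is ensured by \eqref{e:estimPsi} (using $3\al' < 1$) together with the uniform-in-$s$ control of $\|Y_s\|_{\hL^p}$ and $\|\na Y_s\|_{\hL^p}$. The main obstacle is making the integration by parts rigorous when $Y_s$ is a priori only a Besov distribution; this is resolved by the gain $\be > 1$, which promotes $\na Y_s$ to an $\hL^{3p}$ function and renders all pointwise products with the smooth weight $\wh w_\si$ unambiguous.
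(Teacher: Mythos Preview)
Your approach is essentially identical to the paper's: apply Proposition~\ref{p:weak-full} with the test function $\phi\,\wh w_\si$, expand $\nabla(\phi\,\wh w_\si)$, bound the cross term using $|\nabla \wh w_\si| \le \si\, \wh w_\si$ (the paper phrases this as boundedness of the function $G = \si\,\nabla(|\cdot|_*)/|\cdot|_*$), and then pass to the limit by density.

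There is one small inaccuracy in your density step. You claim that $\na\phi \in \hL^{p'}$ via Remark~\ref{r:embed-lp} applied to $\na\phi \in \hB^{0,\si}_{p'}$. But with the paper's shorthand $\hB^{\al,\si}_p := \hB^{\al,\si}_{p,\infty}$, this means $\na\phi \in \hB^{0,\si}_{p',\infty}$, whereas Remark~\ref{r:embed-lp} only gives the embedding $\hB^{0,\si}_{p',1} \hookrightarrow \hL^{p'}$; the $q=\infty$ version is false in general. The paper avoids this by not reducing to weighted H\"older at all for the gradient pairing: instead it uses Besov duality (Proposition~\ref{p:dual}) directly, estimating
\[
\bigl|(\na Y_s,\na\phi)_\si\bigr| \;\lesssim\; \|\na Y_s\|_{\hB^{\be-1,\si}_{3p}}\,\|\na\phi\|_{\hB^{1-\be,\si}_{p'',1}}
\;\lesssim\; \|\phi\|_{\hB^{2-\be,\si}_{p'',1}},
\]
with $p''$ the conjugate of $3p$. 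Since $2-\be<1$ and $p''\le p'$, Remarks~\ref{r:besov-mu} and~\ref{r:embed-q} then control this by $\|\phi\|_{\hB^{1,\si}_{p'}}$. Your argument for the error term $(|\na Y_s|,|\phi|)_\si$ is fine as stated, since there you only need $\phi\in\hL^{p'}$, which does follow from $\phi\in\hB^{1,\si}_{p',\infty}$ via Remark~\ref{r:embed-q} (to reach $\hB^{0,\si}_{p',1}$) and then Remark~\ref{r:embed-lp}.
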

\begin{proof}
We first argue that \eqref{e:weak-eq-full-form} holds for smooth, compactly supported $\phi$, and then conclude by density.

\medskip 

\noindent \emph{Step 1.} For any $\phi \in C^\infty_c$,
$$
(Y_s,\Delta \phi) = - (\na Y_s,\na \phi )
$$ 
in the sense of distributions. In fact, since $Y \in \mcl{C}([0,T],\hB^{\be,\si}_{3p})$ with $\be > 1$, in particular $\na Y_s$ belongs to $\hL^{3p}$ (in the sense that each coordinate of $\na Y$ belongs to this space, see Proposition~\ref{p:derivatives}) and the right-hand side above can be interpreted as a space integral. We apply Proposition~\ref{p:weak-full} with $\phi \wh w_\si$ as a test function [recall that $\wh w_\si$ was defined in \eqref{e:def:wh-w}]. We observe that the gradient of this function is
$$
(\na \phi) \wh w_\si -\underbrace{\frac{\si  \na(|\cdot|_*)}{|\cdot|_*}}_{=:G} \phi \wh w_\si.
$$
We have established \eqref{e:weak-eq-full-form} for $\phi \in C^\infty_c$ and with
\begin{equation}
\label{e:defErr}
\msf{Err}(t) =  \int_0^t (\na Y_s, G\phi)_{\si} \, \d s.
\end{equation}

\medskip 

\noindent \emph{Step 2.} We now conclude by density.
In view of Proposition~\ref{p:derivatives}, we have $\na Y \in \mcl{C}([0,T],\hB^{\be-1,\si}_{3p})$ (in the sense that each coordinate of $\na Y$ belongs to this space). Hence, for smooth, compactly supported $\phi$ and $\td{\phi}$, 
$$
\int_0^t \Ll| (\na Y_s, \na \phi)_{\si} - (\na Y_s, \na \td{\phi})_{\si} \Rr| \, \d s 
 \lesssim \|\na \phi - \na \td{\phi}\|_{\hB^{1-\be,\si}_{p'',1}} \lesssim \|\phi - \td{\phi}\|_{\hB^{2-\be,\si}_{p'',1}}
$$
by Propositions~\ref{p:dual} and \ref{p:derivatives}, where $p''$ is the conjugate exponent of $3p$. 
Similarly, we infer from \eqref{e:estimPsi} that
$$
\int_0^t \Ll| (\Psi_s,  \phi)_{\si} - (\Psi_s,\td{\phi})_{\si} \Rr| \, \d s \lesssim \|\phi - \td{\phi}\|_{\hB^{\al,\si}_{p',1}}.
$$
The other terms can be treated similarly (using also the fact that $G$ is uniformly bounded). Since $\al < 1$, $2-\be < 1$ and $p'' \le p'$, we can use Remark~\ref{r:besov-mu} [cf.\ also \eqref{e:r2-poly}] and obtain by density that for every $\phi \in \hB^{1,\si}_{p'}$, 
$$
	( Y_t,\phi )_{\si} = \int_0^t \Ll[- (\na Y_s,\na \phi )_{\si}  + ( \Psi_s,\phi )_{\si} \Rr] \, \d s + \msf{Err}(t)
$$
with $\msf{Err}(t)$ given by \eqref{e:defErr}. The bound \eqref{e:boundErr} follows from the fact that $G$ is bounded.
\end{proof}
\begin{prop}[Testing against $Y_t^{p-1}$]
	Recall that we assume \eqref{e:conds-p-full}. There exists $C < \infty$ such that if $\un{Z} \in \ZZt$ and $Y \in \wh{S}_{3p}^{T,\si}(\un{Z})$, then
	\label{p:test-full}
	$$
	\frac{1}{p}\|Y_t\|^p_{\hL^p}  = \int_0^t \Ll[ -(p-1)( \na Y_s, Y_s^{p-2}\na Y_s)_{\si} + ( \Psi(Y_s,\un{Z}_s),Y_s^{p-1} )_{\si} \Rr] \, \d s + \msf{Err}_p(t),
$$
with 
$$
\Ll|\msf{Err}_p(t) \Rr| \le C \int_0^t (|\na Y_s|,|Y_s|^{p-1})_{\si} \, \d s.
$$
\end{prop}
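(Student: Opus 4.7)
My plan is to follow the strategy of Proposition~\ref{p:test}, replacing the weak formulation Proposition~\ref{p:weak1} by its weighted analogue, Proposition~\ref{p:weak-f}. The only genuinely new feature is the correction term generated by the weight in the integration-by-parts: this is precisely what will furnish $\msf{Err}_p(t)$ after telescoping along a shrinking subdivision and passing to the limit.

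The first task is to verify that for each $u \in [0,T]$, the function $\phi := Y_u^{p-1}$ is an admissible test function, namely $\phi \in \hB^{1,\si}_{p'}$ (where $p'$ is the conjugate exponent of $p$). Since $Y_u \in \hB^{\be,\si}_{3p}$ with $\be > 1$, iterated application of Corollary~\ref{c:multipl1} (see Remark~\ref{r:multipl1}) places $Y_u^{p-1}$ in $\hB^{\be,\si}_{3p/(p-1)}$; the embedding \eqref{e:r2-poly} for polynomial weights (using $3p/(p-1) \ge p'$ and $\si > 2$), combined with Remark~\ref{r:besov-mu}, then yields $Y_u^{p-1} \in \hB^{1,\si}_{p'}$, as required. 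Next, for any subdivision $0 = t_0 \le \cdots \le t_n = t$, applying Proposition~\ref{p:weak-f} with $\phi = Y_{t_i}^{p-1}$ on each interval $[t_i,t_{i+1}]$ and telescoping (using $Y_0 = 0$) produces
$$
\|Y_t\|^p_{\hL^p} = \mfk{S}(\un t) + \mfk{I}(\un t) + \mfk{E}(\un t),
$$
where
\begin{align*}
\mfk{S}(\un t) &:= \sum_i \bigl(Y_{t_{i+1}}, Y_{t_{i+1}}^{p-1} - Y_{t_i}^{p-1}\bigr)_\si, \\
\mfk{I}(\un t) &:= \sum_i \int_{t_i}^{t_{i+1}} \bigl[-(p-1)\bigl(\na Y_s, Y_{t_i}^{p-2}\na Y_{t_i}\bigr)_\si + \bigl(\Psi_s, Y_{t_i}^{p-1}\bigr)_\si\bigr] \, ds,
\end{align*}
and $\mfk{E}(\un t)$ is the sum of the error terms from Proposition~\ref{p:weak-f}, which by \eqref{e:boundErr} satisfies
$$
|\mfk{E}(\un t)| \le C \sum_i \int_{t_i}^{t_{i+1}} \bigl(|\na Y_s|, |Y_{t_i}|^{p-1}\bigr)_\si \, ds.
$$

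The convergences $\mfk{S}(\un t) \to \frac{p-1}{p}\|Y_t\|^p_{\hL^p}$ and $\mfk{I}(\un t) \to \int_0^t[\cdots] \, ds$ as the mesh size tends to zero are obtained by transcribing the Young-integral argument of Proposition~\ref{p:test}, with $\lan \cdot,\cdot \ran$ replaced by $(\cdot,\cdot)_\si$; the Hölder estimates needed for the telescoping of powers go through verbatim against the weight. The key input is that $Y$ is Hölder continuous into $\hL^p$ with exponent $\ka > 1/2$, which follows from Proposition~\ref{p:time-reg-full} together with \eqref{e:conds-p-full} (which ensures $\be/2 > 1/2$). This, combined with uniform boundedness of $\|Y_s\|_{\hL^{3p}}$ and $\|\na Y_s\|_{\hL^{3p}}$ on $[0,T]$ (via Proposition~\ref{p:derivatives}) and the pointwise-in-time bound \eqref{e:estimPsi} used through the duality Proposition~\ref{p:dual}, yields convergence of the weighted Riemann sums to the corresponding integrals.

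It remains to handle $\mfk{E}(\un t)$. By Hölder's inequality applied to the weighted pairing, with $|\na Y_s| \in \hL^{3p}$ and $|Y_{t_i}|^{p-1} \in \hL^{3p/(p-1)}$, the integrand $s \mapsto (|\na Y_s|, |Y_{t_i}|^{p-1})_\si$ is uniformly bounded on $[0,T]$ (recalling $\wh w_\si$ is integrable since $\si > 2$). Using continuity of $s \mapsto Y_s$ in $\hL^p$ together with the multiplicative inequalities to propagate this continuity to $s \mapsto |Y_s|^{p-1}$, a standard dominated convergence argument gives $\mfk{E}(\un t) \to \msf{Err}_p(t)$ with $|\msf{Err}_p(t)| \le C \int_0^t (|\na Y_s|, |Y_s|^{p-1})_\si \, ds$. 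Rearranging $\|Y_t\|^p_{\hL^p} = \lim_{|\un t| \to 0} [\mfk{S}(\un t) + \mfk{I}(\un t) + \mfk{E}(\un t)]$ produces the asserted identity. I expect the main technical obstacle to be bookkeeping—keeping track of admissibility of $Y_u^{p-1}$ in every function space invoked along the way, and systematically controlling cross terms in the telescoped powers against the weight—rather than a conceptual difficulty beyond those already met in Section~\ref{s:torus}.
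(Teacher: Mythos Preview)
Your proposal is correct and follows essentially the same route as the paper: verify $Y_u^{p-1}\in\hB^{1,\si}_{p'}$, apply Proposition~\ref{p:weak-f} on each subinterval, split the telescoped identity into the cross-product sum $\mfk S$, the Riemann-type sum $\mfk I$, and the weight-error $\mfk E$, and pass to the limit via the Young-integral argument of Proposition~\ref{p:test} together with Proposition~\ref{p:time-reg-full}. One small remark: since Proposition~\ref{p:weak-f} only provides the \emph{bound} \eqref{e:boundErr} and not the explicit form of the error, you cannot pass to the limit in $\mfk E(\un t)$ directly by dominated convergence; rather, convergence of $\mfk E(\un t)$ to some $\msf{Err}_p(t)$ follows from the identity once $\mfk S(\un t)$ and $\mfk I(\un t)$ are shown to converge, and the stated bound on $\msf{Err}_p(t)$ then follows from the limit of the bounds (which is what you do show converges).
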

\begin{proof}
We first check that we can use Proposition~\ref{p:weak-f} with $\phi = Y_t^{p-1}$ for a fixed $t$, that is, we check that $Y_t^{p-1} \in \hB^{1,\si}_{p'}$, where $p'$ is the conjugate exponent of $p$. By the multiplicative inequality (Corollary~\ref{c:multipl1}),
$$
\|Y_t^{p-1}\|_{\hB^{1,\si}_{p'}} \lesssim \|Y_t\|^{p-1}_{\hB^{1,\si}_{(p-1)p'}} = \|Y_t\|^{p-1}_{\hB^{1,\si}_{p}}.
$$ 
Since $Y_t \in \hB^{\be,\si}_{p}$ and $\be > 1$, we have indeed $Y_t^{p-1} \in \hB^{1,\si}_{p'}$. As a consequence, we can proceed as in the beginning of the proof of Proposition~\ref{p:test}, i.e.\ write
$$
\|Y_v\|^p_{\hL^p} - \|Y_u\|^p_{\hL^p} - ( Y_v, Y_v^{p-1} - Y_u^{p-1} )_\si
 =  ( Y_v, Y_u^{p-1} )_\si - ( Y_u,Y_u^{p-1} )_\si,
$$
and then use Proposition~\ref{p:weak-f} to obtain that for any subdivision $\un{t} = (t_0,\ldots,t_n)$ with $0 = t_0 \le \cdots \le t_n = t$,
$$
	\|Y_t\|^p_{\hL^p} - \|Y_0\|^p_{\hL^p} - \mfk{S}(\un{t}) = \mfk{I}_1(\un{t}) + \mfk{I}_2(\un{t})  + \msf{Err}(\un{t}),
$$
where
$$
\mfk{S}(\un{t}) = \sum_{i = 0}^{n-1} ( Y_{t_{i+1}}, Y_{t_{i+1}}^{p-1} - Y_{t_{i}}^{p-1} )_\si,
$$
$$
\mfk{I}_1(\un{t}) = -\sum_{i = 0}^{n-1} \int_{t_i}^{t_{i+1}} ( \na Y_s, \na(Y_{t_i}^{p-1}) )_\si  \, \d s,
$$
$$
\mfk{I}_2(\un{t}) = \sum_{i = 0}^{n-1} \int_{t_i}^{t_{i+1}}  ( \Psi_s,Y_{t_i}^{p-1} )_\si  \, \d s
$$
and 
$$
\Ll|\msf{Err}(\un{t}) \Rr| \le C \sum_{i = 0}^{n-1} \int_{t_i}^{t_{i+1}}  (|\na Y_s|, |Y_{t_i}|^{p-1})_\si  \, \d s.
$$
By Proposition~\ref{p:derivatives}, the function $t \mapsto \na Y_t$ belongs to $\mcl{C}([0,T],\hB^{\be-1,\si}_{3p})$. Moreover, a direct adaptation of the argument at the beginning of this proof shows that $t \mapsto Y_t^{p-1}$ belongs to $\mcl{C}([0,T],\hB^{1,\si}_{p'})$, and as a consequence, the function $t \mapsto \na Y_t^{p-1}$ belongs to $\mcl{C}([0,T],\hB^{0,\si}_{p'})$. By Proposition~\ref{p:dual} (and since $\be > 1$), this suffices to ensure that as the subdivision $\un{t}$ gets finer and finer,
$$
\mfk{I}_1(\un{t}) \to -\int_0^t ( \na Y_s, \na(Y_{s}^{p-1}) )_\si \, \d s ,
$$
and similarly,
$$
\sum_{i = 0}^{n-1} \int_{t_i}^{t_{i+1}}  (|\na Y_s|, |Y_{t_i}|^{p-1})_\si  \, \d s \to \int_0^t (|\na Y_s|, |Y_{s}|^{p-1})_\si  \, \d s.
$$
Now, using \eqref{e:estimPsi} and the fact that $t \mapsto \Psi_t$ is in $\mcl{C}((0,T],\hB^{-\al,\si}_{p})$, together with the fact already seen that $t \mapsto Y_t^{p-1}$ is in $\mcl C ([0,T],\hB^{1,\si}_{p'})$, we obtain that as the subdivision gets finer and finer,
$$
\mfk{I}_2(\un{t}) \to \int_0^t  ( \Psi_s,Y_{s}^{p-1} )_\si \, \d s.
$$
There remains to check that as the subdivision $\un{t}$ gets finer and finer,
$$
\mfk{S}(\un{t}) \to \frac{p-1}{p} \Ll( 	\|Y_t\|^p_{\hL^p} - \|Y_0\|^p_{\hL^p} \Rr).
$$
The proof is the same as that of the similar statement \eqref{e:lim-mfkS} in the proof of Proposition~\ref{p:test}. We only need to verify that the function $t \mapsto Y_t$ is $\ka$-Hölder continuous as a function from $[0,T]$ to $\hL^p$, for some $\ka > 1/2$. This is guaranteed by Proposition~\ref{p:time-reg-full}.
\end{proof}
\begin{prop}[A priori estimate in $\hL^p$]
\label{p:apriori}
Recall that we assume \eqref{e:conds-p-full}. For every $K < \infty$, there exists $C < \infty$ such that if $\un{Z} \in \ZZt$ satisfies $\|\un{Z}\|_\ZZt \le K$, $Y \in \wh S^{T,\si}_{3p}(\un{Z})$ and $t \le T$, then
\begin{equation}
	\label{e:apriori}
	\|Y_t\|^p_{\hL^p} + \int_0^t \Ll\|Y_s^{p-2}|\na Y_s|^2\Rr\|_{\hL^1}  \d s  \le C.
\end{equation}
\end{prop}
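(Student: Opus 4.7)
The plan is to follow the blueprint laid down in Theorem~\ref{t:apriori-torus}, adapting it to accommodate the polynomial weight and the associated error term. Starting from the identity in Proposition~\ref{p:test-full}, we split $\Psi = -Y^3 + \Psi'$ where $\Psi' = -3Y^2 Z - 3Y\Zdd - \Ztt + a(Y+Z)$, and introduce the shorthand
\begin{equation*}
A_s := \Ll\|Y_s^{p-2}|\na Y_s|^2\Rr\|_{\hL^1}, \qquad B_s := \Ll\|Y_s^{p+2}\Rr\|_{\hL^1}.
\end{equation*}
Since $p$ is even, $((p-1)\na Y_s, Y_s^{p-2}\na Y_s)_\si = (p-1)A_s$ and $(-Y_s^3, Y_s^{p-1})_\si = -B_s$, so Proposition~\ref{p:test-full} rearranges into
\begin{equation*}
\frac{1}{p}\|Y_t\|_{\hL^p}^p + (p-1)\!\!\int_0^t\!\! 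A_s\,\d s + \!\!\int_0^t\!\! B_s\,\d s = \!\!\int_0^t\!\! (\Psi'_s, Y_s^{p-1})_\si\,\d s + \msf{Err}_p(t).
\end{equation*}

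The first task is to estimate the three critical cross terms $(Y_s^{p+1}, Z_s)_\si$, $(Y_s^{p}, \Zdd_s)_\si$ and $(Y_s^{p-1}, \Ztt_s)_\si$ exactly as in the proof of Theorem~\ref{t:apriori-torus}: Proposition~\ref{p:dual} bounds each by the $\hB^{\al,\si}_{1,1}$ norm of a power of $Y_s$ times the relevant $\hB^{-\al,\si}_{\infty,\infty}$-norm of $\un Z$ (controlled by a power of $s^{-\al'}$ through $K$). Proposition~\ref{p:estimate} (or Remark~\ref{r:estimate}) then reduces those Besov norms to expressions involving $\|Y_s^q\|_{\hL^1}$ and $\|Y_s^{q-1}\na Y_s\|_{\hL^1}$; applying Cauchy--Schwarz and Jensen gives upper bounds in terms of fractional powers of $A_s$ and $B_s$ (with implicit constants depending on $\si$). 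Repeating verbatim the computations that produced \eqref{e:first-control}, \eqref{e:control-thirda} and \eqref{e:control-third}, Young's inequality (using the strict inequality $\al'(p+2) < 1$, and in fact $<3/4$) absorbs these powers into $\tfrac{1}{10}(A_s+B_s)$ plus a term bounded by $C\,s^{-\al'(p+2)}$. The lower-order contributions from $aY$, $aZ$ and $Y$ pose no difficulty.

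The genuinely new point, compared to the periodic case, is the error term $\msf{Err}_p(t)$, which is absent there because a flat weight has no gradient. By the bound in Proposition~\ref{p:test-full}, Cauchy--Schwarz gives
\begin{equation*}
(|\na Y_s|, |Y_s|^{p-1})_\si \le \sqrt{A_s}\;\|Y_s^{p}\|_{\hL^1}^{1/2} \le \sqrt{A_s}\;B_s^{p/(2(p+2))},
\end{equation*}
where the last step uses Jensen with the integrable weight $\wh w_\si$. Another application of Young's inequality bounds this by $\tfrac{1}{10}A_s + C\,B_s^{p/(p+2)}$, and since $p/(p+2)<1$ the second piece is in turn bounded by $\tfrac{1}{10}B_s + C$.

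Putting everything together, the time-integral of $(\Psi'_s,Y_s^{p-1})_\si + \msf{Err}_p(t)$ is bounded by $\tfrac{1}{2}\int_0^t(A_s+B_s)\,\d s + C\int_0^t s^{-\al'(p+2)}\,\d s$. The first piece is absorbed into the left-hand side of the identity, while the second is finite and uniformly bounded on $[0,T]$ thanks to $\al'(p+2) < 1$, yielding \eqref{e:apriori}. The main obstacle is really just the new error contribution from the weight: once it is written as $\sqrt{A_s}\cdot\|Y_s^p\|_{\hL^1}^{1/2}$ and interpolated against $B_s$, the whole argument collapses onto the torus proof.
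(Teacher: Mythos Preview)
Your treatment of the error term $\msf{Err}_p(t)$ is correct and matches the paper. However, there is a genuine gap in your handling of the cross terms $(Y_s^{p+1},Z_s)_\si$, $(Y_s^{p},\Zdd_s)_\si$, $(Y_s^{p-1},\Ztt_s)_\si$: you claim that Proposition~\ref{p:dual} bounds each by the $\hB^{\al,\si}_{1,1}$ norm of a power of $Y_s$ times the $\hB^{-\al,\si}_{\infty,\infty}$ norm of the corresponding component of $\un Z$, and then simply repeat the torus computations. But in the full-space setting the hypothesis $\|\un Z\|_{\ZZt}\le K$ only controls $\|Z_t\|_{\hB^{-\al,\si}_{3p}}$, $\|\Zdd_t\|_{\hB^{-\al,\si}_{2p}}$, $\|\Ztt_t\|_{\hB^{-\al,\si}_{p}}$, not the $L^\infty$-based Besov norms. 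The Besov embedding (Proposition~\ref{p:embed}) does not help here: passing to $p=\infty$ in the target would force the source weight to vanish, which is useless. So the duality must be run with these finite exponents, and Proposition~\ref{p:dual} then forces $Y_s^{p+1}$ into $\hB^{\al,\si}_{p'',1}$ with $p'' = (3p)' > 1$, and similarly for the other terms. Proposition~\ref{p:estimate}, however, only estimates the $\hB^{\al,\si}_{1,1}$ norm, so you cannot invoke it directly.

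The paper identifies exactly this as the new difficulty beyond the torus case and resolves it by interpolating (Proposition~\ref{p:interpol}) between $\hB^{3/4,\si}_{1,1}$, where Proposition~\ref{p:estimate} applies, and a purely $L^q$-based Besov space where the norm reduces to a power of $B_s$. The interpolation parameters are chosen so that the resulting powers of $A_s$ and $B_s$ still sum to something strictly less than $1$, and the time singularity stays integrable thanks to $\al'(p+2)<3/4$ (this is why the sharper constant $3/4$ rather than $1$ appears in \eqref{e:conds-p-full}). Your proposal misses this interpolation step entirely; without it the argument does not close.
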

\begin{proof}
As in the proof of Theorem~\ref{t:apriori-torus}, the starting point is to decompose $\Psi(Y_s,\un{Z}_s)$ into $-Y_s^3 + \Psi'_s$, so that the identity derived in Proposition~\ref{p:test-full} becomes
\begin{multline}
\label{e:energy}
\frac{1}{p}\|Y_t\|^p_{\hL^p}  + \int_0^t \Ll[ (p-1) \Ll\|Y_s^{p-2}|\na Y_s|^2\Rr\|_{\hL^1} +  \|Y_s^{p+2}\|_{\hL^1} \Rr] \, \d s \\
\le \int_0^t \Ll[(\Psi_s',Y_s^{p-1} )_{\si} + C (|\na Y_s|,|Y_s|^{p-1})_{\si} \Rr] \, \d s,
\end{multline}
We write 
$$
A_s = \Ll\|Y_s^{p-2} \Ll| \na Y_s \Rr|^2 \Rr\|_{\hL^1}, \qquad B_s = \|Y_s^{p+2}\|_{\hL^1}.
$$
We will now show that the integrand in the \rhs\ of \eqref{e:energy} is bounded by a linear combination of terms of the form $ C \|\un{Z}\|_\ZZt^{\ga_1} \, s^{-\ga_2}   \, A_s^{\ga_3} \, B_s^{\ga_4}$. We will then summarize all the terms into Table~\ref{t}, analyse the value of the exponents $\ga_1, \ldots, \ga_4$, and conclude. The integrand in the \rhs\ of \eqref{e:energy} is a sum of two terms. We begin with the second one:
$$
(|\na Y_s|,|Y_s|^{p-1})_{\si} \le A_s^{1/2} \, \|Y_s^{p}\|^{1/2}_{\hL^1} \lesssim A_s^{1/2} \, B_s^{p/2(p+2)},
$$
by the Cauchy-Schwarz and Hölder's inequalities.
This estimate is reported on the first line of Table~\ref{t}.

We now move to the study of $(\Psi_s',Y_s^{p-1} )_{\si}$. We decompose $\Psi_s'$ into a sum of terms that we will analyse in turn: 
$$
\Psi' = -3 Y^2 Z - 3 Y \Zdd - \Ztt + a Z + a Y.
$$

$$
\Ll|(Y_s^2 Z_s,Y_s^{p-1})_{\si} \Rr|= \Ll|(Y_s^{p+1},Z_s)_{\si} \Rr| \lesssim \|Y_s^{p+1}\|_{\hB^{\al,\si}_{p'',1}} \ \|Z_s\|_{\hB^{-\al,\si}_{3p}},
$$
where we used Proposition~\ref{p:dual} for the inequality, and where $p''$ is the conjugate exponent of $3p$. On the one hand,
$$
\|Z_s\|_{\hB^{-\al,\si}_{3p}} \le \|\un{Z}\|_{\ZZt}.
$$
On the other hand, we would like to use Proposition~\ref{p:estimate} to estimate $\|Y_s^{p+1}\|_{\hB^{\al,\si}_{p'',1}}$. Compared with the proof of Theorem~\ref{t:apriori-torus}, a new difficulty appears since Proposition~\ref{p:estimate} gives an estimate of the norm in Besov spaces with lower indices equal to $1$, while we have $p'' > 1$ here. We solve this difficulty by appealing to an interpolation inequality, for which we now introduce some notation. Let 
\begin{equation}
\label{e:def:nu}
q = \frac{p+2}{p+1} \quad \text{ and } \quad \nu = \frac{p + 2}{3p} \in (0,1), 
\end{equation}
so that 
\begin{equation}
\label{e:param-interpol2}
\frac{1}{p''} = \frac{1-\nu}{1}+ \frac{\nu}{q}.
\end{equation}
Note that $1-\nu \ge 1/3$, while $\al <1/4$ since we assume $\al < \al'$ and $(p+2) \al' < 1$. Hence, there exists $\al_1 < 0$ such that
\begin{equation}
\label{e:param-interpol1}
\al = \frac{3(1-\nu)}{4} + \nu \al_1.
\end{equation}
The interpolation inequality (Proposition~\ref{p:interpol}) now reads
$$
\|Y_s^{p+1}\|_{\hB^{\al,\si}_{p'',1}} \le \|Y_s^{p+1}\|^{1-\nu}_{\hB^{3/4,\si}_{1,1}} \ \|Y_s^{p+1}\|^\nu_{\hB^{\al_1,\si}_{q,1}}.
$$
Since $\al_1 < 0$, by Remarks~\ref{r:embed-q} and~\ref{r:lp-embed},
$$
\|Y_s^{p+1}\|_{\hB^{\al_1,\si}_{q,1}} \lesssim \|Y_s^{p+1}\|_{\hL^{q}} =  B_s^{\frac{p+1}{p+2} }  ,
$$
while by Proposition~\ref{p:estimate},
$$
\|Y_s^{p+1}\|_{\hB^{3/4,\si}_{1,1}} \lesssim \|Y_s^{p} |\na Y_s| \|^{3/4}_{\hL^1} \, \|Y_s^{p+1}\|_{\hL^1}^{1/4} +  \|Y_s^{p+1}\|_{\hL^1}.
$$
By the Cauchy-Schwarz inequality,
$$
\|Y_s^{p} |\na Y_s| \|_{\hL^1} \le \sqrt{A_s B_s},
$$
while by Hölder's inequality,
$$
\|Y_s^{p+1}\|_{\hL^1} \lesssim B_s^{\frac{p+1}{p+2} }  .
$$
To sum up, we have shown that
\begin{align*}
\Ll|(Y_s^2 Z_s,Y_s^{p-1})_{\si} \Rr| & \lesssim \|\un{Z}\|_\ZZt \, B_s^{\nu \frac{p+1}{p+2}}\Ll(  A_s^{\frac38} \, B_s^{\frac38 + \frac14 \frac{p+1}{p+2}  } + B_s^{\frac{p+1}{p+2} }  \Rr)^{1-\nu} \\
& \lesssim \|\un{Z}\|_\ZZt    \, A_s^{\frac{3(1-\nu)}{8}} \, B_s^{\frac{3(1-\nu)}{8} + \frac{p+1}{p+2}\frac{1 +3\nu}{4} }  \\
& \qquad +   \|\un{Z}\|_\ZZ   \, B_s^{\frac{p+1}{p+2}}.
\end{align*}
We summarize this computation by two lines of Table~\ref{t}. 

We now turn to the evaluation of
$$
\Ll|(Y_s \Zdd_s,Y_s^{p-1})_{\si} \Rr| \lesssim \|Y_s^p\|_{\hB^{\al,\si}_{\ov p,1}} \ \|\Zdd_s\|_{\hB^{-\al,\si}_{2p}}, 
$$
where $\ov p$ denotes the conjugate exponent of $2p$. 
We first note that
$$
\|\Zdd_s\|_{\hB^{-\al,\si}_{2p}} \le \|\un{Z}\|_{\ZZt} \, s^{-\al'}.
$$
We then prepare the ground for the adequate interpolation inequality by setting
$$
\td{q} = \frac{p+2}{p} \quad \text{ and } \quad \td{\nu} = \frac{p+2}{4p},
$$
so that
$$
\frac{1}{\ov p} = \frac{1-\td{\nu}}{1} + \frac{\td{\nu}}{\td{q}}.
$$
Since $1-\td{\nu} \ge 1/2$ and $\al < 1/4$, there exists $\td{\al}_1 < 0$ such that
$$
\al = \frac{1-\td{\nu}}{2} + \td{\nu}\al_1,
$$
and the interpolation inequality is
$$
\|Y_s^{p}\|_{\hB^{\al,\si}_{\ov p,1}} \le \|Y_s^{p}\|^{1-\td{\nu}}_{\hB^{1/2,\si}_{1,1}} \ \|Y_s^{p}\|^{\td{\nu}}_{\hB^{\td{\al}_1,\si}_{\td{q},1}}.
$$
We have
$$
\|Y_s^{p}\|_{\hB^{\td{\al}_1,\si}_{\td{q}}} \lesssim B_s^{\frac{p}{p+2} }, 
$$
while for simplicity, we can now use Proposition~\ref{p:estimate} in the form given by Remark~\ref{r:estimate}:
$$
\|Y_s^{p}\|_{\hB^{1/2,\si}_{1,1}} \lesssim \|Y_s^{p-1} |\na Y_s| \|_{\hL^1} + \|Y_s^p\|_{\hL^1}.
$$
The estimation is completed by the following two observations:
$$
\|Y_s^{p-1} |\na Y_s| \|_{\hL^1} \le \sqrt{A_s \, \|Y_s^p\|_{\hL^1}},
$$
$$
\|Y_s^p\|_{\hL^1} \lesssim  \, B_s^{\frac{p}{p+2} }. 
$$
To sum up, we have shown the estimate
$$
\Ll|(Y_s \Zdd_s,Y_s^{p-1})_{\si} \Rr| \lesssim \|\un{Z}\|_{\ZZt} \, s^{-\al'} \, B_s^{\td{\nu} \frac{p}{p+2} } \Ll( A_s^{1/2} \, B_s^{\frac{p}{2(p+2)}}  +   B_s^{\frac{p}{p+2} } \Rr)^{1-\td{\nu}}, 
$$
which is summarized on the corresponding two lines of Table~\ref{t}.

The same analysis can be performed for
$$
\Ll|( \Ztt_s,Y_s^{p-1})_{\si} \Rr|,
$$
by setting 
$$
\ov{q} = \frac{p+2}{p-1}, \qquad \ov{\nu} = \frac{p+2}{3p},
$$
so that 
$$
\frac 1 {p'} = \frac{1-\ov \nu}{1} + \frac{\ov \nu}{\ov q},
$$
and proceeding as before. This leads to the estimate
$$
\Ll|( \Ztt_s,Y_s^{p-1})_{\si} \Rr| \lesssim \|\un{Z}\|_{\ZZt} \, s^{-2\al'} \, B_s^{\ov{\nu} \frac{p-1}{p+2} } \Ll(  A_s^{1/2} \, B_s^{\frac{p-1}{2(p+2)}}  +  B_s^{\frac{p-1}{p+2} } \Rr)^{1-\ov{\nu}},
$$
which we report again in Table~\ref{t}. The same argument also leads to
$$
\Ll| (Z_s, Y_s^{p-1})_{\si} \Rr| \lesssim \|\un{Z}\|_{\ZZt} \,  B_s^{\ov{\nu} \frac{p-1}{p+2} } \Ll(  A_s^{1/2} \, B_s^{\frac{p-1}{2(p+2)}}  + B_s^{\frac{p-1}{p+2} } \Rr)^{1-\ov{\nu}},
$$
whose contribution can be absorbed into that of the previous term (so we do not report it in the table). Finally, we have
$$
 (Y_s, Y_s^{p-1})_{\si} = \|Y_s^p\|_{\hL^1} \lesssim B_s^{\frac{p}{p+2}} , 
$$
and we have finished to fill the table.

{\small
\begin{table}
\centering
\renewcommand{\arraystretch}{1.5}
\begin{tabular}{cccccc}
\toprule
 & $\ga_1$ & $\ga_2$ & $\ga_3$ & $\ga_4$  & $1-\ga_3 - \ga_4$ 
\\
\midrule
$(|\na Y_s|,|Y_s|^{p-1})_{\si}$   & $0$ & $0$ & $\frac12$    & $\frac{p}{2(p+2)} $ & $\frac1{p+2}$ 
\\
$\Ll|(Y_s^2 Z_s,Y_s^{p-1})_{\si} \Rr|$  & $1$ & $0$ & $\frac{3(1-\nu)}{8}  $ & $\frac{3(1-\nu)}{8} + \frac{p+1}{p+2}\frac{1 +3\nu}{4}  $ & $> \frac{1}{4(p+2)}$
\\
	  & $1$ & $0$ & $0$ & $\frac{p+1}{p+2}$ & $  \frac1{p+2}$
\\
$\Ll|(Y_s \Zdd_s,Y_s^{p-1})_{\si} \Rr|$ & $1$ & $\al'$ & $\frac{1-\td{\nu}}{2} $ & $ \frac{p}{p+2} \frac{1+\td{\nu}}{2}      $ & $> \frac1{p+2}$ 
\\
	 & $1$ & $\al'$ & $0 $ & $ \frac{p}{p+2}   $ & $  \frac2{p+2}$  
\\
$\Ll|( \Ztt_s,Y_s^{p-1})_{\si} \Rr|$  & $1$ & $2\al'$ & $\frac{1-\ov{\nu}}{2} $ & $ \frac{p-1}{p+2} \frac{1+\ov{\nu}}{2}     $ & $> \frac{3}{2(p+2)}$ 
\\
	 & $1$ & $2\al'$ & $0 $ & $ \frac{p-1}{p+2}    $ & $  \frac3{p+2}$ 
\\
$ (Y_s, Y_s^{p-1})_{\si} $  & $0$ & $0$ & $0 $ & $ \frac{p}{p+2}   $ & $  \frac2{p+2}$   
\\
\bottomrule
\end{tabular}
\bigskip
\caption{Each term in the first column is bounded by a sum of terms of the form $C \,  \|\un{Z}\|_\ZZt^{\ga_1} \, s^{-\ga_2}   \, A_s^{\ga_3} \, B_s^{\ga_4}$ for the values of $\ga_1, \ldots, \ga_4$ displayed on the corresponding lines. Recall that $\nu$, $\td{\nu}$, $\ov{\nu} \in (0,1)$. }
\label{t}
\end{table}
}

In order to conclude the proof, we have to show how to control a term of the form $C \, \|\un{Z}\|_\ZZt^{\ga_1} \, s^{-\ga_2}   \, A_s^{\ga_3} \, B_s^{\ga_4}$ by the terms $A_s$ and $B_s$ that appear on the \lhs\ of \eqref{e:energy}. We note first that we always have $\td{\ga} := \ga_3 + \ga_4 < 1$. Moreover, Young's inequality for products ensures that
$$
A_s^{\ga_3} \, B_s^{\ga_4} \le A_s^{\td{\ga}} + B_s^{\td{\ga}}.
$$
Finally, we observe that $\sup_{x \ge 0} (-x + a x^{\td{\ga}}) \lesssim a^{\frac{1}{1-\td{\ga}}}$, and as a consequence, 
$$
-\frac{A_s}{10} + C \,  \|\un{Z}\|_\ZZt^{\ga_1} \, s^{-\ga_2}   \, A_s^{\td{\ga}} \lesssim \Ll(  \|\un{Z}\|_\ZZt^{\ga_1} \, s^{-\ga_2}\Rr)^{\frac{1}{1-\td{\ga}}},
$$
and similarly with $A_s$ replaced by $B_s$. Hence, it follows from \eqref{e:energy} that
$$
\|Y_t\|^p_{\hL^p}  + \int_0^t \Ll\|Y_s^{p-2}|\na Y_s|^2\Rr\|_{\hL^1} \, \d s \lesssim \sum \int_0^t \Ll(  \|\un{Z}\|_\ZZt^{\ga_1} \, s^{-\ga_2}\Rr)^{\frac{1}{1-\td{\ga}}} \, \d s,
$$
where the sum is over all $\ga_1, \ga_2, \td{\ga} = \ga_3 + \ga_4$ described in Table~\ref{t}. In order for the integral to be finite, we need to ensure that $\ga_2 < 1-\td{\ga}$ in all cases. This is granted by the assumption that $\al' (p+2) < 3/4$ (the critical case being the third line from the bottom in the table). 
\end{proof}
We now upgrade the $\hL^p$ estimate to an estimate on a Besov norm of the solution. We do it in two steps: in the proposition below, we derive a time-averaged estimate. 

\begin{prop}[Weak a priori estimate in Besov spaces]
\label{p:apriori-besov}
Let $p \ge 4$ be an even positive integer such that \eqref{e:conds-p-full} holds, and let $\td{\al}$ be such that
\begin{equation}
\label{e:def:tdal}
\al \le \td{\al} \le \al + \frac{1}{p}.
\end{equation}
For every $K < \infty$, there exists $C < \infty$ such that if $\un{Z} \in \ZZt$ satisfies $\|\un{Z}\|_{\ZZ} \le K$ and $Y \in \wh S^{T,\si}_{3p}(\un{Z})$, then
$$
\int_0^T \|Y_s\|^{p-1}_{\B^{\td{\al},\si}_{p/3}} \, \d s \le C.
$$
\end{prop}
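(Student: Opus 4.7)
The plan builds on the $L^p$-type bound of Proposition~\ref{p:apriori}. Since $p$ is even, the chain rule---justified near $\{Y_s = 0\}$ by the standard regularization $(Y_s^2+\varepsilon)^{p/4}$ followed by $\varepsilon \downarrow 0$---yields $Y_s^{p-2}|\nabla Y_s|^2 = (2/p)^2|\nabla(|Y_s|^{p/2})|^2$. Setting $f_s := |Y_s|^{p/2}$, Proposition~\ref{p:apriori} becomes
\[
\sup_{0 \le t \le T}\|f_t\|_{\hL^2}^2 + \int_0^T \|\nabla f_s\|_{\hL^2}^2\,ds \le C,
\]
i.e.\ $f \in L^\infty_t(\hL^2) \cap L^2_t(\hL^2 \text{ with gradient})$.

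Next I would invoke the polynomial-weight analogue of Proposition~\ref{p:estimate}, whose proof goes through verbatim (cf.\ Section~\ref{s:DifferentWeights}), applied to $f_s$ with an auxiliary exponent $\alpha_0 \in (0,1)$. Since $\sigma > d = 2$ the weight $\wh w_\sigma$ is integrable, so Cauchy--Schwarz gives $\|\cdot\|_{\hL^1} \lesssim \|\cdot\|_{\hL^2}$, leading to
\[
\|f_s\|_{\hB^{\alpha_0,\sigma}_{1,1}} \lesssim \|f_s\|_{\hL^2}^{1-\alpha_0}\|\nabla f_s\|_{\hL^2}^{\alpha_0} + \|f_s\|_{\hL^2}.
\]
Raising to the power $2/\alpha_0$ and integrating in time yields
\[
\int_0^T \|f_s\|_{\hB^{\alpha_0,\sigma}_{1,1}}^{2/\alpha_0}\,ds \le C. \qquad (\ast)
\]

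The final and most delicate step is to convert $(\ast)$ into the target bound on $Y$. The map $F(x) := |x|^{2/p}\,\mathrm{sgn}(x)$ is H\"older continuous of exponent $\gamma = 2/p$, and one expects a weighted H\"older-composition inequality of the form $\|F(u)\|_{\hB^{\alpha_0\gamma,\sigma'}_{1/\gamma,q}} \lesssim \|u\|_{\hB^{\alpha_0,\sigma''}_{1,q}}^{\gamma}$. Applied to $u = f_s$ (so $F(u_s) = Y_s$), this gives
\[
\|Y_s\|_{\hB^{2\alpha_0/p,\sigma'}_{p/2,\infty}} \lesssim \|f_s\|_{\hB^{\alpha_0,\sigma''}_{1,1}}^{2/p}.
\]
I would then choose $\alpha_0 := \td\alpha\,p/2$: this lies in $(0,1)$ because condition~\eqref{e:conds-p-full} together with $\alpha < \alpha'$ forces $\alpha < 3/(4(p+2)) < 1/p$, so the hypothesis $\td\alpha \le \alpha + 1/p$ gives $\alpha_0 \le \alpha p/2 + 1/2 < 1$. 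A final weighted embedding from $\hL^{p/2}$ to $\hL^{p/3}$ (valid since $\sigma > 2$, via Hölder in space) yields $\|Y_s\|_{\hB^{\td\alpha,\sigma}_{p/3,\infty}} \lesssim \|f_s\|_{\hB^{\alpha_0,\sigma''}_{1,1}}^{2/p}$. Raising to the $(p-1)$-th power, integrating in time, and applying H\"older in time (noting $2(p-1)/p < 2/\alpha_0$ since $\alpha_0 < 1$) produces $\int_0^T\|Y_s\|^{p-1}_{\hB^{\td\alpha,\sigma}_{p/3,\infty}}\,ds \le C$ via $(\ast)$.

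The main obstacle is the weighted H\"older-composition estimate, since Besov spaces behave delicately under non-smooth nonlinear operations. One concrete route is a direct Littlewood--Paley argument: bound $\|\delta_k Y_s\|_{\hL^{p/2}}$ block by block via the pointwise inequality $||a|^{2/p} - |b|^{2/p}| \le |a - b|^{2/p}$ (valid for all $a,b \in \R$), approximating the non-smooth composition using a sampling strategy analogous to that in the proof of Lemma~\ref{le:Kolmogorov}. The accounting of weight indices between $\sigma$, $\sigma'$, $\sigma''$ can be absorbed into the slack offered by the assumption $\sigma > 2$ and by decreasing the weight index through the embedding in Remark~\ref{r:besov-mu}/\eqref{e:r2-poly}.
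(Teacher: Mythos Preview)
Your approach is genuinely different from the paper's, and more circuitous. The paper avoids nonlinear composition entirely: it applies Proposition~\ref{p:apriori} \emph{twice}, once with the given $p$ (giving $\sup_t\|Y_t\|_{\hL^p}\lesssim 1$) and once with $p=2$ (giving $\int_0^T\|\nabla Y_s\|_{\hL^2}^2\,\d s\lesssim 1$), and then interpolates $\hB^{\td\al,\si}_{p/3}$ linearly between $\hB^{\al_0,\si}_p$ (with $\al_0<0$, controlled by $\hL^p$) and $\hB^{\al_1,\si}_1$ (with $\al_1\in(0,1)$, controlled via Remark~\ref{r:estimate} by $\|Y_s\|_{\hL^2}+\|\nabla Y_s\|_{\hL^2}$). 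The interpolation parameter is $\nu=2/(p-1)$, chosen so that $3/p=(1-\nu)/p+\nu$, and the exponent $2/\nu=p-1$ falls out directly. No composition with a non-smooth function is needed.

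Your route has two problems. First, a slip: with $f_s=|Y_s|^{p/2}\ge0$ you get $F(f_s)=|Y_s|$, not $Y_s$, and since taking absolute values can only \emph{decrease} Besov norms of positive order, this does not control the quantity you want. The fix is easy --- use the signed power $g_s=Y_s|Y_s|^{p/2-1}$ instead, so that $F(g_s)=Y_s$ and $F$ is still globally $(2/p)$-H\"older with constant $2^{1-2/p}$. Second, and more seriously, the weighted H\"older-composition estimate you need is not available anywhere in the paper, and your proposed ``sampling strategy'' is too vague to close the gap: the sampling in Lemma~\ref{le:Kolmogorov} handles random distributions with \emph{compactly supported} spectrum, whereas here you need to control $\dk(F(g_s))$ from information on $\de_j g_s$ across all scales $j$, which the pointwise H\"older inequality alone does not give block by block. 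A workable path does exist --- establish the equivalence between the Littlewood--Paley definition and a first-differences characterization of $\hB^{\al_0,\si}_{1,q}$ for $\al_0\in(0,1)$, then apply $|F(a)-F(b)|\le 2^{1-\gamma}|a-b|^\gamma$ at the level of increments --- but this is genuine additional work, whereas the paper's direct interpolation uses only tools already in place.
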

\begin{proof}
Let $\nu = 2/(p-1) \in (0,1)$. Since $p\td{\al} \le \al p + 1 < \al'(p+2) + 1 < 2$, we have $\td{\al} < \nu$. Hence, there exists $\al_0 < 0$ and $\al_1 \in (0,1)$ such that
$$
\td{\al} = (1-\nu) \al_0 + \nu \al_1.
$$
By the definition of $\nu$, we also have
$$
\frac{3}{p} = \frac{1-\nu}{p} + \frac{\nu}{1}.
$$
By the interpolation inequality (Proposition~\ref{p:interpol}),
\begin{equation}
\label{e:ap-be-1}
\|Y_s\|_{\hB^{\td{\al},\si}_{p/3}} \le \|Y_s\|^{1-\nu}_{\hB^{\al_0,\si}_{p}} \ \|Y_s\|^\nu_{\hB^{\al_1,\si}_{1}}.
\end{equation}
Since $\al_0 < 0$, Remark~\ref{r:lp-embed} and Proposition~\ref{p:apriori} ensure that 
\begin{equation}
\label{e:ap-be-2}
\|Y_s\|_{\hB^{\al_0,\si}_{p}} \lesssim \|Y_s\|_{\hL^p} \lesssim 1
\end{equation}
(where the implicit constant depends in particular on $K$). 
Moreover, by Remark~\ref{r:embed-q} and Proposition~\ref{p:estimate} (which we only use in the weaker form provided by Remark~\ref{r:estimate} here), we have
$$
\|Y_s\|_{\hB^{\al_1,\si}_{1}} \lesssim \|Y_s\|_{\hL^1} + \|\na Y_s\|_{\hL^1} \lesssim \|Y_s\|_{\hL^2} + \|\na Y_s\|_{\hL^2}.
$$
Using Proposition~\ref{p:apriori} with $p = 2$ (noting that \eqref{e:conds-p-full} is clearly satisfied for $p = 2$), we obtain that
\begin{equation}
\label{e:ap-be-3}
\int_0^t \|Y_s\|_{\hB^{\al_1,\si}_{1}}^2 \, \d s \lesssim 1.
\end{equation}
Combining \eqref{e:ap-be-1}, \eqref{e:ap-be-2} and \eqref{e:ap-be-3}, we arrive at
$$
\int_0^t \|Y_s\|^{2/\nu}_{\hB^{\td{\al},\si}_{p/3}} \, \d s \lesssim 1,
$$
which is the announced result.
\end{proof}

We now conclude with a pointwise-in-time estimate of the Besov norm of solutions.

\begin{prop}[Strong a priori estimate in Besov spaces]
Let $p \ge 10$ be an even positive integer such that \eqref{e:conds-p-full} holds, and assume furthermore that
\begin{equation}
\label{e:cond:tdbe}
\frac{p-1}{p-4} \  \frac{\al+\be}{2} < 1.
\end{equation}
For every $K < \infty$, there exists $C < \infty$ such that if $\un{Z} \in \ZZt$ satisfies $\|\un{Z}\|_\ZZt \le K$ and $Y \in \wh S^{T,\si}_{3p}(\un{Z})$, then
$$
\sup_{t \le T} \|Y_t\|_{\hB^{{\be},\si}_{p/9}} \le C. 
$$
\label{p:pointwise-in-time-apriori}
\end{prop}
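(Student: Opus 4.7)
The plan is to use the mild formulation together with the smoothing property of the heat semigroup to express $\|Y_t\|_{\hB^{\be,\si}_{p/9}}$ as a time integral of a suitable Besov norm of $\Psi_s$, estimate the latter via the multiplicative inequalities, and then apply Hölder's inequality in time to exploit the time-averaged control provided by Proposition~\ref{p:apriori-besov}.

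More concretely, by the mild equation and Proposition~\ref{p:smooth-besov} (applied in the polynomial weight setting),
\begin{equation*}
\|Y_t\|_{\hB^{\be,\si}_{p/9}}
\le \int_0^t \|e^{(t-s)\Delta} \Psi_s\|_{\hB^{\be,\si}_{p/9}}\,\d s
\lesssim \int_0^t (t-s)^{-(\be+\al)/2} \|\Psi_s\|_{\hB^{-\al,\si}_{p/9}}\,\d s.
\end{equation*}
I would then bound $\|\Psi_s\|_{\hB^{-\al,\si}_{p/9}}$ by following the template of Lemma~\ref{l:estimpsi}, with the integrability indices tuned so that the dominant nonlinear term $Y^3$ lives in $\hL^{p/9}$. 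Choosing $\td\al$ as in \eqref{e:def:tdal}, the multiplicative inequalities (Corollaries~\ref{c:multipl1} and~\ref{c:multipl2}, with weights suitably distributed as in Remarks~\ref{r:multipl1} and~\ref{r:multipl2} and using $\si>2$) yield
\begin{equation*}
\|\Psi_s\|_{\hB^{-\al,\si}_{p/9}}
\lesssim \|Y_s\|^3_{\hB^{\td\al,\si}_{p/3}}
+ \|Y_s\|^2_{\hB^{\td\al,\si}_{p/3}} \|Z_s\|_{\hB^{-\al,\si}_{3p}}
+ \|Y_s\|_{\hB^{\td\al,\si}_{p/3}} \|\Zdd_s\|_{\hB^{-\al,\si}_{2p}}
+ \|\Ztt_s\|_{\hB^{-\al,\si}_{p}}
+ \|Y_s\|_{\hL^{p/3}} + \|Z_s\|_{\hB^{-\al,\si}_{3p}}.
\end{equation*}
The stochastic data contributes at worst $s^{-2\al'}$, and since $(\be+\al)/2+2\al'<1$ (thanks to $\be<2$ and $\al,\al'$ small), the ``$Z$-only'' and lower-order terms produce integrable singularities in $s$. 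The terms involving positive powers of $Y$ are dealt with by Hölder's inequality in $s$.

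The main work is the $Y^3$ term, where I apply Hölder with the conjugate pair $((p-1)/3,(p-1)/(p-4))$:
\begin{equation*}
\int_0^t (t-s)^{-(\be+\al)/2} \|Y_s\|^3_{\hB^{\td\al,\si}_{p/3}}\,\d s
\le
\Ll(\int_0^t (t-s)^{-\frac{(\be+\al)(p-1)}{2(p-4)}}\,\d s\Rr)^{\!\frac{p-4}{p-1}}
\Ll(\int_0^t \|Y_s\|^{p-1}_{\hB^{\td\al,\si}_{p/3}}\,\d s\Rr)^{\!\frac{3}{p-1}}.
\end{equation*}
The time integral on the right is finite precisely because of hypothesis~\eqref{e:cond:tdbe}, while the second factor is controlled by Proposition~\ref{p:apriori-besov}. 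The $Y^2 Z$ and $Y\Zdd$ terms are handled analogously with Hölder exponents $((p-1)/2, (p-1)/(p-3))$ and $(p-1,(p-1)/(p-2))$ respectively, multiplied by the $s$-weights from $\un Z$; the resulting conditions on $(\be+\al)$ are strictly weaker than \eqref{e:cond:tdbe} (for $p\ge 10$), so they follow from it automatically. The linear term $\|Y_s\|_{\hL^{p/3}}$ is bounded by Proposition~\ref{p:apriori}.

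The main obstacle, apart from the sharp time-integrability condition \eqref{e:cond:tdbe} which is built into the statement, is keeping track of the integrability indices and weights through the multiplicative inequalities so that all three factors of $Y$ end up in the same space $\hB^{\td\al,\si}_{p/3}$ for which we have the Proposition~\ref{p:apriori-besov} estimate. This is where the choice $p/9$ for the target space of $Y$ is essential: one needs $3/(p/3)=9/p$ for the $Y^3$ factor, matching the integrability index of the target space, and the embedding $\hL^{p_1}\hookrightarrow\hL^{p_2}$ from \eqref{e:r2-poly} (valid because $\si>d=2$) absorbs any slack coming from the mixed terms like $Y^2 Z$, where the product initially lives in a slightly better Lebesgue space.
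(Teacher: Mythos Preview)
Your argument is correct and follows the same route as the paper's proof: mild formulation, heat-flow smoothing (Proposition~\ref{p:smooth-besov}) to reduce to $\|\Psi_s\|_{\hB^{-\al,\si}_{p/9}}$, Lemma~\ref{l:estimpsi}-type multiplicative bounds to isolate $\|Y_s\|_{\hB^{\td\al,\si}_{p/3}}^3$, and then H\"older in $s$ with exponent pair $((p-1)/3,(p-1)/(p-4))$ against Proposition~\ref{p:apriori-besov}. The only cosmetic difference is that the paper first collapses all the mixed terms into the single estimate $\|\Psi_s\|_{\hB^{-\al,\si}_{p/9}}\lesssim \|Y_s\|_{\hB^{\td\al,\si}_{p/3}}^3 + s^{-2\al'}$ (via an elementary Young inequality on products), whereas you keep the $Y^2 Z$ and $Y\Zdd$ terms separate and treat them with their own H\"older splits; both choices lead to the same conclusion.
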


\begin{proof}
Let $\td{\al} \in (\al,\al+1/p]$. Recall from Lemma~\ref{l:estimpsi} that
\begin{multline*}
 \|\Psi_s\|_{\hB^{-{\al},\si}_{p/9}}  \lesssim \|Y_s\|^3_{\hB^{\td{\al},\si}_{p/3}} + \|Y_s\|^2_{\hB^{\td{\al},\si}_{p/3}} \ \|Z_s\|_{\hB^{-\al,\si}_{p/3}} + \|Y_s\|_{\hB^{\td{\al},\si}_{p/3}} \ \|\Zdd_s\|_{\hB^{-\al,\si}_{2p/9}} \\
+ \|\Ztt_s\|_{\hB^{-\al,\si}_{p/9}} + \|Y_s\|_{\hB^{\td{\al},\si}_{p/3}} + \|Z_s\|_{\hB^{-\al,\si}_{p/3}}.
\end{multline*}
From the definition of $\|\un{Z}\|_\ZZt$ in \eqref{e:def:Z-norm-full} and Remark~\ref{r:besov-mu} [or \eqref{e:r2-poly}], it follows that
$$
\|\Psi_s\|_{\hB^{-{\al},\si}_{p/9}}  \lesssim \Ll( \|Y_s\|^3_{\hB^{\td{\al},\si}_{p/3}} + s^{-2\al'} \Rr)  \Ll( \|\un{Z}\|_{\ZZt} + 1 \Rr) ,
$$
so that
\begin{equation}
\label{e:someest}
\|\Psi_s\|_{\hB^{-{\al},\si}_{p/9}}  \lesssim \Ll( \|Y_s\|^3_{\hB^{\td{\al},\si}_{p/3}} + s^{-2\al'} \Rr)  
\end{equation}
(where the implicit constant depends in particular on $K$).
By the definition of $Y \in \wh S^{T,\si}_{3p}(\un{Z})$,
$$
Y_t = \int_0^t e^{(t-s) \Delta} \Psi_s \, \d s,
$$
so by Proposition~\ref{p:smooth-besov} and \eqref{e:someest},
\begin{align*}
\|Y_t\|_{\hB^{{\be},\si}_{p/9}} & \lesssim \int_0^t (t-s)^{-\frac{\al+\be}{2}} \Ll( \|Y_s\|^3_{\hB^{\td{\al},\si}_{p/3}} + s^{-2\al'}\Rr) \ \d s \\
& \lesssim 1 + \int_0^t (t-s)^{-\frac{\al+\be}{2}} \|Y_s\|^3_{\hB^{\td{\al},\si}_{p/3}}  \ \d s ,
\end{align*}
since $\frac{\al + \be}{2} + 2\al' < 1$ by \eqref{e:conds-p-full}. By Hölder's inequality, since we assume
\begin{equation*}
\frac{1}{1-\frac{3}{p-1}  } \ \frac{\al+\be}{2} < 1,
\end{equation*}
it follows that the remaining integral is smaller than a constant times
$$
\Ll(\int_0^t \|Y_s\|^{p-1}_{\hB^{\td{\al},\si}_{p/3}} \ \d s\Rr)^{3/(p-1)},
$$
so Proposition~\ref{p:apriori-besov} enables us to conclude.
\end{proof}

\begin{rem}
\label{r:uniformly-equi}
Informally, we started from
$$
\partial_t Y = \Delta Y + \Psi(Y, \un Z),
$$
multiplied by $Y^{p-1}$ and integrated to get an estimate on the $L^p$ norm, which we then upgraded to obtain Proposition~\ref{p:pointwise-in-time-apriori}. A similar strategy enables to find an a priori estimate on the modulus of continuity of the solutions. Indeed, for any fixed $s \in [0,T]$, we write an equation for $\td{Y}_t := Y_t - Y_s$ ($s \le t \le T$), test it against $\td{Y}_t^{p-1}$, and proceed as before. We obtain that under the assumptions of Proposition~\ref{p:pointwise-in-time-apriori}, the set
$$
\{Y \in S^{T,\si}_{3p}(\un{Z}), \ \|\un{Z}\|_\ZZt \le K \}
$$
is a family of uniformly equicontinuous functions in $\mcl C([0,T], \hB^{{\be},\si}_{p/9})$.
\end{rem}

%
%
%
%
%
%
%
\section{Construction of solutions in the plane}
\label{s:exist-full}

\begin{thm}[Existence of solutions in the plane]
Let $T > 0$, $\be < 2$, $ \al >0$ be sufficiently small, $p$ be sufficiently large, and $\si > 2$. Let $X_0 \in \hB^{-\al,\si}_{3p}$, and let $\un Z = (Z,Z^{:2:},Z^{:3:})$ be as in \eqref{e:DefZN} (that is, $Z$ is the solution of \eqref{e:eqZ}, and $Z^{:n:}$ are its Wick powers). With probability one, there exists $Y \in \mcl C([0,T],\hB^{\be,\si}_{p/9})$ solving \eqref{e:eqY}.
\label{t:exist-full}
\end{thm}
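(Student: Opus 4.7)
The plan is to obtain $Y$ as a limit of the periodic solutions constructed in Section~\ref{s:torus}, using the a priori estimates of Section~\ref{s:full1} to extract a convergent subsequence.

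First I would set up the approximating problems. For each $M \ge 1$, Lemma~\ref{l:periodisation1} produces an $M$-periodic distribution $X_{0;M}$ approximating $X_0$, and Corollary~\ref{cor:BoundsOnZ1} gives that almost surely the periodised data $\un{Z}_M = (Z_M, Z^{:2:}_M, Z^{:3:}_M)$ associated with $X_{0;M}$ lies in $\td{\msc{Z}}^M_\infty$. By Theorem~\ref{t:global-torus}, there exists a unique $M$-periodic solution $Y_M \in \mcl{C}([0,T], \td{B}^{\be,M}_\infty)$ of \eqref{e:eq-for-Y-torus} with $Y_0 = 0$, so that $X_M := Y_M + Z_M$ solves the periodised version of \eqref{e:eqX}.

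Next I would switch perspective from periodic to polynomially weighted spaces. By Corollary~\ref{cor:BoundsOnZ2}, almost surely there exists a sequence $M_k \to \infty$ along which $\un{Z}_{M_k}$ is uniformly bounded in the polynomially weighted norm $\|\cdot\|_{\ZZt}$ (with $\si > 2$), and $\un{Z}_{M_k} \to \un{Z}$ in a slightly worse analogous space. Since each $Y_{M_k}$ is periodic and bounded, it defines an element of $\hB^{\be,\si}_{3p}$ (because $\si > d$ makes the polynomial weight integrable on each period), and it is still a mild solution in this polynomially weighted space. Hence Proposition~\ref{p:pointwise-in-time-apriori} applies and gives
\begin{equation*}
\sup_k \sup_{t \le T} \|Y_{M_k}(t)\|_{\hB^{\be,\si}_{p/9}} < \infty,
\end{equation*}
once $\al', \be, p$ are chosen to satisfy the assumptions of that proposition (which forces $\al$ small, $p$ large, $\be < 2$). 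Combined with the time-modulus estimate of Remark~\ref{r:uniformly-equi}, the family $(Y_{M_k})$ is uniformly equicontinuous as maps into $\hB^{\be,\si}_{p/9}$.

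Choosing $\be' < \be$ and $\si' > \si$ (with $\be' > 1$ still), the compact embedding from Proposition~\ref{p:compact-poly} together with the Arzelà--Ascoli theorem yields a further subsequence (still called $M_k$) and a limit $Y \in \mcl{C}([0,T], \hB^{\be',\si'}_{p/9})$ with $Y_{M_k} \to Y$ uniformly in $t$ in this topology. Lower semi-continuity of the norm and the uniform bound above give $Y \in L^\infty([0,T], \hB^{\be,\si}_{p/9})$, and one then uses the mild equation together with Proposition~\ref{p:smooth-besov} to upgrade this to continuity into the stronger space.

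The main obstacle is the last step: passing to the limit $k \to \infty$ in the mild equation
\begin{equation*}
Y_{M_k}(t) = \int_0^t e^{(t-s)\Delta} \Psi(Y_{M_k}(s), \un{Z}_{M_k}(s)) \, \d s
\end{equation*}
to identify $Y$ as a solution of \eqref{e:eqY}. Each term of $\Psi$ (see \eqref{e:defPsi1}) must be handled separately via the multiplicative inequalities of Corollaries~\ref{c:multipl1} and~\ref{c:multipl2}: for instance, $Y_{M_k}^2 Z_{M_k} \to Y^2 Z$ needs continuity of multiplication in the Besov scale, which is delivered by these corollaries combined with the uniform bound on $\|Y_{M_k}\|_{\hB^{\be,\si}_{p/9}}$ and the convergence of $Y_{M_k}$ in $\hB^{\be',\si'}_{p/9}$ and of $Z_{M_k}$ in a weighted $\hB^{-\al',\tilde\si}_{3p,\infty}$-space. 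The time integrability near $s=0$ of the resulting bounds is handled exactly as in Lemma~\ref{l:estimpsi}, and dominated convergence then allows the passage to the limit, giving the required identity for $Y$.
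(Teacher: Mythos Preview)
Your proposal is correct and follows essentially the same route as the paper: construct periodic solutions via Theorem~\ref{t:global-torus}, invoke Corollary~\ref{cor:BoundsOnZ2} to control $\un Z_{M_k}$ uniformly in $\ZZt$, apply Proposition~\ref{p:pointwise-in-time-apriori} and Remark~\ref{r:uniformly-equi} for the uniform bounds and equicontinuity, extract a limit via compact embedding and Arzel\`a--Ascoli, and pass to the limit in the mild equation using the multiplicative inequalities and Lemma~\ref{l:estimpsi}. The only cosmetic difference is that you explicitly drop to $\be' < \be$ in the compactness step and then upgrade back via the mild formulation, whereas the paper states the conclusion in $\hB^{\be,\si}_{p/9}$ directly; your version is arguably the more careful reading of Proposition~\ref{p:compact-poly}.
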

\begin{proof}
Recall that we denote the periodic approximations of $\un{Z}$ by 
$$\un{Z}_{\cdot; M} = (Z_{\cdot; M},Z_{\cdot; M}^{:2:},Z_{\cdot; M}^{:3:}).
$$
By Corollary~\ref{cor:BoundsOnZ1}, for any $\al' > \al$ and every integer $M \ge 1$, the quantity
$$
 \sup_{0 \le t \le T} \Ll(\|Z_{t,M}\|_{\tB_\infty^{-\al-\frac2{3p},M}} \, \vee \,  t^{\al'+\frac2{3p}} \|Z^{:2:}_{t,M}\|_{\tB_\infty^{-\al-\frac2{3p},M}} \, \vee \, t^{2\Ll(\al'+\frac2{3p}\Rr)} \|Z^{:3:}_{t,M}\|_{\tB_\infty^{-\al-\frac2{3p},M}} \Rr)
$$
is finite almost surely. For $\al + \frac{2}{3p} < \al' + \frac{2}{3p}$ sufficiently small, Theorem~\ref{t:global-torus} ensures that there exists $Y_{\cdot;M} \in \mcl C([0,T],\tB^{\be,M}_\infty)$ such that
\begin{equation}
\label{e:eqY-weakM}
Y_{t;M} = \int_0^t e^{(t-s) \Delta} \, \Psi(Y_{s;M}, \un{Z}_{s;M}) \, \d s.
\end{equation}
In particular, $Y_{\cdot;M} \in \mcl C([0,T],\hB^{\be,\si}_p)$. 

We further impose that $p\ge 3^4$ be sufficiently large that 
$$
\frac{p-1}{p-4} \ \frac{\beta}{2} < 1,
$$
and then $0 < \al < \al'$ sufficiently small that \eqref{e:conds-p-full} and \eqref{e:cond:tdbe} hold. We learn from Corollary~\ref{cor:BoundsOnZ2} that  with probability one, there exists a subsequence $(M_k)_k$ tending to infinity and such that
$$
\sup_{k} \|\un{Z}_{\cdot;M_k}\|_\ZZt < \infty.
$$
By Proposition~\ref{p:pointwise-in-time-apriori}, it thus follows that with probability one,
\begin{equation}
\label{e:strong-bound}
\sup_{k} \  \sup_{t \le T}  \|Y_{t;M_k}\|_{\hB^{{\be},\si}_{p/9}} < \infty.
\end{equation}
By Remark~\ref{r:uniformly-equi}, with probability one, $(Y_{\cdot;M_k})_{k}$ is a family of uniformly equicontinuous functions in $\mcl C([0,T], \hB^{\be,\si}_{p/9})$. 
%
By Proposition~\ref{p:compact} (or Proposition~\ref{p:compact-poly}), for every $\si' >\si$, we can thus extract a subsequence  that converges uniformly in $\mcl C([0,T], \hB^{\be,\si'}_{p/9})$ to some $Y$; and moreover, $Y \in \mcl C([0,T], \hB^{\be,\si}_{p/9})$. It then suffices to pass to the limit in \eqref{e:eqY-weakM} to obtain \eqref{e:eqY-mild}, 
using the fact ensured by Corollary~\ref{cor:BoundsOnZ2} that for $n =1,2,3$, with probability one, $
\sup_{0 \le t \le T} t^{(n-1)\al'}\|Z^{:n:}_{t;M_k} - Z^{:n:}_t\|_{\hB_{\frac{3p}{n}}^{-\al',\si'}} \xrightarrow[k \to \infty]{} 0.
$
\end{proof}

%
%
%
%
%
%
%
\section{Uniqueness of solutions in the plane}
\label{s:uniqueness}

Consider the parabolic Anderson problem
\begin{equation}
\label{e:parab-And}
\Ll\{
\begin{array}{l}
\dr_t Y  = \Delta Y + W Y \qquad (\text{on } [0,T] \times \R^2), \\  
Y(0,\cdot) = 0,
\end{array}
\Rr.
\end{equation}
interpreted in the mild sense:
\begin{equation}
	\label{e:mild-parab-And}
	Y_t = \int_0^t e^{(t-s)\Delta} (W_s Y_s) \, \d s.
\end{equation}
 We want to find sufficient conditions on $W$ and $Y$ to guarantee that $Y = 0$.

\begin{thm}[Uniqueness for the parabolic Anderson problem]
	Assume that there exists $K < \infty$, $p \ge 1$, $\mu_0 > 0$ and $\al, \al' , b\in (0,\infty)$ such that for every $\mu \le \mu_0$ and $t \le T$,
	\begin{equation}
		\label{e:uniq-hyp1}
		\|W_t\|_{\B^{-\al,\mu}_{p}} \le K \, t^{-\al'} \, \mu^{-b}.
	\end{equation}
	Let $Y$ be a solution of \eqref{e:parab-And} such that for some $C < \infty$, $\td{\al} > \al$ and $c < \infty$, it holds for every $\mu \le \mu_0$ and $t \le T$ that
	\begin{equation}
		\label{e:uniq-hyp2}
		\|Y_t\|_{\B^{\td{\al},\mu}_p} \le C \mu^{-c}.
	\end{equation}
	If
	\begin{equation}
		\label{e:uniq-cond}
		\frac{\al + \td{\al}}{2} + \frac{1}{p} + b < 1  \quad \text{ and } \quad \frac{\al + \td{\al}}{2}  + \frac{1}{p} + \al' < 1,
	\end{equation}
	then $Y = 0$.
		\label{t:unique-anders}
\end{thm}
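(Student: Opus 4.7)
The plan is to combine the mild formulation~\eqref{e:mild-parab-And} with a bootstrap argument that gradually absorbs the singularity $\mu^{-c}$ in the hypothesis~\eqref{e:uniq-hyp2} against the positive time power~$t^\eta$ arising from heat-semigroup smoothing, and uses the positivity of $1-A-b$ (first condition in~\eqref{e:uniq-cond}) to obtain super-exponential decay of the iterative constants. Here $A := \frac{\al+\td\al}{2}+\frac{1}{p}$ and $\eta := 1-A-\al'$.

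First, I would establish a one-step bound: for every $0 < \eps < \mu \le \mu_0$ and $t \le T$,
\begin{equation*}
\|Y_t\|_{\B^{\td\al,\mu}_p} \le C_0 \int_0^t (t-s)^{-A}\, s^{-\al'}\, \eps^{-b}\, \|Y_s\|_{\B^{\td\al,\mu-\eps}_p}\,\d s.
\end{equation*}
This follows by applying Proposition~\ref{p:smooth-besov} (heat smoothing from a space of regularity slightly below $-\al$ up to $\B^{\td\al,\mu}_p$), then the multiplicative inequality (Corollary~\ref{c:multipl2} together with Remark~\ref{r:multipl2}) to split the product $WY$ with weights $\eps$ on $W$ and $\mu-\eps$ on $Y$, with indices chosen so that Proposition~\ref{p:embed} returns both factors to $L^p$-based Besov spaces at a regularity cost of~$\frac{1}{p}$ that is absorbed into~$A$, and finally the hypothesis~\eqref{e:uniq-hyp1} applied at weight~$\eps$.

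Next, I would prove by induction on $k \ge 0$ that for all $t \le T$ and $\mu \le \mu_0$,
\begin{equation*}
\|Y_t\|_{\B^{\td\al,\mu}_p} \le C_k \, \mu^{-(kb+c)} \, t^{k\eta}.
\end{equation*}
The base case $k=0$ is exactly~\eqref{e:uniq-hyp2}. For the inductive step, substitute the inductive bound into the one-step estimate, compute the time integral as $B(1-A,\, k\eta-\al'+1)\, t^{(k+1)\eta}$ (using $1-A-\al' = \eta$), and optimize over $\eps$: the minimizer $\eps = b\mu/((k+1)b+c)$ yields $\eps^{-b}(\mu-\eps)^{-(kb+c)} = O_k(1)\cdot \mu^{-((k+1)b+c)}$. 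The second condition $A+\al' < 1$ guarantees $\eta > 0$, so the Beta function is well-defined and $t^{k\eta}$ does not blow up.

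The last step is asymptotic analysis of the constants. From the inductive computation, using $B(1-A,y) \sim \Gamma(1-A)\, y^{-(1-A)}$ and $\eps^{-b}(\mu-\eps)^{-(kb+c)}\,\mu^{(k+1)b+c} \sim (kb)^b$, one finds for large $k$,
\begin{equation*}
C_{k+1}/C_k \sim C_1\, k^{b-(1-A)} = C_1\, k^{A+b-1}.
\end{equation*}
The first condition $A+b < 1$ yields $A+b-1 < 0$, so $C_{k+1}/C_k \to 0$, and comparison with a product gives $C_k \le C_1^k/(k!)^{1-A-b}$, which decays faster than any exponential. Consequently, for any fixed $\mu \le \mu_0$ and $t \le T$, the inductive bound forces
\begin{equation*}
\|Y_t\|_{\B^{\td\al,\mu}_p} \le \lim_{k\to\infty} C_k \, \mu^{-(kb+c)}\, t^{k\eta} = 0,
\end{equation*}
hence $Y \equiv 0$.

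The main obstacle is the careful verification of the one-step bound, requiring a delicate tuning of the Besov indices and of the weight split in the multiplicative inequality so that, after the Besov embedding, one can invoke the hypothesis~\eqref{e:uniq-hyp1} at exactly the regularity $-\al$ and at any weight $\eps \in (0,\mu)$, with the entire embedding cost absorbed into the $\frac{1}{p}$ term of $A$. Once the one-step bound is in place, the bootstrap induction and the asymptotic analysis of the constants $C_k$ are essentially mechanical.
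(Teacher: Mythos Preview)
Your proposal is correct and follows essentially the same strategy as the paper: iterate a one-step estimate obtained by splitting the weight between $W$ and $Y$ via the multiplicative inequality, then exploit the factorial decay coming from the iterated time integrals (your Beta-function asymptotics, the paper's Lemma~\ref{l:iter-int}) to beat the exponential growth from the weight factors. The only differences are organizational: the paper fixes a weight sequence $\mu/n^a$ in advance (Proposition~\ref{p:recurs}) and uses hypothesis~\eqref{e:uniq-hyp2} only once at the end, whereas you optimize the split $\eps$ at each step and carry a pointwise-in-$(\mu,t)$ bound through the induction; this lets you dispense with the auxiliary parameter $a$ and the separate iterated-integral lemma, but the underlying mechanism is the same.
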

\begin{rem}
	\label{r:c-not-here}
	The constant $c$ does not appear in the condition \eqref{e:uniq-cond}. This is a manifestation of the fact that \eqref{e:uniq-hyp2} can be somewhat weakened if desired, as the reader can easily check from the proof.
\end{rem}

The proof relies on the following estimate.
\begin{prop}[Recursive estimate]
\label{p:recurs}
	Let $\td{\al} > \al$, $p \ge 1$, $a > 0$ and $\mu > 0$. There exists $C > 0$ such that for every $n\ge 1$,
	\begin{equation}
		\label{e:recurs}
		\|Y_t\|_{\B^{\td{\al},\frac{\mu}{n^a}}_p} \le C \int_0^t (t-s)^{-\frac{\al + \td{\al}}{2} - \frac{1}{p}} \ \|W_s\|_{\B^{-\al,{\mu_n}}_{p}} \ \|Y_s\|_{\B^{\td{\al},\frac{\mu}{(n+1)^a}}_p} \ \d s, 
	\end{equation}
	where 
	\begin{equation}
	\label{e:def:mu_n}
	\mu_n =  \frac{\mu}{n^a} - \frac{\mu}{(n+1)^a} .
\end{equation}
\end{prop}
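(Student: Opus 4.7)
The plan is to read off the claim from the mild formulation~\eqref{e:mild-parab-And} by bringing the Besov norm inside the time integral and then successively applying heat-semigroup smoothing, Besov embedding, and the multiplicative inequality, with the weight parameters chosen so that the decomposition $\mu/n^a = \mu_n + \mu/(n+1)^a$ is used telescopically.

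Fix $s \in (0,t)$. First, applying Proposition~\ref{p:smooth-besov} with lower regularity index $\beta := -\al - 2/p$ (which satisfies $\beta \le \td\al$) and weight $\mu/n^a$ gives
$$
\|e^{(t-s)\Delta}(W_s Y_s)\|_{\B^{\td\al, \mu/n^a}_p} \le C\,(t-s)^{-\frac{\al+\td\al}{2} - \frac{1}{p}}\, \|W_s Y_s\|_{\B^{-\al - 2/p, \mu/n^a}_p}.
$$
Second, the Besov embedding of Proposition~\ref{p:embed} with $r = p/2$ (and $d=2$) costs exactly $d/p = 2/p$ in regularity while doubling the weight parameter:
$$
\|W_s Y_s\|_{\B^{-\al-2/p, \mu/n^a}_p} \le C\,\|W_s Y_s\|_{\B^{-\al, \mu/(2n^a)}_{p/2}}.
$$
Third, apply Corollary~\ref{c:multipl2} with $p_1 = p_2 = p$ and the uneven-weight version of Remark~\ref{r:multipl2}, taking $\mu_1 = \mu_n$ and $\mu_2 = \mu/(n+1)^a$. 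The hypotheses $-\al < 0 < \td\al$ and $-\al + \td\al > 0$ are exactly $\td\al > \al$, which is assumed. The Hölder constraint $\mu_{\mathrm{LHS}}/(p/2) = \mu_1/p + \mu_2/p$ forces $\mu_{\mathrm{LHS}} = (\mu_n + \mu/(n+1)^a)/2 = \mu/(2n^a)$, matching the previous step exactly, so
$$
\|W_s Y_s\|_{\B^{-\al, \mu/(2n^a)}_{p/2}} \le C\,\|W_s\|_{\B^{-\al, \mu_n}_p}\, \|Y_s\|_{\B^{\td\al, \mu/(n+1)^a}_p}.
$$
Chaining these three estimates and integrating in $s$ delivers~\eqref{e:recurs}.

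All three constants involved depend only on an upper bound $\mu_0$ for the weights; taking $\mu_0 = \mu$ suffices, since each weight parameter that enters---$\mu/n^a$, $\mu/(2n^a)$, $\mu_n$, $\mu/(n+1)^a$---is bounded above by $\mu$. Hence $C$ can be chosen independent of $n$, as stated. The main obstacle is precisely this weight bookkeeping: one must split $\mu/n^a$ telescopically so that the intermediate weight $\mu/(2n^a)$ produced by the multiplicative inequality (working at integrability index $p/2$) is exactly the value restored by the Besov embedding back to integrability $p$. Once this is set up, each individual step is a direct application of the tools developed in Sections~\ref{s:ineq} and~\ref{s:besov}, and no further analytic ingredient is required.
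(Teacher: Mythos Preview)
Your proposal is correct and follows essentially the same approach as the paper: the chain of inequalities (heat-flow smoothing via Proposition~\ref{p:smooth-besov}, Besov embedding via Proposition~\ref{p:embed} from integrability $p$ down to $p/2$, and the uneven-weight multiplicative inequality of Remark~\ref{r:multipl2}) is identical, with the same weight bookkeeping $\mu/(2n^a) = (\mu_n + \mu/(n+1)^a)/2$. Your explicit remark on the $n$-uniformity of the constants is a welcome addition.
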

\begin{proof}
Note that 
$$
\frac{1}{(p/2)} \frac{\mu}{2 n^a}  = \frac{\mu}{p}\Ll( \frac{1}{n^a} - \frac{1}{(n+1)^a}    \Rr)  + \frac{\mu}{p(n+1)^a}  = \frac{\mu_n}{p} + \frac{1}{p}\frac{\mu}{(n+1)^a}.
$$
It thus follows from Remark~\ref{r:multipl2} that
\begin{equation}
	\label{e:recurs-1}
	\|W_s Y_s\|_{\B^{-{\al},\frac{\mu}{2n^a} }_{p/2}} \lesssim \|W_s\|_{\B^{-\al,\mu_n}_{p}} \ \|Y_s\|_{\B^{\td{\al},\frac{\mu}{(n+1)^a}}_p}.
\end{equation}
%
We now observe that
\begin{align*}
	\|Y_t\|_{\B^{\td{\al},\frac{\mu}{n^a} }_p} & \le \int_0^t \|e^{(t-s)\Delta} (W_s Y_s)\|_{ \B^{\td{\al},\frac{\mu}{n^a} }_p} \ \d s \\
		& \lesssim \int_0^t (t-s)^{-\frac{\al + \td{\al}}{2} - \frac{1}{p}   } \, \|W_s Y_s\|_{\B^{-{\al} - \frac{2}{p} ,\frac{\mu}{n^a} }_p} \ \d s \\
		& \lesssim \int_0^t (t-s)^{-\frac{\al + \td{\al}}{2} - \frac{1}{p}} \,  \|W_s Y_s\|_{\B^{-{\al},\frac{\mu}{2 n^a} }_{\frac{p}{2} }} \ \d s,
\end{align*}
where we used Proposition~\ref{p:smooth-besov} in the second step, and Proposition~\ref{p:embed} in the third. The conclusion follows by \eqref{e:recurs-1}.
\end{proof}

We now prepare for a Gronwall-type argument via the following lemma.
\begin{lem}[Iterated integrals]
	\label{l:iter-int}
	Let $\ga_1, \ga_2 \ge 0$ be such that $\ga_1 + \ga_2 < 1$, and define recursively
	\begin{align*}
		\label{}
		I_0(t) & = 1, \\
		I_{n+1}(t) & = \int_0^t (t-s)^{-\ga_1} \, s^{-\ga_2} \,   I_n(s) \, \d s \qquad (n \in \N).
	\end{align*}
	For every $\td{\ga} > \ga_1$ and $T < \infty$, there exists $C < \infty$ such that uniformly over $n \in \N$ and $t \le T$,
	\begin{equation}
		\label{e:iter-int}
		I_n(t) \le \frac{C}{(n!)^{1-\td{\ga}}}.
	\end{equation}
\end{lem}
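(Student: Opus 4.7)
The plan is to set $\mu := 1 - \ga_1 - \ga_2 > 0$ and first prove by induction on $n$ the identity $I_n(t) = c_n\, t^{n\mu}$, where $c_0 = 1$ and $c_{n+1} = c_n\, B(1-\ga_1,\, 1-\ga_2 + n\mu)$, with $B$ the Beta function. The base case is trivial; the inductive step follows from the substitution $s = tu$ in the integral defining $I_{n+1}$, which gives
$$I_{n+1}(t) = c_n \int_0^t (t-s)^{-\ga_1} s^{n\mu -\ga_2}\, \d s = c_n\, t^{(n+1)\mu}\, B(1-\ga_1,\, 1-\ga_2 + n\mu).$$

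The crucial step will then be to control the growth of $c_n$. Using $B(a,b) = \Gamma(a)\Gamma(b)/\Gamma(a+b)$ together with the classical asymptotic $\Gamma(b)/\Gamma(b+a) \sim b^{-a}$ as $b \to \infty$ (for fixed $a > 0$), one obtains
$$B(1-\ga_1,\, 1-\ga_2 + n\mu) \le C_1\, n^{-(1-\ga_1)}$$
uniformly for $n \ge 1$, with $C_1$ depending only on $\ga_1, \ga_2$. Multiplying these bounds over $k = 1,\dots,n-1$ and absorbing the residual factor of $n^{1-\ga_1}$ into an exponential yields $c_n \le C_2^n / (n!)^{1-\ga_1}$ for some constant $C_2 = C_2(\ga_1,\ga_2)$. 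Since $t \le T$ and $\mu > 0$, this gives
$$I_n(t) \le \frac{(C_2 T^\mu)^n}{(n!)^{1-\ga_1}}.$$

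The last step is a cosmetic rewriting: for any fixed $\td\ga > \ga_1$,
$$\frac{(C_2 T^\mu)^n}{(n!)^{1-\ga_1}} = \frac{(C_2 T^\mu)^n\, (n!)^{\ga_1 - \td\ga}}{(n!)^{1-\td\ga}}.$$
Since $\ga_1 - \td\ga < 0$, Stirling's formula implies that $(C_2 T^\mu)^n\, (n!)^{\ga_1 - \td\ga}$ decays super-exponentially in $n$ (its logarithm is $n \log(C_2 T^\mu) + (\ga_1 - \td\ga)\log n!$, which tends to $-\infty$), so the numerator is bounded by some constant $C = C(T, \ga_1, \ga_2, \td\ga)$. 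This is exactly the estimate \eqref{e:iter-int}.

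No serious difficulty is anticipated: the proof reduces to a routine computation with the Beta function. The one feature worth emphasising is that the factorial decay rate of $c_n$ is governed by the exponent $1-\ga_1$ appearing in the Beta asymptotic in the first argument, rather than by $\mu$ (the exponent attached to $t$). It is precisely this observation which makes the loss of the small factorial power $(n!)^{\ga_1 - \td\ga}$ harmless and delivers a constant independent of $n$.
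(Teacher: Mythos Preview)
Your proof is correct and follows essentially the same route as the paper: both first establish the explicit formula $I_n(t)=t^{n\mu}\prod_{k=0}^{n-1} J_k$ with $J_k=B(1-\ga_1,1-\ga_2+k\mu)$, then bound each factor $J_k$ by a constant times $k^{-(1-\ga_1)}$ to obtain $c_n \le C^n/(n!)^{1-\ga_1}$, and finally use the slack $\td\ga>\ga_1$ to absorb the geometric prefactor. The only difference is in how $J_k$ is estimated: you invoke the Gamma-function asymptotic $\Gamma(b)/\Gamma(a+b)\sim b^{-a}$, while the paper splits the defining integral at $1-c(\log n)/n$ and picks up an extra harmless $\log$ factor; your argument is slightly cleaner here but otherwise equivalent.
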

\begin{proof}
Let $\ga = \ga_1 + \ga_2$ and, for $n \in \N$,
$$
J_n = \int_0^1 (1-u)^{-\ga_1} u^{-\ga_2 + n(1-\ga)} \, \d u.
$$
We first show by induction on $n$ that
\begin{equation}
	\label{e:iter-int-2}
	I_n(t) = t^{n(1-\ga)} \prod_{k = 0}^{n-1} J_k.
\end{equation}
The case $n = 0$ is trivial (we understand the product as being $1$). For $n \in \N$, a change of variables gives
$$
I_{n+1}(t) = t^{1-\ga} \int_0^1 (1-u)^{-\ga_1} u^{-\ga_2} \, I_n(tu) \, \d u,
$$
so \eqref{e:iter-int-2} implies the same statement with $n$ replaced by $n+1$.

In order to conclude, it suffices to show that there exists $C < \infty$ such that for every $n$ sufficiently large,
\begin{equation}
	\label{e:iter-int-1}
	J_n  \le C \, \Ll(\frac{\log n}{n} \Rr)^{1-\ga_1}. 
\end{equation}
We consider $n$ sufficiently large that $-\ga_2 + n(1-\ga) \ge 0$. For any $\eps \in [0,1)$, we can decompose the integral defining $J_n$ along $\int_0^{1-\eps} + \int_{1-\eps}^1$ and obtain that
\[
	J_n \le \eps^{-\ga_1} (1-\eps)^{-\ga_2 + n(1-\ga)}  + (1-\ga_1)^{-1} \eps^{1-\ga_1}.
\]
Choosing $\eps = c (\log n)/n$ for some constant $c$ gives an upper bound that is asymptotically equivalent to
\[
	\Ll( \frac{n}{c\log n}   \Rr)^{\ga_1} n^{-c(1-\ga)} + (1-\ga_1)^{-1} \Ll( \frac{c \log n}{n}   \Rr)^{1-\ga_1}.
\]
It then suffices to fix $c$ sufficiently large to obtain \eqref{e:iter-int-1}, and thus conclude the proof.
\end{proof}

\begin{proof}[Proof of Theorem~\ref{t:unique-anders}]
In view of \eqref{e:uniq-cond}, there exists $a > 0$ such that 
\begin{equation}
	\label{e:cond-ab}
	(1+a)b < 1-\frac{\al + \td{\al}}{2} - \frac{1}{p}.
\end{equation}
We fix $\mu = \mu_0$. By Proposition~\ref{p:recurs} and the assumption in \eqref{e:uniq-hyp1}, there exists $C > 0$ such that for every $n \ge 1$,
$$
		\|Y_t\|_{\B^{\td{\al},\frac{\mu}{n^a}}_p} \le C \, \mu_n^{-b} \, \int_0^t (t-s)^{-\frac{\al + \td{\al}}{2} - \frac{1}{p}} \, s^{-\al'} \, \|Y_s\|_{\B^{\td{\al},\frac{\mu}{(n+1)^a}}_p} \ \d s,
$$
where $\mu_n$ is as in \eqref{e:def:mu_n}. 
We define $\ga_1 = \frac{\al + \td{\al}}{2} + \frac1{p}$, $\ga_2 = \al'$, and $I_n(t)$ as in Lemma~\ref{l:iter-int} [note that $\ga_1 + \ga_2 < 1$ by \eqref{e:uniq-cond}]. By induction, we obtain
\begin{equation}
\label{e:uniq-induct}
\|Y_t\|_{\B^{\td{\al},\mu}_p} \le C^n \ \Ll(\prod_{k=1}^{n} \mu_k\Rr)^{-b} \ I_n(t) \ \sup_{s \le t} \|Y_s\|_{\B^{\td{\al},\frac{\mu}{(n+1)^a}}_p} .
\end{equation}
As $n$ tends to infinity, we have
$$
\mu_n \sim \frac{a\mu}{n^{1+a}},
$$ 
so in particular (for some possibly larger $C < \infty$),
$$
\Ll(\prod_{k=1}^{n} \mu_k \Rr)^{-b} \le C^n (n!)^{(1+a)b}.
$$
In view of \eqref{e:cond-ab}, we can define $\td{\ga}$ such that $\ga_1 < \td{\ga} < 1$ and 
\begin{equation}
	\label{e:cond-td-ga}
	(1+a)b < 1-\td{\ga}.
\end{equation}
By Lemma~\ref{l:iter-int} and \eqref{e:uniq-hyp2}, the \rhs\ of \eqref{e:uniq-induct} is thus bounded by
$$
C^n \ \frac{(n!)^{(1+a)b}}{(n!)^{1-\td{\ga}}}  \Ll(\frac{(n+1)^a}{\mu} \Rr)^c.
$$
This quantity tends to $0$ with $n$ by \eqref{e:cond-td-ga}, so $\|Y_t\|_{\B^{\td{\al},\mu}_p} = 0$ for every $t \le T$, and thus $Y = 0$.
\end{proof}

\begin{thm}[Uniqueness of solutions]
\label{t:uniq}
Let $T > 0$, $\be < 2$, $\si > 2$, $\al >0$ be sufficiently small, and $p$ be sufficiently large (depending on $\si$). Let $X_0 \in \hB^{-\al,\si}_{p}$, and let $\un Z = (Z,Z^{:2:},Z^{:3:})$ be as in \eqref{e:DefZN} (that is, $Z$ is the solution of \eqref{e:eqZ}, and $Z^{:n:}$ are its Wick powers). With probability one, if $Y^{(1)},Y^{(2)} \in \mcl C([0,T],\hB^{\be,\si}_{p})$ are two solutions of \eqref{e:eqY}, then $Y^{(1)} = Y^{(2)}$.
\end{thm}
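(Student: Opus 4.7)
The strategy is to reduce the uniqueness statement to the parabolic Anderson uniqueness result, Theorem~\ref{t:unique-anders}. Setting $D = Y^{(1)} - Y^{(2)}$ and subtracting the two mild equations \eqref{e:eqY-mild}, the difference of the nonlinearities factors as $\Psi(Y^{(1)}_s, \un{Z}_s) - \Psi(Y^{(2)}_s, \un{Z}_s) = W_s \, D_s$ with
\begin{equation*}
W = -\Ll( (Y^{(1)})^2 + Y^{(1)} Y^{(2)} + (Y^{(2)})^2 \Rr) - 3 (Y^{(1)} + Y^{(2)}) Z - 3 Z^{:2:} + a,
\end{equation*}
so that $D$ is a mild solution of \eqref{e:parab-And}. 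It thus remains to verify hypotheses \eqref{e:uniq-hyp1} and \eqref{e:uniq-hyp2} of Theorem~\ref{t:unique-anders} in the stretched exponentially weighted spaces $\Bb$.

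The key step — this is the ``unfolding'' alluded to in the introduction — is a pointwise weight comparison. Since $\sup_{r \ge 1} r^\sigma e^{-\mu r^\delta} \lesssim \mu^{-\sigma/\delta}$, one gets $e^{-\mu|x|_*^\de} \lesssim \mu^{-\sigma/\delta} |x|_*^{-\sigma}$, which immediately yields
\begin{equation*}
\|g\|_{L^r_\mu} \le C \, \mu^{-\frac{\si}{\de r}} \, \|g\|_{\hL^r_\si}, \qquad \|g\|_{\B^{\al,\mu}_{r,q}} \le C \, \mu^{-\frac{\si}{\de r}} \, \|g\|_{\hB^{\al,\si}_{r,q}}
\end{equation*}
(the Besov version following by applying the $L^r$ version to each $\de_k g$). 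Coupled with the variant of Theorem~\ref{thm:BoundsOnZ} for arbitrary integrable weights noted after Lemma~\ref{le:Kolmogorov}, this gives almost-surely bounds of the form $\|Z_t^{:n:}\|_{\B^{-\al,\mu}_r} \lesssim t^{-(n-1)\al'} \mu^{-b_n}$ for some $b_n > 0$. Applied to $D \in \mcl{C}([0,T], \hB^{\be,\si}_p)$, it gives $\|D_t\|_{\B^{\td\al,\mu}_p} \le C \mu^{-c}$ for any $\td\al < \be$ and $c = \si/(\de p)$, which is hypothesis~\eqref{e:uniq-hyp2}.

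Hypothesis~\eqref{e:uniq-hyp1} is then obtained by expanding $W$ and estimating each term in $\B^{-\al,\mu}_p$. The quadratic terms $Y^{(i)} Y^{(j)}$ are handled by Corollary~\ref{c:multipl1} (with a small loss in regularity absorbed thanks to $\be < 2$ close to $2$ and Besov embedding); the mixed terms $Y^{(i)} Z$ are handled by Corollary~\ref{c:multipl2} (positive times negative regularity, with $\al + \be > 0$); and the $Z^{:2:}$ term contributes the $t^{-\al'}$ singularity at $s = 0$. After applying the weight comparison above to convert each polynomial-weight factor into an exponential-weight factor with a $\mu^{-b_n}$ price, we obtain the required inequality $\|W_t\|_{\B^{-\al,\mu}_p} \le K t^{-\al'} \mu^{-b}$ for some $b$ that is small when $p$ is large (and $\si$ is fixed).

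The main obstacle is the bookkeeping for condition~\eqref{e:uniq-cond}. Since $b$ and $c$ are of order $\si/(\de p)$ and $\al'$ can be taken arbitrarily small, the two conditions $\frac{\al+\td\al}{2} + \frac1p + b < 1$ and $\frac{\al+\td\al}{2} + \frac1p + \al' < 1$ reduce, once $\be < 2$ is fixed and $\td\al \in (\al, \be)$ is chosen close to $\be$, to $\al$ small and $p$ large (depending on $\si$). With such choices, Theorem~\ref{t:unique-anders} applies and yields $D = 0$, completing the proof.
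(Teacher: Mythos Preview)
Your proof is correct and follows essentially the same route as the paper's: reduce to the parabolic Anderson problem via $D=Y^{(1)}-Y^{(2)}$, use the pointwise weight comparison $w_\mu \lesssim \mu^{-\si/\de}\,\wh w_\si$ to pass from polynomial to stretched-exponential weights, and verify the hypotheses of Theorem~\ref{t:unique-anders}. The only cosmetic difference is that the paper first bounds $W$ in the polynomial-weighted space $\hB^{-\al,\si}_{p/2}$ (the natural target of the multiplicative inequalities, since products halve the integrability index) and then applies the weight comparison and Theorem~\ref{t:unique-anders} with $p/2$ in place of $p$; your phrasing ``each term in $\B^{-\al,\mu}_p$'' should likewise be read with $p/2$, or else one needs an extra Besov embedding to restore the index.
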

\begin{proof}
The process $Y := Y^{(1)} - Y^{(2)}$ solves
$$
Y_t = \int_0^t e^{(t-s)\Delta} \, W_s  Y_s  \, \d s,
$$
where 
$$
W = -(Y^{(1)})^2 - Y^{(1)}Y^{(2)} - (Y^{(2)})^2 - 3 (Y^{(1)} + Y^{(2)}) Z - 3 Z^{:2:} + a.
$$
We verify that for suitable choices of parameters, the conditions of Theorem~\ref{t:unique-anders} are satisfied. Observe that
$$
\sup_{\mu } \mu^{cp} \, e^{-\mu |x|_*^\de} \lesssim |x|_*^{-\de c p},
$$
and as a consequence, 
$$
\sup_{\mu } \mu^{c} \, \|\cdot \|_{\Lm^{p}} \lesssim \|\cdot\|_{\wh L_{\de c p}^p},
$$
\begin{equation}
\label{e:twist}
\sup_{\mu } \mu^c \, \|\cdot\|_{\B^{\td \al,\mu}_{p}} \lesssim \|\cdot\|_{\hB^{\td \al,\de c p}_p}.
\end{equation}
Since the solutions $Y^{(1)}$, $Y^{(2)}$ are in $\mcl C([0,T],\hB^{\be,\si}_{p})$, we get that for some $c < \infty$,
$$
\sup_{0 \le t \le T} \sup_{\mu\le \mu_0} \mu^c \, \|Y_t\|_{\B^{\be,\mu}_{p}} < \infty \qquad \text{a.s.}.
$$
By Remark~\ref{r:besov-mu} and up to a redefinition of $c$, we also have that for every $\td\al \le \be$,
$$
\sup_{0 \le t \le T} \sup_{\mu\le \mu_0} \mu^c \, \|Y_t\|_{\B^{\td \al,\mu}_{p/2}} < \infty \qquad \text{a.s.}.
$$
By Corollary~\ref{cor:BoundsOnZ2}, with probability one,
$$
\sup_{0 \le t \le T} \Ll(\|Z_t\|_{\hB^{-\al,\si}_{p}} \vee t^{\al'} \|Z^{:2:}_t\|_{\hB^{-\al,\si}_{p/2}} \Rr) < \infty.
$$
By the multiplicative inequalities, it follows that with probability one,
$$
\sup_{0 \le t \le T} t^{\al'}\|W_t\|_{\hB^{-\al,\si}_{p/2}} < \infty.
$$
Using \eqref{e:twist}, we see that for any given $b > 0$ and $0 < \al < \al'$, we can choose $p$ sufficiently large that
$$
\sup_{0 \le t \le T} \sup_{\mu\le \mu_0} \mu^b \, t^{\al'} \|W_t\|_{\B^{- \al,\mu}_{p/2}} < \infty.
$$
The conclusion thus follows from Theorem~\ref{t:unique-anders}.
\end{proof}

\appendix

\section{Gevrey classes}
\label{s:AppA}
We begin by recalling two classical facts about Gevrey classes: first, the stability of $\mcl G^\theta$ under multiplication; second, that the Fourier transform of a function in $\Gg$ has fast decay at infinity. We then prove a third result that was needed in Subsection~\ref{ss:heat-flow}, whose proof is in large measure a combination of the proofs of these two more classical facts.

\begin{prop}[Stability under multiplication]
	\label{p:gevrey-mult}
	For every $\th \ge 1$, the Gevrey class $\G^\th$ is stable under multiplication.
\end{prop}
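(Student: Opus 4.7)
The plan is to bound $|\partial^n(fg)|$ on a given compact $K$ via the Leibniz formula
\[
	\partial^n(fg) = \sum_{k \le n} \binom{n}{k} \, \partial^k f \cdot \partial^{n-k} g,
\]
where the sum is over multi-indices $k \in \N^d$ with $k_i \le n_i$ for each coordinate, and $\binom{n}{k} = \prod_i \binom{n_i}{k_i}$. Given $f,g \in \G^\th$, I would fix constants $C_f, C_g$ such that $\sup_K |\partial^n f| \le C_f^{|n|+1}(n!)^\th$ and $\sup_K |\partial^n g| \le C_g^{|n|+1}(n!)^\th$ for every $n$, and then aim to produce a single constant $C$ (depending on $C_f, C_g$) so that $\sup_K |\partial^n(fg)| \le C^{|n|+1}(n!)^\th$.

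The first key observation is the combinatorial bound
\[
	\binom{n}{k} (k!)^\th \Ll((n-k)!\Rr)^\th = n! \, (k!)^{\th-1} \Ll((n-k)!\Rr)^{\th-1} \le (n!)^\th,
\]
which uses $\theta \ge 1$ together with $k!\,(n-k)! \le n!$ (equivalently $\binom{n}{k} \ge 1$). Inserting the Gevrey bounds on $\partial^k f$ and $\partial^{n-k} g$ therefore gives
\[
	\sup_K |\partial^n(fg)| \le (n!)^\th \sum_{k \le n} C_f^{|k|+1} C_g^{|n-k|+1}.
\]
Writing $C_\star = \max(C_f, C_g)$, the summand is at most $C_f C_g \, C_\star^{|n|}$, and the number of multi-indices satisfies $\sum_{k \le n} 1 = \prod_i (n_i+1) \le 2^{|n|}$. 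This yields $\sup_K |\partial^n(fg)| \le C_f C_g\,(2C_\star)^{|n|}(n!)^\th$, so choosing $C = \max(C_f C_g, 2C_\star)$ gives the desired estimate $\sup_K |\partial^n(fg)| \le C^{|n|+1}(n!)^\th$.

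No step is genuinely hard: the only thing to watch is the role of $\theta \ge 1$ in the combinatorial inequality $(k!)^{\th-1}((n-k)!)^{\th-1} \le (n!)^{\th-1}$, which fails for $\theta < 1$ and explains why the Gevrey hierarchy is only defined in that range. The rest is a textbook Leibniz-plus-Hölder-counting argument and carries over verbatim to the multi-index setting.
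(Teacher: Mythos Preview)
Your proof is correct and follows essentially the same approach as the paper's: both apply the Leibniz formula, use the key combinatorial inequality $\binom{n}{k}(k!)^\th((n-k)!)^\th \le (n!)^\th$ (valid precisely because $\theta \ge 1$), and then absorb the number of terms in the sum into the exponential constant. The only cosmetic differences are that you track separate constants $C_f, C_g$ and bound the number of terms by $\prod_i(n_i+1) \le 2^{|n|}$, whereas the paper takes a single constant and uses the cruder count $\binom{|n|+d}{d}$.
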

\begin{proof}
Let $f,g \in \G^\th$, and let $K$ be a compact subset of $\R^d$. There exists $C < \infty$ such that for every $x \in K$ and $n \in \N^d$,
$$
|\dr^n f|(x)\ , \ |\dr^n g|(x) \le C^{|n|+1} (n!)^{\th}.
$$
We have
\begin{equation*}
	\dr^n (fg) = \sum_{m \le n} {n \choose m} \, \dr^{n-m} f \, \dr^m g,
\end{equation*}
where we use the multi-index notation ${n \choose m} = \frac{n!}{(n-m)! \, m!}$. The number of $m$'s such that $|m| \le |n|$ is ${|n|+d \choose d}$, so it suffices to show that on $K$,
$$
{n \choose m} \, \Ll|\dr^{n-m} f \, \dr^m g\Rr| \le C^{|n|+1} (n!)^{\th}.
$$
On $K$, the \lhs\ above is bounded by
$$
n! C^{|n|+1} [(n-m)! \, m!]^{\th-1},
$$
and since $(n-m)! \, m! \le n!$, the proof is complete.
\end{proof}
\begin{prop}[Decay of the Fourier transform]
\label{p:decay-a}
If $f \in \Gg$, then there exists $c > 0$ and $C < \infty$ such that
\begin{equation}
	\label{e:decay-a}
	|\hat{f}(\ze)| \le C e^{-c|\ze|^{1/\th}}.
\end{equation}
\end{prop}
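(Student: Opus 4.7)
The plan is to use the standard integration-by-parts trick to convert decay of the Fourier transform into control on derivatives, and then exploit the Gevrey bound together with compact support to get explicit factorial-type estimates. The optimization over the number of derivatives will produce the stretched exponential.

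First I would observe that since $f \in \Gg$, the support $K$ of $f$ is compact, so that for every $n \in \N$,
\begin{equation*}
|\zeta|^{2n} |\hat f(\zeta)| = |\widehat{(-\Delta)^n f}(\zeta)| \le \|(-\Delta)^n f\|_{L^1} \le |K| \cdot \sup_K |(-\Delta)^n f|.
\end{equation*}
Next I would expand $(-\Delta)^n$ via the multinomial formula as a sum of at most $d^n$ pure derivatives $\partial_1^{2n_1}\cdots \partial_d^{2n_d}$ of total order $2n$, each weighted by a multinomial coefficient bounded by $n!\,d^n$ or similar. For each such term, the Gevrey assumption provides a bound $C^{2n+1}(m!)^\theta \le C^{2n+1}((2n)!)^\theta$ on $K$, where I use $m_1!\cdots m_d! \le (m_1+\cdots+m_d)!$ to collapse the multi-index factorial. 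Absorbing the combinatorial factors and $|K|$ into a new constant, this yields
\begin{equation*}
|\zeta|^{2n}|\hat f(\zeta)| \le C_1^{2n+1}\,((2n)!)^{\theta}
\end{equation*}
uniformly in $\zeta$ and $n$.

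The decay \eqref{e:decay-a} then follows by optimizing over $n$. Using the crude Stirling bound $(2n)! \le (2n)^{2n}$, we get
\begin{equation*}
|\hat f(\zeta)| \le C_1 \left(\frac{C_1^{1/\theta}\,(2n)}{|\zeta|^{1/\theta}}\right)^{2n\theta}.
\end{equation*}
Choosing $n$ to be (the integer part of) $\frac{|\zeta|^{1/\theta}}{2e C_1^{1/\theta}}$ makes the quantity inside the parentheses equal to $1/e$, so the bound becomes $C_1\,e^{-2n\theta} \le C\,\exp(-c|\zeta|^{1/\theta})$ for some $c>0$ depending only on $\theta$ and $C_1$. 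For small $|\zeta|$ (where the above choice of $n$ is not admissible), the trivial bound $|\hat f(\zeta)| \le \|f\|_{L^1}$ suffices after possibly enlarging the prefactor $C$.

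The main (very mild) obstacle is purely bookkeeping: making sure that the multinomial coefficients, the number of terms in the expansion of $(-\Delta)^n$, and the volume of $K$ can all be absorbed into a single geometric factor $C_1^{2n+1}$, so that the subsequent optimization in $n$ yields the exponent $1/\theta$ rather than something worse. This is the step where the assumption $\theta \ge 1$ (and the inequality $m_1!\cdots m_d! \le (|m|)!$) is essential.
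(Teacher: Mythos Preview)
Your proof is correct and follows essentially the same approach as the paper's: both convert $|\zeta|^k|\hat f(\zeta)|$ into $L^1$ bounds on $k$-th order derivatives via integration by parts, invoke the Gevrey estimate, and then optimize over $k$. The only cosmetic differences are that the paper uses coordinate monomials $\zeta^n$ rather than powers of the Laplacian, and sums the resulting bounds $|\hat f(\zeta)| \le C(Cm/|\zeta|^{1/\theta})^m$ over $m$ against $(c|\zeta|^{1/\theta})^m/m!$ instead of choosing a single near-optimal $m$; neither change is material. (One small aside: your closing remark that $\theta \ge 1$ is essential for the step $m_1!\cdots m_d! \le |m|!$ is not quite right---that inequality holds unconditionally, and $\theta \ge 1$ enters only through the nontriviality of $\Gg$.)
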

\begin{proof}
It suffices to show that \eqref{e:decay-a} holds uniformly over $|\ze| \ge 1$. For any $n \in \N^d$, writing $\ze^n = \ze_1^{n_1} \cdots \ze_d^{n_d}$, we observe that
$$
|\ze^n \hat{f} (\ze)| = |\widehat{\dr^n f}(\ze)| \le \int | \dr^n f| \le C^{|n|+1} (n!)^{\theta},
$$
where we used the fact that $f$ is compactly supported and in $\G^\th$ in the last step. As a consequence, for every positive integer $m$, letting $M = \lfloor m/\th \rfloor$ (the integer part of $m/\th$) and $\ov{M} = M+1$, we have
$$
|\ze|^{m/\th} \, |\hat{f}(\ze)| \stackrel{(|\ze| \ge 1)}{\le} |\ze|^{\ov{M}} \, |\hat{f}(\ze)| \le C^{\ov{M}} \Ll( |\ze_1|^{\ov{M}} + \cdots + |\ze_d|^{\ov{M}} \Rr) |\hat{f}(\ze)| \le C^{\ov{M}+1} (\ov{M}!)^{\th}
$$
(we use $C$ as a generic constant whose value can change from an inequality to another). One can then check that the \rhs\ above is bounded by 
$$
C^{M+1} (M!)^\th \le C^{M+1} M^{\th M} \le C^{m+1} m^{m}.
$$
We have thus shown that uniformly over $|\ze| \ge 1$ and $m$, 
$$
|\hat{f}(\ze)| \le C \Ll( \frac{C m}{|\ze|^{1/\th}} \Rr)^m.
$$
Since
$$
e^{c|\ze|^{1/\th}} |\hat{f}(\ze)| = \sum_{m = 0}^{+\infty} \frac{\Ll(c|\ze|^{1/\th}\Rr)^{m}}{m!} |\hat{f}(\ze)| \le C \sum_{m = 0}^{+\infty} \frac{(cCm)^m}{m!} 
$$
and $m! \ge (m/e)^m$, it suffices to choose $c > 0$ sufficiently small that $cCe < 1$ to obtain the result.
\end{proof}
\begin{prop}[Exponential decay]
	\label{p:gevrey-ann}
	Let $\phi \in \Gg$ be supported in an annulus $\mcl{C}$, and let
	$$
	\ov{g}_{t}(x) = \int e^{i x \cdot \ze} \phi(\ze) \, e^{-t|\ze|^2} \, \d \ze.
	$$
	There exists $C < \infty$ and $c > 0$ such that uniformly over $x \in \R^d$ and $t \ge 0$,
	$$
	|\ov{g}_{t}(x)| \le C  e^{-ct - c |x|^{1/\th}}.
	$$
\end{prop}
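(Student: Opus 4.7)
\medskip

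\noindent\textbf{Proof plan for Proposition~\ref{p:gevrey-ann}.}

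The plan is to decouple the two forms of decay and then recombine them by a geometric-mean argument. First I would establish the (trivial) $t$-decay: since $\phi$ is supported in an annulus $\mcl{C} \subset \{|\ze| \ge r\}$ for some $r>0$, we have on $\supp\phi$ the pointwise bound $e^{-t|\ze|^2} \le e^{-tr^2}$, so
\begin{equation*}
  |\ov g_t(x)| \le \int_{\supp\phi} |\phi(\ze)|\, e^{-t|\ze|^2}\,\d\ze \le \|\phi\|_{L^1}\, e^{-tr^2},
\end{equation*}
uniformly in $x\in\R^d$ and $t\ge 0$. Separately, I would derive the $x$-decay
\begin{equation*}
  |\ov g_t(x)| \le C e^{-c|x|^{1/\th}},
\end{equation*}
uniformly in $t \ge 0$. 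Once both are established, the announced bound
$|\ov g_t(x)|\le C e^{-c(t+|x|^{1/\th})/2}$ follows from $\min(A,B)\le \sqrt{AB}$ (adjusting constants).

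The $x$-decay is obtained by applying Proposition~\ref{p:decay-a} to the function $\phi_t(\ze) := \phi(\ze)\, e^{-t|\ze|^2}$, whose Fourier transform is (up to a constant) $\ov g_t(-x)$. Since the constants in Proposition~\ref{p:decay-a} depend only on the Gevrey seminorms of $\phi_t$ on its support, it suffices to verify that $\phi_t\in \Gg$ with seminorms bounded \emph{uniformly in} $t\ge 0$. By the Leibniz rule and the argument of Proposition~\ref{p:gevrey-mult} (noting that Gevrey$-1$ bounds imply Gevrey$-\th$ bounds for $\th\ge 1$), this reduces to showing that the family $\{\ze\mapsto e^{-t|\ze|^2}\}_{t\ge 0}$, restricted to the compact set $\supp\phi$, has analytic (Gevrey$-1$) seminorms bounded uniformly in $t$.

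This last point is the main technical step, and it is where the annulus hypothesis enters crucially. Since $e^{-t|\ze|^2}=\prod_i e^{-t\ze_i^2}$, I would factor $\dr^m e^{-t|\ze|^2} = \prod_i \dr_{\ze_i}^{m_i} e^{-t\ze_i^2}$ and use the Hermite representation $\dr_\ze^n e^{-t\ze^2} = (2t)^{n/2}(-1)^n H_n(\sqrt{2t}\,\ze)\,e^{-t\ze^2}$ together with the Cramer bound $|H_n(u)|\lesssim \sqrt{n!}\,e^{u^2/4}$. Multiplying factors yields, on $\supp\phi$,
\begin{equation*}
  |\dr^m e^{-t|\ze|^2}| \le C^{d}\,(2t)^{|m|/2}\,(m!)^{1/2}\, e^{-t|\ze|^2/2} \le C^{d}\,(2t)^{|m|/2}\,(m!)^{1/2}\, e^{-tr^2/2}.
\end{equation*}
Optimising the function $t\mapsto t^{|m|/2} e^{-tr^2/2}$ over $t\ge 0$ gives a maximum of order $(|m|/r^2)^{|m|/2}\lesssim C^{|m|}(m!)^{1/2}$ (using $|m|^{|m|}\lesssim C^{|m|}|m|!$ and the multinomial inequality $|m|!\le d^{|m|}\,m!$). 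Combining these two bounds yields the uniform estimate $|\dr^m e^{-t|\ze|^2}|\le C^{|m|+1}\,m!$ on $\supp\phi$, independently of $t\ge 0$, which is exactly what the multiplicative stability argument needs to conclude that $\phi_t$ has uniform Gevrey$-\th$ seminorms. The hardest part is bookkeeping through this Hermite-type computation; once that uniform bound is in hand, the Fourier decay follows from the (proof of the) already-established Proposition~\ref{p:decay-a}.
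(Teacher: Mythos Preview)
Your proposal is correct, but it differs from the paper's proof in two respects.

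First, the paper does not decouple the $t$-decay from the $x$-decay. Instead it proves the single derivative estimate
\[
\big|\dr^n\big(e^{-t|\ze|^2}\big)\big| \le C^{|n|+1}\,n!\,e^{-ct}
\qquad (\ze\in\mcl C),
\]
with the factor $e^{-ct}$ built into the Gevrey bound. This is then fed through the Leibniz step (Proposition~\ref{p:gevrey-mult}) and the Fourier-decay step (Proposition~\ref{p:decay-a}) so that the product $e^{-ct}e^{-c|x|^{1/\th}}$ comes out directly, with no need for the geometric-mean trick $\min(A,B)\le\sqrt{AB}$.

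Second, for the derivative estimate itself the paper does not use Hermite polynomials and Cram\'er's inequality. It writes $e^{-t|\ze|^2}=f(g(\ze))$ with $f(y)=e^{ty}$, $g(\ze)=-|\ze|^2$, and applies Fa\`a di Bruno. Because $g$ is quadratic, only partitions into blocks of size~$1$ or~$2$ survive; each contributes $t^{|\pi|}e^{-t|\ze|^2}$, and since $|\ze|^2\ge r>0$ on the annulus, a direct count of such partitions absorbs the powers of $t$ into $e^{-tr/2}$.

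Your route is arguably more modular (and the Hermite calculation is neat), but it relies on Cram\'er's inequality as an external input, whereas the paper's Fa\`a di Bruno argument is entirely self-contained. The paper's approach also avoids halving the exponential rate via the square-root combination, though this has no consequence for the application.
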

\begin{proof}
It suffices to show that there exists $C < \infty$ and $c > 0$ such that uniformly over $\ze \in \mcl{C}$ and $n \in \N^d$,
\begin{equation}
	\label{e:gevrey-ann}
	\dr^n \Ll( e^{-t|\ze|^2} \Rr) \le C^{|n|+1} (n!)^\th e^{-ct}.
\end{equation}
Indeed, given this, the proof of Proposition~\ref{p:gevrey-mult} shows that uniformly over $\ze$,
$$
\dr^{n} \Ll( \phi(\ze) \, e^{-t|\ze|^2}  \Rr) \le C^{|n|+1} (n!)^\th e^{-ct},
$$
and then we can repeat the proof of Proposition~\ref{p:decay-a} to obtain the result.

We now show that \eqref{e:gevrey-ann} holds with $\th = 1$. Let us write $f(y) = e^{ty}$ and $g(\ze) = -|\ze|^2$. By Fa\'a di Bruno's formula, for $n \in \N$ and $1 \le i_1,\ldots,i_n \le d$,
$$
\frac{\dr^n}{\dr_{\ze_{i_1}} \cdots \dr_{\ze_{i_n}}} (f(g(\ze)) = \sum_{\pi \in \Pi} f^{(|\pi|)}(g(\ze)) \prod_{B \in \pi} \frac{\dr^{|B|}}{\prod_{k \in B} \dr_{\ze_{i_k}}} g(\ze),
$$
where $\Pi$ is the set of partitions of $\{1,\ldots,n\}$. Because of the form of $g$, the term indexed by $B$ in the last product is zero unless $|B| \le 2$. It thus suffices to focus on showing that
$$
\Ll|  \sum f^{(|\pi|)}(g(\ze))  \Rr| \le C^{|n|+1} \, n! \, e^{-ct},
$$ 
where the sum runs over partitions $\pi$ whose constituents have at most two elements.
Moreover, $f^{(|\pi|)}(y) = t^{|\pi|} e^{ty}$, and there are
$$
\frac{n!}{m_1! \, m_2! \, 2^{m_2}}
$$
partitions of $\{1,\ldots,n\}$ by $m_1$ singletons and $m_2$ sets of $2$ elements ($m_1 + 2m_2 = n$). Let $r \in (0,2)$ be such that $|\ze|^2 \le r \Rightarrow \ze \notin \mcl{C}$. It suffices to check that
\begin{equation}
	\label{e:gev-ann2}
	\Ll|  \sum_{m_1 + 2m_2 = n} \frac{n!}{m_1! \, m_2! \, 2^{m_2}} t^{m_1 + m_2} e^{-t r}\Rr| \le C^{|n|+1} \, n! \, e^{-ct}.
\end{equation}
Since $2^N = \sum_{k = 0}^N {N \choose k}$, we have $(m_1 + m_2)! \le 2^{m_1 + m_2} \, m_1! \, m_2!$, and thus, for $m_1 + 2m_2 = n$,
$$
\frac{1}{m_1! \, m_2! } t^{m_1 + m_2} \le 2^{n} \frac{t^{m_1+m_2}}{(m_1+m_2)!} =\Ll( \frac{4}{r}   \Rr)^{n} \frac{(tr/2)^{m_1+m_2}}{(m_1+m_2)!} \le \Ll( \frac{4}{r}   \Rr)^{n} e^{tr/2}.
$$
This implies \eqref{e:gev-ann2} (with $c = r/2$), and thus concludes the proof.
\end{proof}

\end{document}